\PassOptionsToPackage{dvipsnames}{xcolor}

\documentclass{amsart}

\usepackage{esint}
\usepackage{amssymb}
\usepackage{tikz}
\usepackage{graphicx}
\usepackage{amsrefs,enumitem}
\usepackage{xcolor}

\usepackage{hyperref}
\hypersetup{
  colorlinks=true,
  linkcolor=RoyalBlue,
  urlcolor=black,
  citecolor=black
}

\newtheorem{theorem}{Theorem}
\newtheorem{lemma}[theorem]{Lemma}
\newtheorem{corollary}[theorem]{Corollary}
\newtheorem{proposition}[theorem]{Proposition}
\theoremstyle{definition}

\newtheorem{remark}[theorem]{Remark}

\newtheorem{definition}[theorem]{Definition}

\newcommand{\eref}[1]{(\ref{e.#1})}
\newcommand{\tref}[1]{Theorem \ref{t.#1}}
\newcommand{\lref}[1]{Lemma \ref{l.#1}}
\newcommand{\pref}[1]{Proposition \ref{p.#1}}
\newcommand{\cref}[1]{Corollary \ref{c.#1}}
\newcommand{\fref}[1]{Figure \ref{f.#1}}
\newcommand{\sref}[1]{Section \ref{s.#1}}

\newcommand{\dref}[1]{Definition \ref{d.#1}}
\newcommand{\rref}[1]{Remark \ref{r.#1}}

\numberwithin{theorem}{section}
\numberwithin{equation}{section}

\newcommand{\N}{\mathbb{N}}

\newcommand{\R}{\mathbb{R}}

\newcommand{\grad}{\nabla}

\def\Xint#1{\mathchoice
{\XXint\displaystyle\textstyle{#1}}%
{\XXint\textstyle\scriptstyle{#1}}%
{\XXint\scriptstyle\scriptscriptstyle{#1}}%
{\XXint\scriptscriptstyle\scriptscriptstyle{#1}}%
\!\int}
\def\XXint#1#2#3{{\setbox0=\hbox{$#1{#2#3}{\int}$ }
\vcenter{\hbox{$#2#3$ }}\kern-.6\wd0}}

\def\dashint{\Xint-}

\newcommand{\ep}{\varepsilon}
\newcommand{\eps}{\ep}

\newcommand{\dist}{\operatorname{dist}}

\newcommand{\one}{\mathbf{1}}

\begin{document}

\title[Variational properties of Perron's extremal solutions]{Variational properties of Perron's extremal solutions in the Bernoulli one-phase problem}
\author{Farhan Abedin}
\address{Department of Mathematical Sciences, Lafayette College, Easton, PA 18042}
\email{abedinf@lafayette.edu}
\author{William M Feldman}
\address{Department of Mathematics, University of Utah, Salt Lake City, UT 84112}
\email{feldman@math.utah.edu}
\author{Kerrek Stinson}
\address{Department of Mathematics, University of Utah, Salt Lake City, UT 84112}
\email{kstinson@math.utah.edu}

\begin{abstract}
    We study the Perron extremal solutions of the stationary one-phase Bernoulli problem.  These solutions are important in several applications, but not much is known about the regularity of their free boundaries due to their non-variational construction.  We show that, in fact, Perron extremal solutions retain some variational features; specifically, they are  solutions in the sense of inner variations. We establish this property by showing that extremal solutions arise as infinite-time limits of monotone solutions of the parabolic Bernoulli problem. As a consequence, we are able to describe the structure of singularities for Perron extremal solutions in two dimensions: largest subsolutions have smooth free boundaries, while smallest supersolutions have free boundary points that are either smooth, or whose blow-up limits are two-plane wedges with equal slope.
\end{abstract}
\maketitle

\tableofcontents

\section{Introduction}

Let $U \subset \R^d, \, d\geq2,$ be a bounded domain and let $Q: \overline{U} \to (0,\infty)$ be a Lipschitz continuous coefficient field. We study non-negative solutions of the Bernoulli one-phase free boundary problem
\begin{equation}\label{e.bernoulli-basic}
     \begin{cases}
         \Delta u = 0 & \hbox{in } \{u>0\} \cap U, \\
         |\grad u| = Q(x) &\hbox{on } \partial \{u>0\} \cap U,
     \end{cases}
\end{equation}
which arises formally as the Euler--Lagrange condition for critical points of the Alt--Caffarelli energy
\begin{equation}\label{e.energy}
J_Q(u;U) := \int_U |\grad u|^2 + Q(x)^2 {\bf 1}_{\{u>0\}}\ dx.
\end{equation}

Several notions of weak solution to \eref{bernoulli-basic} have been introduced and studied in the literature over the past several decades.  Energy minimizers were first considered in the pioneering work of Alt and Caffarelli \cite{AltCaffarelli}. More recently, there has also been interest in understanding the behavior of non-minimizing variational solutions of \eref{bernoulli-basic}: we mention work on almost minimizers \cites{David-Toro-2015, DeSilvaSavin}, stable solutions \cite{Kamburov-Wang} and weaker notions like inner variational solutions \cites{Weiss, Jerison-Kamburov-I, KriventsovWeiss}. On the non-variational side, Caffarelli developed the viscosity solution framework for \eref{bernoulli-basic} and other free boundary problems in \cites{Caffarelli-Harnack-I, Caffarelli-Harnack-II, Caffarelli-Harnack-III}. In general, there is no simple hierarchy among these solution notions. We refer the reader to the  discussion in \cite{KriventsovWeiss}*{Subsection 3.1} for illuminating examples that illustrate the similarities and differences between various types of solution of \eref{bernoulli-basic}.

In several applications, certain special viscosity solutions are naturally singled out by the underlying physical model. These ``extremal'' solutions are obtained via Perron's method: they are, respectively, the smallest supersolution above a smooth strict subsolution, and the largest subsolution below a smooth strict supersolution. When the PDE problem enjoys uniqueness via comparison principle, the largest subsolution and smallest supersolution  coincide; this is not the case for \eqref{e.bernoulli-basic}, as illustrated by the numerical example in Figure~\ref{f.necklace}. Indeed, each extremal solution exhibits distinct free boundary behavior. Note that the extremal solutions trap all other viscosity solutions with the same boundary data, including the energy minimizer, which is also a viscosity solution (see \cite{VelichkovBook}*{Proposition 7.1}).

Perron's extremal solutions arise, for instance, as long-time limits of monotone free boundary evolutions, in quasi-static evolution problems \cite{FeldmanKimPozar2}, {or in the homogenization theory of \eref{bernoulli-basic} where they play an essential role in understanding contact angle hysteresis in capillarity theory \cites{Caffarelli-Lee, Caffarelli-Mellet, Feldman-2021}.}
 In some sense, these extremal solutions are more natural than energy minimizers, because they can be reliably accessed by gradient flows while the energy minimizer may sit in a complicated landscape of local minimizers and it is difficult to guarantee that a particular flow will reach the global minimum. 
 
 Despite their natural appearance in physical applications, Perron extremal solutions have been less studied in the context of regularity theory due to the lack of a suitable variational structure that allows tools such as monotonicity formulae to be used for understanding fine properties of the free boundary. The only regularity result for extremal solutions we are aware of is that of Orcan-Ekmekci \cites{OrcanEkmekci}, who establishes a non-degeneracy property of the largest subsolution in $d=2$. See also \rref{caffarelli-admissible} below for discussion of a slightly different notion of smallest supersolution studied by Caffarelli in \cite{Caffarelli-Harnack-III}. Of course, the regularity theory for general viscosity solutions \cites{CaffarelliSalsa, DeSilva} also applies to the Perron extremal solutions, but this theory mainly deals with a priori flat or cone-monotone (Lipschitz level sets) solutions. 

The purpose of this paper is to show that Perron's extremal solutions do carry variational structure: they are \emph{inner variational solutions} of \eref{bernoulli-basic} in the sense introduced by Weiss \cite{Weiss}, and they are minimal with respect to one-sided compact perturbations of the energy. In particular Weiss' monotonicity formula and resultant homogeneity of blow-ups for inner variational solutions applies to them. In two dimensions, this yields the first structure theorem for the free boundary of smallest supersolutions, and classical regularity for largest subsolutions, making these extremal solutions better behaved than an arbitrary viscosity solution of \eref{bernoulli-basic}.

Our main result is as follows. See Section \ref{s.concepts} for precise definitions.

\begin{theorem}\label{t.main}
    If $u$ is the smallest supersolution (resp.\ largest subsolution) of \eqref{e.bernoulli-basic} in $U$ above a smooth strict subsolution $g$ with $g|_{\partial U} >0$ (resp.\ below a smooth strict supersolution $g$, strict boundary positivity not needed) then
    \begin{enumerate}[label = (\roman*)]
        \item\label{part.visc} $u$ is a viscosity solution of \eqref{e.bernoulli-basic} in $U$,
        \item\label{part.inner} $u$ is an inner variational solution of \eqref{e.bernoulli-basic} in $U$, and
        \item\label{part.downward} $u$ is a downward (resp.\ upward) minimizer of $J_Q(\cdot ;U)$ locally around each free boundary point.
    \end{enumerate}
\end{theorem}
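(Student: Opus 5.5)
The abstract already indicates the route: I would realize the extremal solution as the infinite-time limit of a monotone solution of the parabolic Bernoulli problem, and pass the variational properties of the flow to the limit. I treat the smallest supersolution; the largest subsolution is symmetric, with inequalities reversed. Part \ref{part.visc} is standard Perron theory. The smallest supersolution $u$ above the strict subsolution $g$ is a supersolution by stability of the infimum under lower semicontinuous envelopes. If it failed to be a subsolution at some point, a local bump construction would give a strictly smaller supersolution still above $g$. Here $g|_{\partial U}>0$ and the strictness of $g$ keep the bump away from $g$ and from $\partial U$, which contradicts minimality.

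For \ref{part.inner} and \ref{part.downward} I would run a gradient flow starting from $g$. The natural choice is a minimizing-movements scheme for $J_Q$ with a one-sided constraint: at each step, minimize $J_Q(v;U)+\frac{1}{2h}\|v-u_{k}\|^2_{L^2}$ over $v\ge u_{k}$ with the same boundary data. Since $g$ is a strict subsolution, the discrete solutions are viscosity supersolutions of the free boundary condition in the region where the constraint is active, and subsolutions elsewhere. A comparison argument then shows two things. First, $u_k$ is nondecreasing in $k$. Second, $u_k\le w$ for every supersolution $w\ge g$, by induction, because a constrained minimizer cannot cross a supersolution lying above the previous step. Uniform Lipschitz and nondegeneracy estimates for one-sided (upward-constrained) minimizers then give locally uniform convergence of $u_k$ as $k\to\infty$ (and $h\to0$) to some $u_\infty\le u$. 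The dissipation bound $\sum_k \|u_{k+1}-u_k\|^2/h\le J_Q(g)$ makes the time derivative vanish in the limit. Hence $u_\infty$ is a viscosity solution, in particular a supersolution above $g$, and so $u_\infty=u$.

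Each discrete step $u_{k+1}$ minimizes $J_Q(\cdot)+$ penalty among competitors $v\ge u_k$. Since $u_k\le u_{k+1}$, any downward perturbation $v$ of $u_{k+1}$ satisfying $u\ge v\ge u_k$ near a free boundary point is admissible. Letting $k\to\infty$, I would obtain the downward minimality \ref{part.downward}. To get a comparison class that does not depend on $k$, I would use the nondegeneracy and strong convergence of $u_k$, together with convergence of the positivity sets in $L^1$ and of the energies, which follows from uniform density estimates. For \ref{part.inner}, I would take the domain variations $v=u_{k+1}(x+t\xi(x))$. These are not one-sided, so I would instead derive the inner variation identity for the flow with an error term $\int \partial_t u\,\nabla u\cdot\xi$, and show that this error vanishes in the limit using the dissipation bound. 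The free boundary term passes to the limit by $L^1$ convergence of $\mathbf{1}_{\{u_k>0\}}$.

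I expect the main obstacle to be the identification $u_\infty=u$ together with the strong convergence of the positivity sets. Comparison for the constrained minimizers against arbitrary viscosity supersolutions is delicate, and so is ruling out loss of free boundary area in the limit, which is needed for the $Q^2\mathbf{1}_{\{u>0\}}$ term in both \ref{part.inner} and \ref{part.downward}. I would handle this with uniform nondegeneracy inherited from the one-sided minimality, plus a density lower bound for the zero set, obtained from the supersolution property near free boundary points.
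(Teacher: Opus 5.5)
The step that fails is the non-crossing induction $u_k\le w\Rightarrow u_{k+1}\le w$ for supersolutions $w\ge g_+$. For a perturbation supported in $B_r$ with $|v-u_k|\lesssim r$, the penalty $\frac{1}{2h}\|v-u_k\|_{L^2}^2$ costs only $O(r^{d+2}/h)$, while changes in $J_Q$ are of order $r^d$. So at scales $r\ll\sqrt h$ each step is just upward energy minimization, and it ignores viscosity barriers that are not upward minimizing.

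Smallest supersolutions do have such points: by \cref{2d-structure} and \fref{necklace} they can have two-plane blow-ups $Q(x_0)|(x-x_0)\cdot e|$. Take $w=u$ and suppose $u_k=u$. Since $u$ is subharmonic, its harmonic replacement in a small ball $B_r(x_0)$ around such a point is an admissible competitor: it lies above $u_k$ and has the same boundary data. It lowers the Dirichlet energy by $\gtrsim r^d$, changes the measure of the positivity set by $o(r^d)$, and costs $O(r^{d+2}/h)$ in penalty. Hence $u_k$ is not a minimizer of the step, and $u_{k+1}\ge u_k=u$ with $u_{k+1}\ne u$, i.e.\ $u_{k+1}$ crosses $u$. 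The same tunnelling occurs once two components of $\{u_k>0\}$ come within distance $\ll\sqrt h$. In a configuration like \fref{necklace} your scheme should therefore bridge the touching points, and there is no reason for its limit to be $u$ rather than a larger solution.

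This is not a technicality. The stationary problem has no comparison principle, even under strict boundary ordering; what prevents crossing is a first-crossing time, which a discrete-time energy scheme lacks. That is why the paper approximates by the semilinear flow $\partial_t\mathfrak u_\ep=\Delta\mathfrak u_\ep-Q^2\beta_\ep(\mathfrak u_\ep)$. This flow satisfies classical comparison for each $\ep$, and its limit is a parabolic viscosity solution obeying the strict comparison \tref{comparison}. It also carries the inner-variation identity \eqref{eqn:innerVarEps}, with the $\partial_t\mathfrak u\,\xi\cdot\grad\mathfrak u$ term you anticipated, together with the Lipschitz, dissipation and BV bounds needed to let $\ep\to0$ and then $t\to\infty$. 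Strict ordering on $\partial_PU_\infty$ comes from shifting the data to $g+a$ with $a<0$; this is where $g|_{\partial U}>0$ is used. One then lets $a\uparrow0$, using stability of inner variational solutions and of supersolutions.

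Part \ref{part.downward} has a separate problem. Competitors for $u_{k+1}$ must lie above $u_k$, and $u_k\to u$. So for a fixed downward competitor $v\le u$, the admissible truncations $\min\{\max\{v,u_k\},u_{k+1}\}$ converge to $u$ itself, and the discrete minimality says nothing in the limit. The paper does not use the flow for \ref{part.downward} at all. It minimizes $J_Q$ over $\{0\le w\le u,\ w-u\in H^1_0(B)\}$. The minimizer is a viscosity supersolution: upward bumps $w\vee(\varphi+\delta)_+$ are admissible where $w<u$, and where $w=u$ the property is inherited from $u$. The local smallest-supersolution property then gives $w\ge u$, hence $w=u$. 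The upward case is the same argument with $u$ as a lower obstacle.

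Finally, the largest-subsolution case is not symmetric. Along a decreasing flow the positive phase can degenerate in the limit, so the subsolution property is not stable. The paper recovers it only from the inner variational structure, via \lref{inner-variational-implies-subsoln} and Kriventsov--Weiss. Nondegeneracy extracted from one-sided minimality of your discrete steps does not fill this gap: it is not uniform in $h$, since at larger scales the penalty dominates.
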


Although \tref{main} may appear to be concerned with somewhat technical solution notions, it has noteworthy consequences for the regularity theory and free boundary structure of Perron solutions. Combining the information given by the inner variational notion with the extremality properties of the Perron solutions, we can establish a much finer description of the free boundary regularity.  

\begin{figure}[t]
  \begin{minipage}[c]{0.32\textwidth}
    \centering
    \includegraphics[width=\linewidth,trim=16 0 2 24,clip]{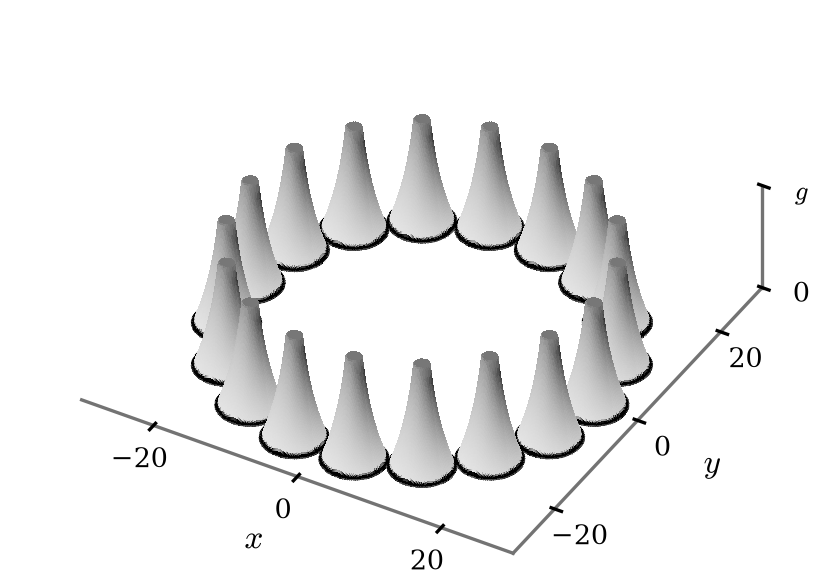}
  \end{minipage}
  \begin{minipage}[c]{0.32\textwidth}
    \centering
    \includegraphics[width=\linewidth,trim=16 0 2 24,clip]{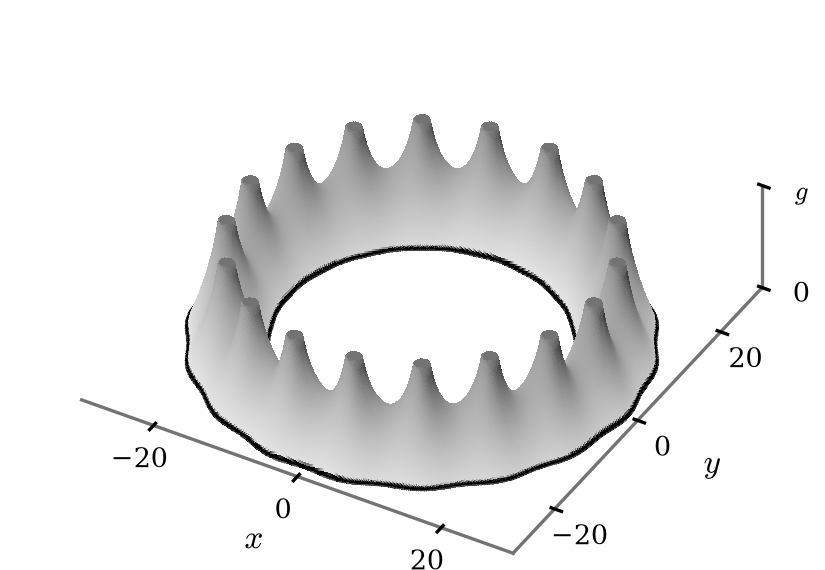}
  \end{minipage}
  \begin{minipage}[c]{0.32\textwidth}
    \centering
    \includegraphics[width=\linewidth,trim=16 0 2 24,clip]{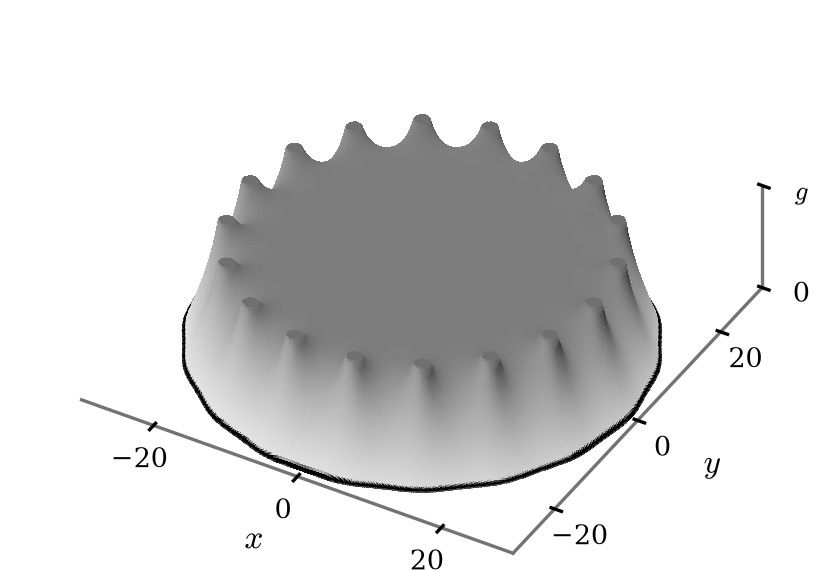}
  \end{minipage}
  \caption{A configuration where minimal supersolution (left), energy minimizer (middle), and maximal subsolution (right) are distinct. The domain is the exterior of a union of $18$ unit disks centered and evenly spaced around a circle of radius $r=24$, and the data $g$ is chosen so that the radial solutions centered at each disk just touch their two neighbors.}
  \label{f.necklace}
\end{figure}

\begin{corollary}\label{c.2d-structure}
Extremal solutions in $\R^2$ satisfy the following improved regularity properties:
    \begin{enumerate}[label = (\roman*)]
         \item\label{part.smallest-reg} If $u$ is a smallest supersolution as in \tref{main} with $U \subset \R^2$, then the free boundary decomposes as the disjoint union
    \[\partial \{u>0\} \cap U = \textup{FB}_{\textup{reg}} \cup \textup{FB}_{\textup{TP}}.\]
    Here, $\textup{FB}_{\textup{reg}}$ is relatively open in $\partial \{u>0\}$ and, in a neighborhood of each point $x_0 \in \textup{FB}_{\textup{reg}}$, $u$ is a classical solution\footnote{i.e. the free boundary $\partial\{u>0\} \cap U$ is locally a $C^{1,\gamma}$ manifold near $x_0$ and the free boundary condition holds classically at all free boundary points near $x_0$.} of \eqref{e.bernoulli-basic}. At each point $x_0 \in \textup{FB}_{\textup{TP}}$, every subsequential blow-up limit of $u$ is of the form $Q(x_0)|(x-x_0) \cdot e|$ for some unit vector $e$.
    \item\label{part.largest-reg} If $u$ is a largest subsolution as in \tref{main} with $U \subset \R^2$, then $u$ is a classical solution of \eref{bernoulli-basic} in $U$.
    \end{enumerate}
\end{corollary}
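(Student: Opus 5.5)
The plan is to combine \tref{main} with known structure theory for inner variational solutions. By \tref{main}\ref{part.inner}, $u$ is an inner variational solution, so Weiss' monotonicity formula applies at every free boundary point $x_0$. We first need standard a priori properties: Lipschitz continuity of $u$ (valid for viscosity solutions via \tref{main}\ref{part.visc} and the usual Harnack-type arguments) and, where available, nondegeneracy. With these, blow-ups $u_r(x)=u(x_0+rx)/r$ converge subsequentially, locally uniformly and strongly in $H^1_{loc}$. The strong convergence is what lets the inner variation identity pass to the limit; for one-sided minimizers it should follow from the downward/upward minimality in \tref{main}\ref{part.downward}. Weiss' formula then makes every blow-up $v$ a $1$-homogeneous inner variational solution with constant coefficient $Q(x_0)$, and the limit is also a viscosity solution.

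In $d=2$, a $1$-homogeneous harmonic function on each component of its positivity set is a union of sectors $v=a_i\,r\,(\sin\theta-\theta_i)$ with opening angle exactly $\pi$. So the positive set is either a half-plane or two complementary half-planes. The inner variation condition is a balance of the flux $|\nabla v|^2-Q^2$ across each ray. This forces $a=Q(x_0)$ in the half-plane case. In the two-half-plane case, where $v=a_1(x\cdot e)^+ + a_2(x\cdot e)^-$, it forces $a_1=a_2=:a$. The zero-slope case is excluded by nondegeneracy. The viscosity supersolution condition then gives $a\ge Q(x_0)$ at the wedge, and the inner variation identity applied to the field $\Phi=\phi e$ should pin $a=Q(x_0)$. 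Hence blow-ups are either the half-plane solution $Q(x_0)(x\cdot e)^+$ or the two-plane $Q(x_0)|x\cdot e|$. The density of $\{u>0\}$, which is monotone-compatible through the Weiss energy value, distinguishes the two cases: all blow-ups at $x_0$ share the same Weiss limit, either $\pi Q^2/2$ or $\pi Q^2$. So each point is uniformly of one type.

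For part~\ref{part.smallest-reg}, at points whose blow-up is a half-plane we obtain flatness of $u$ at small scales, and De Silva's flat-implies-$C^{1,\gamma}$ theorem \cite{DeSilva} for viscosity solutions gives a classical solution in a neighborhood. This makes $\textup{FB}_{\textup{reg}}$ relatively open, and its complement $\textup{FB}_{\textup{TP}}$ consists of points whose blow-ups are all two-plane.

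For part~\ref{part.largest-reg}, we must rule out two-plane points for the largest subsolution. Upward minimality (\tref{main}\ref{part.downward}) should do this. Near a two-plane point, filling the thin zero set by a compact upward perturbation strictly lowers the energy: the measure term increases only by a small amount, while the Dirichlet energy drops by an order-one amount relative to the scale. In the blow-up limit, $v=Q|x\cdot e|$ is not a minimizer among $w\ge v$, since $Q|x\cdot e|$ is dominated by energy competitors lifting the kink. Passing this contradiction to the limit requires compactness of upward minimality under blow-up, which I expect to be the main technical obstacle. A second delicate point is justifying $a=Q(x_0)$ and nondegeneracy for the smallest supersolution. If nondegeneracy fails, I would try to obtain it from downward minimality, via the standard Alt--Caffarelli argument that cuts $u$ down on small balls.
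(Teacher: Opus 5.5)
\textbf{Gap in part \ref{part.smallest-reg}: determining the slope at two-plane points.} Your argument here does not work, and this step is where the content of the corollary lies. There are two problems.

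First, the viscosity supersolution condition gives $a\le Q(x_0)$, not $a\ge Q(x_0)$. At the kink of $a|x\cdot e|$, consider $\beta\,x\cdot e+\gamma (x\cdot e)^2$ with $|\beta|<a$ and $\gamma>0$. It touches from below, has positive Laplacian, and has gradient of size $|\beta|$. So if $a>Q(x_0)$, you can take $Q(x_0)<|\beta|<a$ and violate the definition.

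Second, the inner variation cannot pin $a$. With $\chi\equiv 1$, the stress tensor $(|\nabla v|^2+Q(x_0)^2\chi)I-2\nabla v\otimes\nabla v=(a^2+Q(x_0)^2)I-2a^2e\otimes e$ is the same on both sides of the line. Hence $a|x\cdot e|$ is an inner variational solution for every $a\ge 0$, as the paper notes in Section~\ref{s.inner-variational-stationary}.

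So after your argument the range $0<a<Q(x_0)$ is still open. The missing ingredient is the downward minimality of \tref{main}\ref{part.downward}. It passes to blow-ups by \cref{blow-up-directional}; the downward case of \lref{directionalStable} needs only local Lipschitz bounds. Then \pref{two-plane-minimizing-props} shows $a|x\cdot e|$ is not downward minimizing for $0<a<Q(x_0)$. The competitor $\frac{a}{1-\ep}(|x\cdot e|-\ep)_+$, cut off in the tangential directions, saves more in the measure term than it costs in Dirichlet energy once $\ep<1-a^2/Q(x_0)^2$. This is how the paper gets $a=Q(x_0)$: the supersolution property gives $a\le Q(x_0)$ and downward minimality gives $a\ge Q(x_0)$.

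Your plan to get nondegeneracy from downward minimality, via the Alt--Caffarelli cut-off competitor, is sound. It is in fact needed to dispose of $a=0$. \pref{two-plane-minimizing-props} does not cover that case, since $\phi_0\equiv0$ is trivially downward minimizing.

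\textbf{Part \ref{part.largest-reg}: nondegeneracy is not supplied.} Your harmonic-replacement argument matches the paper's. It excludes $a|x\cdot e|$ for every $a>0$, so pinning the slope is not needed here. However, excluding $a=0$ requires a genuine nondegeneracy theorem for largest subsolutions, and neither of your tools provides one: $u\equiv 0$ with $\chi\equiv 1$ is both upward minimizing and an inner variational solution. The paper uses the two-dimensional result of Orcan-Ekmekci, \tref{largest-sub-non-degen}, whose proof uses the largest-subsolution property itself.

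The blow-up stability of upward minimality, which you flag as the main obstacle, is \cref{blow-up-directional}(ii). It is handled through the inner variational property and \cite{KriventsovWeiss}. These give $L^1_{\rm loc}$ convergence of the positivity sets, so the measure term of the lifted competitor passes to the limit.
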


We believe this is the first result describing the structure of the singular set (or lack thereof) for a sizable class of viscosity solutions of the Bernoulli free boundary problem, outside of flat or cone-monotone scenarios.

Since Corollary \ref{c.2d-structure} is a somewhat immediate consequence of \tref{main} combined with the well-known blow-up analysis for inner variational solutions via the Weiss monotonicity formula \cite{Weiss}, we provide a sketch of the proof right away. 

\begin{proof}[Proof sketch of \cref{2d-structure}]

We start with the arguments which are common to both cases. By the interior Lipschitz estimate for viscosity solutions of \eqref{e.bernoulli-basic}, see \cite{CaffarelliSalsa}*{Lemma 11.19}, $u$ is Lipschitz continuous on compact subsets of $U$. By \tref{main}\ref{part.inner}, $u$ is an inner variational solution with coefficient $Q \in C^{0,1} \subset C^\gamma$. By \lref{inner-compactness-stability}, blow-up sequences $u_{x_0,r}(x):=r^{-1}u(x_0+rx)$ at $x_0 \in \partial \{u>0\}$ are pre-compact with respect to local uniform convergence, $H^1_{\textup{loc}}$ convergence, and $L^1_{\textup{loc}}$ convergence of the positivity sets. By Lemma~\ref{l.super-stability}, all blow-up limits are viscosity supersolutions of \eref{bernoulli-basic} with constant slope $Q(x_0)$. 

By the Weiss monotonicity formula \cite{Weiss}, every blow-up limit of $u$ at $x_0$ is a $1$-homogeneous global inner variational solution with constant slope $Q(x_0)$. In the plane, the $1$-homogeneous global inner variational solutions are classified (see for instance \cite{Jerison-Kamburov-I}*{Section 5}): they are precisely the half-plane solutions $Q(x_0)(x\cdot e)_+$ and the two-plane solutions $\phi_\alpha:=\alpha|x\cdot e|$ with $\chi_\alpha = {\bf 1}_{\R^d}$ for $\alpha \in [0,\infty)$.
Since, for $\alpha > Q(x_0)$, the two-plane solution $\phi_\alpha$ is not a viscosity supersolution, we find that $\alpha \in [0,Q(x_0)]$.

Next we discuss minimality properties of the blow-up limits. By \tref{main} \ref{part.downward}, $u$ is energy minimizing with respect to upward or downward (in the respective cases) perturbations in a sufficiently small neighborhood of $x_0$. This property is stable with respect to the blow-up limit at $x_0$, see \cref{blow-up-directional} below, so we conclude that every blow-up limit is upward or downward minimizing (in the respective cases) for $J_{Q(x_0)}$.

Let $\textup{FB}_{\textup{reg}}$ be the set of free boundary points at which some blow-up is a half-plane solution. At such points the $\ep$-regularity theory of Caffarelli \cites{Caffarelli-Harnack-I, Caffarelli-Harnack-II} (see also \cite{DeSilva}) applies and yields a neighborhood in which the free boundary is a $C^{1,\gamma}$ graph and $u$ is a classical solution. In particular, $\textup{FB}_{\textup{reg}}$ is relatively open in $\partial \{u>0\}$.

Let $\textup{FB}_{\textup{TP}} = \partial \{u>0\} \setminus \textup{FB}_{\textup{reg}}$ which, by the previous discussion, is a relatively closed set in $\partial \{u>0\}$ where all blow-ups of $u$ are two-plane solutions $\phi_{\alpha}$ for some $\alpha \geq 0$. 

Part \ref{part.smallest-reg}.   For $\alpha < Q(x_0)$, $\phi_{\alpha}$ is not energy minimizing with respect to downward perturbations (see Proposition~\ref{p.two-plane-minimizing-props}). Since $\alpha > Q(x_0)$ was ruled out above using the viscosity supersolution property, every blow-up limit at a point $x_0 \in \textup{FB}_{\textup{TP}}$ is of the form $Q(x_0)|x \cdot e|$ for some $e \in S^{d-1}$.

Part \ref{part.largest-reg}. The degenerate blow-up $\phi_0(x) \equiv 0$ is ruled out by the non-degeneracy property of largest subsolutions (see \tref{largest-sub-non-degen}). The remaining two-plane solutions $\phi_\alpha$ for $\alpha>0$ are not upward minimizing (see Proposition~\ref{p.two-plane-minimizing-props}).  Thus $\textup{FB}_{\textup{TP}} = \emptyset$ and $u$ is a classical solution.
\end{proof}

Note that the result on minimal supersolutions only uses the viscosity supersolution property, the inner variational property, and the downward energy minimality property. The result on maximal subsolutions, on the other hand, really uses the largest subsolution property via the non-degeneracy result \tref{largest-sub-non-degen}.

\begin{remark}\label{r.caffarelli-admissible}
In \cite{Caffarelli-Harnack-III}, Caffarelli considers a notion of ``restricted'' or ``admissible'' supersolution \cite{CaffarelliSalsa} which satisfies the additional property of being outer regular at all free boundary points. The infimum of such supersolutions above an inner-regular subsolution gives rise to what Caffarelli and Salsa call a ``minimal viscosity solution'' in \cite{CaffarelliSalsa}*{Chapter 6}. These are different from the smallest supersolutions considered in this paper; in particular, they cannot have two-plane blow-up at the free boundary. The results of \cite{Caffarelli-Harnack-III} (see also the exposition in \cite{CaffarelliSalsa}*{Chapter 6}) show that the free boundary for such solutions is smooth, except on a set of zero $(d-1)$-Hausdorff measure. We mention a conjecture from \cite{Jerison-Kamburov-II} which posits that Caffarelli's minimal viscosity solutions are classical in dimension two. As illustrated by Figure~\ref{f.necklace}, the free boundary need not be classical for the minimal supersolution considered in our work.
\end{remark}

\subsection{Strategy of the proof}\label{ss.plan}

Part \ref{part.visc} of \tref{main} is a standard well-known consequence of the Perron's method construction, but we also establish this independently within the proof of part \ref{part.inner} of \tref{main} for the class of sub/supersolution data $g$ considered here. For part \ref{part.downward} of \tref{main}, we take some inspiration from the work of \cite{DePLaux} and the connection between one-sided minimality and monotonicity in mean curvature flow, though our arguments are different in nature: We solve the obstacle problem for $J_Q$ with $u$ itself as an upper (resp. lower) obstacle, and invoke the extremality of $u$ to conclude that the minimizer is $u$.

The bulk of the manuscript is devoted to proving part \ref{part.inner} of \tref{main}, which establishes the inner variational property for Perron's extremal solutions. Our strategy is to realize the extremal solution via an approximation scheme which preserves both viscosity and variational structures. This is quite delicate, as naive approximation may not lead to the desired solution of \eqref{e.bernoulli-basic}. Indeed, \eqref{e.bernoulli-basic} suffers from lack of uniqueness and well-known approximation procedures for generating inner variational solutions and viscosity solutions are not guaranteed to yield the extremal solutions. On the other hand, the extremal solutions are somewhat special compared to arbitrary viscosity solutions because they arise as the long-time limit of time-monotone solutions of the \emph{parabolic Bernoulli problem}
\[
     \begin{cases}
         \partial_t \mathfrak u =\Delta \mathfrak u & \hbox{in } \{u>0\} \cap U_\infty, \\
         |\grad \mathfrak u| = Q(x) &\hbox{on } \partial \{u>0\} \cap U_\infty, \\
         \mathfrak u = {g}_+ &\hbox{on } \partial_PU_\infty.
     \end{cases}
\]
This provides a clue as to what the correct approximation scheme should be. We first obtain a variational and viscosity solution of the parabolic Bernoulli problem by approximating with a well-known semilinear parabolic problem (see \eqref{e.bernoulli-parabolic-semilinear}). Then we take the $t\to \infty$ limit of the parabolic Bernoulli flow, which we show preserves both the inner variational and viscosity properties. Finally, we show via monotonicity and comparison principle that the limit of the flow must be the Perron extremal solution for the given sub/supersolution data, thus establishing the inner variational property. For both limit cases, the viscosity properties are especially subtle to verify due to the possibility of degeneracy; limits of viscosity subsolutions may fail to be viscosity subsolutions.  Several new ideas are introduced throughout the paper to deal with this issue in the various limits, see Section~\ref{subsec:paraVisc} and \sref{inner-variational-stationary} for further discussion.

\subsection{Notation and conventions}\label{ss.notation}

We write $B_r(x)$ for the open ball, ${\bf 1}_E$ for the indicator of a set $E$, $s_+ := \max(s,0)$, and $\dist(\cdot,\cdot)$ for the Euclidean distance. For space-time sets we use $U_\infty := U \times (0,\infty)$, the parabolic cylinders $Q_r(x,t) := B_r(x) \times (t - r^2, t]$, and the parabolic boundary $\partial_P$.

The following conditions are in force throughout the paper.
\begin{enumerate}[label = (\roman*)]
    \item $U \subset \R^d$, $d \geq 2$, is open, bounded, and connected, and $\partial U$ is $C^2$.
    \item $Q \in C^{0,1}(\overline{U})$ and there are constants $0 < Q_{\min} \leq Q_{\max}$ with
    \[Q_{\min} \leq Q(x) \leq Q_{\max} \  \hbox{for all } x \in \overline{U}.\]
    We write $N_Q := \|\grad(Q^2)\|_{L^\infty(U)}$.
\end{enumerate}

The letters $C$ and $c$ denote positive constants which may change from line to line, we will indicate the dependence where needed. We call a constant universal when it only depends on the parameters fixed throughout the paper: $d$, the $C^2$ regularity of the domain $U$, the $C^{0,1}$ regularity and positivity of $Q$, and the $C^2$ regularity and strict sub/supersolution hypothesis of $g$.

\subsection{Acknowledgments}
F.A. acknowledges support from the NSF grant DMS-2246611. W.F. was supported by the NSF grant DMS-2407235 and appreciated conversations with Dennis Kriventsov, Bozhidar Velichkov, and Sebastian Munoz. K.S. was supported by funding from the
NSF RTG grant DMS-2136198. Although the main ideas in this work originated prior to the AIM workshop ``Time-dependent Bernoulli free boundary problems", all three authors benefited greatly from the conversations and exchange of ideas that took place during the workshop. We thank the AIM staff for their support and hospitality.

\subsection{AI use declaration}
All the mathematical ideas and proof presentations in the main body of the paper and in Appendix~\ref{s.largest-sub-nondegen} were conceptualized and written and/or derived from appropriate sources in the literature by the authors. AI was used in certain instances to improve presentation. The idea for the example in \fref{necklace} was proposed by Claude Fable 5. An outline of Appendix~\ref{s.semilinear} was drafted by the authors, but the details of some of the key technical steps were initially filled in by Claude Fable 5. The appendix was then thoroughly revised for accuracy and clarity by the authors. We take the same level of responsibility for its contents as we do for any other paper we have written.

\section{Solution concepts}\label{s.concepts}
We introduce the three different notions of solution that are united by Theorem \ref{t.main} and derive some useful properties based on these definitions.

\subsection{Viscosity solutions}\label{s.viscosity-solns-prelim}

We first define viscosity solutions of \eref{bernoulli-basic}, following \cite{Kim03}.

\begin{definition}\label{d.stationary-viscosity}
Let $u \in C(U)$ be non-negative.
\begin{enumerate}[label = (\roman*)]
\item $u$ is a \emph{viscosity supersolution} of \eref{bernoulli-basic} in $U$ if, whenever $\varphi \in C^\infty(U)$ touches $u$ from below at $x \in U$, then either
    \[\Delta \varphi(x) \leq 0, \  \hbox{or} \  \varphi(x) = 0 \ \hbox{ and } \ |\grad \varphi(x)| \leq Q(x).\]
\item $u$ is a \emph{viscosity subsolution} of \eref{bernoulli-basic} in $U$ if, whenever $\varphi \in C^\infty(U)$ and $\varphi_+$ touches $u$ from above in $\overline{\{u>0\}} \cap U$ at $x$, then either
    \[\Delta \varphi(x) \geq 0, \  \hbox{or} \  \varphi(x) = 0 \ \hbox{ and } \ |\grad \varphi(x)| \geq Q(x).\]
\item $u$ is a \emph{viscosity solution} if it is both a viscosity supersolution and a viscosity subsolution.
\end{enumerate}
\end{definition}

We also make precise the meaning of smooth strict subsolution/supersolution used in the statement of the main theorem.
\begin{definition}\label{d.strict-sub}
A function $g \in C^2(\overline{U})$ is a smooth strict subsolution (resp. supersolution) of \eref{bernoulli-basic} if there are constants $a_0, \delta_0 >0$ such that
\begin{enumerate}[label = (\roman*)]
    \item $\Delta g \geq 0$ (resp. $\Delta g \leq 0$) in $\{g > -a_0\}$, and
    \item $|\grad g(x)|^2 \geq (1+ \delta_0)Q(x)^2 $ (resp. $|\grad g(x)|^2 \leq (1-\delta_0)Q(x)^2 $) for all \\ {$x \in \{|g| \leq a_0\}$}.
\end{enumerate}
\end{definition}
Note that for $|a|\leq a_0/2$, the shifts $g+a$ are also strict sub/supersolutions with parameters $a_0$ replaced by $a_0/2$ and the same $\delta_0$.

The viscosity supersolution property is stable under uniform convergence (see, for instance  \cite{FeldmanPozar2025}*{Lemma 3.1}). In contrast, the stability of the viscosity subsolution property at the free boundary is subtle due to potential degeneracy in the limit. For example, when $Q\equiv 1$, $u_j(x) = \frac{1}{2}(x_d)_+ + \frac{1}{j}(-x_d)_+$ is a sequence of viscosity subsolutions converging locally uniformly to a limit which is not a viscosity subsolution in the standard sense. We refer to \cite{FeldmanKim} for further discussion.  This issue is a significant challenge in the proof of \tref{main} and shapes the details of the proofs in \sref{conclusion}, resulting in the the noticeable asymmetry between the increasing and decreasing cases therein. 

The degenerate example above motivates the following relaxed notion of subsolution which is stable under limits.

\begin{definition}\label{d.relaxed-sub-stationary}
    Let $u \in C(U)$ be non-negative. Let $E$ be closed in $\overline{U}$ and containing $\{u>0\}$. The pair $(u,E)$ is a \emph{{relaxed subsolution}} of \eref{bernoulli-basic} if whenever $\varphi \in C^\infty(U)$ touches $u$ from above in $E \cap U$ at $x$, either $\Delta \varphi(x) \geq 0$, or $\varphi(x) = 0$ and $|\grad \varphi(x)| \geq Q(x)$.

    We say $u$ is a \emph{{relaxed solution}} of \eref{bernoulli-basic} if it is a viscosity supersolution and a {relaxed subsolution}.
\end{definition}

Note that a relaxed subsolution is a viscosity subsolution if and only if we can let $E = \overline{\{u > 0\}}\cap U$.

We recall the stability of the viscosity supersolution and relaxed subsolution properties under locally uniform limits, referring to \cite{FeldmanPozar2025}*{Lemma 3.1} and \cite{FeldmanKim}*{Lemma A.12} for the proofs.
\begin{lemma}\label{l.super-stability}
   Let $U$ be an open set, and $u_k \in C(U)$ with $u_k \to u$ locally uniformly in $U$.
    \begin{enumerate}[label = \textup{(\roman*)}]
        \item\label{part.stability-super} If each $u_k$ is a supersolution of \eref{bernoulli-basic} in $U$, then so is $u$.
        \item\label{part.stability-pair} If each $(u_k, E_k)$ is a {relaxed subsolution} of \eref{bernoulli-basic} in $U$, then so is $(u, E^*)$, where $E^* = {\limsup_{k \to \infty}^*}E_k$.
    \end{enumerate}
\end{lemma}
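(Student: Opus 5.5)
Both parts follow the same template: take a test function touching the limit $u$ at a point, perturb it so the touching becomes strict, transfer this strict touching to $u_k$ at nearby points $x_k$ via uniform convergence, apply the definition for $u_k$, and pass to the limit in the resulting inequality using continuity of $Q$ and smoothness of the test function.

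For part \ref{part.stability-super}, let $\varphi \in C^\infty(U)$ touch $u$ from below at $x_0$. I would replace $\varphi$ by $\varphi_\eta(x) = \varphi(x) - \eta|x-x_0|^2$, so the touching is strict on a small closed ball $\overline{B}_r(x_0)$. Since $u_k \to u$ uniformly on $\overline{B}_r(x_0)$, the minimum of $u_k - \varphi_\eta$ over the ball is attained at points $x_k \to x_0$, and these are interior for large $k$. Then $\varphi_\eta + c_k$ touches $u_k$ from below at $x_k$, with $c_k \to 0$.

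The supersolution property of $u_k$ gives one of two alternatives at $x_k$. Either $\Delta\varphi_\eta(x_k) \le 0$, or $\varphi_\eta(x_k)+c_k = 0$ and $|\nabla\varphi_\eta(x_k)| \le Q(x_k)$. Along a subsequence one alternative occurs infinitely often. Letting $k\to\infty$ and then $\eta\to0$ yields either $\Delta\varphi(x_0) \le 0$, or $\varphi(x_0)=0$ and $|\nabla\varphi(x_0)|\le Q(x_0)$, since $\Delta\varphi_\eta = \Delta\varphi - 2d\eta$.

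For part \ref{part.stability-pair}, first recall that $E^* = \limsup^* E_k$ is the set of limits of points $y_k \in E_{k}$ along subsequences. This set is closed, and it contains $\{u>0\}$: if $u(y)>0$, then $u_k(y)>0$ for large $k$, so $y \in E_k$. Now let $\varphi$ touch $u$ from above in $E^*\cap U$ at $x_0$, and make the touching strict with $\varphi + \eta|x-x_0|^2$. The main obstacle is producing touching points for $u_k$ restricted to $E_k$ that converge to $x_0$. For this I would use the defining property of $E^*$ together with compactness. Maximize $u_k - \varphi_\eta$ over $E_k \cap \overline{B}_r(x_0)$. This set is nonempty for large $k$ along a subsequence realizing $x_0 \in E^*$; if $x_0$ were not such a limit, there would be nothing to prove. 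Any limit point $\bar x$ of the maximizers lies in $E^*$. Comparing with values at points $y_k \in E_k$ with $y_k \to x_0$ gives $(u-\varphi_\eta)(\bar x) \ge (u-\varphi_\eta)(x_0) = 0$, so strictness forces $\bar x = x_0$. The relaxed subsolution alternative at $x_k$ then passes to the limit exactly as in part \ref{part.stability-super}, with the inequalities reversed. This completes the plan for both statements.
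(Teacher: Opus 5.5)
Your argument is correct and is the standard one. The paper does not prove this lemma itself but cites \cite{FeldmanPozar2025}*{Lemma 3.1} and \cite{FeldmanKim}*{Lemma A.12}, and your treatment of part (ii) (strict perturbation, maximizing over $E_k \cap \overline{B}_r(x_0)$, then using the definition of ${\limsup}^*$ to force the maximizers to $x_0$) is exactly the mechanism the paper uses in \lref{limit-max} and in the proof of \lref{subsolution-limit}.
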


\subsection{Perron's method and extremal viscosity solutions}

We now make precise the meaning of the Perron extremal solutions appearing in \tref{main}.

For the largest subsolution case, we let $g$ be a smooth strict supersolution of \eqref{e.bernoulli-basic} in the sense of \dref{strict-sub}. We define
\begin{equation}\label{eqn:S^g}
    \mathcal{S}^g := \Big\{v \in C(\overline U):\ v \hbox{ is a subsolution of \eref{bernoulli-basic} in } U, \ v \leq g_+ \hbox{ on } \overline U\Big\},
\end{equation}
and define the \emph{largest subsolution below $g$} as 
\begin{equation}\label{e.perron-def-sub}
    u(x) := \sup_{v \in \mathcal{S}^g} v(x).
\end{equation}

For the smallest supersolution case, we let $g$ be a smooth strict subsolution of \eqref{e.bernoulli-basic} in the sense of \dref{strict-sub}. We define
\begin{equation}\label{eqn:S_g}
    \mathcal{S}_g := \Big\{v \in C(\overline U):\ v \hbox{ is a supersolution of \eref{bernoulli-basic} in } U, \ v \geq g_+ \hbox{ on } \overline U\Big\},
\end{equation}
and define the \emph{smallest supersolution above $g$} as
\begin{equation}\label{e.perron-def-super}
u(x) := \inf_{v \in \mathcal{S}_g} v(x).
\end{equation}

The classes are trivially nonempty, as $0 \in \mathcal{S}^g$ and $\max g \in \mathcal{S}_g$. This also immediately produces the maximum bounds $0 \leq u \leq \max g$ in both cases. A priori the extremal solutions as defined above are only LSC or USC as a supremum or infimum of continuous functions. However, one can show that both are in fact continuous viscosity solutions of \eref{bernoulli-basic}.

\begin{lemma}\label{l.extremal-viscosity}
Let $u$ be the smallest supersolution above $g$ as defined in \eqref{e.perron-def-super} (resp.\ the largest subsolution below $g$ as in \eqref{e.perron-def-sub}). Then $u$ is a viscosity solution of \eref{bernoulli-basic} in $U$.
\end{lemma}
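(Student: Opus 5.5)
We follow the classical Perron argument in the style of Ishii, adapted to the free boundary setting as in \cite{Kim03}. We describe the smallest supersolution case; the largest subsolution case is symmetric, with the roles of sub and supersolution interchanged.

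\textbf{Step 1: envelopes and boundary barriers.} Let $u_*$ and $u^*$ denote the lower and upper semicontinuous envelopes of $u$. Since $g_+$ is a (nonstrict) viscosity subsolution, the comparison principle for a strict classical subsolution against a supersolution gives $v \geq g_+$ for every $v\in\mathcal S_g$. Hence $g_+ \leq u_* \leq u \leq u^* \leq \max g$. We also need $u^*=g$ on $\partial U$. For this we use that $g>0$ on $\partial U$ and $\partial U$ is $C^2$. Near each boundary point we build a harmonic supersolution barrier, namely $g(x_0)+\epsilon$ plus a multiple of a standard $C^2$-domain barrier, and take the minimum with $\max g$. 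This function lies in $\mathcal S_g$ and pinches $u$ to $g$ at the boundary.

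\textbf{Step 2: $u_*$ is a supersolution.} The infimum of a family of viscosity supersolutions is, after taking the lower envelope, a supersolution. Suppose $\varphi$ touches $u_*$ strictly from below at $x$. Choose $v_k \in \mathcal S_g$ and points $x_k\to x$ with $v_k(x_k)\to u_*(x)$. Then $\varphi$ minus small constants touches $v_k$ from below at nearby points $y_k\to x$. The supersolution alternatives pass to the limit: either $\Delta\varphi\le0$, or $\varphi(y_k)=0$ with $|\nabla\varphi(y_k)|\le Q(y_k)$. Continuity of $Q$ closes the argument. Here the fact that $\varphi$ is smooth across the zero set matters, but only the closed alternatives are used.

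\textbf{Step 3: $u^*$ is a subsolution (bump construction).} Suppose not. Then some $\varphi$ with $\varphi_+$ touching $u^*$ from above on $\overline{\{u^*>0\}}$ at $x_0$ satisfies $\Delta\varphi(x_0)<0$, and, if $\varphi(x_0)=0$, also $|\nabla\varphi(x_0)|<Q(x_0)$. We may assume the touching is strict. Then $w=\varphi_+ - \delta$, with small $\delta$ and possibly after a quadratic perturbation, is a strict classical supersolution in $B_r(x_0)$, and $w<u_*$ near $\partial B_r(x_0)$.

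Define $\tilde v=\min(u,w)$ in $B_r$ and $\tilde v=u$ outside. More precisely, use $\min(v,w)$ for members $v$ of the family to keep continuity. This is a supersolution lying above $g_+$, so it belongs to $\mathcal S_g$, which requires checking $w \ge g_+$. That inequality follows by comparing at the touching point: $g_+ \le u_*$, and a strict subsolution cannot touch the supersolution $w$. Yet $\tilde v<u$ somewhere near $x_0$, contradicting minimality.

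The free boundary case $\varphi(x_0)=0$ is the delicate one. If $x_0\in\partial\{u^*>0\}$, the perturbed $\varphi$ has a nonzero gradient of size less than $Q$. The set $\{w>0\}$ then cuts into $\{u>0\}$, and $\min$ creates a new free boundary with slope less than $Q$, which is admissible for a supersolution. Making this rigorous, including producing a strict decrease of the infimum, is the main obstacle.

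\textbf{Step 4: conclusion.} Comparison between the subsolution $u^*$ and the supersolution $u_*$ is not available in general, since the problem lacks uniqueness. Instead, continuity follows from minimality. By Step 2, $u_*\in\mathcal S_g$ once we know $u_*$ is continuous up to the boundary and $\ge g_+$, and then $u\le u_*$. Hence $u=u_*$ is lower semicontinuous. To get $u^*\le u_*$, we use the interior Lipschitz estimate for viscosity solutions \cite{CaffarelliSalsa}*{Lemma 11.19}, applied to a regularization. Alternatively, we redo Step 3 at points where $u^*>u_*$, using a bump below $u^*$ to reach a contradiction. We expect the former route to be cleaner.
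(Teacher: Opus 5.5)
Your direct Perron route differs from the paper's, and it has a genuine gap where you defer to Step~4: the continuity of $u$. The lemma requires $u\in C(U)$. Without continuity, $u_*$ is not known to belong to $\mathcal S_g$, so the step ``$u_*\in\mathcal S_g$, hence $u\le u_*$'' is circular.

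Neither proposed fix closes this. The interior Lipschitz estimate needs a continuous function that is simultaneously a sub- and supersolution, which is exactly what you lack. The subsolution property is held by $u^*$ and the supersolution property by $u_*$. With no elliptic comparison principle, nothing relates the two envelopes, and you specify no ``regularization'' that inherits both properties. Redoing Step~3 only reproves that $u^*$ is a subsolution; it gives no inequality $u^*\le u_*$.

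A genuinely additional argument is needed. One option:
\begin{itemize}
\item $\mathcal S_g$ is closed under harmonic replacement in balls where $v>0$. The replacement stays above $g_+$ because $g_+$ is subharmonic. This makes $u$ harmonic on $\{u_*>0\}$.
\item For $v\in\mathcal S_g$, let $\rho=\dist(x,\{v=0\})$ be small. Replace $v$ harmonically in $B_\rho(x)$, apply Harnack, and use a radial barrier touching from below at the nearest zero. This gives $u(x)\le C\rho$ with $C$ uniform in $v$, which forces $u^*=0$ wherever $u_*=0$.
\end{itemize}

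The paper avoids Perron's method altogether. It sets $\hat u=\lim_{a\uparrow0}\lim_{t\to\infty}\mathfrak u_a$, where $\mathfrak u_a$ are monotone parabolic flows with shifted data $(g+a)_+$. Strict parabolic comparison (Theorem~\ref{t.comparison}, where the shift and $g|_{\partial U}>0$ enter) gives $g_+\le\hat u\le v$ for every $v\in\mathcal S_g$. As a locally uniform limit of Lipschitz viscosity solutions, $\hat u$ is a continuous viscosity solution in $\mathcal S_g$, so $\hat u=u$. Comparison for the flow thus replaces the missing elliptic comparison and delivers continuity for free.

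Step~3 also fails as written in the free boundary case, and the difficulty is not the strict decrease.
\begin{itemize}
\item Take $w=(\varphi-\delta)_+$; your $\varphi_+-\delta$ is negative where $\varphi<\delta$.
\item The patch equal to $\min(v,w)$ in $B_r(x_0)$ and $v$ outside is continuous and a supersolution only if $w\ge v$ near $\partial B_r(x_0)$. This is the opposite of your ``$w<u_*$''.
\item If $\varphi(x_0)=0$ and $\Delta\varphi<0$ on $B_r(x_0)$, the mean value inequality gives $\min_{\partial B_r(x_0)}\varphi<0$. So $w=0$ on a relatively open piece of the sphere where, in general, $v>0$.
\item The patch then jumps down across $\partial B_r(x_0)$. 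This destroys continuity and violates the supersolution condition at those points.
\end{itemize}
The strict decrease, by contrast, is immediate: points $y\to x_0$ with $u(y)>0$ eventually satisfy $\varphi(y)<\delta$, so $w(y)=0<u(y)$.

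The gluing does work with $v=u$ itself. Near the sphere, $u=0$ off $\overline{\{u>0\}}$, while strict touching gives $\varphi-\delta>u$ on $\overline{\{u>0\}}$ for small $\delta$. But using $v=u$ needs $u\in\mathcal S_g$, so continuity must come before Step~3, not after.

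The largest subsolution case is also not literally symmetric. There the subsolution property passes to $u^*$ because the positivity sets of the competitors lie inside that of $u$. The bump $\max(v,(\phi+\delta)_+)$ glues trivially wherever it vanishes. Only continuity is at stake.
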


{A direct proof is possible following the approach outlined in \cite{CaffarelliSalsa}*{Chapter 6}; however, we can also derive \lref{extremal-viscosity} via an independent method as a corollary of our arguments with the parabolic flow. We thus skip the proof for now, referring to Section~\ref{s.conclusion}, Propositions \ref{p.inner-smallest} and \ref{p.inner-largest}, for details.}

Extremal solutions created by Perron's method satisfy the following local properties (respectively).  We state them here because these weaker properties are all we need to establish the conclusion \tref{main}\ref{part.downward}.

\begin{definition}\label{d.largest-smallest}
Let $u \in C(U)$ be non-negative.
\begin{enumerate}[label = (\roman*)]
\item Call $u$ a \emph{(local) smallest supersolution} of \eref{bernoulli-basic} in $U$ if $u$ is a viscosity supersolution of \eref{bernoulli-basic} in $U$ and, for every ball $B \subset\subset U$ and every viscosity supersolution $v \in C(U)$ of \eref{bernoulli-basic} in $U$ with $v = u$ on $U \setminus B$, it holds that $v \geq u$ in $U$.
\item Call $u$ a \emph{(local) largest subsolution} of \eref{bernoulli-basic} in $U$ if $u$ is a viscosity subsolution of \eref{bernoulli-basic} in $U$ and, for every ball $B \subset\subset U$ and every viscosity subsolution $v \in C(U)$ of \eref{bernoulli-basic} in $U$ with $v = u$ on $U \setminus B$, it holds that $v \leq u$ in $U$.
\end{enumerate}
\end{definition}

\begin{lemma}
    If $u$ is the Perron smallest supersolution of \eqref{e.bernoulli-basic} in $U$ above a smooth strict subsolution $g$ then for every $x \in \partial \{u>0\} \cap U$ there is a ball $B_r(x)$ so that $u$ is a local smallest supersolution in $B_r(x)$.
\end{lemma}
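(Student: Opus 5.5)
Given $x_0\in\partial\{u>0\}\cap U$, the plan is to pick the radius so that the ball $B_r(x_0)$ stays away from both $\partial U$ and the positive set of $g$. The key observation is that $g>0$ on $\partial U$ and $g_+\le u$. Since $u(x_0)=0$, we have $g(x_0)\le 0$.

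First I would choose $r$ so that $B_{2r}(x_0)\subset\subset U$. I would then like $B_r(x_0)$ to avoid $\{g>0\}$ as well, which requires $g(x_0)<0$. This holds: at a point where $g(x_0)=0$ we have $|\nabla g(x_0)|>0$, so $g>0$ at nearby points arbitrarily close to $x_0$, and there $u\ge g_+>0$. But then $u$ could still vanish at $x_0$, so this alone does not force $g(x_0)<0$. So I do not insist on it. Instead I handle $g$ via a minimum construction that requires no such separation.

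Let $B\subset\subset B_r(x_0)$ be a ball and let $v$ be a supersolution in $B_r(x_0)$ with $v=u$ on $B_r(x_0)\setminus B$. I reduce to a competitor in the Perron class as follows. Define $w:=\min(u,v)$ in $B_r(x_0)$ and $w:=u$ in $U\setminus B_r(x_0)$. This is continuous because $v=u$ near $\partial B_r(x_0)$, outside $B$. The function $w$ is a viscosity supersolution in $U$. Locally it is either $u$ (away from $\overline B$) or the minimum of two supersolutions, and the minimum of two supersolutions is again a supersolution: a test function touching $\min(u,v)$ from below at $y$ touches whichever of $u,v$ attains the minimum at $y$.

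It remains to check $w\ge g_+$, which reduces to showing $v\ge g_+$ in $B$. This is the main obstacle, and here I use strictness of $g$ via a sliding comparison. Suppose $\min(v-g_+)<0$ on $\overline B$. Since $v=u\ge g_+$ on $\partial B$, we can take the largest $s\ge 0$ for which $g_+-s\le v$ on $\overline B$; for $s>0$, $(g-s)_+$ touches $v$ from below at an interior point $y$ where $g(y)-s\ge 0$. I consider two cases.
\begin{itemize}
\item If $g(y)-s>0$, then $v(y)>0$, and $g-s$ is a smooth test function touching $v$ from below with $\Delta g\ge0$. Strictness makes this a contradiction after perturbing by $\epsilon|x-y|^2$; alternatively I could slightly modify $g$ to be strictly subharmonic near its zero set, using the gradient bound.
\item If $g(y)=s$, the test function $g-s$ touches $v$ from below at a zero of the test function with $|\nabla g|^2\ge(1+\delta_0)Q^2>Q^2$, and the perturbed function still has positive Laplacian. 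This contradicts the supersolution definition, which requires $\Delta\varphi\le0$ or $|\nabla\varphi|\le Q$.
\end{itemize}
One subtlety is that $g-s$ must lie in the range $|g-s|\le a_0$ near $y$, so that both strict properties hold. This is fine because $s\le\max g$, and I would iterate the sliding in small increments of size at most $a_0/2$, using the shifts $g+a$ noted after \dref{strict-sub}. Hence $v\ge g_+$.

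With this, $w\in\mathcal S_g$, so $u\le w\le v$ in $B_r(x_0)$. Thus $u$ is a local smallest supersolution in $B_r(x_0)$, with any $r$ for which $B_r(x_0)\subset\subset U$ (i.e. $r<\dist(x_0,\partial U)$). The supersolution property of $u$ itself comes from \lref{extremal-viscosity}.
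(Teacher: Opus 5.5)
\textbf{There is a genuine gap.} It lies in the step ``$v\ge g_+$ in $B$'', which you try to prove for an arbitrary ball $B$.

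\emph{Why the sliding argument fails.} Your Case 1 gives no contradiction. $\Delta g\ge 0$ is not strict; $g-s+\epsilon|x-y|^2$ no longer lies below $v$; $g-s-\epsilon|x-y|^2$ may have negative Laplacian; and the gradient bound that would let you make $g$ strictly subharmonic holds only on $\{|g|\le a_0\}$. The strong maximum principle only moves the contact to a point $z\in\partial\{v>0\}\cap B$ with $g(z)=s^*$, i.e.\ into Case 2. There you need $|\nabla g(z)|>Q(z)$, which \dref{strict-sub} gives only when $s^*\le a_0$. The contact level $s^*=\max_{\overline B}(g_+-v)$ is dictated by $v$, so it cannot be lowered by ``iterating in increments of $a_0/2$''.

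\emph{The inequality is false in general.} In $\R^3$ with $Q\equiv 1$, let $g=2r_1(1-r_1/|x|)$ for $|x|\ge r_1/2$, extended smoothly inside with values below $-a_0$. Take $a_0=r_1/4$, $\delta_0=1$, and $U=B_{R_0}$ with $R_0>8r_1$. This is a smooth strict subsolution with $g>0$ on $\partial U$, and $u\le \max g<2r_1$. For $\rho=4r_1$, the function $v=\min\{u,\rho(1-\rho/|x|)_+\}$ is a supersolution equal to $u$ outside $B_{2\rho}$. It vanishes on $B_\rho$, although $g>0$ on $\{r_1<|x|<\rho\}$. The contact occurs at level $s^*=g(\rho)=\tfrac32 r_1>a_0$, where $|\nabla g|=\tfrac18<Q$. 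Your argument never uses that $x_0$ is a free boundary point or that $r$ is small, so it would prove this false statement. In particular, your conclusion that every $r<\dist(x_0,\partial U)$ works is not justified.

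\textbf{The missing idea.} The route you set aside is the one that works, and it is the paper's proof: show $g(x_0)<0$. Suppose $g(x_0)=0=u(x_0)$. Take $\mu\in((1+\delta_0)^{-1/2},1)$ and $\lambda$ large, and set $\varphi=\mu g+\lambda g^2$. Then:
\begin{itemize}
\item $\varphi\le g_+\le u$ on $\{|g|<\min(\mu,1-\mu)/\lambda\}$, and $\varphi(x_0)=0$;
\item $|\nabla\varphi(x_0)|=\mu|\nabla g(x_0)|>Q(x_0)$;
\item $\Delta\varphi(x_0)=\mu\Delta g(x_0)+2\lambda|\nabla g(x_0)|^2>0$.
\end{itemize}
This contradicts the supersolution property of $u$ at $x_0$. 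The contradiction comes from testing $u$ with a reparametrization of $g$, not from the positivity of $g$ at nearby points.

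By continuity, $g<0$ on some $B_r(x_0)\subset\subset U$. Take any competitor $v$ and extend it by $u$; no minimum is needed. Then $v\ge 0\ge g$ in $B$ and $v=u\ge g_+$ outside $B$, so $v\in\mathcal S_g$ and hence $v\ge u$. This is why the lemma only asserts local minimality in a small ball around each free boundary point.
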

\begin{proof}
    Since $u$ is a viscosity supersolution it cannot touch $g$ from above, that would contradict the definition of viscosity solution. So $u>g$ in $U$.  

If $x \in  \partial \{u>0\} \cap U$ then let $B_r(x)$ be a sufficiently small ball centered at $x$ so that $B_r(x) \subset\subset U$ and $g \leq 0$ on $B_r(x)$.  Such a ball exists by the previous paragraph. Now let $B \subset \subset B_r(x)$ and $v$ be a supersolution in $B_r(x)$ with $v = u$ on $B_r(x) \setminus B$. Extend $v = u$ in $U \setminus B_r(x)$, this is a supersolution in $U$ since it is equal to $u$ in a neighborhood of $\partial B_r(x)$. Then $v = u \geq g$ in $U \setminus B$ and $v \geq 0 \geq g$ in $B$ so $v \in \mathcal{S}_g$ and so $v \geq u$ by definition of the smallest supersolution.
\end{proof}

\subsection{Inner variational solutions}\label{s.inner-variational-stationary}
The following notion of solution, studied by Kriventsov and Weiss \cite{KriventsovWeiss}, encodes criticality of the energy \eref{energy} with respect to domain deformations.

\begin{definition}\label{d.stationary-inner}
Given an open set $U \subset \R^d$, a pair of functions $(u, \chi)$ with $u : U \to [0, \infty)$ and $\chi : U \to \{0, 1\}$ is an \emph{inner variational solution} of \eref{bernoulli-basic} if the following properties hold:
\begin{enumerate}[label = (\roman*)]
	\item $u \in C^{0,1}_{\textup{loc}}(U)\cap C^2(\{u > 0\})$ with $\Delta u = 0$ in $\{u>0\}$.
	\item $\chi$ is Borel measurable.
	\item ${\bf 1}_{\{u > 0\}} \leq \chi$ $\mathcal{L}^d$-a.e. on $U$.
	\item For each vector field $\xi \in C^{0, 1}_c(U; \R^d)$,
	\begin{equation}\label{e.stationary-inner-var}
		\int_U \Big[\big(|\nabla u|^2 + Q(x)^2\chi\big)  \grad \cdot \xi - 2 \nabla u \cdot D\xi \nabla u + \grad(Q^2)\cdot \xi  \chi \Big]  dx = 0.
	\end{equation}
\end{enumerate}
Overloading the terminology, we also say $u : U \to [0, \infty)$ is an \emph{inner variational solution} if there exists a $\chi$ so that $(u,\chi)$ is an inner variational solution.
\end{definition}

A recent significant result, due to Kriventsov and Weiss \cite{KriventsovWeiss}*{Theorem 1.2 (i)}, shows that if $(u,\chi)$ is an inner variational solution in a connected domain $U$, then either $u \equiv 0$ and $\chi = {\bf 1}_{U}$, or $\chi = {\bf 1}_{\{u>0\}}$ almost everywhere. In particular, whenever $u$ is an inner variational solution with some $\chi$ it is an inner variational solution with ${\bf 1}_{\{u>0\}}$. We will avoid using this powerful result for the smallest supersolution case, but we do need it, or at least a consequence, in the largest subsolution case.

As explained in \cite{KriventsovWeiss}, inner variational solutions are not necessarily viscosity \emph{super}solutions: the two-plane functions $\alpha |x \cdot e|$ are inner variational solutions (with $\chi \equiv 1$, $Q \equiv 1$) for every $\alpha > 0$, but fail the supersolution condition when $\alpha > 1$; likewise $(|v|, {\bf 1}_U)$ is an inner variational solution whenever $v$ is harmonic. The subsolution condition, however, does hold as we show next. 

\begin{lemma}\label{l.inner-variational-implies-subsoln}
    If $(u,\chi)$ is an inner variational solution of \eqref{e.bernoulli-basic} in $U$ then $u$ is a viscosity subsolution of \eqref{e.bernoulli-basic} in $U$.
\end{lemma}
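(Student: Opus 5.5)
The plan is to split according to whether the touching point lies in the positivity set or on the free boundary. The interior case is immediate, and the free boundary case is handled by a blow-up argument that uses the inner variational identity \eref{stationary-inner-var} with translation-type vector fields.

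\emph{Interior points.} Let $\varphi\in C^\infty(U)$ be such that $\varphi_+$ touches $u$ from above in $\overline{\{u>0\}}\cap U$ at $x$. If $u(x)>0$, then near $x$ we have $\varphi\ge u>0$, so $\varphi$ touches the harmonic function $u$ from above at an interior point. Since $\varphi-u$ attains a local minimum at $x$ and $u\in C^2$ there, we get $\Delta\varphi(x)\ge \Delta u(x)=0$, as required.

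\emph{Free boundary points: blow-up.} Now let $x\in\partial\{u>0\}$, so $\varphi(x)=0$, and put $p=\grad\varphi(x)$. Arguing by contradiction, assume $\Delta\varphi(x)<0$ and $|p|<Q(x)$. Replacing $\varphi$ by $\varphi+\ep|y-x|^2$ with $\ep$ small keeps both strict inequalities and makes the touching strict.

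Consider the rescalings $u_r(y)=r^{-1}u(x+ry)$ and $\chi_r(y)=\chi(x+ry)$.
\begin{itemize}
\item Since $u$ is locally Lipschitz, a subsequence satisfies $u_r\to u_0$ locally uniformly and $\chi_r\rightharpoonup\chi_0$ weakly-$*$ in $L^\infty$.
\item The touching condition passes to the limit as $u_0\le (p\cdot y)_+$ on $\overline{\{u_0>0\}}$, so $\{u_0>0\}\subset\{p\cdot y>0\}$.
\item I will show $\grad u_r\to\grad u_0$ strongly in $L^2_{\rm loc}$. The standard argument is the identity $\int|\grad u_r|^2\eta=-\int u_r\grad u_r\cdot\grad\eta$, which holds because $u_r$ is harmonic in $\{u_r>0\}$ and Lipschitz (integrate over $\{u_r>\delta\}$ and let $\delta\to0$), together with the same identity for $u_0$.
\item Strong convergence lets us pass to the limit in \eref{stationary-inner-var}. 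The term $\grad(Q^2)\cdot\xi\,\chi$ scales away, so $(u_0,\chi_0)$ satisfies the inner variational identity with constant coefficient $Q(x)^2$, where $\chi_0$ is now $[0,1]$-valued.
\end{itemize}

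\emph{Contradiction from the limit identity.} Take $\xi=\eta\,e$ with $e=p/|p|$ (or any fixed direction if $p=0$). In the open half-space $\{p\cdot y<0\}$ we have $u_0\equiv0$, so the identity reduces to $\int\chi_0\,\grad\cdot\xi=0$, and hence $\chi_0$ is constant there. On $\{u_0>0\}$, I will approximate by the superlevel sets $\{u_0>\delta\}$ and use the divergence theorem. This converts \eref{stationary-inner-var} into a flux balance between $Q(x)^2\chi_0$ and $|\grad u_0|^2$ across the interface, roughly
\[\int_{\{u_0=\delta\}}|\grad u_0|^2\,\xi\cdot\nu \approx Q(x)^2\int \chi_0\,\grad\cdot\xi .\]
The bound $u_0\le (p\cdot y)_+$, combined with a boundary Harnack/Hopf-type comparison against the linear function, should force $|\grad u_0|\le|p|<Q(x)$ in an averaged sense along the interface. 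That makes the two sides incompatible unless $u_0\equiv0$ near the origin with $\chi_0$ constant.

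\emph{Main obstacle: degeneracy.} The blow-up may degenerate to $u_0\equiv0$, which by itself gives no contradiction. To exclude this, I would first try to exploit the strict inequality $\Delta\varphi(x)<0$ at the original scale rather than in the limit. The idea is to test \eref{stationary-inner-var} directly with $\xi=\eta_r(y)\,e$ supported in $B_r(x)$. Comparing with $\varphi$ via the divergence theorem on $\{u>\delta\}\cap B_r(x)$, and using $u\le\varphi$ with $\varphi$ strictly superharmonic, gives a quantitative lower bound on $\int_{B_r}|\grad u|^2$ relative to $r^d$ at free boundary points. This is a nondegeneracy estimate, and it forces $u_0\not\equiv0$ with $0\in\partial\{u_0>0\}$. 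Making this nondegeneracy step rigorous is where I expect most of the work to lie.
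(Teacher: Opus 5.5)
\textbf{There is a genuine gap, in the step where you derive a contradiction from the limit identity.} You correctly find that $\chi_0$ is a constant $c_-$ on $\{p\cdot y<0\}$. Nothing in your argument forces $c_-=0$, and when $c_->0$ the flux balance across the interface reads $|\grad u_0|^2=Q(x)^2(1-c_-)$. This is perfectly compatible with $|\grad u_0|\le|p|<Q(x)$.

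Here is an explicit instance. For $0\le\alpha\le|p|$ and $e=p/|p|$, set $u_0=\alpha(e\cdot y)_+$ and $\chi_0=\one_{\{e\cdot y>0\}}+c\,\one_{\{e\cdot y<0\}}$. A direct computation shows the left side of the constant-coefficient identity equals $\bigl(\alpha^2-Q(x)^2(1-c)\bigr)\int_{\{e\cdot y=0\}}\xi\cdot e$. This vanishes for $c=1-\alpha^2/Q(x)^2\in(0,1]$, and $u_0\le(p\cdot y)_+$. Taking $\alpha=|p|$ gives a nondegenerate limit, so even granting nondegeneracy the blow-up yields no contradiction; note that the hypothesis $\Delta\varphi(x)<0$ is lost at this scale. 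Taking $\alpha=0$ gives the $\{0,1\}$-valued pair $u_0\equiv0$, $\chi_0\equiv1$.

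The missing ingredient is that $\chi$ does not ``wet'' the zero phase. This is the theorem of Kriventsov--Weiss \cite{KriventsovWeiss} (Theorem 1.2(i) there): on the connected domain $U$, either $u\equiv0$ or $\chi=\one_{\{u>0\}}$ a.e. It is a global and deep statement, and no blow-up at a single touching point will produce it.

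\textbf{The nondegeneracy step has a further problem beyond rigour: its mechanism is backwards.} The inequality $u\le\varphi$ with $\Delta\varphi<0$ is an upper barrier, so it can only bound $u$ from above. Moreover, wherever $\chi\equiv1$ near $x$, the $\chi$-terms in \eref{stationary-inner-var} cancel identically, because $\int\bigl(Q^2\grad\cdot\xi+\grad(Q^2)\cdot\xi\bigr)=\int\grad\cdot(Q^2\xi)=0$. So the identity carries no information forcing linear growth of $u$ until a wetted zero phase has been excluded. In the case $p=0$, the bound $u_0\le(p\cdot y)_+$ already forces $u_0\equiv0$, so that whole case rests on this unproven step.

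The paper's proof splits by the density of $\chi$, not by a blow-up of the touching configuration, and quotes Kriventsov--Weiss for the hard parts:
\begin{enumerate}
\item By Theorem 1.2(i) there, either $u\equiv0$ (and the subsolution condition is vacuous) or $\chi=\one_{\{u>0\}}$ a.e.
\item The set $\Sigma^H$ of free boundary points where $\{\chi=1\}$ has density one is closed, and $u$ is a viscosity solution on $U\setminus\Sigma^H$ (their Lemma 8.3). That lemma is where the nondegeneracy you were looking for is hidden.
\item At $x_0\in\Sigma^H$, an elementary density argument finishes. The set $\{\varphi<0\}$ lies in $\{u=0\}$, which has density zero at $x_0$. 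This forces $\grad\varphi(x_0)=0$ and $D^2\varphi(x_0)\ge0$, hence $\Delta\varphi(x_0)\ge0$.
\end{enumerate}
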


This provides a very useful link between the viscosity and inner variational theories. Uniform stability is lacking for pure viscosity subsolutions, as discussed in \sref{viscosity-solns-prelim}, but when the inner variational property is added in subsolution stability can be recovered.  We exploit this idea below in \sref{long-time-limit}.

At a high level, the idea is that a smooth test function cannot touch $u$ from above at two-plane points and for touchings within the singular set the test function will be subharmonic. The proof below is short, but relies heavily on the substantial contributions of \cite{KriventsovWeiss}. We remark that the results of \cite{KriventsovWeiss} are stated for $Q \equiv 1$, but naturally carry over to the case of $Q \in C^{0,1}(U)$. 

\begin{proof}[Proof of \lref{inner-variational-implies-subsoln}] 

Define the ``singular set'' of $u$ to be the set of free boundary points of highest density,
\[\Sigma^H := \Big\{x \in \partial \{u>0\}: \lim_{r \to 0} \frac{|\{\chi = 1\} \cap B_r(x)|}{|B_r|} = 1\Big\}.\]
By \cite{KriventsovWeiss}*{Proposition 3.5} the set $\Sigma^H$ is closed in $U$ and by \cite{KriventsovWeiss}*{Lemma 8.3}, $u$ is a viscosity solution of \eqref{e.bernoulli-basic} in the open set $U \setminus \Sigma^H$. By  \cite{KriventsovWeiss}*{Theorem 1.2 (i)}, either $u \equiv 0$ in $U$ or $\chi = {\bf 1}_{\{u>0\}}$ a.e. If $u \equiv 0$ the subsolution condition is vacuous, so assume $\chi = {\bf 1}_{\{u>0\}}$. It remains to check the subsolution condition at touching points $x_0 \in \Sigma^H$.

Suppose $\varphi \in C^\infty$ is such that $\varphi$ touches $u$ from above in $\overline{\{u>0\}}$ at $x_0 \in \Sigma^H$; since $\Sigma^H \subset \partial\{u>0\}$ we have $u(x_0) = 0 = \varphi(x_0)$. We claim that $\grad \varphi (x_0) = 0$ and $D^2 \varphi(x_0) \geq 0$; in particular $\Delta\varphi(x_0) \geq 0$, which verifies the subsolution condition. {Since $0 \leq u \leq \varphi_+$, we have $\{\varphi < 0\} \subset \{u = 0\}$,} and since $\chi = {\bf 1}_{\{u>0\}}$, the definition of $\Sigma^H$ gives
  \begin{equation}\label{e.density-zero}
  {\liminf_{r \to 0}} \frac{|\{\varphi < 0\} \cap B_r(x_0)|}{|B_r|} \leq \lim_{r \to 0} \frac{|\{u = 0\} \cap B_r(x_0)|}{|B_r|} = 0.
  \end{equation}
If $\grad\varphi(x_0) \neq 0$, then by the Taylor expansion $\varphi(x) = \grad\varphi(x_0)\cdot(x - x_0) + O(|x-x_0|^2)$,
{it is clear the density of $\{\varphi < 0\}$ at $x_0$ is $\tfrac12$,} which contradicts \eref{density-zero}. 
Hence $\grad\varphi(x_0) = 0$. If $D^2\varphi(x_0)$ had a negative eigenvalue, say $D^2\varphi(x_0)e\cdot e =: -2\mu < 0$ with $|e| = 1$, then by the second-order Taylor expansion {it holds that} $\varphi(x) \leq -\mu |x-x_0|^2 + o(|x-x_0|^2)$ on the cone $\{x : |(x - x_0) - ((x-x_0)\cdot e)e| \leq \theta |x-x_0|\}$ for a suitable $\theta = \theta(D^2\varphi(x_0), \mu) > 0$, so $\{\varphi < 0\}$ has positive lower density at $x_0$, again contradicting \eref{density-zero}. This proves the claim.
\end{proof}

\subsubsection{Stability properties under limits}
Next we state a compactness and stability property of inner variational solutions.

\begin{lemma}[compactness for inner variational solutions]\label{l.inner-compactness-stability}
Let $(u_k, \chi_k)$ be inner variational solutions of \eref{bernoulli-basic} in $U$ such that, for every $V \subset\subset U$, 
{it holds that $\sup_k \|\grad u_k\|_{L^\infty(V)} <\infty$ and $\sup_k \int_V |\nabla \chi_k|\, dx<\infty$.} Then there is a subsequence $k_j$ such that
\[u_{k_j} \to u \ \hbox{ locally uniformly in } U, \  \grad u_{k_j} \to \grad u \ \hbox{ in } L^2_{\rm loc}(U), \  \chi_{k_j} \to \chi \ \hbox{ in } L^1_{\rm loc}(U),\]
and $(u,\chi)$ is an inner variational solution of \eref{bernoulli-basic} in $U$.
\end{lemma}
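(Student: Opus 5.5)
The proof splits into three steps: compactness, passing to the limit in the equation for $u$, and passing to the limit in the inner variation identity \eref{stationary-inner-var}.

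\textbf{Compactness.} Fix an exhaustion of $U$ by sets $V\subset\subset U$ and use a diagonal argument.
\begin{enumerate}[label = (\alph*)]
\item By the uniform local Lipschitz bound and Arzel\`a--Ascoli, a subsequence satisfies $u_{k_j}\to u$ locally uniformly, with $u\in C^{0,1}_{\rm loc}(U)$ and $u\geq 0$.
\item The same subsequence has $\grad u_{k_j}\rightharpoonup \grad u$ weakly-$*$ in $L^\infty_{\rm loc}$.
\item Since $\sup_k\int_V|\grad\chi_k|<\infty$ and $0\le\chi_k\le 1$, BV compactness gives $\chi_{k_j}\to\chi$ in $L^1_{\rm loc}$. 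Passing to a further a.e.\ subsequence, $\chi$ is $\{0,1\}$-valued and Borel (after choosing a Borel representative).
\end{enumerate}

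\textbf{Harmonicity and the constraint on $\chi$.} On the open set $\{u>0\}$, uniform convergence gives $u_{k_j}>0$ on compact subsets for $j$ large. There the $u_{k_j}$ are harmonic, so their uniform limit $u$ is harmonic, hence $C^2$.

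For the constraint: at a.e.\ $x$ with $u(x)>0$ we have $u_{k_j}(x)>0$ for large $j$, so $\chi_{k_j}(x)=1$ a.e. The a.e.\ limit then gives $\chi\ge{\bf 1}_{\{u>0\}}$ a.e.

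\textbf{Strong $H^1_{\rm loc}$ convergence (the main obstacle).} The identity \eref{stationary-inner-var} is quadratic in $\grad u$, so weak convergence is not enough to pass to the limit. I would show $\int\eta|\grad u_{k_j}|^2\to\int\eta|\grad u|^2$ for each $\eta\in C^\infty_c(U)$ with $\eta\ge 0$.

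Since $u_k\ge 0$ is Lipschitz and harmonic in $\{u_k>0\}$, $\Delta u_k$ is a nonnegative Radon measure supported on $\partial\{u_k>0\}$: test with $\min(u_k/\epsilon,1)\eta$ and let $\epsilon\to0$. Its mass on compacts is bounded uniformly in $k$, by testing against a cutoff and using the Lipschitz bound. Integrating by parts,
\[
\int\eta|\grad u_k|^2=-\int u_k\grad u_k\cdot\grad\eta-\int\eta u_k\,d\Delta u_k .
\]
The last term vanishes because $u_k=0$ on the support of $\Delta u_k$. The first term converges by uniform convergence of $u_k$ and weak convergence of $\grad u_k$. The same identity holds for $u$, because $\Delta u\ge0$ is likewise supported on $\partial\{u>0\}$. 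This gives convergence of the norms, and hence $\grad u_{k_j}\to\grad u$ in $L^2_{\rm loc}$. The step that needs care is that supports-of-measure argument, namely that $u_k\,d\Delta u_k=0$. It holds because $u_k$ is continuous and vanishes on $\partial\{u_k>0\}$.

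\textbf{Passing to the limit in \eref{stationary-inner-var}.} Fix $\xi\in C^{0,1}_c(U;\R^d)$. Then $\grad\cdot\xi$ and $D\xi$ are in $L^\infty$ with compact support, and $Q^2$, $\grad(Q^2)$ are bounded.
\begin{itemize}
\item The quadratic terms $|\grad u_{k_j}|^2\grad\cdot\xi$ and $\grad u_{k_j}\cdot D\xi\,\grad u_{k_j}$ converge in $L^1$ by the strong $L^2_{\rm loc}$ convergence.
\item The terms linear in $\chi_{k_j}$ converge by the $L^1_{\rm loc}$ convergence.
\end{itemize}
So $(u,\chi)$ satisfies \eref{stationary-inner-var}, and together with the previous steps it is an inner variational solution of \eref{bernoulli-basic} in $U$.
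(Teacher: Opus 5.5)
Your proposal is correct and is essentially the standard argument behind the references the paper cites in place of a proof (the first paragraph of the proof of \cite{KriventsovWeiss}*{Theorem 9.3}, cf.\ \cite{Jerison-Kamburov-I}*{Proposition 4.2}): Arzel\`a--Ascoli plus $BV$ compactness, then strong $L^2_{\rm loc}$ convergence of gradients via $\int\eta|\grad u_k|^2=-\int u_k\grad u_k\cdot\grad\eta$ (equivalently $\Delta(u_k^2)=2|\grad u_k|^2$, the same device the paper uses in Lemma~\ref{lem:strongGrad}), and finally passage to the limit in \eref{stationary-inner-var}. One caveat: Arzel\`a--Ascoli also needs a uniform pointwise bound such as $\sup_k u_k(x_0)<\infty$, which the statement tacitly assumes and which holds in every application in the paper; without it the lemma fails, as the inner variational solutions $(u_k,\chi_k)=(k,{\bf 1}_U)$ show.
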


The proof of this result can be found in the first paragraph of the proof of \cite{KriventsovWeiss}*{Theorem 9.3}. See also \cite{Jerison-Kamburov-I}*{Proposition 4.2}.

\subsection{Directional minimality}
The final property in our main theorem is a one-sided minimality of the energy.

\begin{definition}\label{d.directional}
 We say $u \in H^1(U)$ is an \emph{upward (resp.\ downward) minimizer} of $J_Q(\cdot;U)$, see \eqref{e.energy}, if
\begin{enumerate}[label = (\roman*)]
\item the set $\{u>0\}$ is open and $u$ is harmonic in $\{u>0\}$, and
\item for every ball $B \subset\subset U$ and every $v \in H^1(U)$ with $v \geq u$ (resp.\ $v \leq u$) and $u-v \in H^1_0(B)$,
\begin{equation}\label{eqn:directionalMinimality}
J_Q(u;U) \leq J_Q(v;U).
\end{equation}
\end{enumerate}
\end{definition}

The one-sided minimality properties are stable under limits.

\begin{lemma}\label{l.directionalStable}
Suppose $u_n\in C^{0,1}_{\textup{loc}}(U)$ are such that 
\begin{equation}\label{eqn:locallocalLip}
\sup_{n}\|\nabla u_n\|_{L^\infty(V)}\leq L_V 
\end{equation}
    for a constant $L_V > 0$ and every $V\subset\subset U,$ and $u_n \to u$ locally uniformly. Suppose that $Q_n \in C(\overline{U})$, $Q_n \to Q$ locally uniformly in $U$, and $Q_{\min} \leq Q_n,Q \leq Q_{\max}$. The following holds:
    \begin{enumerate}[label = (\roman*)]
    \item If $u_n$ are downward minimizers of $J_{Q_n}$ in $U$, then $u$ is a downward minimizer of $J_{Q}$ in $U$.
    
    \item If $u_n$ are upward minimizers of $J_{Q_n}$ in $U$ and also inner variational solutions as in Definition \ref{d.stationary-inner}, then $u$ is an upward minimizer of $J_{Q}$ in $U$.
    
    \end{enumerate}

\end{lemma}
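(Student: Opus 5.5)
The plan is to pass to the limit in the one-sided minimality inequality using competitors built from the limit's competitors, and to control the two terms of $J$ separately: the Dirichlet term by lower semicontinuity or strong convergence, the measure term by lower semicontinuity of $\mathbf 1_{\{u>0\}}$.

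\textbf{Step 1 (property (i) of Definition~\ref{d.directional}).} By continuity, $\{u>0\}$ is open. On any compact $K\subset\{u>0\}$ we have $u_n>0$ for large $n$, so $u_n$ is harmonic near $K$. Harmonicity then passes to the uniform limit.

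\textbf{Step 2 (compactness and energy convergence).} By \eqref{eqn:locallocalLip}, $\nabla u_n\rightharpoonup\nabla u$ weakly-$*$ in $L^\infty_{\rm loc}$. We need strong $L^2_{\rm loc}$ convergence of gradients.

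In case (ii), Lemma~\ref{l.inner-compactness-stability} gives it directly, provided we also have a BV bound on $\chi_n$. Here we can use $\chi_n=\mathbf 1_{\{u_n>0\}}$ by Kriventsov--Weiss, and the perimeter bound should come from the local Lipschitz bound together with upward minimality: comparing $u_n$ with its harmonic replacement gives an upper density bound for the zero set's complement, and hence a local perimeter bound. If that route is awkward, we argue directly instead. Testing harmonicity of $u_n$ in $\{u_n>0\}$ against $u_n\eta$ gives $\int |\nabla u_n|^2\eta = -\int u_n\nabla u_n\cdot\nabla\eta$. This identity holds because $u_n\ge0$ is Lipschitz and $\Delta u_n$ is a nonnegative measure supported on the free boundary, where $u_n=0$. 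The same identity holds for $u$, and the right-hand side converges, so the $L^2$ norms converge and hence $\nabla u_n\to\nabla u$ strongly. This argument works in both cases (i) and (ii).

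\textbf{Step 3 (case (i), downward).} Let $v\le u$ with $u-v\in H^1_0(B)$. Set $v_n:=\min(u_n,\,v+(u_n-u))$. Then $v_n\le u_n$ and $v_n=u_n$ outside $B$. Downward minimality of $u_n$ gives $J_{Q_n}(u_n;B')\le J_{Q_n}(v_n;B')$ on a slightly larger ball $B'$.

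As $n\to\infty$, $v_n\to v$ in $H^1$ because $u_n-u\to0$ in $H^1$ by Step 2. For the measure term we need $\limsup\int Q_n^2\mathbf 1_{\{v_n>0\}}\le\int Q^2\mathbf 1_{\{v>0\}}$. This follows from $v_n\le v+(u_n-u)$. To handle the set $\{v=0<v_n\}$, replace $v$ by $(v-\varepsilon)_+$ on $B$, suitably cut off, to absorb the uniform error $\|u_n-u\|_\infty$. Then let $\varepsilon\to0$.

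On the left side, Step 2 gives convergence of the Dirichlet term, and $\mathbf 1_{\{u>0\}}\le\liminf\mathbf 1_{\{u_n>0\}}$ gives lower semicontinuity of the measure term. Together these yield $J_Q(u)\le J_Q(v)$.

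\textbf{Step 4 (case (ii), upward) — the main obstacle.} Take $v\ge u$ and competitors $v_n=\max(u_n,\,v+(u_n-u))$. The Dirichlet terms converge as before. Now, however, the left side requires $\int\mathbf 1_{\{u_n>0\}}\to\int\mathbf 1_{\{u>0\}}$, i.e., no positivity set can disappear in the limit. Lower semicontinuity runs the wrong way here.

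This is where the inner variational hypothesis enters. Lemma~\ref{l.inner-compactness-stability} gives $\chi_n\to\chi$ in $L^1_{\rm loc}$, with $(u,\chi)$ inner variational. We then need $\chi=\mathbf 1_{\{u>0\}}$ a.e. away from the trivial case. Applying Kriventsov--Weiss to the limit gives either $\chi=\mathbf 1_{\{u>0\}}$, or $u\equiv0$ with $\chi=1$ on the relevant component.

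The degenerate case must be ruled out on components where $u\equiv0$ but $|\{u_n>0\}|$ stays large. Here I would use upward minimality of $u_n$. If $\{u_n>0\}$ nearly fills a ball on which $u_n$ is small, then the competitor $\max(u_n,\psi)$, with $\psi$ a suitable positive harmonic-type bump, lowers the energy by a definite amount. This contradicts the assumed minimality, and would be formalized via a density estimate.

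Once the characteristic functions converge in $L^1$, the measure terms converge, and the same limiting argument as in Step 3 gives $J_Q(u)\le J_Q(v)$.
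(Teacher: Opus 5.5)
Your Step 2 is correct and worth keeping. The identity $\int\eta|\nabla u_n|^2=-\int u_n\nabla u_n\cdot\nabla\eta$ holds for every nonnegative Lipschitz $u_n$ harmonic in $\{u_n>0\}$: test with $\eta(u_n-\delta)_+$ and let $\delta\to0$. It is also what the paper tacitly needs when it asserts $\min\{v,u_n\}\to v$ in $H^1(B)$.

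The gap in Step 3 is the treatment of the measure term. Since $v\le u$, your competitor $\min(u_n,v+(u_n-u))$ is just $u_n-(u-v)$. On $\{v=0\}$ it equals $u_n-u$, which is positive wherever $u_n>u$. Replacing $v$ by $(v-\varepsilon)_+$ changes nothing there, because on $\{v\le\varepsilon\}$ the competitor is again $u_n-u$. The shift must be allowed to go negative. Take $v_n=u_n-(u-v)-\varepsilon\varphi$ with $\varphi\in C^\infty_c(B)$, $0\le\varphi\le1$, and $\varphi=1$ on a concentric ball $B'$. Then:
\begin{itemize}
\item $v_n$ is an admissible downward competitor;
\item on $B'$, $v_n>0$ forces $v>\varepsilon-\|u_n-u\|_{L^\infty(B)}>0$ for large $n$;
\item the annulus costs at most $Q_{\max}^2|B\setminus B'|$;
\item the gradient error $\varepsilon\nabla\varphi$ disappears as $\varepsilon\to0$ with $\varphi$ fixed.
\end{itemize}
The paper instead uses $\varphi\min\{v,u_n\}+(1-\varphi)u_n$, whose positivity set lies in $\{v>0\}$ where $\varphi=1$, and controls the cut-off by a Poincar\'e inequality on the annulus.

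In Step 4 the obstacle is misplaced, and your proposed remedy is both unproved and unnecessary. The left side needs only lower semicontinuity; the excess positivity $W_n:=\{u_n>0\}\cap\{v=0\}\cap B$ enters through the competitor. For $v_n=u_n+(v-u)$ you have $u=0$, hence $v_n=u_n$, on $\{v=0\}$. So $\int_{W_n}Q_n^2$ appears identically on both sides of $J_{Q_n}(u_n;B)\le J_{Q_n}(v_n;B)$ and cancels, leaving
\[
\int_B|\nabla u_n|^2+\int_{B\cap\{v>0\}}Q_n^2\mathbf 1_{\{u_n>0\}}\le\int_B|\nabla v_n|^2+\int_{B\cap\{v>0\}}Q_n^2 .
\]
Fatou's lemma and Step 2 then give $J_Q(u;B)\le J_Q(v;B)$. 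Thus you need neither Lemma~\ref{l.inner-compactness-stability} nor Kriventsov--Weiss, and the inner variational hypothesis plays no role. You never verify that lemma's perimeter hypothesis, and an upper density bound for $\{u_n>0\}$ does not yield one. The same cancellation is available for the paper's competitor $\varphi\max\{v,u_n\}+(1-\varphi)u_n$, which the paper handles via those two results instead.

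Your density argument for excluding $u\equiv0$, $\chi=1$ does not hold up as stated. Among upward competitors in $B$, the largest Dirichlet gain is that of the harmonic replacement $h_n$, namely $\int_B(h_n-u_n)\,d\Delta u_n\le\|u_n\|_{L^\infty(\overline B)}\,\Delta u_n(B)$. Since $\Delta u_n(B)$ is bounded by the Lipschitz bound, this tends to $0$ when $u_n\to0$, so there is no definite energy decrease. Nor is that case a threat: if $u\equiv0$ then $J_Q(u;B)=0\le J_Q(v;B)$, so $u$ is trivially an upward minimizer. This is also why the degenerate alternative of Kriventsov--Weiss is harmless in the paper's argument.
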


As with the relaxed viscosity subsolution, the asymmetry of the upward and downward cases reflects the possibility that the limit $u$ has a `wetted' region strictly larger than its support. Here we use \cite{KriventsovWeiss} to rule out this phenomena.

\begin{proof}[Proof of Lemma \ref{l.directionalStable}]
We just do the case $Q_n = Q$ for all $n$, the generalization to a uniformly convergent sequence of $Q_n$ is not difficult. It is clear from the locally uniform convergence that $u$ is harmonic in its positivity set. We must verify that \eqref{eqn:directionalMinimality} holds for $u$.

\emph{Step 1: Downward minimality.} Suppose that \eqref{eqn:directionalMinimality} fails for competitors beneath $u$. Then there is $B \subset\subset U$ and $v$ with $v \leq u$, $u = v$ in $U \setminus B$, and
\[ J_Q(v;B) < J_Q(u;B) .\]
Without loss of generality, assume that $B = B_1(0)$. Letting $r< 1$ and $B_r : = B_r(0)$, we take $\varphi\in C^\infty(B;[0,1])$ to be a standard cutoff with $\varphi = 1$ in  $B_{r}$, $\varphi  = 0$ in $B_1^c$, and $|\grad \varphi| \leq C/(1-r)$.  Define 
$
v_n = \varphi \min\{v,u_n\} + (1-\varphi) u_n.
$
Then $v_n \leq u_n$ and $u_n-v_n \in H^1_0(B)$ so that $v_n$ is an admissible downward competitor for $u_n$. We show that for large $n$ and $r$ sufficiently close to $1$, $v_n$ has smaller energy than $u_n.$

We begin by estimating the competitors' energy:
\begin{equation}\nonumber
\begin{aligned}
J_Q(v_n,B) \leq J_Q(\min\{v,u_n\},B_{r}) & + \frac{C}{(1-r)^2}\int_{B_1 \setminus B_{r}}(u_n-v)^2\, dx\\
&+\int_{B_1 \setminus B_r} \left(|\grad u_n|^2+|\grad v|^2 \right) dx +C(1-r).
\end{aligned}
\end{equation}
Since $\min\{v,u_n\} \to v$ in $H^1(B)$ and $\{\min\{v,u_n\}>0\}\subset\{v>0\}$, we obtain
\begin{equation}\label{eqn:coincide}
\limsup_{n \to \infty}J_Q(\min\{v,u_n\},B_{r}) \leq J_Q(v,B_r).
\end{equation}
Thus
\[
\begin{aligned}
\limsup_{n \to \infty}J_Q(v_n,B) \leq J_Q(v,B_{r}) & + \frac{C}{(1-r)^2}\int_{B_1 \setminus B_{r}}(u-v)^2\, dx\\
& \int_{B_1 \setminus B_r} \left(L_B+|\grad v|^2\right)  dx+C(1-r),
\end{aligned}
\]
where we have applied \eqref{eqn:locallocalLip}.
The last three terms vanish as $ r \nearrow 1$ by monotone convergence theorem and
\[\frac{1}{(1-r)^2}\int_{B_1 \setminus B_{r}}(u-v)^2 \, dx \leq C\int_{B_1 \setminus B_{r}} |\grad (u-v)|^2\, dx\]
since $(u-v)$ has zero trace on $\partial B_1$ (this is the Poincare inequality on the annular domain $B_1\setminus B_r$ with zero outer-trace). Thus
\[\limsup_{r\nearrow 1}\limsup_{n \to \infty}J(v_n,B) \leq J(v,B) < J(u,B) \leq \liminf_{n \to \infty} J(u_n,B)\]
where the last inequality follows from lower semicontinuity of the Alt-Caffarelli energy.  A diagonalization argument for $r$ and $n$ then contradicts \eqref{eqn:directionalMinimality} for sufficiently large $n$.

\emph{Step 2: Upward minimality.}
Suppose that \eqref{eqn:directionalMinimality} fails for competitors above $u$. Then there is $B \subset\subset U$ and $(v,\chi)$ with $v \geq u$, $u = v$ in $U \setminus B$, and
\[ J_Q(v;B) < J_Q(u;B) .\]
As in Step 1, we may assume $B = B_1(0)$ and define $v_n = \varphi \max\{v,u_n\} + (1-\varphi) u_n$ with $\phi$ as before. Then $v_n \geq u_n$ and $u_n-v_n \in H^1_0(B)$ so it is an admissible upward competitor. We compute
\[
\begin{aligned}
J_Q(v_n,B) \leq J_Q(\max\{v,u_n\},B_{r}) & + \frac{C}{(1-r)^2}\int_{B_1 \setminus B_{r}}(u_n-v)^2\, dx \\
& \int_{B_1 \setminus B_r} \left(|\grad u_n|^2+|\grad v|^2 \right) dx +C(1-r).
\end{aligned}
\]
Applying Lemma \ref{l.inner-compactness-stability} and \cite{KriventsovWeiss}*{Theorem 1.2 (i)}, we find that $\one_{\{u_n > 0 \}}\to \one_{\{u > 0 \}}$ in $L^1_{\textup{loc}}(U)$. From this it follows 
$$\one_{\{\max\{v,u_n\} > 0 \}} = \max\{\one_{\{v>0 \}},\one_{\{u_n>0 \}}\}\to \max\{\one_{\{v>0 \}},\one_{\{u>0 \}}\} = \one_{\{v>0 \}}$$
in $L^1_{\textup{loc}}(U)$.
Since in addition $\max\{v,u_n\} \to v$ in $H^1(B)$, we obtain \eqref{eqn:coincide}, and may proceed as in Step 1 to find a contradiction for the upward minimality of $u_n$.
\end{proof}

As a consequence of this lemma, we deduce that directional minimality is stable under blow-up limits.

\begin{corollary}\label{c.blow-up-directional}
    Let $u \in H^1(U) \cap C^{0,1}_{\textup{loc}}(U)$. For any $x_0  \in \partial \{u>0\} \cap U$ the blow-up sequence for $0 < r < \frac{1}{2}d(x_0,\partial U)$
    \[u_{x_0,r}(x):= r^{-1}u(x_0+rx),\]
    is pre-compact in $H^1_{\textup{loc}}(\R^d)$ and the topology of local uniform convergence. Furthermore, for any subsequential limit $v$,
    the following holds:
    \begin{enumerate}[label = (\roman*)]
    \item If $u$ is a downward minimizer of $J_{Q}$ in $U$, then $v$ is a downward minimizer of $J_{Q(x_0)}$ in $\R^d$.
    
    \item If $u$ is an upward minimizer of $J_{Q}$ in $U$ and also an inner variational solution as in Definition \ref{d.stationary-inner}, then $v$ is an upward minimizer of $J_{Q(x_0)}$ in $\R^d$.
    
    \end{enumerate}
\end{corollary}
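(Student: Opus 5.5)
The plan is to reduce the corollary to \lref{directionalStable}, applied to the rescaled functions $u_r := u_{x_0,r}$ with the rescaled coefficients $Q_r(x) := Q(x_0 + r x)$.

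\emph{Compactness.} Fix $R>0$. For $r$ small enough, $u_r$ is defined on $B_R$, and
\[\|\grad u_r\|_{L^\infty(B_R)} = \|\grad u\|_{L^\infty(B_{rR}(x_0))} \leq L\]
for some $L$ depending on a fixed compact neighborhood $V \subset\subset U$ of $x_0$. Since $u(x_0)=0$, we also have $u_r(0)=0$, so the family is equibounded and equi-Lipschitz on $B_R$. Arzel\`a--Ascoli then gives local uniform precompactness, and a diagonal argument in $R$ extends this to all of $\R^d$.

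For $H^1_{\rm loc}$, the uniform Lipschitz bound already gives weak precompactness. For strong convergence I would run the standard argument. Each $u_r$ is harmonic in $\{u_r>0\}$, which is part of the definition of directional minimizer. Take a cutoff $\eta$ and test with $\eta\,(u_r - \epsilon)_+$; this gives
\[\int \eta |\grad (u_r-\epsilon)_+|^2 = -\int (u_r-\epsilon)_+\grad\eta\cdot\grad u_r.\]
Pass to the limit using uniform convergence. The limit $v$ is harmonic in $\{v>0\}$, so the same identity holds for $v$, and letting $\epsilon\to0$ yields convergence of the $L^2$ norms of $\grad u_r$ on $\{u_r>\epsilon\}$. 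The remaining piece is $\int_{\{0<u_r\le\epsilon\}}\eta|\grad u_r|^2$. Testing with $\eta\min(u_r,\epsilon)$ bounds it by
\[-\int \min(u_r,\epsilon)\grad\eta\cdot\grad u_r \le C\epsilon,\]
which is uniformly small. This gives strong convergence $\grad u_{r_j}\to\grad v$ in $L^2_{\rm loc}$.

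\emph{Minimality.} The directional minimality is scale invariant:
\[J_{Q_r}(w_r;B) = r^{-d}J_Q(w;x_0+rB) \quad \text{for } w_r(x)=r^{-1}w(x_0+rx).\]
Competitors correspond bijectively, preserving both the ordering and the compact support in balls. Hence $u_r$ is a downward (resp.\ upward) minimizer of $J_{Q_r}$ on $(U-x_0)/r$, which contains $B_R$ for small $r$.

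Inner variational solutions are also preserved under this rescaling, with coefficient $Q_r$. The only change is that the term $\grad(Q_r^2)$ is multiplied by $r$, and the definition accommodates this with the rescaled coefficient.

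Finally, $Q_r\to Q(x_0)$ uniformly on $B_R$ because $Q$ is Lipschitz, and $Q_{\min}\le Q_r\le Q_{\max}$. Applying \lref{directionalStable} on each $B_R$ with $Q_n = Q_{r_n}$ shows that $v$ is a downward (resp.\ upward) minimizer of $J_{Q(x_0)}$ in $B_R$. Since every ball $B\subset\subset\R^d$ lies in some $B_R$, the property holds in $\R^d$.

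The main obstacle is case (ii). \lref{directionalStable} was proved only for a fixed $Q$, and its upward case relies on \lref{inner-compactness-stability} and \cite{KriventsovWeiss} to get $\one_{\{u_n>0\}}\to\one_{\{v>0\}}$ in $L^1_{\rm loc}$. That step needs the perimeter bound $\sup_r \int_V|\grad \chi_r|<\infty$ for the blow-ups. I would obtain it from the scale-invariant density and perimeter estimates for inner variational solutions (Weiss monotonicity, as used in \cite{KriventsovWeiss}), and check that the varying coefficients $Q_r$ cause no harm because they converge uniformly with $\|\grad Q_r\|_\infty\le r\|\grad Q\|_\infty\to0$.
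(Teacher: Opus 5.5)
Your proposal is correct and follows the same route as the paper. Uniform Lipschitz bounds give compactness of the blow-ups; directional minimality and the inner variational identity rescale with coefficient $Q_r(x)=Q(x_0+rx)\to Q(x_0)$; and \lref{directionalStable} then concludes. The paper's proof is exactly this reduction, and it records only weak $H^1_{\rm loc}$ compactness.

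Your strong $H^1_{\rm loc}$ argument, and your remark that case (ii) needs a uniform perimeter bound on the rescaled $\chi$ in order to invoke \lref{inner-compactness-stability}, are details the paper leaves implicit. That bound does not come from Weiss monotonicity. It comes directly from the inner variational identity: formally $Q^2\grad\chi = 2\,\Delta u\,\grad u$ with $\Delta u\geq 0$, which can be made rigorous by mollifying $u$. This yields the scale-invariant estimate $|D\chi|(B_\rho(x_0))\leq C L^2\rho^{d-1}$, in the spirit of \cite{KriventsovWeiss}*{Lemma 3.3}.
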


\begin{proof}
    Without loss of generality, $x_0 = 0$. The blow-up sequence $u_{r} := r^{-1}u(rx)$ satisfies $u_r(0) = 0$ and is uniformly Lipschitz on any $B(0,R)$ with $R>0$ for all $r$ sufficiently small.  Thus $\{u_r\}_{r >0}$ is precompact with respect to local uniform convergence and weak $H^1_{\textup{loc}}(\R^d)$.  Let $v \in {C^{0,1}}(\R^d)$ with $v(0) = 0$ be any subsequential limit. Note also $Q(rx)\to Q(x_0)$ uniformly locally as $Q\in C^{0,1}(U)$. Thus Lemma~\ref{l.directionalStable} allows to conclude.
    
\end{proof}

Finally, we classify the one-sided minimizers formed by two-plane profiles.

\begin{proposition}\label{p.two-plane-minimizing-props}
    Let $\phi_\alpha(s) = \alpha |s|$ the one dimensional two-plane solution, let $q >0$ and $J_q$ be the Alt-Caffarelli energy with $Q(x) \equiv q$.  Then
    \begin{enumerate}
        \item $\phi_\alpha(x_d)$ is not an upward minimizer of $J_q$ {in $\R^d$} for any $\alpha>0$.
        \item $\phi_\alpha(x_d)$ is a downward minimizer of $J_q$ {in $\R^d$} for $\alpha\geq q$, but not downward minimizing for $0 < \alpha < q$.
    \end{enumerate}
\end{proposition}
\begin{proof}

We just do the $q=1$ case and denote $J_1 = J$.

We directly check that such functions are not upward minimal: Since $\phi_{\alpha}(x_d)$ is subharmonic, but not harmonic, in $B_1(0)$, harmonic replacement in $B_1(0)$ is an increasing perturbation which strictly decreases the Dirichlet energy without changing the {closure of the positivity set}.  Thus, $\phi_\alpha$ is not upward minimal.

Next we check that $\phi_\alpha(x_d)$ is not downward minimizing for $\alpha < 1$. First we check in dimension $d=1$ and extend the idea to higher dimensions. 

\emph{Step 1: Not downward minimizing for $d = 1$ and $\alpha <1$.} For $\ep \in (0,1)$ to be determined, consider the perturbation
\[
\phi_\alpha^\ep(s):=\frac{\alpha}{1-\ep}(s-\ep)_++\frac{\alpha}{1-\ep}(-s-\ep)_{+} = \frac{\alpha}{1-\ep}(|s|-\ep)_+.
\]
Then $0\leq \phi_\alpha^\ep\leq \phi_\alpha$, and $\phi_\alpha^\ep = \phi_\alpha$ at
$s = -1,1$, so $\phi_\alpha^\ep$ is an admissible downward competitor for $\phi_\alpha$
in $[-1,1]$. Since $\phi_\alpha^\ep$ has slope $\pm\alpha/(1-\ep)$ on
$\{\phi_\alpha^\ep>0\}=[-1,1]\setminus[-\ep,\ep]$, which has length $2(1-\ep)$, we find that
\[J(\phi_\alpha^\ep;[-1,1])=2\Big[\frac{\alpha^2}{1-\ep}+(1-\ep)\Big]
=J(\phi_\alpha;[-1,1])-\frac{2\ep}{1-\ep}\big(1-\ep-\alpha^2\big),\]
which is strictly smaller than $J(\phi_\alpha;[-1,1])=2(\alpha^2+1)$ whenever
$0<\ep<1-\alpha^2$.

\emph{Step 2: Not downward minimizing for $d > 1$ and $\alpha <1$.}
To extend this idea to higher dimensions, {we must construct a local competitor.} Let $L \gg 1$ and let {$\eta\in C^\infty(\R)$ be a decreasing cut-off function with $\eta = 1$ in $B_{1/2}$ and $\eta = 0$} outside $B_1$.
Define
\[\psi^\ep(x):=\phi_\alpha^{{[\ep\eta(|x'|/L)]}}(x \cdot e_d).\]
Then $0\leq \psi^\ep\leq\phi_\alpha$
and $\psi^\ep = \phi_\alpha$ on $\partial (B_L'(0) \times [-1,1])$, so $\psi^\ep$ is an
admissible downward competitor in the cylinder. By the one-dimensional computation, the
vertical energy of the column over $x'$ is exactly
\[\int_{-1}^{1}|\partial_d\psi^\ep|^2+{\bf 1}_{\{\psi^\ep>0\}} dx_d
=2(\alpha^2+1)+2\ep\eta(|x'|/L)\Big[\frac{\alpha^2}{1-\ep\eta(|x'|/L)}-1\Big].\]
For the horizontal part, on $\{|x_d|>\ep\eta(|x'|/L)\}$, where $\psi^\ep$ is positive, a direct calculation shows
\[\nabla'\psi^\ep=\frac{\alpha\ep(|x_d|-1)}{(1-\ep\eta(|x'|/L))^2} \nabla'\left[\eta(|x'|/L)\right]
\quad\hbox{so}\quad |\nabla'\psi^\ep|\leq 4\|\eta'\|_\infty\alpha\ep/L\]
for $\ep\leq1/2$, while $\nabla'\psi^\ep=0$ on $\{|x_d|<\ep\eta(|x'|/L)\}$. Integrating in
$x'$ and rescaling gives
\begin{align*}
J(\psi^\ep;B_L'(0) \times [-1,1]) &- J(\phi_\alpha;B_L'(0) \times [-1,1])\\
&= 2L^{d-1} \left[\ep\int_{B_1'}\eta(|y|)\Big[\frac{\alpha^2}{1-\ep\eta(|y|)}-1\Big] dy 
+O(\alpha^2\ep^2 L^{-2})\right].
\end{align*}
For $0<\ep<1-\alpha^2$ the integrand is nonpositive, and on $B_{1/2}'$, where
$\eta\equiv1$, it equals $-\tfrac{1-\ep-\alpha^2}{1-\ep}<0$, so the coefficient of
$L^{d-1}$ is at most $-\tfrac{2\ep(1-\ep-\alpha^2)}{1-\ep} |B_{1/2}'|<0$. Taking $L$
large the $O(\alpha^2\ep^2 L^{-2})$ error from the cut-off is dominated, and $\psi^\ep$ has strictly
smaller energy than $\phi_\alpha$ in $B_L'(0)\times[-1,1]$. Thus $\phi_\alpha$ is not
downward minimizing for any $\alpha<1$.

\emph{Step 3: Downward minimizing for $\alpha \geq 1$.}
Finally, we show that $\phi_\alpha(x_d)$ is downward minimizing for $\alpha>0.$ It suffices to show it is downward minimizing for $d=1$, since any competitor for $d>1$ will pay the $1$-dimensional energy along slices in the $x_d$ direction plus energy due to the derivative in the $x'$ direction.
We consider a competitor $\phi$ with $\phi\leq \phi_\alpha$ and $\phi = \phi_\alpha$ on $[-1,1]^c$. Note that $\phi(0) = 0$ so we may define $s_0 : = \sup \{s : \phi(s) = 0\}.$ The energy of $\phi$ decreases if we replace $\phi$ by $\one_{[0,s_0]^c}\phi$. By convexity of the Dirichlet energy, we have that
$$J(\phi;[0,1]) \geq \frac{\alpha^2}{1-s_0} + (1-s_0).$$
Thus downward minimality of $\phi_\alpha$ on $[0,1]$ follows from showing 
$$\frac{\alpha^2}{x} + x  \geq \alpha^2 + 1 \quad \text{ for } x\in [0,1],$$
which easily follows from looking at the roots of the quadratic found by multiplying both sides by $x$.
As the same argument holds on $[-1,0]$ and any downward competitor on $[-1,1]$ can be decomposed into downward competitors for $[-1,0]$ and $[0,1]$ (because $\phi_\alpha(0) = 0$), we conclude that $\phi_\alpha$ is downward minimizing on $[-1,1]$. The argument is the same for any interval.
\end{proof}

\section{The parabolic Bernoulli problem}\label{s.parabolic_bernoulli}

In this section we set up the parabolic Bernoulli problem with Dirichlet boundary data generated by the strict subsolution (or supersolution) ${g}$, define the solution notions for the evolutionary problem, and state the main results on the existence and long-time behavior of these solutions.

We write $U_\infty := U \times (0,\infty)$ and recall that we denote the parabolic boundary by $\partial_P U_\infty.$
The \emph{parabolic Bernoulli problem} with time-independent Dirichlet data ${g}_+$ is as follows:
\begin{equation}\label{e.bernoulli-parabolic}
     \begin{cases}
         \partial_t \mathfrak u =\Delta \mathfrak u & \hbox{in } \{u>0\} \cap U_\infty, \\
         |\grad \mathfrak u| = Q(x) &\hbox{on } \partial \{u>0\} \cap U_\infty, \\
         \mathfrak u = {g}_+ &\hbox{on } \partial_PU_\infty.
     \end{cases}
\end{equation}
We will require both viscosity and inner variational solution notions for the interior PDE in \eref{bernoulli-parabolic}. 

\subsection{Parabolic viscosity solutions}\label{subsec:paraVisc}

First we recall viscosity solution notions. We actually use a slightly different notion of viscosity solutions than the one based on touching from \cite{Kim03}. Our notion is stronger, as viscosity solutions in our sense are also viscosity solutions in the sense of \cite{Kim03}, but our existence proof using the semilinear approximation (see \sref{semilinear-intro} below) produces such solutions. In essence, we follow the philosophy of viscosity solutions more directly and require that $u$ satisfies parabolic strict comparison with respect to smooth strict sub/supersolution barriers. This would follow from the touching definition if one could rule out outward jumps of the positive phase in order to produce a first crossing location. We also introduce a notion of relaxed subsolution, parallel to the elliptic case in Section~\ref{s.viscosity-solns-prelim}, which is new in the literature.

   \begin{definition}
        Let $E$ be closed in $\overline{U} \times [0,T]$ and $F$ be a subset of $\overline{U} \times [0,T]$. We write $\mathfrak u \prec_E \mathfrak v$ on $F$ if $\mathfrak u < \mathfrak v$ on $E \cap F$. We use the shorthand $\mathfrak u \prec \mathfrak v$ to mean $\mathfrak u \prec_{E} \mathfrak v$ with $E=\overline{\{\mathfrak u>0\}}$.
    \end{definition}

    To introduce our notion of viscosity solution for \eref{bernoulli-parabolic}, we first recall that $\varphi \in C^\infty(\overline{V} \times [a,b])$ is a classical strict subsolution of \eref{bernoulli-parabolic} on $U\times (0,T]$ if
    $$\partial_t \phi - \Delta \phi < 0 \ \hbox{ in } \ \overline{\{{\phi> 0}\}} \cap ({\overline{V} \times [a,b]}) $$
    and
    $$|\grad \phi| > Q \ \hbox{ on } \ \partial \{\phi>0\} \cap ({\overline{V} \times [a,b]}) ,$$ with the analogous relations for a classical strict subsolution. {Note for this definition to work as expected, we actually consider $\{\phi>0\}$, its closure, and its boundary to be defined in an open neighborhood of $\overline{V}\times [a,b]$.}

    \begin{definition}\label{d.parabolic-supersolution} \label{d.parabolic-subsolution}
    We say that $\mathfrak u \in C(U \times (0,T];{[0,\infty)})$ is a \emph{supersolution} {(resp. \emph{subsolution})} of \eref{bernoulli-parabolic} if for any parabolic cylinder $V \times (a,b] \subset\subset U \times (0,T]$ and any $\varphi \in C^\infty(\overline{V} \times [a,b])$ a classical strict subsolution {(resp. supersolution)} of \eref{bernoulli-parabolic} in $\overline{V} \times [a,b]$ with $\varphi \prec \mathfrak u$ {(resp. $\mathfrak u \prec  \varphi $)} on $\partial_P(V \times (a,b])$, it follows that $\varphi \prec \mathfrak u$ {(resp. $\mathfrak u \prec \varphi$)} in $V \times (a,b]$. We say that $\mathfrak  u$ is a viscosity solution of \eref{bernoulli-parabolic} if it is both a subsolution and a supersolution.
\end{definition}

\begin{remark}[Solutions of \eqref{e.bernoulli-basic} are stationary solutions of \eqref{e.bernoulli-parabolic}] \label{eqn:statToPara}
    If $u$ is a viscosity subsolution (resp. supersolution) of \eqref{e.bernoulli-basic} then $u$ is a (stationary) viscosity subsolution (resp. supersolution) of \eqref{e.bernoulli-parabolic}. In brief, the argument is as follows. Suppose $V \subset \subset U$ and $\varphi$ is a smooth strict supersolution of \eqref{e.bernoulli-parabolic} with $u \prec \varphi$ on $\partial_P(V \times (a,b])$. If $u \prec \varphi$ fails to hold in $V \times(a,b]$ then, by time continuity of both $\varphi$ on $\overline{\{u>0\}}$ there is a first crossing location $(x_0,t_0) \in \overline{\{u>0\}} \cap (V \times (a,b])$. Then $x \mapsto u(x)-\varphi(x,t_0)$ has a local max at $x_0$. Since $u$ is time-independent $\partial_t \varphi(x_0,t_0) \leq 0$ so $0<(\partial_t - \Delta)\varphi(x_0,t_0) \leq -\Delta \varphi(x_0,t_0)$. So we conclude, from the strict supersolution property of $\varphi$ that
    \[\Delta \varphi(x_0,t_0) < 0 \ \hbox{ and in case $\varphi(x_0,t_0) = 0$ then } \ |\grad \varphi(x_0,t_0)| < Q(x_0).\]
    This contradicts the subsolution property of $u$.  The supersolution case is similar.
\end{remark}

Finally, we introduce a convenient relaxation of the subsolution notion which is unconditionally stable under uniform limits; compare with \dref{relaxed-sub-stationary} in the stationary case.

\begin{definition}\label{d.parabolic-relaxed-subsolution}
    Consider $\mathfrak u \in C(U \times (0,T];{[0,\infty)})$ and $E$ closed in $\overline{U} \times [0,T]$ with $\{\mathfrak u>0\} \subset E$. We say that $(\mathfrak u,E)$ is a 
    \emph{relaxed subsolution} of \eref{bernoulli-parabolic} if for any parabolic cylinder $V \times (a,b] \subset\subset U \times (0,T]$ and any $\varphi \in C^\infty(\overline{V} \times [a,b])$ a classical strict supersolution of \eref{bernoulli-parabolic} in $\overline{V} \times [a,b]$ with $\mathfrak u \prec_E \varphi$ on $\partial_P(V \times (a,b])$, it follows that  $\mathfrak u \prec_E \varphi$ in $V \times (a,b]$. We say that $(\mathfrak u,E)$ is a \emph{relaxed viscosity solution} if $\mathfrak u$ is a (standard) supersolution and $(\mathfrak u,E)$ is a relaxed subsolution.
\end{definition}
As in the stationary case, viscosity subsolutions are relaxed subsolutions with $E = \overline{\{\mathfrak u>0\}}$ and it is easy to show that if $(\mathfrak u,E)$ is a relaxed subsolution and $E \subset \overline{\{\mathfrak u>0\}}$ then $E = \overline{\{\mathfrak u>0\}}$ and $\mathfrak u$ is a subsolution in the sense of Definition~\ref{d.parabolic-subsolution}.

We say that $(\mathfrak u,E)$ is \emph{monotone increasing} in time if both $\mathfrak{u}(\cdot,t)$ and $$E_t:= \{x: (x,t)\in E\}$$ are monotone increasing in $t$. In the monotone increasing case, all relaxed subsolutions are in fact subsolutions, as established by the following proposition.

\begin{proposition} \label{p.monotone-nondegen-relaxed} %
        Suppose that $(\mathfrak u,E)$ is a monotone increasing in time relaxed subsolution of \eref{bernoulli-parabolic}, as in Definition \ref{d.parabolic-relaxed-subsolution}.
    If $E_0 \subset \overline{\{\mathfrak u_0>0\}}$ then $E \subset \overline{\{\mathfrak u>0\}}$ and $\mathfrak u$ is a standard subsolution, as in Definition \ref{d.parabolic-subsolution}.
    \end{proposition}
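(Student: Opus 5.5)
The plan is to show $E \subset \overline{\{\mathfrak u>0\}}$ by contradiction, using a small smooth strict supersolution barrier that shrinks around a point where $E$ is strictly larger than the positivity set. Once the inclusion is known, the remark after Definition~\ref{d.parabolic-relaxed-subsolution} gives $E = \overline{\{\mathfrak u>0\}}$, and $\mathfrak u$ is then a standard subsolution.

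Suppose $(x_0,t_0) \in E$ but $(x_0,t_0) \notin \overline{\{\mathfrak u>0\}}$. Then there are $r>0$ and $\tau>0$ with $\mathfrak u \equiv 0$ on $B_{2r}(x_0)\times(t_0-\tau,t_0+\tau)$. Monotonicity of $\mathfrak u$ in time gives $\mathfrak u \equiv 0$ on $B_{2r}(x_0)\times[0,t_0]$. In particular $x_0\notin \overline{\{\mathfrak u_0>0\}}$, and hence $B_{2r}(x_0)\cap E_0=\emptyset$ after shrinking $r$, using the hypothesis $E_0\subset\overline{\{\mathfrak u_0>0\}}$ (closedness of $\overline{\{\mathfrak u_0>0\}}$ gives a neighborhood). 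Set $s:=\sup\{t\le t_0: E_t\cap \overline{B_r(x_0)}=\emptyset\}$. Since $E$ is closed and $E_t$ is increasing, $E_s\cap \overline{B_r(x_0)}\neq\emptyset$ at the first time $s\le t_0$, while $E_t\cap \overline{B_r(x_0)}=\emptyset$ for $t<s$. Note that $E$ may only intersect the interior of $B_{2r}$ at times $\ge s$, and throughout $B_{2r}\times[0,t_0]$ we have $\mathfrak u=0$.

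The barrier is a strict classical supersolution $\varphi$ on $\overline{B_{2r}(x_0)}\times[s-\delta,s]$ whose positive set is a small region that contracts to avoid the point. Concretely, pick $y\in E_s\cap\overline{B_r(x_0)}$. Take $\varphi(x,t)=\epsilon\big(\rho(t)^2-|x-y|^2\big)$ inverted appropriately. Preferably, use $\varphi(x,t) = -\epsilon + $ small negative constant. Then $\varphi<0$ everywhere, so $\overline{\{\varphi>0\}}=\emptyset$ and the strict supersolution conditions hold vacuously. The relation $\mathfrak u\prec_E\varphi$ demands $\mathfrak u<\varphi$ on $E$, i.e.\ $E\cap(\text{region})=\emptyset$. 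Since $\mathfrak u\ge0>\varphi$, this holds exactly where $E$ is absent. For the cylinder take $V=B_{r'}(y)\subset B_{2r}(x_0)$ and times $(s-\delta,s]$. On the parabolic boundary at time $s-\delta$, $E$ misses $\overline{B_r}$; but on the lateral boundary $\partial B_{r'}(y)\times[s-\delta,s]$ I also need $E$ to be absent. This is the main obstacle: $E$ might first touch $\overline{B_r}$ at its edge. To handle it, I will instead choose $y$ to be a first-touching point for a family of balls. Apply the previous first-time argument to $\overline{B_\rho(x_0)}$ for every $\rho\in(0,r]$. Pick the smallest $\rho$ so that the hitting time $s(\rho)$ is minimal for it, or argue directly as follows. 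Since $x_0\in E_{t_0}$, the hitting time of $\{x_0\}$ is finite. Let $y$ and $s$ be such that $E_t\cap B_{\rho}(y)=\emptyset$ for $t<s$ but $y\in E_s$, with $E_s\cap \partial B_\rho(y)$ possibly nonempty. Here I would instead use a genuinely nontrivial barrier $\varphi=\epsilon(|x-y|^2-\rho^2)+\eta(s-t)$-type that is negative near $y$ and positive near $\partial B_\rho(y)$. Positivity on the lateral boundary makes the relation $\mathfrak u=0<\varphi$ automatic there regardless of $E$, and since $\mathfrak u \equiv 0$ on $B_{2r}$, the $E$-portion at the boundary is harmless. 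Its strict supersolution property requires $\partial_t\varphi-\Delta\varphi>0$, i.e.\ $-\eta-2d\epsilon>0$; this fails. So I flip the sign of $\eta$ and use $\varphi=\epsilon(|x-y|^2-\rho^2/4)-\eta(t-s)$ with $\eta$ large negative drift adjusted so $\partial_t\varphi-\Delta\varphi = -\eta-2d\epsilon >0$ needs $\eta<-2d\epsilon$. Then $\varphi$ increases in time and $\varphi(y,s)<0$ at the final time, while $\varphi>0$ on $\partial B_\rho(y)$ for all times. For the free boundary condition, take $\epsilon$ tiny so $|\nabla\varphi|\le 2\epsilon\rho<Q_{\min}$.

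With this barrier, $\mathfrak u\prec_E\varphi$ holds on $\partial_P$: on the lateral boundary because $\varphi>0=\mathfrak u$, and at the initial time because $E$ is absent inside $B_\rho(y)$. The relaxed subsolution property then gives $\mathfrak u\prec_E\varphi$ at $(y,s)\in E$, i.e.\ $0<\varphi(y,s)<0$, a contradiction. Tuning $\epsilon,\eta,\delta,\rho$ so that both inequalities hold simultaneously is the delicate step.
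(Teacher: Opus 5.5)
Your reduction is sound: if $(x_0,t_0)\in E\setminus\overline{\{\mathfrak u>0\}}$, monotonicity of $\mathfrak u$ gives $\mathfrak u\equiv 0$ on $B_{2r}(x_0)\times[0,t_0]$, and the hypothesis gives $E_0\cap B_{2r}(x_0)=\emptyset$. The gap is in the barrier step.

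Your final barrier $\varphi=\epsilon(|x-y|^2-\rho^2/4)+|\eta|(t-s)$ is subharmonic, so strictness forces the upward drift $|\eta|>2d\epsilon$. Keeping $\varphi>0$ on $\partial B_\rho(y)$ then confines you to a window of length $\delta<3\epsilon\rho^2/(4|\eta|)<3\rho^2/(8d)$. Moreover, $\{\varphi(\cdot,t)\le 0\}$ is a ball about $y$ of radius $R(t)\in[\rho/2,\rho)$, and it shrinks with $R'(t)<-d/\rho$. The contradiction therefore needs a point $(y,s)\in E$ such that $E_{s-\delta}$ misses a ball of radius at least $\rho/2$ about $y$. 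You simply assert such a point exists (``$E_t\cap B_\rho(y)=\emptyset$ for $t<s$ but $y\in E_s$''), and nothing guarantees it.

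For example, let $E$ enter $B_{2r}(x_0)$ from outside as a slowly advancing front, $E_t\cap B_{2r}(x_0)=\{x\in B_{2r}(x_0):(x-x_0)\cdot e\ge (t_0-t)/L\}$ with $L$ large and $t_0\ge 2rL$. Then every point of $E_s$ has points of $E_{s-\delta}$ within distance $\delta/L$. The front advances more slowly than the negative region of any barrier of your form retreats, so the two never meet. No choice of $y,\rho,\epsilon,\eta,\delta$ gives a contradiction; the tuning you defer as ``the delicate step'' cannot be done.

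What is missing is a barrier that needs no time drift, i.e.\ one that is superharmonic rather than subharmonic where it is positive. Take $\varphi$ radial about $x_0$, equal to $\gamma\bigl(1-(r/|x-x_0|)^{d-1}\bigr)$ for $|x-x_0|\ge r/2$, and continued smoothly as a negative function inside. It is strictly superharmonic on $\overline{\{\varphi>0\}}$, positive on $\partial B_{2r}(x_0)$ and negative at $x_0$. Also $|\grad\varphi|=\gamma(d-1)/r<Q_{\min}$ on $\{\varphi=0\}$ for $\gamma$ small. Being time independent, it is a strict supersolution on $\overline{B_{2r}(x_0)}\times[a,t_0]$ for every $a$. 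After shrinking $r$ so that $E_0\cap\overline{B_{2r}(x_0)}=\emptyset$, closedness of $E$ gives $a>0$ with $E\cap(\overline{B_{2r}(x_0)}\times[0,a])=\emptyset$. Then $\mathfrak u\prec_E\varphi$ holds on the lateral boundary since $\mathfrak u=0<\varphi$ there, and vacuously at $t=a$. The relaxed subsolution property yields $0=\mathfrak u(x_0,t_0)<\varphi(x_0)<0$, with no first-hitting time and no monotonicity of $E$ needed.

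This is a qualitative version of the paper's argument. The paper proves $\sup_{\partial B_r(x_0)}\mathfrak u(\cdot,t_0)\ge cr$ whenever $\overline{B_r(x_0)}\cap E_0=\emptyset$ and $x_0\in E_{t_0}$. It uses a stationary ring barrier $h$, and monotonicity of $\mathfrak u$ carries the contradiction hypothesis $\mathfrak u(\cdot,t_0)<h$ on $\partial B_r(x_0)$ back to all earlier times. In your setup, where $\mathfrak u\equiv 0$ on the whole backward cylinder, that lateral condition is automatic.
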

    
    \begin{proof}
    \emph{Step 1: Non-degeneracy of solutions.} We first show that if $(u,E)$ is a relaxed subsolution in $B_1(0) \times (0,T]$ with {$\overline{B_1(0)} \subset E_0^c$ and $0 \in E_T$} then
    \begin{equation}\label{eqn:nondegenStep}
     \sup_{x\in \partial B_r(0)} \mathfrak u(x,T) \geq cr \ \hbox{ for all } \ 0 < r < 1
     \end{equation}
    {and a universal constant $c>0.$}
    In other words ${0} \in \overline{\{\mathfrak{u}(\cdot,T)>0\}}$. 
    It suffices to show the result for $r=1$ since $(\mathfrak u(rx,r^2t)/r,{\tilde E})$, {where $\tilde E : = \{(x,t) : (rx,r^2t)\in E\}$,} is a relaxed subsolution of the same PDE and the other hypotheses for \eqref{eqn:nondegenStep} are also invariant under this scaling (up to changing the final time). Take {$h$ as the} standard radially symmetric barrier solving
    \[ \Delta h = 0 \ \hbox{ in } \ B_1 \setminus B_{1/2} \ \hbox{ with } \ h = 0 \ \hbox{ on } \partial B_{1/2}, \hbox{ and } h=c \hbox{ on } \ \partial B_1.\]
    For $c$ sufficiently small it is a smooth strict supersolution of \eref{bernoulli-basic} and hence also of \eqref{e.bernoulli-parabolic}. Suppose, toward a contradiction, that $\mathfrak u< h$ on $\partial B_1 \times \{T\}$ then by monotonicity the same inequality holds on $\partial B_1 \times (0,T]$. Also, since $(\overline{B_1(0)} \times \{0\} )\cap E = \emptyset$, we conclude $\mathfrak u \prec_E h$ on $\overline{B_1(0)} \times \{0\}$ (since the strict inequality condition is vacuous).  Thus $u \prec_E h$ on $\partial_P(B_1(0) \times (0,T])$ and Definition~\ref{d.parabolic-relaxed-subsolution} gives $\mathfrak u \prec_Eh$ in $B_r(0) \times (0,T]$ which contradicts {$0 \in E_T$}.
    
    \emph{Step 2: Application of non-degeneracy.} Now consider $(\mathfrak u,E)$ as in the proposition statement. Let $x_0 \in E_{t_0}$. If $x_0 \in E_0$ then we use the initial data and monotonicity hypotheses {to conclude} $x_0\in E_0 \subset \overline{\{\mathfrak u_0>0\}} \subset \overline{\{\mathfrak u(\cdot,t_0)>0\}}$. So assume $x_0 \not\in E_0$. Then the (rescaled) hypotheses of \eqref{eqn:nondegenStep} hold in a sufficiently small ball centered at $x_0$ with time interval $(0,t_0]$, and we conclude that $x_0 \in \overline{\{\mathfrak u(\cdot,t_0)>0\}}$. {Altogether we find $E_{t_0}\subset \overline{\{\mathfrak u(\cdot,t_0)>0\}}$, giving the desired subset relation.}
    \end{proof}

\subsection{Parabolic comparison principles}

The identification of the long-time limits of \eref{bernoulli-parabolic} with the Perron solutions rests on a \emph{strict} comparison principle for \eref{bernoulli-parabolic}.

\begin{theorem}[Strict comparison]\label{t.comparison}
    If $U$ is a bounded domain and $\mathfrak u$ and $\mathfrak v$ in $C(\overline{U} \times [0,T])$ are respectively a viscosity subsolution and a viscosity supersolution of \eqref{e.bernoulli-parabolic} in $U \times (0,T]$ with $\mathfrak u \prec \mathfrak v$ on a neighborhood of the parabolic boundary $\partial_P(U \times (0,T])$, then $\mathfrak u \prec \mathfrak v$ on $\overline{U} \times [0,T]$.
\end{theorem}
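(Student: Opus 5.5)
We argue by contradiction through the classical scheme of Caffarelli and Kim \cite{Kim03}: regularize $\mathfrak u$ and $\mathfrak v$ by space-time sup/inf convolutions with radii that vary in time, locate a first contact point, and rule it out with explicit classical strict barriers. The barrier-based Definition~\ref{d.parabolic-subsolution} is well suited to this. Every test we need is a comparison of $\mathfrak u$ (or $\mathfrak v$) with a smooth strict super/subsolution on a small cylinder. Moreover, spatial translates of a sub/supersolution remain sub/supersolutions for the translated coefficient $Q(\cdot+y)$, and $Q$ is Lipschitz.

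\emph{Step 1: reduction and regularization.} Since $\mathfrak u \prec \mathfrak v$ on a neighborhood $N$ of $\partial_P(U\times(0,T])$ and both functions are continuous on the compact set $\overline U\times[0,T]$, there is $r_0>0$ such that the strict ordering survives perturbations of size $r_0$ in a slightly smaller neighborhood. For small $r_0$ and a large constant $m$, define
\[
Z(x,t):=\sup_{\overline{B_{r(t)}(x)}}\mathfrak u(\cdot,t),\qquad W(x,t):=\inf_{\overline{B_{r(t)}(x)}}\mathfrak v(\cdot,t),\qquad r(t):=r_0(1-mt)\ \text{ or } \ r(t)=r_0e^{-mt}.
\]
Then $\{Z(\cdot,t)>0\}$ has the interior ball property and $\{W(\cdot,t)>0\}$ has the exterior ball property at every free boundary point, with radius $r(t)$. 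Also $Z\prec W$ still holds near the parabolic boundary when $r_0$ is small. Using translation invariance and the Lipschitz bound on $Q$, we show that $Z$ is a subsolution, and $W$ a supersolution, with a strict margin of order $r_0 m$ in the free boundary condition produced by the shrinking radius. More precisely, $Z$ satisfies the free boundary relation with $Q$ replaced by $(1+c\,r_0m)Q-C r_0\|Q\|_{C^{0,1}}$. This margin is positive once $m$ is large. Let
\[
t_0:=\sup\{t : Z\prec W \text{ on } \overline U\times[0,t]\}.
\]
If the theorem fails for $\mathfrak u,\mathfrak v$, then $t_0<T$ for $r_0$ small. Continuity of the functions, together with the time-continuity of the closed positivity sets that is built into the barrier definition, yields a contact point $(x_0,t_0)$ in the interior. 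At this point either $Z=W>0$, or $x_0\in\partial\{Z(\cdot,t_0)>0\}\cap\partial\{W(\cdot,t_0)>0\}$.

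\emph{Step 2: interior contact.} If $Z(x_0,t_0)=W(x_0,t_0)>0$, then both functions are positive in a small cylinder below $(x_0,t_0)$. There the barrier definition reduces to viscosity sub/supersolutions of the heat equation. The standard parabolic comparison principle for viscosity solutions, or the strong maximum principle applied on the cylinder, contradicts the fact that $t_0$ is a first crossing time: strict ordering on the parabolic boundary of the small cylinder propagates to its interior.

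\emph{Step 3: free boundary contact (main obstacle).} Here $\{Z>0\}$ has an interior ball $B$ and $\{W>0\}$ an exterior ball $B'$ at $x_0$, and the two balls are tangent at $x_0$ with common normal $\nu$. First we derive linear asymptotics. Comparing $Z$ from below with explicit radial caloric barriers in annuli adjacent to $B$, and $W$ from above with barriers adjacent to $B'$, we get
\[
Z(x,t_0)\ge \alpha\,((x-x_0)\cdot\nu)_+ + o(|x-x_0|),\qquad W(x,t_0)\le \beta\,((x-x_0)\cdot\nu)_+ + o(|x-x_0|),
\]
with $\alpha\le\beta$ forced by $Z\le W$. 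The lower bound for $Z$ uses the positivity of $Z$ in $B$ and a Hopf-type argument applied through the barrier definition. Next we prove $\beta\le Q(x_0)$. If instead $\beta>Q(x_0)$, we place a smooth radial strict subsolution inside $B'$ that starts below $W$ slightly before $t_0$ and has slope strictly between $Q(x_0)$ and $\beta$. By Definition~\ref{d.parabolic-supersolution} it stays $\prec W$, yet it crosses at $(x_0,t_0)$, a contradiction. Symmetrically, we prove $\alpha\ge(1+c\,r_0m)Q(x_0)-Cr_0$. If this failed, a strict supersolution built in the exterior of $B$ would stay above $Z$ by the subsolution property and again reach $x_0$. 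Combining the two bounds gives $\alpha>\beta$ for large $m$, contradicting $\alpha\le\beta$.

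The delicate part is Step 3. We must produce genuine classical strict barriers, which requires smoothness near $\partial\{\varphi>0\}$ and strict inequalities on the closure of their positivity sets, and we must check the prescribed $\prec$-ordering on the whole parabolic boundary of a small cylinder, including the bottom time slice. The positivity sets can jump in time, so that ordering is not automatic. I would handle this by choosing the cylinders with bottom at time $t_0-\delta$, using the first-crossing property to get strict ordering there, and using the strict margin $r_0m$ to absorb the $o(|x-x_0|)$ errors and the time variation over $[t_0-\delta,t_0]$.
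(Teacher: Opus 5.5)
Your overall route is the Kim-type argument the paper invokes: regularize by sup/inf-convolution, find a first contact point, and rule it out with barriers. However, the step that is supposed to create strictness is false, and the contradiction in Step 3 depends on it.

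A radius that depends only on $t$ does not change spatial slopes at the free boundary. The subsolution property of $Z$ is inherited from the translates $\mathfrak u(\cdot+h,t)$, $|h|\le r(t)$, which are subsolutions for $Q(\cdot+h)$. This gives the free boundary relation with $Q-r(t)\|Q\|_{C^{0,1}}$, which is a loss, not the gain $(1+c\,r_0m)Q$ you claim. The same holds for $W$. For example, take $Q$ constant and $\mathfrak u=\mathfrak v=Q(x_d)_+$. Then $Z=Q(x_d+r(t))_+$ and $W=Q(x_d-r(t))_+$, and both free boundary slopes equal $Q$ for every $m$.

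What the shrinking radius does give is strictness of the heat operator: formally $Z_t-\Delta Z\le r'(t)|\nabla\mathfrak u|<0$ where $\nabla\mathfrak u\neq0$ (in the example, $Z_t-\Delta Z=Q\,r'(t)$). Equivalently, it shifts the free boundary speed by $|r'|$. The free boundary law in \eqref{e.bernoulli-parabolic} has no velocity term, so this does not yield $\alpha\ge(1+c\,r_0m)Q(x_0)-Cr_0$. Without that bound, Step 3 only gives $\alpha\ge Q(x_0)-Cr_0$ and $\beta\le Q(x_0)+Cr_0$. These are compatible with $\alpha\le\beta$ (e.g.\ $\alpha=\beta=Q$ for constant $Q$), so no contradiction follows.

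A simple repair is to create the margin by scaling.
\begin{enumerate}
\item[(a)] For $\lambda>1$, $\lambda\mathfrak u$ is a subsolution with $Q$ replaced by $\lambda Q$: apply Definition~\ref{d.parabolic-subsolution} to $\varphi/\lambda$.
\item[(b)] Take a compact neighborhood of $\partial_P$ inside the given one. On its intersection with $\overline{\{\mathfrak u>0\}}$, $\mathfrak v-\mathfrak u$ has a positive minimum, so $\lambda\mathfrak u\prec\mathfrak v$ there for $\lambda$ close to $1$.
\item[(c)] $\lambda\mathfrak u\prec\mathfrak v$ implies $\mathfrak u\prec\mathfrak v$, so it suffices to prove the theorem for $\lambda\mathfrak u$.
\item[(d)] Convolve with radius $r_0\ll(\lambda-1)Q_{\min}/\|Q\|_{C^{0,1}}$. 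Then $Z$ is a subsolution for $\lambda Q-\lambda r_0\|Q\|_{C^{0,1}}$ and $W$ is a supersolution for $Q+r_0\|Q\|_{C^{0,1}}$. These coefficients are uniformly separated, which is the margin Step 3 actually needs.
\end{enumerate}

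Two further points in Step 3 also need correcting.
\begin{enumerate}
\item[(i)] $Z$ is only subcaloric, and its subsolution property supplies barriers only from above. So no Hopf-type lower bound can be obtained for $Z$. It must be applied to $W$, which is supercaloric and positive in $B$ because $W\ge Z$.
\item[(ii)] A purely spatial convolution gives no regularity of the positivity sets in time. Each barrier used on $[t_0-\delta,t_0]$ must have its positivity set at time $t_0-\delta$ compactly inside the region where the other function is known to be positive, e.g.\ $\psi(x)-\mu(t_0-t)$ for a suitable stationary strict barrier $\psi$. Continuity in time can then be invoked only away from the free boundary.
\end{enumerate}
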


This result follows almost immediately from the arguments of Kim \cite{Kim03}, though the statement is not exactly the same as \cite{Kim03}*{Theorem 2.2}, which is set on the whole space without a spatial boundary. The strict ordering in a neighborhood of the parabolic boundary is inherited by the sup/inf-convolved regularizations on a slightly smaller parabolic domain and then \cite{Kim03}*{Theorem 2.2} shows that an interior crossing results in a contradiction.

\subsection{Parabolic inner variational solutions} Next, we introduce the notion of parabolic inner variational solutions.
\begin{definition}\label{d.parabolic-inner}
A pair $(\mathfrak u,\chi)$ is an \emph{inner variational solution} of \eref{bernoulli-parabolic} in $U_\infty$ if the following hold:
\begin{enumerate}[label = (\roman*)]
	\item $\mathfrak u \in C(U_\infty)$ is non-negative, $\grad \mathfrak u \in L^\infty_{\rm loc}(U_\infty;\R^d)$, and $\partial_t \mathfrak u \in L^2(U_\infty)$.
	\item $\chi : U_\infty \to \{0,1\}$ is Borel measurable.
	\item ${\bf 1}_{\{\mathfrak u > 0\}} \leq \chi$ $\mathcal{L}^{d+1}$-a.e.\ in $U_\infty$.
	\item For every $\xi \in C^{1}_c(U_\infty; \R^d)$,
	\begin{equation}\label{e.parabolic-inner-var}
		\int_{U_\infty} \Big[(|\nabla \mathfrak u|^2 + Q(x)^2\chi)  \grad \cdot \xi - 2 \grad \mathfrak  u \cdot D\xi \grad \mathfrak u + \grad (Q^2) \cdot \xi  \chi - 2 (\xi \cdot \grad \mathfrak u)  \partial_t \mathfrak u\Big]   dx  dt = 0.
	\end{equation}
\end{enumerate}
\end{definition}

It is elementary to check that solutions of the stationary problem \eqref{e.bernoulli-basic} in the viscosity (resp. inner variational) sense are time independent viscosity (resp. inner variational) solutions of the parabolic problem \eqref{e.bernoulli-parabolic}.

We further remark that given our construction of inner variational solutions to the parabolic problem, we always have accompanying local Lipschitz estimates for $u$ and local perimeter bounds for $\chi.$ However, it is likely a perimeter bound can be proven directly along the lines of \cite{KriventsovWeiss}*{Lemma 3.3}.

\subsection{The semilinear approximation}\label{s.semilinear-intro}

The parabolic Bernoulli problem \eref{bernoulli-parabolic} is known to arise as the singular limit of semilinear reaction diffusion equations (see \cites{BerestyckiCaffarelliNirenberg, CaffarelliVazquez1995,Weiss03}). We use these particular approximations, as for example done in \cite{Weiss03}, to show existence of solutions of the sharp interface problem \eref{bernoulli-parabolic} with desirable properties, e.g, monotonicity. Precisely, we consider the semilinear equation
\begin{equation}\label{e.bernoulli-parabolic-semilinear}
\begin{cases}
         \partial_t\mathfrak{u}_\ep =\Delta \mathfrak{u}_\ep -  Q(x)^2 \frac{1}{\ep}\beta \left(\frac{\mathfrak{u}_\ep}{\ep}\right) & \hbox{ in } U_\infty, \\
         \mathfrak{u}_\ep = g_\eps &\hbox{ on } \partial_P U_\infty.
\end{cases}
\end{equation}
Here $g_{\ep}$ is an appropriate regularization of the data $g_+$ in \eref{bernoulli-parabolic}. The reaction term $\beta \in  C^\infty(\R)$ satisfies
\begin{equation}\label{e.beta-conditions}
\begin{minipage}{\linewidth} 
\begin{itemize}
\item $\operatorname{supp} \beta = [0,1]$ and $\beta > 0$ on $(0,1)$, 
\item $\beta$ is nondecreasing on $[0,1/2]$ and nonincreasing on $[1/2,1]$,
\item $\int_{\R}\beta(s)   ds = \tfrac12$.
\end{itemize}
\end{minipage}
\end{equation}
Below we often write $\beta_\eps(z) := \frac{1}{\ep}\beta(\frac{z}{\ep})$. We also denote 
\[
\mathcal{B}_\ep(z) := \int_0^z \beta_\eps(s)   ds .
\]
Note that $0 \leq \mathcal{B}_\ep \leq \tfrac12$, that $\mathcal{B}_\ep(z) = \tfrac12$ for $z \geq \ep$, and that $\mathcal{B}_\ep(z) = 0$ for $z \leq 0$. 

We define
\begin{equation}\label{eqn:chiEps}
 \chi_\ep(x,t) := 2\mathcal{B}_\ep(\mathfrak{u}_\ep(x,t)),
 \end{equation}
and sometimes use the shorthand $\chi^0_{\ep}$  to denote $\chi_\ep(\cdot,0)$. Note that $\chi_\ep \in [0,1]$ plays the role of ${\bf 1}_{\{\mathfrak u>0\}}$ in the context of the semilinear problem, and $\grad \chi_\ep = 2\beta_\eps(\mathfrak{u}_\ep)\grad \mathfrak{u}_\ep$. This motivates the definition of the Bernoulli free energy of a pair $(\mathfrak u,\chi)$ in $U$: 
\[J(u,\chi;U) := \int_U |\grad u|^2 + Q(x)^2 \chi(x) \ dx.\]

We can now state the main existence and regularity results for \eref{bernoulli-parabolic-semilinear} that are sufficient for our purposes. There are several parts, all of which follow by adapting known techniques established, for example, in \cites{CaffarelliVazquez1995,Petrosyan}. As none of the precise results we need are covered exactly by the results written in the literature, we provide details of the proof in Appendix~\ref{s.semilinear}.

\begin{proposition}\label{p.semilinear-wellposed}\label{p.localLip}\label{l.time-modulus}\label{p.attainment}\label{l.lipschitz-initial}\label{p.positivity-trace}
Let ${g}$ be a smooth strict subsolution (resp.\ supersolution) of \eref{bernoulli-basic} in the sense of \dref{strict-sub}. For all $\ep>0$ sufficiently small, there is $g_\eps \in C^{0,1}(\overline U)$ and a solution $\mathfrak{u}_\ep \in C(\overline U \times [0,\infty)) \cap C^\infty(U_\infty)$ of \eref{bernoulli-parabolic-semilinear} with the following properties.
\begin{enumerate}[label = (\roman*)]
\item\label{part.data} (Data) ${g}_+ \leq g_\ep \leq {g}_+ + \ep$ on $\overline U$, $\|\grad g_\ep\|_{L^\infty} \leq \|\grad {g}\|_{L^\infty}$, and $J(g_\ep,\chi^0_\eps;U) \leq E_0:=\int_U |\grad {g}|^2 dx + Q_{\max}^2 |U|$.
\item\label{part.monotone} (Monotonicity) $t \mapsto \mathfrak{u}_\ep(x,t)$ is monotone increasing (resp.\ decreasing) for every $x \in \overline U$, and
  \begin{equation}\label{e.Linfty} 0 \leq \mathfrak{u}_\ep \leq \|{g}\|_{L^\infty(U)} + \ep \  \hbox{in } \overline U \times [0,\infty). \end{equation}
\item\label{part.H1-continuity} (Energy dissipation) 
{For} every $T > 0$,
  \begin{equation}\label{e.dissipation-eps}
  \tfrac12 J(\mathfrak{u}_\ep(T),\chi_\ep(T);U) + \int_0^T\!\!\int_U (\partial_t \mathfrak{u}_\ep)^2 dx \, dt = \tfrac12 J(g_\ep,\chi^0_\eps;U) \leq \tfrac12 E_0.
  \end{equation}
\item\label{part.lipschitz} (Interior Lipschitz bound) For every parabolic cylinder $Q_{2r}(x,t) \subset U_\infty$ with $r \leq 1$,
  \begin{equation}\label{e.localEst}
  \|\grad \mathfrak{u}_\ep\|_{L^\infty(Q_r(x,t))} \leq C\left(\frac{\|\mathfrak{u}_\ep\|_{L^\infty(Q_{2r}(x,t))}}{r} + 1\right).
  \end{equation}
 Moreover $\sup_{t>0}\|\grad \mathfrak{u}_\ep(\cdot,t)\|_{L^\infty(V)} \leq C_V$ for every $V \subset\subset U$.
\item\label{part.attainment} {(Continuity up to the parabolic boundary) There is a modulus of continuity $\varpi$, independent of $\ep$, such that
  \begin{equation}\label{e.attainment}
  |\mathfrak{u}_\ep(x,t) - \mathfrak{u}_\ep(y,s)| \leq \varpi\big(|x-y| + |t-s|\big) \ \hbox{for all } (x,t),(y,s) \in \overline U_\infty
  \end{equation}}
  {and $\mathfrak{u}_\eps = g_\eps$ on $\partial_P U_\infty$.}
\item\label{part.trace} (Positivity set does not jump outward at $t=0$) For any $K \subset\subset \overline{U} \setminus \overline{\{g>0\}}$ there exists $\tau>0$ such that for all $\ep>0$ sufficiently small $K \subset \{\mathfrak{u}_\ep(\cdot,t) < \ep\}$ for all $0 \leq t \leq \tau$.

\end{enumerate}
\end{proposition}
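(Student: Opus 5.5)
The plan is to construct $g_\ep$ first, then $\mathfrak u_\ep$ by a monotone scheme, and to derive the estimates in turn. I treat the subsolution case; the supersolution case is symmetric with inequalities reversed.

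\emph{Data and monotonicity.} Set $g_\ep := \max\{g, \ep\,\eta_\ep(g/\ep)\}$, where $\eta_\ep$ is a small profile chosen so that $g_\ep = g$ on $\{g \geq \ep\}$, $g_+ \leq g_\ep \leq g_+ + \ep$, and $|\grad g_\ep| \leq |\grad g|$. One could also take the stationary semilinear subsolution built from the one-dimensional traveling profile along $g$. The strict subsolution hypotheses $\Delta g \geq 0$ and $|\grad g|^2 \geq (1+\delta_0)Q^2$ on $\{|g|\leq a_0\}$ should make $g_\ep$ a stationary (weak) subsolution of \eref{bernoulli-parabolic-semilinear}. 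The guiding one-dimensional fact is that the equation $\phi'' = Q^2\beta_\ep(\phi)$ has solutions with $\phi'^2 = 2Q^2\mathcal B_\ep(\phi) \leq Q^2$. In practice it is easier to define $g_\ep$ in the transition layer by composing $g$ with the one-dimensional profile, so that $\Delta g_\ep \geq Q^2\beta_\ep(g_\ep)$ follows from $|\grad g|\geq Q$ up to an $O(\ep)$ Lipschitz error in $Q$; the $\delta_0$ margin absorbs that error. With $g_\ep$ a stationary subsolution and classical parabolic theory, the solution of \eref{bernoulli-parabolic-semilinear} exists, is smooth, and is monotone in $t$: comparing $\mathfrak u_\ep(\cdot,t+h)$ with $\mathfrak u_\ep(\cdot,t)$ gives $\partial_t\mathfrak u_\ep \geq 0$. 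The constants $0$ and $\max g+\ep$ are a sub- and supersolution, which gives \eref{Linfty}. The bound $J(g_\ep,\chi^0_\ep)\leq E_0$ is immediate from $|\grad g_\ep|\leq|\grad g|$ and $\chi\leq 1$.

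\emph{Energy identity.} Multiply the equation by $\partial_t\mathfrak u_\ep$ and integrate over $U$; the boundary term vanishes because $\partial_t \mathfrak u_\ep = 0$ on $\partial U$. This gives
\[\frac{d}{dt}\int_U \tfrac12|\grad \mathfrak u_\ep|^2 + Q^2\mathcal B_\ep(\mathfrak u_\ep)\,dx = -\int_U(\partial_t \mathfrak u_\ep)^2\,dx,\]
and integrating in time yields \eref{dissipation-eps}. Regularity near $t=0$ needs care since $g_\ep$ is only Lipschitz; I would regularize $g_\ep$ and pass to the limit.

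\emph{Interior Lipschitz bound.} This is the Caffarelli--V\'azquez estimate. In $\{\mathfrak u_\ep \geq \ep\}$ the function is caloric, and interior gradient estimates at scale $\dist(\cdot,\{\mathfrak u_\ep<\ep\})$ combine with a growth bound $\mathfrak u_\ep \leq C(\ep + d)$. That growth bound comes from the Bernstein-type quantity $|\grad\mathfrak u_\ep|^2 - 2Q^2\mathcal B_\ep(\mathfrak u_\ep)$, or from the argument in \cite{CaffarelliVazquez1995}. The Lipschitz continuity of $Q$ enters only as a lower-order term, which accounts for the $+1$ in \eref{localEst}. The uniform-in-time bound then follows from \eref{localEst} and \eref{Linfty}.

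\emph{Boundary continuity and no outward jump.} For \eref{attainment} I would build barriers near $\partial U$. Since $g>0$ on $\partial U$ in the subsolution case, $\mathfrak u_\ep \geq g_\ep$ keeps the solution positive near the boundary, where it is caloric. Above, harmonic-lifting barriers in the $C^2$ domain give a spatial modulus there. Time continuity follows from monotonicity plus the energy bound, or from comparison with $g_\ep$ shifted in time. For part (vi), take $K$ at positive distance from $\{g>0\}$. I would place a radial supersolution barrier of the semilinear problem, of the type $\ep$ plus a caloric bump, centered at points of $K$. Its initial values dominate $g_\ep$, which is at most $\ep$ near $K$ (on $\{g\le0\}$ one has $g_\ep\le\ep$), and it stays below $\ep$ for time $\tau$ depending on $\dist(K,\{g>0\})$.

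I expect the main obstacle to be the uniform Lipschitz estimate \eref{localEst} together with the $\ep$-uniform modulus up to the parabolic boundary, since these are exactly where the singular reaction term must be controlled independently of $\ep$.
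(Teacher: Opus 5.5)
Your decomposition matches the paper's: profile-composed data, monotonicity by comparing with time translates, the energy identity, a Bernstein-type gradient bound, and barriers. The barrier you propose for part (vi), however, cannot work. A function of the form $\ep$ plus a caloric bump is caloric, and no caloric barrier gives $\mathfrak u_\ep<\ep$ on $K$ with $\tau$ independent of $\ep$. On the lateral boundary of the comparison ball, the only $\ep$-uniform information is $\mathfrak u_\ep\le \ep+\varpi(t)$. A caloric function that is nonnegative initially and at least a fixed positive number on that lateral boundary for $t\in[\tau/2,\tau]$ is, at time $\tau$ on $K$, bounded below by a positive constant depending on $r,\tau$ but not on $\ep$. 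It therefore exceeds $\ep$ once $\ep$ is small. Diffusion spreads positivity instantly, and the threshold $\ep$ is protected only by the absorption $-Q^2\beta_\ep$, so the barrier must use it. The paper takes a radial strict Bernoulli supersolution $\hat\eta$: slope at most $Q_{\min}/2$ near its zero set, negative on $B_r(x_0)$, and of size comparable to $r$ on the outer sphere. It composes $\hat\eta$ with the profile $\Psi_{\ep,\hat\theta}$ of \lref{profile-super}, whose floor $\kappa_{\hat\theta}\ep$ lies above $g_\ep$ there and strictly below $\ep$; note that $g_\ep$ vanishes near $K$ for small $\ep$ because $|\grad g|$ is bounded below near $\partial\{g>0\}$. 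The resulting barrier equals $\kappa_{\hat\theta}\ep+O(\ep e^{-cr/\ep})<\ep$ on $B_r(x_0)$. It dominates $\mathfrak u_\ep$ on the outer sphere for $t\le\tau$ by the uniform modulus of part (v), and comparison concludes. So (vi) genuinely rests on (v).

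Several further steps also need a different idea.

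First, in part (v) you invoke $g>0$ on $\partial U$, which is not a hypothesis here; it enters only in \pref{inner-smallest} and is unavailable in the supersolution case. In the increasing case you do not need it: $\mathfrak u_\ep\ge g_\ep$ gives the lower bound, and the harmonic extension of $g_\ep|_{\partial U}$ gives the upper bound. In the decreasing case the upper bound is the trivial one, and the lower bound needs a stationary subsolution of the semilinear equation lying below $g_\ep$. Near boundary points where $g$ is small the solution sits in the layer $\{0<\mathfrak u_\ep<\ep\}$, where harmonic barriers are not subsolutions. The paper builds $w=g_\ep(x_0)-\sigma-A_\sigma\eta$ from the exterior-ball barrier, with $|\grad w|\ge 2Q_{\max}$ and $w\le g_\ep$, and compares with $(\Phi_{\ep,\theta}(w))_+$.

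Relatedly, the supersolution data is not a mirror image of the subsolution data. By the strong maximum principle, a nonnegative supersolution of $\Delta v\le Q^2\beta_\ep(v)$ is strictly positive, so $g_\ep$ cannot vanish on $\{g\le 0\}$. Moreover, nothing is known about $\Delta g$ on $\{g<-a_0\}$. This is why $\Psi_{1,\hat\theta}$ levels off at a floor $\kappa>0$ with $\beta(\kappa)>0$ and is exponentially flat there.

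Second, the bound $\sup_{t>0}\|\grad\mathfrak u_\ep(\cdot,t)\|_{L^\infty(V)}\le C_V$ does not follow from \eref{localEst} and \eref{Linfty}, because for $t<4r^2$ no cylinder $Q_{2r}(x,t)$ fits in $U_\infty$. You need a second Bernstein estimate, localized only in space, that uses $\|\grad g_\ep\|_{L^\infty}\le\|\grad g\|_{L^\infty}$ at $t=0$ (\pref{localLip-initial-data}). This estimate is also what upgrades the boundary modulus to the global one in (v).

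Third, the claim that $Q$ enters only at lower order is not justified for the quantity you name. With variable $Q$, $|\grad\mathfrak u_\ep|^2-2Q^2\mathcal B_\ep(\mathfrak u_\ep)$ produces a $\Delta(Q^2)$ term, which is unavailable for Lipschitz $Q$. It also produces a source $2\beta_\ep(\mathfrak u_\ep)\grad(Q^2)\cdot\grad\mathfrak u_\ep$ of size $\ep^{-1}$ in the layer. Separately, the distance-to-the-layer argument does not yield caloric parabolic cylinders, since in the increasing case the layer was closer at earlier times. The paper instead runs Bernstein on $v=\phi^{-1}(\mathfrak u_\ep)$ with $\ell=\phi''/\phi'$ satisfying $\ell'\le-\ep^{-2}$ across the layer. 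This creates a term $2\ell'(v)|\grad v|^4\le-2\ep^{-2}|\grad v|^4$, which absorbs both the $\ep^{-2}$ reaction terms and the $\ep^{-1}\grad(Q^2)$ term.
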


\subsection{Existence and limit theorems}\label{s.limit-theorems}

In the next two sections, we will establish the following results about  \eref{bernoulli-parabolic}. Recall ${g}_+$ is the positive part of the strict subsolution (resp.\ supersolution) ${g}$.

\begin{theorem}\label{t.parabolic-existence}

Suppose {g} is a smooth strict subsolution (resp. supersolution) of \eqref{e.bernoulli-basic} in $\overline{U}$. Then there exists a pair $(\mathfrak{u},\chi)$ such that 
\begin{enumerate}[label = (\roman*)]
\item $\mathfrak{u} \in C(\overline{U}\times[0,\infty))$ with $\mathfrak{u} = {g}_+$ on $\partial_P U_\infty$,  
\item $\mathfrak{u}$ is monotone increasing (resp.  decreasing) in time , 
\item $(\mathfrak{u},\chi)$ is an inner variational solution of \eref{bernoulli-parabolic} in $U_\infty$, and 
\item $\mathfrak{u}$ is a viscosity solution (resp. relaxed viscosity solution) of \eref{bernoulli-parabolic}  in $U_\infty$. 
\item %
The dissipation inequality
\begin{equation}\label{eqn:diss2}
\frac{1}{2}J(\mathfrak{u}(T),\chi(T);U) + \int_0^T\int_U (\partial_t \mathfrak{u})^2   dx  \, dt  \leq \frac{1}{2}{J(\mathfrak{u}(0),\chi(0);U)}
\end{equation}
holds for a.e. $T \in (0,\infty)$;
the interior Lipschitz bound 
\begin{equation}\label{eqn:LipEst}
\|\grad \mathfrak{u}\|_{L^\infty(Q_r(x,t))} \leq C\big(r^{-1}\|\mathfrak{u}\|_{L^\infty(Q_{2r}(x,t))} + 1\big)
\end{equation} holds for $Q_{2r}(x,t) \subset U_\infty$ and $r \leq 1$;
the weighted perimeter bound 
\begin{equation}\label{eqn:chiEst2}
\int_{U_\infty}\eta |\grad\chi| dx \, dt \leq C_\eta \|\eta\|_{L^2((0,\infty);H^1(U))}
\end{equation} holds for any $\eta \in C^1_c(U_\infty;[0,\infty))$, with $C_\eta$ depending on $\eta$ only through $d_\eta = \dist({\rm spt} \, \eta, \partial_P U_\infty)$ and $T_\eta = \sup\{t : (x,t) \in {\rm spt} \, \eta\}$.
\end{enumerate}

\end{theorem}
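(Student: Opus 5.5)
The plan is to obtain $(\mathfrak u,\chi)$ as a subsequential limit of the semilinear solutions $\mathfrak u_\ep$ from \pref{semilinear-wellposed}, as $\ep\to0$.

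\emph{Compactness.} The uniform modulus \eref{attainment} and Arzel\`a--Ascoli give a subsequence with $\mathfrak u_\ep\to\mathfrak u$ locally uniformly in $\overline U\times[0,\infty)$. Since $g_\ep\to g_+$ uniformly, $\mathfrak u=g_+$ on $\partial_PU_\infty$. Monotonicity in time passes to the limit. The dissipation identity \eref{dissipation-eps} bounds $\partial_t\mathfrak u_\ep$ in $L^2(U_\infty)$, so $\partial_t\mathfrak u_\ep\rightharpoonup\partial_t\mathfrak u$ weakly. Passing to the limit in \eref{localEst} gives the Lipschitz bound \eqref{eqn:LipEst}.

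\emph{Perimeter bound and convergence of $\chi_\ep$.} For \eqref{eqn:chiEst2} I would test the semilinear equation against $\eta$ times a suitable function. Using $\grad\chi_\ep=2\beta_\ep(\mathfrak u_\ep)\grad\mathfrak u_\ep$, the aim is to control $\int\eta|\grad\chi_\ep|$ by $\int\eta\,\beta_\ep(\mathfrak u_\ep)|\grad\mathfrak u_\ep|$. This quantity is bounded through the equation, since $Q^2\beta_\ep(\mathfrak u_\ep)=\Delta\mathfrak u_\ep-\partial_t\mathfrak u_\ep$, together with the Lipschitz bound and the $L^2$ bound on $\partial_t\mathfrak u_\ep$; this is where the $H^1$-norm of $\eta$ enters. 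I expect this step to need care, and I would follow the appendix-style estimates of \cite{Weiss03}.

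Given the uniform local BV bound on $\chi_\ep$ in space-time, we get $\chi_\ep\to\chi$ in $L^1_{\rm loc}$. Because $\chi_\ep\in[0,1]$ and $\chi_\ep\to 0$ or $1$ away from the thin layer $\{0<\mathfrak u_\ep<\ep\}$, the limit $\chi$ is $\{0,1\}$-valued. Moreover $\chi=1$ on $\{\mathfrak u>0\}$. The perimeter bound follows by lower semicontinuity.

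\emph{Inner variation and dissipation.} The identity \eref{parabolic-inner-var} will come from multiplying \eref{bernoulli-parabolic-semilinear} by $\xi\cdot\grad\mathfrak u_\ep$ and integrating by parts, which gives the $\ep$-identity with $\chi_\ep$ in place of $\chi$. To pass to the limit, the key ingredient is strong convergence $\grad\mathfrak u_\ep\to\grad\mathfrak u$ in $L^2_{\rm loc}$. I would prove it by showing $\limsup\int\eta|\grad\mathfrak u_\ep|^2\le\int\eta|\grad\mathfrak u|^2$, testing the equation with $\eta\mathfrak u_\ep$ and using $\mathfrak u_\ep\beta_\ep(\mathfrak u_\ep)\le\beta\|s\|_\infty\to$ bounded by $\mathbf{1}_{\{0<\mathfrak u_\ep<\ep\}}$, whose measure vanishes thanks to the perimeter bound. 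The term $(\xi\cdot\grad\mathfrak u_\ep)\partial_t\mathfrak u_\ep$ then converges as a product of a strongly and a weakly convergent sequence. The dissipation inequality \eqref{eqn:diss2} follows from lower semicontinuity for a.e.\ $T$. For the right-hand side we use $J(g_\ep,\chi^0_\ep)\to J(\mathfrak u(0),\chi(0))$, where $\chi(0)$ is interpreted as the limit of the regularized initial indicator.

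\emph{Viscosity properties.} This is the step I expect to be the main obstacle.

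First, $\mathfrak u_\ep$ satisfies strict comparison with classical strict sub- and supersolutions $\varphi$ of the free boundary problem. To see this, build semilinear barriers by composing $\varphi$ with the one-dimensional traveling profile of $\beta_\ep$; the strict inequalities $|\grad\varphi|\gtrless Q$ absorb the error for small $\ep$. Passing to the limit, $\mathfrak u$ is a supersolution. It is also a relaxed subsolution with $E=\limsup^*\overline{\{\mathfrak u_\ep>0\}}$ (or the support of $\chi$), by the stability argument of \lref{super-stability} adapted to the parabolic setting.

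In the increasing case, \pref{positivity-trace} shows $E_0\subset\overline{\{g>0\}}=\overline{\{\mathfrak u_0>0\}}$. Then \pref{monotone-nondegen-relaxed} upgrades the relaxed subsolution to a genuine subsolution, so $\mathfrak u$ is a viscosity solution. In the decreasing case we stop at a relaxed viscosity solution, as the theorem states.

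The delicate points are the construction of the semilinear barriers near $\partial\{\varphi>0\}$ and the identification of $E_0$ at $t=0$.
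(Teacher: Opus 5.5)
Your argument breaks at the choice of the wetted set $E$, and this choice is exactly what makes the increasing case work. Since $0\le Q^2\beta_\ep(s)\le C\ep^{-2}s$ for $s\ge0$, each $\mathfrak u_\ep$ is a nonnegative supersolution of a linear parabolic equation with bounded zeroth-order coefficient. By the strong maximum principle, $\mathfrak u_\ep>0$ in all of $U\times(0,\infty)$ (unless $g_+\equiv0$).

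Hence ${\limsup}^*\overline{\{\mathfrak u_\ep>0\}}=\overline U\times[0,\infty)$ and $E_0=\overline U$, so the hypothesis $E_0\subset\overline{\{\mathfrak u_0>0\}}$ of \pref{monotone-nondegen-relaxed} fails. Worse, with $E$ equal to everything the relaxed subsolution property carries no free boundary information at all. A classical strict supersolution that is positive on $\partial_P(V\times(a,b])$ stays positive inside, so the condition reduces to subcaloricity of $\mathfrak u$. (In the decreasing case this makes your claimed relaxed property true but empty.) Moreover, \pref{semilinear-wellposed}\ref{part.trace} only says $\mathfrak u_\ep<\ep$ away from $\overline{\{g>0\}}$ for short times; it says nothing about $\{\mathfrak u_\ep>0\}$.

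The wetted set has to be a level set at scale $\ep$, matched to the barrier. The barrier $\varphi_\ep=\Psi_{\ep,\hat\theta}(\varphi-\delta)$ is $\ge\kappa\ep>0$ everywhere and $\le(1-c(\delta))\ep$ near points where $\varphi\le0$ (\lref{profile-super}\ref{part.profile-super-e}). So the comparison $\mathfrak u_\ep\le\varphi_\ep$ only detects where $\mathfrak u_\ep$ exceeds $(1-c)\ep$. This is why the paper takes $E={\limsup}^*\{\mathfrak u_{\ep_j}>\ep_j\}$ in \pref{semilinear-limit-viscosity}:
\begin{itemize}
\item a point of $E$ where $\varphi\le0$ is approached by points with $\mathfrak u_{\ep_j}>\ep_j>\varphi_{\ep_j}$, contradicting comparison;
\item \pref{semilinear-wellposed}\ref{part.trace}, via \cref{positivity-no-jump}, is precisely what yields $E_0\subset\overline{\{g>0\}}$ for this $E$.
\end{itemize}
Your alternative, the support of $\chi$, is not what the stability argument produces. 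That argument gives a Kuratowski upper limit of sets on which the $\ep$-problem obeys comparison, and this need not coincide with the support of an $L^1$ limit. Using the support of $\chi$ would therefore need an argument you have not supplied.

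There are also two smaller issues in the compactness step.
\begin{itemize}
\item \emph{Time compactness of $\chi_\ep$.} The estimate on $\int\eta|\grad\chi_\ep|$ controls only spatial variation, so it does not by itself give a space-time BV bound. Here monotonicity supplies the missing time control, since $\chi_\ep=2\mathcal B_\ep(\mathfrak u_\ep)$ is monotone in $t$ with values in $[0,1]$. Alternatively, use Weiss' argument on $\partial_t(e_\ep*\phi_\delta)$ as in Lemma~\ref{lem:chiConverge}.
\item \emph{The thin layer.} The layer $\{0<\mathfrak u_\ep<\ep\}$ does not have vanishing measure: since $\mathfrak u_\ep>0$, it asymptotically fills $\{\chi=0\}$. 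In any case, a BV bound never makes the set of intermediate values small. What you need is the intermediate estimate $\int\eta\,\beta_\ep(\mathfrak u_\ep)\le C_\eta$ from the perimeter computation. It gives $|\{\delta\ep<\mathfrak u_\ep<(1-\delta)\ep\}\cap K|\le C_{\delta,K}\ep$ on compact $K\subset U_\infty$, hence $\chi\in\{0,1\}$. It also gives $\int\eta\,\mathfrak u_\ep\beta_\ep(\mathfrak u_\ep)\le C_\eta\ep$.
\end{itemize}
For the gradient step, the reaction term actually has a favorable sign and can simply be dropped, as the paper does. You still need the limit identity \eqref{eqn:heatEqnRelation}, or its equivalent obtained by testing with the fixed function $\eta\mathfrak u$, to recognize the right-hand side as $\int\eta|\grad\mathfrak u|^2$.
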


The function $\mathfrak{u}$ is obtained as a locally uniform subsequential limit of the semilinear approximations $\mathfrak{u}_\ep$ with well-prepared initial and boundary data, see \pref{semilinear-wellposed}. The inner variational property is proved in Section \ref{s.inner-limits} (\pref{eps-inner-limit}). The attainment of the boundary data and the viscosity properties are proved in Section \ref{s.viscosity-limits}, where the proof of \tref{parabolic-existence} is assembled.

{Inner variational solutions are somewhat more lenient with respect to limits, and in fact, in the $\eps\to 0$ limit of Theorem \ref{t.parabolic-existence}, limits of the semilinear equation are always inner variational solutions. Similarly, we also show that given \emph{any} inner variational solution of \eref{bernoulli-parabolic} the long time limit $T\to \infty$ converges to a stationary inner variational solution, so long as we include additional regularity estimates \eqref{eqn:diss2}--\eqref{eqn:chiEst2} (which always hold for the semilinear limit).}

\begin{theorem}[long-time limit, variational form]\label{t.longtime-inner}
    Let $(\mathfrak{u},\chi)$ be an inner variational solution of \eref{bernoulli-parabolic} in $U_\infty$ {with boundary data $g_\eps \in H^1(U)$} and {satisfying \eqref{eqn:diss2}--\eqref{eqn:chiEst2}}. 
    {Supposing $g_\eps\to g$ in $H^1$ and $\sup_\eps \|g_\eps\|_{L^\infty(U)} <\infty$,} then $u_\infty(x) := \lim_{t \to \infty}\mathfrak{u}(x,t)$ exists locally uniformly in $U$ and in $H^1_{loc}(U)$, and there are a sequence $t_i \to \infty$ and a Borel function $\chi_\infty : U \to \{0,1\}$ such that $\chi(\cdot,t_i) \to \chi_\infty$ in $L^1_{\rm loc}(U)$, and $(u_\infty, \chi_\infty)$ is an inner variational solution of \eqref{e.bernoulli-basic} in $U$. {Finally, for any $V\subset\subset U$, there is $C>0$ such that
    \begin{equation}\label{eqn:inftyLip}
        \|\grad u_\infty\|_{L^\infty(B_r(x))} \leq C\big(r^{-1}\|u_\infty\|_{L^\infty(B_r(x))} + 1\big)
    \end{equation}
    and $C_V = C_V(\|g\|_{H^1(U)})>0$ such that
    \begin{equation}\label{eqn:inftyPerim}
        \int_{V} |\grad\chi_\infty| dx \leq C_V.
    \end{equation}}
    
\end{theorem}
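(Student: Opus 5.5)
The plan is to work on unit time windows. For $T>0$ I set $\mathfrak u^T(x,s) := \mathfrak u(x,T+s)$ and $\chi^T(x,s):=\chi(x,T+s)$ for $s\in(-1,1)$.

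\emph{Step 1: dissipation.} From \eqref{eqn:diss2}, using $J\geq 0$ and $J(\mathfrak u(0),\chi(0);U)\leq C(\|g_\ep\|_{H^1})$, I get $\int_0^\infty\!\int_U(\partial_t\mathfrak u)^2\,dx\,dt<\infty$. Hence
\[\int_{-1}^1\!\int_U(\partial_s\mathfrak u^T)^2 \to 0 \quad\hbox{as } T\to\infty.\]
Note that this step alone does not give existence of the full limit $\lim_t\mathfrak u(x,t)$ unless monotonicity is available. Monotonicity does hold for the solutions from \tref{parabolic-existence}, but the statement is general. So I obtain existence of the pointwise limit as follows. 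Cauchy--Schwarz gives $\|\mathfrak u(t)-\mathfrak u(s)\|_{L^2(U)}\leq |t-s|^{1/2}\big(\int_s^t\!\int_U(\partial_t\mathfrak u)^2\big)^{1/2}$. This shows only that $\mathfrak u(t)$ is slowly varying, not that it converges. I would therefore first prove everything along subsequences $t_i\to\infty$, and then upgrade to the full limit: in our application monotonicity is available, and in general I would use that the $L^2$ increments over windows tend to zero.

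\emph{Step 2: uniform bounds.} By the maximum bound $\sup_\ep\|g_\ep\|_\infty<\infty$ (I assume $\mathfrak u\leq\sup g_\ep$ via comparison or as part of the hypotheses) and \eqref{eqn:LipEst}, the $\mathfrak u^T$ are uniformly Lipschitz in space on $V\times(-1,1)$ for each $V\subset\subset U$. Applying \eqref{eqn:chiEst2} with a fixed spatial cutoff times a time cutoff gives
\[\int_{-1}^1\!\int_V|\grad\chi^T|\leq C_V,\]
uniformly in $T$: the constant depends on $T_\eta$, but I would check that for a time-translated $\eta$ the dependence disappears; if it does not, I would reprove the perimeter bound with the translated cutoff. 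Time equicontinuity in $L^2$ follows from Step 1, and with the spatial Lipschitz bound it upgrades to uniform equicontinuity. By Arzel\`a--Ascoli and BV compactness, along $T_i\to\infty$ we have $\mathfrak u^{T_i}\to u_\infty$ locally uniformly and $\chi^{T_i}\to\chi_\infty$ in $L^1_{loc}$. The limit $u_\infty$ is time independent because $\partial_s\mathfrak u^{T_i}\to0$ in $L^2$.

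\emph{Step 3: strong convergence and passage to the limit (main obstacle).} I need $\grad\mathfrak u^{T_i}\to\grad u_\infty$ in $L^2_{loc}$ to pass to the limit in the quadratic term $\grad\mathfrak u\cdot D\xi\grad\mathfrak u$. My approach is the one behind \lref{inner-compactness-stability}. On $\{\mathfrak u>0\}$ we have $\Delta\mathfrak u=\partial_t\mathfrak u$, and I test with $\mathfrak u\,\zeta$ for a cutoff $\zeta$:
\[\int|\grad\mathfrak u^T|^2\zeta=-\int\mathfrak u^T\grad\mathfrak u^T\cdot\grad\zeta-\int\partial_s\mathfrak u^T\,\mathfrak u^T\zeta.\]
The last term tends to $0$ by Step 1, so the left side converges to the corresponding expression for $u_\infty$. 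Since $u_\infty$ is harmonic in $\{u_\infty>0\}$ (a local uniform limit with vanishing $\partial_s$) and Lipschitz, the same identity holds for $u_\infty$. This gives convergence of norms, and together with weak convergence, strong convergence.

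Then I test \eqref{e.parabolic-inner-var} with $\xi(x)\theta(s)$, where $\theta\in C_c^\infty(-1,1)$ has $\int\theta=1$. The $\partial_t$ term is bounded by $\|\partial_s\mathfrak u^T\|_{L^2}\cdot\|\grad\mathfrak u\|_\infty\to0$, and the $\chi$-terms pass by $L^1$ convergence. This yields \eqref{e.stationary-inner-var} for the limit pair $(u_\infty,\bar\chi)$, where $\bar\chi=\int\theta\chi_\infty(\cdot,s)$. A potential issue is that $\bar\chi$ need not be $\{0,1\}$-valued. To fix this, I choose $t_i$ by Fubini so that $\chi(\cdot,t_i)$ converges itself: the BV bound in space-time makes $s\mapsto\chi^T(\cdot,s)$ controlled, and I pick slices $s_i$ with $\int_V|\grad\chi^{T_i}(\cdot,s_i)|\leq C_V$. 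These converge in $L^1_{loc}$ to a $\{0,1\}$-valued $\chi_\infty$, and I show they agree with the time average by using that the time variation of $\chi$ vanishes. This last identification is the part I expect to be most delicate. The bounds \eqref{eqn:inftyLip} and \eqref{eqn:inftyPerim} are then inherited by lower semicontinuity, and the inclusion $\one_{\{u_\infty>0\}}\leq\chi_\infty$ passes to the limit by uniform convergence.
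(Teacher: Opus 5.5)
\textbf{Gap: compactness of $\chi$ on space-time windows.} The bound \eqref{eqn:chiEst2} controls only the \emph{spatial} gradient of $\chi$, and nothing in the hypotheses controls $\partial_t\chi$. For the $\ep\to0$ limit the paper needed Weiss' mollified-energy argument to get time compactness of $\chi_\ep$; nothing in the assumptions of this theorem plays that role for $\chi$. So in your Step 2, $\int_{-1}^1\int_V|\grad\chi^T|\le C_V$ does not give $L^1_{\rm loc}$ compactness of $\chi^{T_i}$ in space-time. The function $\one_{\{\sin(Ts)>0\}}$ is a counterexample: it has no spatial variation but oscillates in time. The window limit is therefore only a weak-$*$ limit with values in $[0,1]$. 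This, not the averaging against $\theta$, is the real reason your $\bar\chi$ need not be $\{0,1\}$-valued. It cannot be repaired by letting $\theta$ vary, since that only gives the identity on a.e.\ slice of the weak limit, which is still $[0,1]$-valued. Nor can it be repaired by identifying slice limits with time averages ``because the time variation of $\chi$ vanishes'': that has no basis in the assumptions.

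\textbf{The fix, as in the paper: slice in time before passing to the limit.} Testing \eqref{e.parabolic-inner-var} with $\xi(x)\theta(t)$ for arbitrary $\theta$ gives \eqref{eqn:innerVarLocalized} on a.e.\ time slice, for a countable dense family of $\xi$. Likewise one gets the sliced identity \eqref{eqn:heatEqn} obtained by testing with $\mathfrak u\eta$. By Markov's inequality on the intervals $(i,i+1)$, choose $t_i\to\infty$ at which:
\begin{enumerate}
\item both identities hold;
\item $\|\partial_t\mathfrak u(\cdot,t_i)\|_{L^2(U)}\to0$;
\item $\int_{U_k}|\grad\chi(\cdot,t_i)|\le C_k$ for all $k\le i$, with $C_k$ independent of $i$.
\end{enumerate}
Spatial BV compactness of the individual slices $\chi(\cdot,t_i)$ then yields a $\{0,1\}$-valued $\chi_\infty$ directly. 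Identity \eqref{eqn:heatEqn} at $t_i$ gives strong $L^2_{\rm loc}$ convergence of $\grad\mathfrak u(\cdot,t_i)$ by exactly your Step 3 argument. One then passes to the limit in \eqref{eqn:innerVarLocalized}, and no time average has to be identified.

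Your worry about $C_\eta$ depending on $T_\eta$ is harmless. The derivation \eqref{eqn:chiEpsBV} shows the constant depends on the time support only through its length, and the paper uses it this way on $(i,i+1)$.

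\textbf{The full limit $t\to\infty$.} You are right that \eqref{eqn:diss2} alone does not force convergence. Your general fallback does not give it either: vanishing increments over unit windows do not imply convergence, since $a(t)=\sin\log(1+t)$ has $a'\in L^2(0,\infty)$ but no limit. The paper's Step 1 asserts that $\mathfrak u(\cdot,t)$ is Cauchy in $L^2$ by \eqref{eqn:diss2}. That conclusion actually comes from the time monotonicity of the flows to which the theorem is applied. Without monotonicity, only subsequential limits are justified.
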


We briefly mention that the conclusion of Theorem \ref{t.longtime-inner} can be strengthened by using \cite{KriventsovWeiss}.

\begin{remark}
Applying \cite{KriventsovWeiss}*{Theorem 1.2 (i)}, either $u_\infty \equiv 0$ or $\chi_\infty = {\bf 1}_{\{u_\infty>0\}}$ a.e.; in either case $(u_\infty, {\bf 1}_{\{u_\infty>0\}})$ is an inner variational solution of \eqref{e.bernoulli-basic} in $U$.
\end{remark}

{For viscosity solutions we prove the following limit theorem in Section \ref{s.viscosity-limits}. See \lref{supersolution-limit}, \cref{monotone-long-time}, and \lref{subsolution-inner-limit}.
\begin{theorem}[long-time limit, viscosity form]\phantom{.}
    \begin{enumerate}[label=(\roman*)]
        \item Let $\mathfrak u$ be a bounded viscosity solution of \eqref{e.bernoulli-parabolic} in $U \times (0,\infty)$ which is monotonically increasing in time and such that $\mathfrak u(\cdot,t) \nearrow u_\infty$ locally uniformly in $U$ as $t \to \infty$. Then $u_\infty$ is a viscosity solution of \eqref{e.bernoulli-basic} in $U$.
        \item Let $(\mathfrak{u},\mathfrak{\chi})$ be an inner variational solution and viscosity supersolution of \eqref{e.bernoulli-parabolic} which is bounded and satisfies \eqref{eqn:diss2}--\eqref{eqn:chiEst2}. Then there is a sequence $t\to \infty$ such that $\mathfrak{u}(\cdot,t) \to u_\infty$ locally uniformly, $\mathfrak{\chi}(t) \to \chi_\infty$ in $L^1_{\textup{loc}}$, and $u_\infty$ is a viscosity solution of \eqref{e.bernoulli-basic}.
    \end{enumerate}
\end{theorem}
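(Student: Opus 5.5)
We treat the two parts separately, using the time-independence of the limit and the touching structure of viscosity tests.

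\emph{Part (i).} Supersolution stability is unconditional. Each time slice $\mathfrak u(\cdot,t)$ is a parabolic supersolution, and monotonicity gives $\mathfrak u(\cdot,t+s)\to u_\infty$ locally uniformly in $(x,s)$. So the translates $\mathfrak u(\cdot,\cdot+n)$ converge locally uniformly to the stationary function $u_\infty$, which is therefore a parabolic supersolution; this uses a parabolic analogue of \lref{super-stability}\ref{part.stability-super}, proved via the strict comparison definition. To get the elliptic supersolution property, suppose $\varphi$ touches $u_\infty$ from below at $x_0$ and strictly violates the condition ($\Delta\varphi>0$, and if $\varphi(x_0)=0$ also $|\grad\varphi|>Q$). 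Perturb to $\varphi-\delta|x-x_0|^2-\delta$, which is a strict classical subsolution on a small ball and is strictly below $u_\infty$ on the boundary. Then use $\tilde\varphi(x,t)=\varphi(x)-\delta+\delta' t$, with $\delta'$ small so that it stays strict, on a short time interval. Comparison forces $\tilde\varphi\prec u_\infty$, which contradicts the touching at $x_0$ for the final time.

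For the subsolution property, the difficulty is degeneracy. Here monotone increasing helps, since $\{u_\infty>0\}=\bigcup_t\{\mathfrak u(\cdot,t)>0\}$. Consider the relaxed limit $E^*=\limsup^*\overline{\{\mathfrak u(\cdot,t+n)>0\}}$. Because the positivity sets increase, $E^*_s=\overline{\bigcup_t\{\mathfrak u(\cdot,t)>0\}}=\overline{\{u_\infty>0\}}$ for every $s$. The parabolic version of \lref{super-stability}\ref{part.stability-pair} then makes $(u_\infty,E^*)$ a relaxed subsolution with $E^*=\overline{\{u_\infty>0\}}$, hence a standard subsolution (see \pref{monotone-nondegen-relaxed}). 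The same barrier argument as above, with a strict supersolution touching from above, transfers this to the elliptic subsolution property.

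\emph{Part (ii).} \tref{longtime-inner} gives a sequence $t_i\to\infty$ with $\mathfrak u(\cdot,t_i)\to u_\infty$ locally uniformly, $\chi(t_i)\to\chi_\infty$ in $L^1_{\rm loc}$, and $(u_\infty,\chi_\infty)$ an inner variational solution. \lref{inner-variational-implies-subsoln} then shows directly that $u_\infty$ is a viscosity subsolution, which sidesteps degeneracy entirely.

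It remains to show $u_\infty$ is a supersolution. This is the main obstacle, because we only have convergence along a sequence and no monotonicity. I would use the dissipation bound \eqref{eqn:diss2}, which gives $\int_{t_i}^{t_i+1}\!\int(\partial_t\mathfrak u)^2\to0$, together with the uniform modulus from the Lipschitz estimate \eqref{eqn:LipEst}. From these, $\mathfrak u(\cdot,t_i+s)\to u_\infty$ locally uniformly for $s\in[0,1]$: the $L^2$ smallness of $\partial_t\mathfrak u$ gives $L^2$ closeness in time, and equicontinuity upgrades this to uniform closeness. Thus the translated solutions converge locally uniformly on space-time cylinders to the stationary $u_\infty$, and the parabolic supersolution stability plus the barrier argument of part (i) yields the elliptic supersolution property. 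If the spatial Lipschitz bound alone does not give an equicontinuous family in time, I would instead interpolate the $L^2_tL^2_x$ bound on $\partial_t\mathfrak u$ against the $L^\infty$ gradient bound to obtain uniform-in-$x$ smallness of $\mathfrak u(\cdot,t_i+s)-\mathfrak u(\cdot,t_i)$.
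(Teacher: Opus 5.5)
Your argument is correct, but it is organized differently from the paper's.

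\textbf{How the paper argues.} The paper never forms a space-time limit of time translates. In Lemmas \ref{l.supersolution-limit} and \ref{l.subsolution-limit} it tests the flow itself with $\varphi(x)\pm\eta t\mp n$, follows the first touching times $t_n\to\infty$, shows $x_n\to x_0$, and passes to the limit in the pointwise touching inequalities. The subsolution half of (i) then uses monotonicity to get $E_\infty\subset\overline{\{u_\infty>0\}}$ (Corollary \ref{c.monotone-long-time}), exactly as you identify $E^*$. The subsolution half of (ii) is Lemma \ref{l.inner-variational-implies-subsoln}, as in your proposal.

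\textbf{What your route needs.} It relies on two ingredients the paper does not state.
\begin{enumerate}
\item Stability of the comparison-based parabolic supersolution and relaxed subsolution notions under local uniform limits. This is provable with the $\varphi\pm\delta$ shift and Lemma \ref{l.prec-ordering-convergence}, as in the proof of Proposition \ref{p.semilinear-limit-viscosity}.
\item A barrier argument converting a stationary parabolic sub/supersolution into an elliptic one. Here you must keep the quadratic term: on $B_r(x_0)\times(a,b]$ use $\varphi-\delta|x-x_0|^2-\sigma+\delta'(t-a)$ with $\sigma<\delta'(b-a)<\sigma+\delta r^2$. Then the lateral and initial ordering survive while the final-time ordering fails at $x_0$.
\end{enumerate}

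\textbf{What your route buys.} Your proof of the supersolution property in (ii) is more robust. The first-touching argument of Lemma \ref{l.supersolution-limit} needs $\mathfrak u(\cdot,t)\to u_\infty$ along the uncontrolled times $t_n$, i.e.\ the full limit. The paper takes this from Step 1 of Theorem \ref{t.longtime-inner}. That is automatic in the monotone setting where the paper applies it. In general, however, the dissipation bound only gives $\|\mathfrak u(t)-\mathfrak u(s)\|_{L^2(U)}\le |t-s|^{1/2}\|\partial_t\mathfrak u\|_{L^2(U\times(s,t))}$. This controls bounded time windows but not $t-s\to\infty$. Your window argument uses precisely what this estimate provides. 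The spatial Lipschitz bound \eqref{eqn:LipEst} already upgrades $L^2$ closeness to local uniform closeness, so your fallback interpolation is unnecessary.

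\textbf{Minor points.} In (i) you mean that each time translate, not each time slice, is a parabolic supersolution. Once $E^*=\overline{\{u_\infty>0\}}$ is identified, you do not need Proposition \ref{p.monotone-nondegen-relaxed}; the remark following Definition \ref{d.parabolic-relaxed-subsolution} suffices.
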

}

\section{Limits in the inner variational sense}\label{s.inner-limits}
In this section, we prove that the proposed double limit---with $\eps\to 0$ and then $T\to \infty$---leads to a variational solution of the Bernoulli problem. We encapsulate the first limit in the following proposition, which provides the variational half of \tref{parabolic-existence}.

\begin{proposition}[$\ep \to 0$ limit, variational form]\label{p.eps-inner-limit}
Assume  \eref{beta-conditions} and $g_\eps$ converges to $g$ in $H^1(U;[0,\infty))$ with $\sup_\eps \|g_\eps\|_{L^\infty(U)} <\infty$. Let $\mathfrak{u}_\ep$ be  solutions of \eqref{e.bernoulli-parabolic-semilinear} with $\chi_\ep$ as in \eqref{eqn:chiEps}. There are a subsequence $\ep_j \downarrow 0$ and a pair $(\mathfrak{u},\chi)$ such that:
\begin{enumerate}[label = (\roman*)]
\item\label{part.eps-convergence} (Compactness) $\mathfrak{u}_{\ep_j} \to \mathfrak{u}$ locally uniformly in $U_\infty$, $\grad \mathfrak{u}_{\ep_j} \to \mathfrak{u}$ weakly in $L^2(U_\infty)$, and $\chi_{\ep_j} \to \chi$ in $L^1_{\rm loc}(U_\infty)$ and pointwise a.e., with $\chi \in \{0,1\}$ a.e.;
\item\label{part.eps-inner} (Existence) $(\mathfrak{u},\chi)$ is an inner variational solution of \eref{bernoulli-parabolic} in $U_\infty$ in the sense of \dref{parabolic-inner};
\item\label{part.eps-estimates} (Estimates) the dissipation inequality \eqref{eqn:diss2}, the interior Lipschitz bound \eqref{eqn:LipEst}, and the weighted perimeter bound \eqref{eqn:chiEst2} hold.
\end{enumerate}
\end{proposition}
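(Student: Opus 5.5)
Our approach is to derive uniform-in-$\ep$ estimates from \pref{semilinear-wellposed}, extract a convergent subsequence, and pass to the limit in the $\ep$-level inner variation identity. Some of the hypotheses here are stated abstractly, but for the solutions of \pref{semilinear-wellposed} the estimates are available. Concretely, the bound \eref{Linfty} and the Lipschitz bound \eref{localEst} give uniform local Lipschitz control in space. The dissipation identity \eref{dissipation-eps} gives a uniform bound on $\partial_t \mathfrak u_\ep$ in $L^2(U_\infty)$, and $\grad \mathfrak u_\ep$ is bounded in $L^2(U\times(0,T))$ for every $T$. The modulus \eref{attainment} supplies equicontinuity. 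By Arzel\`a--Ascoli we therefore get local uniform convergence $\mathfrak u_{\ep_j}\to\mathfrak u$, weak convergence of $\grad\mathfrak u_\ep$ and $\partial_t\mathfrak u_\ep$, and the Lipschitz bound \eqref{eqn:LipEst} in the limit.

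For $\chi_\ep = 2\mathcal B_\ep(\mathfrak u_\ep)$ we need a local BV bound in space-time. We have $\partial_t\chi_\ep = 2\beta_\ep(\mathfrak u_\ep)\partial_t\mathfrak u_\ep$, and by monotonicity this has a fixed sign, so its local $L^1$ norm is controlled by the total variation of $\chi_\ep\in[0,1]$. Spatially, we test the equation against $\eta\,\mathcal B_\ep$-type cutoffs: multiplying $\partial_t \mathfrak u_\ep-\Delta\mathfrak u_\ep = -Q^2\beta_\ep(\mathfrak u_\ep)$ by $\eta$ and integrating gives
\[\int \eta Q^2 \beta_\ep(\mathfrak u_\ep) \le C\|\eta\|_{L^2H^1}\big(\|\grad \mathfrak u_\ep\|_{L^2}+\|\partial_t\mathfrak u_\ep\|_{L^2}\big),\]
using $\grad\eta$. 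Combined with the local Lipschitz bound, this gives $\int\eta|\grad\chi_\ep| = 2\int \eta\beta_\ep|\grad \mathfrak u_\ep|\le C_\eta\|\eta\|_{L^2H^1}$, which is \eqref{eqn:chiEst2} at the $\ep$ level. BV compactness then yields $\chi_{\ep_j}\to\chi$ in $L^1_{\rm loc}$ and a.e. Since $\chi_\ep\in(0,1)$ only where $0<\mathfrak u_\ep<\ep$, and $\int\eta\,\beta_\ep\,\dots$ is bounded, we get $\chi\in\{0,1\}$ a.e. In $\{\mathfrak u>0\}$ we have $\mathfrak u_\ep\ge\ep$ eventually, so $\chi=1$ there, which gives ${\bf 1}_{\{\mathfrak u>0\}}\le\chi$. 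Lower semicontinuity then passes the dissipation identity to \eqref{eqn:diss2} for a.e.\ $T$.

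For the inner variation identity, we multiply the semilinear equation by $\xi\cdot\grad\mathfrak u_\ep$ and integrate by parts. This gives the exact identity \eref{parabolic-inner-var} with $\chi$ replaced by $\chi_\ep$, using $Q^2\beta_\ep(\mathfrak u_\ep)\grad\mathfrak u_\ep = \tfrac12 Q^2\grad\chi_\ep$, and this integrates by parts to $-\tfrac12(\chi_\ep\grad\cdot(Q^2\xi))$. The terms linear in $\chi_\ep$ pass to the limit by $L^1$ convergence. The term $(\xi\cdot\grad\mathfrak u_\ep)\partial_t\mathfrak u_\ep$ passes once we have strong $L^2_{\rm loc}$ convergence of $\grad\mathfrak u_\ep$, since $\partial_t\mathfrak u_\ep$ converges weakly.

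The main obstacle is this strong convergence of gradients, which is needed for the quadratic terms $|\grad\mathfrak u_\ep|^2\grad\cdot\xi - 2\grad\mathfrak u_\ep\cdot D\xi\grad\mathfrak u_\ep$. Our first attempt is to test the equation with $\eta(\mathfrak u_\ep-\mathfrak u)$. We get $\int\eta|\grad\mathfrak u_\ep|^2\to\int\eta|\grad\mathfrak u|^2$ because three things hold. The reaction term satisfies $\beta_\ep(\mathfrak u_\ep)(\mathfrak u_\ep-\mathfrak u)\to0$ in $L^1_{\rm loc}$: it is supported where $\mathfrak u_\ep<\ep$, where $|\mathfrak u_\ep-\mathfrak u|\to0$ uniformly, and $\int\eta\beta_\ep$ is bounded. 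The time-derivative term vanishes by uniform convergence. Finally, we need the identity $\int\eta|\grad\mathfrak u|^2 = -\int\mathfrak u\grad\eta\cdot\grad\mathfrak u - \int\eta\,\mathfrak u\,\partial_t\mathfrak u$. This identity holds because $\mathfrak u$ solves the heat equation in $\{\mathfrak u>0\}$ and $\mathfrak u\,\grad\mathfrak u$ vanishes on the zero set; to justify it we test with $\eta(\mathfrak u-\delta)_+$ and let $\delta\to0$. Together with weak convergence this gives strong convergence. The remainder of the argument is routine.
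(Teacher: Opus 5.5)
Your argument is correct for the time-monotone solutions of \pref{semilinear-wellposed}. The proposition, however, is stated for arbitrary solutions of \eqref{e.bernoulli-parabolic-semilinear} with $g_\ep\to g$ in $H^1$ and uniformly bounded, and there is no monotonicity hypothesis. For such solutions your proof of $L^1_{\rm loc}$ compactness of $\chi_\ep$ has a gap. The bound \eqref{eqn:chiEpsBV} controls only the spatial gradient $\grad\chi_\ep$. Your control of $\partial_t\chi_\ep=2\beta_\ep(\mathfrak u_\ep)\partial_t\mathfrak u_\ep$ comes entirely from its sign. Without monotonicity no uniform $L^1_{\rm loc}$ bound on it is available: the naive estimate, Cauchy--Schwarz together with $\int\eta\,\beta_\ep(\mathfrak u_\ep)\le C$, gives only $O(\ep^{-1/2})$. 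So spatial BV alone gives no compactness in time.

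The missing idea is Weiss's argument, used in Lemma \ref{lem:chiConverge}. Work with the energy density $e_\ep=|\grad\mathfrak u_\ep|^2+Q^2\chi_\ep$, for which the equation gives
\[\partial_t e_\ep = 2\grad\cdot\bigl(\partial_t\mathfrak u_\ep\,\grad\mathfrak u_\ep\bigr)-2(\partial_t\mathfrak u_\ep)^2.\]
The $\beta_\ep$ terms cancel exactly because the weight $Q^2$ in $e_\ep$ matches the reaction term. Hence, for a fixed spatial mollifier $\phi_\delta$, the dissipation identity bounds $\partial_t(e_\ep*\phi_\delta)$ in $L^1_{\rm loc}$. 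Two further ingredients control spatial translates of $e_\ep$ uniformly in $\ep$: the spatial BV bound on $\chi_\ep$, and the strong $L^2_{\rm loc}$ compactness of $\grad\mathfrak u_\ep$, which you prove. Together these make $e_\ep$ precompact in $L^1_{\rm loc}$, and therefore $\chi_\ep=Q^{-2}(e_\ep-|\grad\mathfrak u_\ep|^2)$ as well.

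For the monotone flows that \tref{parabolic-existence} actually uses, your shortcut is legitimate and shorter. The identity $\int_K\int_a^b|\partial_t\chi_\ep|=|\int_K(\chi_\ep(b)-\chi_\ep(a))\,dx|\le|K|$ gives a genuine space--time BV bound, and the paper itself remarks that monotonicity would suffice in its main application. What Weiss's argument buys is the generality in which the proposition is phrased.

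Separately, your strong-convergence step is more direct than you present it. With the test function $\eta(\mathfrak u_\ep-\mathfrak u)$, every term except $\int\eta\grad\mathfrak u_\ep\cdot\grad\mathfrak u$ tends to zero; the reaction term, for instance, is at most $\|\mathfrak u_\ep-\mathfrak u\|_{L^\infty({\rm spt}\,\eta)}\int\eta Q^2\beta_\ep(\mathfrak u_\ep)$. The remaining term tends to $\int\eta|\grad\mathfrak u|^2$ by weak convergence, so norm convergence follows without any identity for the limit. The identity \eqref{eqn:heatEqnRelation} is needed only when one tests with $\eta\mathfrak u_\ep$ and discards the reaction term by its sign, as in Lemma \ref{lem:strongGrad}. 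The paper uses it again in \tref{longtime-inner}, but for this proposition you can drop it.
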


For the proof of \pref{eps-inner-limit}, we begin by reframing solutions of the semilinear problem as $\eps$-variational solutions; to pass to the $\eps\to 0$ limit we improve the convergence given in Proposition \ref{p.semilinear-wellposed} (e.g., obtaining strong gradient convergence and perimeter bounds for $\chi_\ep$); the pieces are assembled in Subsection \ref{subsec:paraInnerVar}. We point out that our argument relies on Proposition \ref{p.semilinear-wellposed}, which is only stated for select data $g_\eps$; however, \ref{part.H1-continuity} and \ref{part.lipschitz} of Proposition \ref{p.semilinear-wellposed} and the bound
\begin{equation}\label{eqn:inftyBound}
0\leq \mathfrak{u}_\eps\leq \|g_\eps\|_{L^\infty(U)}
\end{equation} hold for any solution of the semilinear problem, as can be seen from the proof.

With \pref{eps-inner-limit} in hand, we turn to the proof of \tref{longtime-inner} in Subsection \ref{subsec:varTtoInfty}, which identifies the limit of $\mathfrak{u}$ as $T\to \infty$.

\subsection{Inner variational formula for semilinear problem}
Let $\xi \in C^\infty_c(U \times (0,\infty);\R^d)$. Multiplying the equation \eqref{e.bernoulli-parabolic-semilinear} on both sides by $2\xi \cdot \grad \mathfrak{u}_\ep$, integrating on $U_\infty : = U \times (0,\infty)$, and doing appropriate integration by parts, gives the following formula
\begin{equation}\label{eqn:innerVarEps}
		\int_{U_\infty} (|\nabla \mathfrak{u}_\ep|^2 + Q(x)^2\chi_\ep) \grad \cdot \xi - 2 \nabla \mathfrak{u}_\ep \cdot D\xi \nabla \mathfrak{u}_\ep +\grad Q(x)^2 \cdot \xi \chi_\ep - 2\xi \cdot \grad \mathfrak{u}_\ep \partial_t\mathfrak{u}_\ep  dx  dt = 0.
	\end{equation}
	Recall that $\chi_\ep$ is defined in \eqref{eqn:chiEps}. For details see \cite{Weiss}*{Definition 6.1}. To pass to the limit in \eqref{eqn:innerVarEps}, we need to show that $\nabla \mathfrak{u}_\eps$ converges strongly in $L^2$, $\chi_\eps$ converges strongly in $L^1$, and $\partial_t\mathfrak{u}_\ep$ converges weakly in $L^2.$ Towards this end, we derive some energetic estimates.

    \subsection{Compactness of semilinear approximation}\label{subsec:strongGrad}
    
Since the energy of the data $g_\ep$ is bounded uniformly in $\ep>0$, \eqref{e.dissipation-eps} implies $\mathfrak{u}_\eps$ is uniformly bounded in $H^1(U\times (0,T))$, so that along a subsequence $\ep \to 0$ (not relabeled) $\mathfrak{u}_\eps$ weakly converges to $\mathfrak{u}$ in $H^1_{\textup{loc}}(U_\infty)$. Adapting the argument of \cite{CaffarelliVazquez1995}*{Lemma 7.2}, {it is possible to} prove that $\grad \mathfrak{u}_\ep$ is also strongly compact in $L^2_{\rm loc}(U_\infty)$.  
     
     \begin{lemma}\label{lem:strongGrad}
     Let $\mathfrak{u}_{\ep} \to \mathfrak{u}$ as above. Then, along a further subsequence (not relabeled), $\nabla \mathfrak{u}_\eps$ converges to $\nabla \mathfrak{u}$ in $L^2_{\rm loc}(U_\infty).$
     \end{lemma}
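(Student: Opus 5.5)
The strategy is the classical one for showing strong $L^2$ compactness of gradients in singular limits, as in \cite{CaffarelliVazquez1995}*{Lemma 7.2}. Weak convergence gives $\liminf\int \eta|\grad \mathfrak u_\ep|^2 \geq \int \eta|\grad \mathfrak u|^2$ for any cutoff $\eta\in C^\infty_c(U_\infty;[0,\infty))$. So it suffices to prove the reverse inequality
\[
\limsup_{\ep\to0}\int_{U_\infty}\eta|\grad \mathfrak u_\ep|^2\,dx\,dt \leq \int_{U_\infty}\eta|\grad \mathfrak u|^2\,dx\,dt .
\]
By \pref{semilinear-wellposed}\ref{part.lipschitz}, \eqref{eqn:inftyBound} and \ref{part.attainment}, the $\mathfrak u_\ep$ are locally uniformly Lipschitz in space and equicontinuous. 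Passing to a subsequence, $\mathfrak u_\ep\to \mathfrak u$ locally uniformly. In addition, $\partial_t\mathfrak u_\ep\rightharpoonup\partial_t \mathfrak u$ weakly in $L^2$ by \eqref{e.dissipation-eps}.

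The first step is an energy identity from testing the equation with $\eta\,\mathfrak u_\ep$:
\[
\int \eta|\grad\mathfrak u_\ep|^2 = -\int \mathfrak u_\ep\grad\eta\cdot\grad\mathfrak u_\ep - \int\eta\,\mathfrak u_\ep\partial_t\mathfrak u_\ep - \int \eta\, Q^2\,\mathfrak u_\ep\beta_\ep(\mathfrak u_\ep).
\]
The last term is harmless, since $0\le z\beta_\ep(z)\le \|\beta\|_\infty\one_{\{0<z<\ep\}}$. To see that it vanishes in the limit, note that on $\{0<\mathfrak u_\ep<\ep\}$ we have $z\beta_\ep(z)=(z/\ep)\beta(z/\ep)$, which is bounded but not small. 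I would therefore not estimate it pointwise, but instead pair it with the limit identity below.

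The first two terms converge, by uniform convergence of $\mathfrak u_\ep$ times weak convergence of $\grad\mathfrak u_\ep$ and $\partial_t\mathfrak u_\ep$, to
\[
-\int \mathfrak u\grad\eta\cdot\grad\mathfrak u - \int\eta\,\mathfrak u\,\partial_t\mathfrak u .
\]
The second step is to show that this limit expression equals $\int\eta|\grad\mathfrak u|^2$, i.e.\ that $\int \grad\mathfrak u\cdot\grad(\eta\mathfrak u)+\eta\mathfrak u\partial_t\mathfrak u=0$. For this I would use that $\mathfrak u$ solves the heat equation in the open set $\{\mathfrak u>0\}$. There, $\beta_\ep(\mathfrak u_\ep)=0$ once $\mathfrak u_\ep>\ep$, which by uniform convergence holds on compact subsets. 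Rather than testing with $\eta\mathfrak u$ directly, I would test with $\eta(\mathfrak u-\delta)_+$. This function is supported where $\mathfrak u>\delta$, hence where $\mathfrak u_\ep>\delta/2>\ep$ for small $\ep$. Therefore
\[
\int\grad \mathfrak u_\ep\cdot\grad(\eta(\mathfrak u_\ep-\delta)_+)+\eta(\mathfrak u_\ep-\delta)_+\partial_t\mathfrak u_\ep=0
\]
holds exactly, with no reaction term.

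Accordingly, I would rerun the whole argument with $(\mathfrak u_\ep-\delta)_+$ in place of $\mathfrak u_\ep$. This gives an exact identity for $\int\eta|\grad(\mathfrak u_\ep-\delta)_+|^2$, which passes to the limit and shows $\grad(\mathfrak u_\ep-\delta)_+\to\grad(\mathfrak u-\delta)_+$ strongly. The remaining error is $\int_{\{\mathfrak u_\ep\le\delta\}}\eta|\grad\mathfrak u_\ep|^2$. The main obstacle is controlling this error uniformly in $\ep$ as $\delta\to0$.

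To handle it, I would test the equation with $\eta\min(\mathfrak u_\ep,\delta)$. This gives
\[
\int_{\{\mathfrak u_\ep<\delta\}}\eta|\grad\mathfrak u_\ep|^2 \le C\delta\Big(\int|\grad\eta||\grad\mathfrak u_\ep|+|\partial_t\mathfrak u_\ep|\Big) - \int\eta\, Q^2\min(\mathfrak u_\ep,\delta)\beta_\ep(\mathfrak u_\ep) .
\]
The last term has a favorable sign, being nonpositive. The remaining terms are $O(\delta)$ uniformly in $\ep$ by the Lipschitz and dissipation bounds. Letting $\ep\to0$ and then $\delta\to0$ yields the limsup inequality, and hence strong convergence of $\grad\mathfrak u_\ep$ in $L^2_{\rm loc}(U_\infty)$.
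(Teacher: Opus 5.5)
Your argument is correct, but it is organized differently from the paper's.

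\textbf{The paper's route.} The paper tests with $\eta\mathfrak u_\ep$ and simply discards the reaction term, because $Q^2\beta_\ep(\mathfrak u_\ep)\mathfrak u_\ep\eta\ge 0$. This immediately gives
$\limsup_{\ep\to0}\int\eta|\grad\mathfrak u_\ep|^2\le\int\tfrac12\mathfrak u^2\partial_t\eta-\int\mathfrak u\grad\mathfrak u\cdot\grad\eta$.
The truncation $(\mathfrak u-\delta)_+$ is then used only for the limit function, to identify the right-hand side with $\int\eta|\grad\mathfrak u|^2$. This last step lets $\delta\to0$ and uses $\grad\mathfrak u=0$ a.e.\ on $\{\mathfrak u=0\}$.

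\textbf{The reaction term.} Your worry that this term is ``bounded but not small'' is beside the point. Only its sign matters for the limsup inequality, and you use exactly that sign in your $\eta\min(\mathfrak u_\ep,\delta)$ test. Indeed, with $\delta\ge\sup\mathfrak u_\ep$, that test is literally the paper's first step.

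\textbf{Your route.} You truncate at the $\ep$-level instead. For $\ep<\delta$ the equation is exactly the heat equation on $\{\mathfrak u_\ep>\delta\}$. This gives an exact energy identity and strong convergence of $\grad(\mathfrak u_\ep-\delta)_+$. For that you must also pass to the limit in the identity with the fixed test function $\eta(\mathfrak u-\delta)_+$, to obtain the matching identity for $\mathfrak u$. Your text slightly conflates the two test functions $\eta(\mathfrak u-\delta)_+$ and $\eta(\mathfrak u_\ep-\delta)_+$, but both identities are valid and both are needed. Meanwhile the low region contributes $\int_{\{\mathfrak u_\ep<\delta\}}\eta|\grad\mathfrak u_\ep|^2\le C\delta$, uniformly in $\ep$.

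\textbf{Comparison.} Your approach buys a quantitative no-concentration estimate near the free boundary, using only the dissipation bounds. It also avoids the $\delta\to0$ passage in the limit identity, since you only need the trivial bound $\int\eta|\grad(\mathfrak u-\delta)_+|^2\le\int\eta|\grad\mathfrak u|^2$. The paper's version is shorter.
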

    \begin{proof}
    It suffices to show convergence of the norms $\|\eta^{1/2}\grad \mathfrak{u}_\ep\|_{L^2} \to \|\eta^{1/2}\grad \mathfrak{u}\|_{L^2}$ for arbitrary $\eta \in C^1_c(U)$. We multiply \eqref{e.bernoulli-parabolic-semilinear} by $\eta \mathfrak{u}_\ep$ where $\eta\in C_c^1(U_\infty)$ is a smooth, non-negative test function and use $Q^2\beta_\eps(\mathfrak{u}_\eps) \mathfrak{u}_\ep\eta \geq 0$ to find
    $$\int_{U_\infty} \partial_t \mathfrak{u}_\eps    \mathfrak{u}_\eps   \eta   dx  dt \leq \int_{U_\infty} \Delta \mathfrak{u}_\eps   \mathfrak{u}_\eps   \eta   dx  dt. $$
    Integrating both sides by parts (left in time and right in space), we have
    $$-\int_{U_\infty} \frac{1}{2}|\mathfrak{u}_\eps|^2 \partial_t \eta   dx  dt \leq -\int_{U_\infty} |\nabla \mathfrak{u}_\eps|^2 \eta   dx  dt  -\int_{U_\infty} \mathfrak{u}_\eps \nabla \mathfrak{u}_\eps\cdot \nabla \eta   dx  dt,$$
    leading to 
	\begin{equation}\label{eqn:limsupForGrad}    
    \limsup_{\eps\to 0}\int_{U_\infty} |\nabla \mathfrak{u}_\eps|^2 \eta   dx  dt \leq \int_{U_\infty} \frac{1}{2}|\mathfrak{u}|^2 \partial_t \eta   dx  dt -  \int_{U_\infty} \mathfrak{u} \nabla \mathfrak{u}\cdot \nabla \eta   dx  dt
    \end{equation}
    where we have used the strong convergence of $\mathfrak{u}_\eps$ in $L^2.$
    
    We claim that 
    \begin{equation}\label{eqn:heatEqnRelation}
    \int_{U_\infty} \frac{1}{2}|\mathfrak{u}|^2 \partial_t \eta \,  dx  dt -  \int_{U_\infty} \mathfrak{u} \nabla \mathfrak{u}\cdot \nabla \eta  \,  dx  dt = \int_{U_\infty} |\nabla \mathfrak{u}|^2 \eta   \, dx  dt.
    \end{equation}
    To see this, we note that $\partial_t \mathfrak{u} = \Delta \mathfrak{u}$ in $\{\mathfrak{u}>0\}$ in the weak sense
    \begin{equation}\label{eqn:weakHeat}
    \int_{U_\infty} \partial_t \mathfrak{u} \phi  \, dx   dt = - \int_{U_\infty} \nabla \mathfrak{u} \cdot \nabla \phi  \, dx   dt
    \end{equation}
    for every Lipschitz $\phi$ compactly supported in the open set $\{\mathfrak{u} > 0\} \cap U_\infty$.  This follows in a standard way (see \cite{CaffarelliVazquez1995}*{Lemma 7.1}) by multiplying \eqref{e.bernoulli-parabolic-semilinear} by $\phi$, integrating by parts and using weak $H^1_{\textup{loc}}$ convergence. The reaction term is eliminated using the compact support of $\phi$ in $\{\mathfrak{u}>0\}$ and the local uniform convergence to conclude that $\mathfrak{u}_\ep>\ep$ on the support of $\phi$ for $\ep>0$ sufficiently small.
    
     For $\delta > 0$ the function $\phi := \mathfrak{u}^\delta \eta$, with $\mathfrak{u}^\delta :=  \max\{\mathfrak{u}-\delta,0\}$, is Lipschitz with ${\rm spt}(\phi) \subset \{\mathfrak{u} \geq \delta\} \cap {\rm spt}\, \eta$, hence admissible; using $\partial_t \mathfrak{u}   \mathfrak{u}^\delta = \partial_t \tfrac12 (\mathfrak{u}^\delta)^2$ a.e.\ and integrating by parts in time, we find
    \begin{equation}\nonumber
    \int_{U_\infty} \frac{1}{2}|\mathfrak{u}^\delta|^2\partial_t \eta   \, dx   dt =  \int_{U_\infty} \mathfrak{u}^\delta \nabla \mathfrak{u}^\delta \cdot \nabla \eta \,  dx   dt + \int_{U_\infty} |\nabla \mathfrak{u}^\delta|^2 \eta  \, dx   dt,
    \end{equation}
    Letting $\delta \to 0$: $\mathfrak{u}^\delta \to \mathfrak{u}$ uniformly and $\grad \mathfrak{u}^\delta = \grad \mathfrak{u} {\bf 1}_{\{\mathfrak{u}>\delta\}} \to \grad \mathfrak{u} {\bf 1}_{\{\mathfrak{u}>0\}} = \grad \mathfrak{u}$ in $L^2({\rm spt} \, \eta)$, where the last equality holds a.e.\ because $\grad \mathfrak{u} = 0$ a.e.\ on $\{\mathfrak{u}=0\}$. This recovers \eqref{eqn:heatEqnRelation}.
    
    Inserting \eqref{eqn:heatEqnRelation} into \eqref{eqn:limsupForGrad}, we find
    \begin{equation}\label{eqn:limsupForGradFinal}    
    \limsup_{\eps\to 0}\int_{U_\infty} |\nabla \mathfrak{u}_\eps|^2 \eta   dx  dt \leq \int_{U_\infty} |\nabla \mathfrak{u}|^2 \eta   dx  dt,
    \end{equation}
    Since $\eta \geq 0$, weak lower semicontinuity gives $\int \eta |\grad \mathfrak{u}|^2 \leq \liminf_{\ep \to 0}\int\eta|\grad \mathfrak{u}_\ep|^2$, so in fact $\int \eta|\grad \mathfrak{u}_\ep|^2 \to \int\eta|\grad \mathfrak{u}|^2$. 
    \end{proof}

\subsection{Perimeter estimate and $\chi$ compactness in $L^1$}
We prove a localized $L^1$ estimate on $\grad\chi_\ep$. Fix a non-negative test function $\eta \in C_c^\infty(U_\infty)$, let $d_\eta := \dist({\rm spt}\, \eta, \partial_P U_\infty) > 0$ and $T_\eta := \sup\{t : (x,t) \in {\rm spt}\, \eta\}$, and let $L_\eta$ be the bound for $\|\grad \mathfrak{u}_\ep\|_{L^\infty({\rm spt}\, \eta)}$ provided by \eqref{e.localEst} and \eqref{eqn:inftyBound}, which depends on $\eta$ only through $d_\eta$. Recalling the definition of $\chi_\eps$ in \eqref{eqn:chiEps} and using $Q \geq Q_{\min}$ together with the equation \eqref{e.bernoulli-parabolic-semilinear}, we find
\begin{equation}\label{eqn:chiEpsBV}
\begin{aligned}
\int_{U_\infty} \eta |\nabla \chi_\eps|   dx  dt & = \int_{U_\infty} 2\eta  \beta_\eps(\mathfrak{u}_\eps)  |\nabla \mathfrak{u}_\eps|   dx  dt \\
& \leq \frac{2L_\eta}{Q_{\min}^2} \int_{U_\infty} \eta  Q^2 \beta_\eps(\mathfrak{u}_\eps)    dx  dt\\
& = \frac{2L_\eta}{Q_{\min}^2}\int_{U_\infty} \big({-\grad \eta \cdot \grad \mathfrak{u}_\ep} - \eta  \partial_t \mathfrak{u}_\ep\big)   dx  dt\\
& \leq C_\eta  \|\eta\|_{L^2((0,\infty);H^1(U))},
\end{aligned}
\end{equation}
where 
the final inequality uses \eqref{e.dissipation-eps}. Note that only spatial regularity of $\eta$ enters, and that $C_\eta$ depends on $\eta$ only through $d_\eta$ and $T_\eta$.

To pass to the limit in $\eps$, we need (mild) regularity in time for $\chi_\eps$ to recover $L^1$ convergence. In our primary application within this paper, one can argue via monotonicity. However, there is an argument due to Weiss in \cite{Weiss03}*{Proposition 4.1}, which can be localized to obtain $L^1$ convergence. As the argument is by in large the same, we sketch the proof highlighting where our argument differs.

\begin{lemma}\label{lem:chiConverge}
Let $\chi_\eps$ be defined in \eqref{eqn:chiEps}. Then up to a subsequence (not relabeled), we have that 
$\chi_\eps \to \chi \in L^1_{ loc}(U_\infty; \{0,1\})$ in $L^1_{ loc}(U_\infty)$. Furthermore, 
$$\int_{U_\infty} \eta |\nabla \chi|   dx  dt \leq  C_\eta\|\eta\|_{L^2((0,\infty);H^1(U))} \quad \text{ for all non-negative $\eta\in C_c^\infty (U_\infty)$},$$
where $C_\eta$ depends on $\eta$ only through $d_\eta = {\rm dist}({\rm spt}\, \eta, \partial_P U_\infty)$ and $T_\eta = \sup\{t: (x,t) \in {\rm spt}(\eta)\}.$ 
\end{lemma}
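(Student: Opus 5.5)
The plan is a localized version of Weiss' argument \cite{Weiss03}*{Proposition 4.1}: get spatial compactness from the perimeter bound \eqref{eqn:chiEpsBV}, get a weak form of time regularity from the equation, and combine the two into strong $L^1_{\rm loc}$ compactness in space-time. First, by \eqref{eqn:chiEpsBV} and $0\le\chi_\ep\le1$, the sequence $\eta\chi_\ep$ is bounded in $L^1$ and its spatial gradient is bounded in $L^1$ on every compact set.

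What is still missing is a time modulus. For this I would use the identity $\chi_\ep = 2\mathcal B_\ep(\mathfrak u_\ep)$, together with the following two facts:
\begin{itemize}
\item $\mathfrak u_\ep$ is uniformly bounded in $H^1$ in time by \eqref{e.dissipation-eps};
\item $\mathcal B_\ep$ is monotone.
\end{itemize}
Concretely, I would control the time translates
\[
\int\!\!\int \eta\,|\chi_\ep(x,t+h)-\chi_\ep(x,t)|\,dx\,dt .
\]
Following Weiss, the plan is to test the equation against functions of the form $\eta\,\mathrm{sign}$-type truncations of $\chi_\ep(t+h)-\chi_\ep(t)$. The reaction term $Q^2\beta_\ep(\mathfrak u_\ep)$ is already controlled in $L^1_{\rm loc}$ by the same computation as in \eqref{eqn:chiEpsBV}, namely by $\int(-\grad\eta\cdot\grad\mathfrak u_\ep-\eta\partial_t\mathfrak u_\ep)$.

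A simpler alternative that I would try first is to mollify in space. Put $\chi_\ep^\rho := \chi_\ep *_x \rho_\delta$. The BV bound gives $\|\eta(\chi_\ep-\chi_\ep^\rho)\|_{L^1}\le C\delta$ uniformly in $\ep$. It then suffices to show that $\partial_t\chi^\rho_\ep$ is bounded in $L^1_{\rm loc}$, or at least in a negative space, uniformly in $\ep$ for fixed $\delta$, and then apply Aubin--Lions / Fréchet--Kolmogorov.

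To get that bound I would use $\partial_t\chi_\ep = 2\beta_\ep(\mathfrak u_\ep)\partial_t\mathfrak u_\ep$, which is not obviously bounded. This is where the localized Weiss argument enters. Since $\chi_\ep$ is a function of $\mathfrak u_\ep$, the variation $|\chi_\ep(t+h)-\chi_\ep(t)|$ is nonzero essentially only where $\mathfrak u_\ep$ crosses the band $[0,\ep]$. The measure of that crossing set is controlled by the coarea formula through the level-set perimeters, which are bounded by $\int\eta\beta_\ep|\grad\mathfrak u_\ep|$, times $\int|\mathfrak u_\ep(t+h)-\mathfrak u_\ep(t)|/|\grad\mathfrak u_\ep|$-type quantities. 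I expect this to be the main obstacle: making the interpolation between the perimeter bound and the $L^2$ bound on $\partial_t\mathfrak u_\ep$ rigorous, localized with $\eta$, and uniform in $\ep$. Weiss' trick of averaging over levels $s\in(0,\ep)$ and using $\|\mathfrak u_\ep(t+h)-\mathfrak u_\ep(t)\|_{L^2}\le h^{1/2}\|\partial_t\mathfrak u_\ep\|_{L^2}$ should give a modulus of the form $C_\eta h^{1/4}$ after a Chebyshev split.

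Once spatial and temporal equicontinuity in $L^1$ are in hand, Fréchet--Kolmogorov yields a subsequence with $\chi_\ep\to\chi$ in $L^1_{\rm loc}(U_\infty)$.

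Next I would show that $\chi$ takes values in $\{0,1\}$. We have $\chi_\ep\in(0,1)$ only where $0<\mathfrak u_\ep<\ep$. On that set,
\[
\int\eta\,\chi_\ep(1-\chi_\ep) \le \int_{\{0<\mathfrak u_\ep<\ep\}}\eta .
\]
By the coarea formula, combined with the Lipschitz bound \eqref{e.localEst}, the band $\{0<\mathfrak u_\ep<\ep\}$ has measure of order $\ep$ times the level-set perimeter on the part where $|\grad\mathfrak u_\ep|$ is not small. The remaining part, where the gradient is small, is handled as follows. $\mathfrak u_\ep\to\mathfrak u$ uniformly, and $\grad\mathfrak u=0$ a.e. on $\{\mathfrak u=0\}$; on the set where the limit is positive we have $\chi_\ep=1$ eventually; and the $\beta_\ep$ mass estimate from \eqref{eqn:chiEpsBV} forces $\chi_\ep(1-\chi_\ep)\to0$ in $L^1_{\rm loc}$. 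Hence $\chi\in\{0,1\}$ a.e.

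Finally, the perimeter bound for $\chi$ follows from \eqref{eqn:chiEpsBV} by lower semicontinuity of total variation under $L^1_{\rm loc}$ convergence, applied with the fixed weight $\eta\ge0$. The constant keeps the stated dependence on $d_\eta$ and $T_\eta$ because the constant in \eqref{eqn:chiEpsBV} does.
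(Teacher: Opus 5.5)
There is a genuine gap in the time-compactness step, which is the heart of this lemma. You correctly note that $\partial_t\chi_\ep = 2\beta_\ep(\mathfrak u_\ep)\partial_t\mathfrak u_\ep$ is not controlled; Cauchy--Schwarz with $\beta_\ep^2\le C\ep^{-1}\beta_\ep$ only gives $O(\ep^{-1/2})$ in $L^1_{\rm loc}$. None of your proposed fixes closes this.
\begin{itemize}
\item \emph{Averaging over levels.} Since $|\chi_\ep(t+h)-\chi_\ep(t)| = \int 2\beta_\ep(s)\,\one\{s \hbox{ lies between } \mathfrak u_\ep(t),\mathfrak u_\ep(t+h)\}\,ds$, averaging over levels only yields $\int|\chi_\ep(t+h)-\chi_\ep(t)|\,dx\le C\ep^{-1}\|\mathfrak u_\ep(t+h)-\mathfrak u_\ep(t)\|_{L^1}\lesssim \ep^{-1}h^{1/2}$. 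This is not uniform in $\ep$, so there is no $h^{1/4}$ modulus.
\item \emph{Perimeter times displacement.} Upgrading the previous bound this way requires controlling the normal velocity $|\partial_t\mathfrak u_\ep|/|\grad\mathfrak u_\ep|$. That needs a lower bound on $|\grad\mathfrak u_\ep|$ in the transition layer, which is a non-degeneracy property you do not have.
\item \emph{Mollification or Aubin--Lions on $\chi_\ep$ alone.} This fails structurally. Using the equation and $\Delta\mathfrak u_\ep\,\partial_t\mathfrak u_\ep = \grad\cdot(\partial_t\mathfrak u_\ep\grad\mathfrak u_\ep)-\tfrac12\partial_t|\grad\mathfrak u_\ep|^2$, you get $\partial_t\chi_\ep = 2Q^{-2}\grad\cdot(\partial_t\mathfrak u_\ep\grad\mathfrak u_\ep) - 2Q^{-2}(\partial_t\mathfrak u_\ep)^2 - Q^{-2}\partial_t|\grad\mathfrak u_\ep|^2$. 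The last term is a pure time derivative of $|\grad\mathfrak u_\ep|^2$, and the available estimates do not control it in $L^1_t$ of any spatially negative norm.
\end{itemize}

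The missing idea is to move that term to the left and work with the energy density $e_\ep = |\grad\mathfrak u_\ep|^2+Q^2\chi_\ep$, as the paper does following Weiss. Mollify in space, integrate by parts in $z$, and substitute $\Delta\mathfrak u_\ep=\partial_t\mathfrak u_\ep+Q^2\beta_\ep(\mathfrak u_\ep)$. The singular terms $\pm 2Q^2\beta_\ep(\mathfrak u_\ep)\partial_t\mathfrak u_\ep$ then cancel exactly, leaving $|\partial_t(e_\ep*\phi_\delta)|\le C_\delta\big(\|\partial_t\mathfrak u_\ep(t)\|_{L^2}^2+\|\grad\mathfrak u_\ep(t)\|_{L^2}^2\big)$. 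The right side is integrable in time by \eqref{e.dissipation-eps}.

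Spatial equicontinuity of $e_\ep$ comes from two sources: \eqref{eqn:chiEpsBV} for the $\chi_\ep$ part, and the strong $L^2_{\rm loc}$ convergence of Lemma~\ref{lem:strongGrad} for the $|\grad\mathfrak u_\ep|^2$ part. Together with the time bound, this makes $e_\ep$ Cauchy in $L^1_{\rm loc}$. Then $\chi_\ep = Q^{-2}(e_\ep-|\grad\mathfrak u_\ep|^2)$ converges because $|\grad\mathfrak u_\ep|^2$ does. Lemma~\ref{lem:strongGrad} is thus an essential input that your plan never uses.

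A smaller point concerns the $\{0,1\}$ step. The full band $\{0<\mathfrak u_\ep<\ep\}$ need not have small measure, since it can contain large regions where $0<\mathfrak u_\ep\ll\ep$, so the coarea argument is misdirected. What works is your last clause made precise: $\{\delta<\chi_\ep<1-\delta\}\subset\{\delta_0\ep<\mathfrak u_\ep<(1-\delta_0)\ep\}$, on which $\beta_\ep\ge c_{\delta_0}/\ep$. The $L^1_{\rm loc}$ bound on $\beta_\ep(\mathfrak u_\ep)$ from \eqref{eqn:chiEpsBV} then gives this set measure $O(\ep)$. The perimeter bound via lower semicontinuity is fine as you have it.
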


\begin{proof}
Let $\mathcal{U} \subset\subset U_\infty $ be a convex set.
We note that it suffices to prove that the energy density $e_\eps : = |\nabla \mathfrak{u}_\eps|^2 + Q(x)^2\chi_\eps$ is $L^1(\mathcal{U})$ precompact: by Lemma \ref{lem:strongGrad}, $|\nabla \mathfrak{u}_\eps|^2 \to |\nabla \mathfrak{u}|^2$ in $L^1(\mathcal{U})$ along the subsequence, and $\chi_\eps = Q^{-2}(e_\eps - |\nabla \mathfrak{u}_\eps|^2)$ with $Q^2 \in C^{0,1}$ and $Q \geq Q_{\min} > 0$, so $L^1$-precompactness of $e_\eps$ yields that of $\chi_\eps$. 
Introducing a (spatial-)mollifier $\phi_\delta$ supported in $B(0,\delta)\subset \R^d$ and taking $\delta <\delta_0 := {\rm dist}(\mathcal{U},\partial U_\infty)/2$, one computes for $(x,t)\in \mathcal{U}$ that
\begin{equation}\nonumber
\begin{aligned}
& |\partial_t (e_\eps*\phi_\delta) (x,t)| \\
& = \left|-\int_{U} 2 (\partial_t \mathfrak{u}_\eps)^2(z,t) \phi_\delta(x-z)  dz + \int_{U}2\partial_t \mathfrak{u}_\eps \nabla \mathfrak{u}_\eps(z,t)\cdot \nabla \phi_\delta(x-z)   dz\right| \\
& \leq C \left( \|\phi_\delta\|_{\infty} + \|\nabla \phi_\delta\|_{\infty} \right) \left(\|\partial_t \mathfrak{u}_\eps(\cdot, t)\|_{L^2(U)}^2 + \|\nabla \mathfrak{u}_\eps(\cdot, t)\|_{L^2(U)}^2 \right).
\end{aligned}
\end{equation} 
where we used $\partial_t e_\eps = 2\grad \mathfrak{u}_\eps\cdot\grad\partial_t \mathfrak{u}_\eps + 2Q^2\beta_\eps(\mathfrak{u}_\eps)\partial_t \mathfrak{u}_\eps$, integrated by parts in $z$, and substituted $\Delta \mathfrak{u}_\eps = \partial_t \mathfrak{u}_\eps + Q^2\beta_\eps(\mathfrak{u}_\eps)$; the terms involving $\beta_\eps$ cancel exactly because the weight $Q^2$ in $e_\eps$ matches the equation. From this and the dissipation equality \eqref{eqn:dissipation} we find
\begin{equation}\label{eqn:timeDerivative}
\int_{\mathcal{U}} |\partial_t (e_\eps*\phi_\delta)|   dx  dt \leq C_\delta.
\end{equation}

Using the bound \eqref{eqn:chiEpsBV} for some $\eta  = 1$ on $\{{\rm dist}(x,\mathcal{U}) < \delta_0\}$ and the convexity of $\mathcal{U}$ (which allows for the fundamental theorem of calculus along lines), one can also show that
\begin{equation}\label{eqn:spatialEst}
\sup_{\eps>0} \|\chi_\eps - \chi_\eps*\phi_\delta\|_{L^1(\mathcal{U})}\leq C\delta,
\end{equation}
where $C$ only depends on $\mathcal{U}$.
By the Fr\'echet--Kolmogorov Theorem, we also have that
\begin{equation}\label{eqn:nablaEst}
\sup_{\eps>0} \||\nabla \mathfrak{u}_\eps|^2 - |\nabla \mathfrak{u}_\eps|^2*\phi_\delta\|_{L^1(\mathcal{U})} \to 0 \quad \text{ as }\delta\to 0. 
\end{equation}
Together with the Lipschitz continuity of $Q^2$, \eqref{eqn:spatialEst} also controls the weighted term: for $|y| \leq \delta$, $\|Q^2\chi_\eps - (Q^2\chi_\eps)(\cdot-y)\|_{L^1(\mathcal{U})} \leq Q_{\max}^2\|\chi_\eps - \chi_\eps(\cdot-y)\|_{L^1(\mathcal{U})} + \|Q^2\|_{C^{0,1}} \delta |\mathcal{U}|$, so $\sup_\eps\|Q^2\chi_\eps - (Q^2\chi_\eps)*\phi_\delta\|_{L^1(\mathcal{U})} \leq C\delta$.

With these estimates in hand, we can show that $e_\eps$ is Cauchy in $L^1(\mathcal{U})$ as follows. Up to a diagonalization argument, we can assume for every $\delta>0$ (for a sequence $\delta\to 0$) that $e_\eps*\phi_\delta$ converges in $L^1$ as $\eps\to 0$ by \eqref{eqn:timeDerivative} and the spatial regularization. Then for $\eps_1$ and $\eps_2$, we have
$$\|e_{\eps_1} - e_{\eps_2}\|_{L^1(\mathcal{U})} \leq \|e_{\eps_1} - e_{\eps_1}*\phi_\delta\|_{L^1(\mathcal{U})} + \|e_{\eps_1}*\phi_\delta - e_{\eps_2}*\phi_\delta\|_{L^1(\mathcal{U})} + \|e_{\eps_2}*\phi_\delta - e_{\eps_2}\|_{L^1(\mathcal{U})}. $$ By choosing $\delta$ small, the first and third term may be taken arbitrarily small due to \eqref{eqn:spatialEst} and \eqref{eqn:nablaEst}. For this choice of $\delta$, the middle term vanishes as $\eps_1,\eps_2\to 0,$ thereby proving that the sequence is Cauchy. This shows $e_\eps$ converges and thereby $\chi_\eps$ converges to $\chi$ in $L^1(\mathcal{U})$.

By an abuse of notation, we reuse the parameters $\delta$ and $\delta_0$ in the following.
Note that by definition $0\leq \chi_\eps\leq 1$, so to show that $\chi$ only takes the values $0$ and $1$, we show that for any $\delta > 0$ it holds that $\mathcal{L}^{d+1}(\{\delta< \chi < 1-\delta\}\cap \mathcal{U})  = 0.$ 
For this, we introduce a secondary parameter $\delta_0$ and use that $\beta > 0$ on $(0,1)$, implying that
\begin{equation}\label{eqn:chi1}
c_{\beta,\delta_0}\frac{1}{\eps}\mathcal{L}^{d+1}(\{\delta_0< \mathfrak{u}_\eps/\eps < 1-\delta_0\} \cap \mathcal{U}) \leq \int_{\mathcal{U}} \beta_\eps(\mathfrak{u}_\eps) \leq C_{\mathcal{U}} 
\end{equation}
where we have used $\beta_\eps(\mathfrak{u}_\eps) \leq Q_{\min}^{-2} Q^2\beta_\eps(\mathfrak{u}_\eps)$ and the computation in \eqref{eqn:chiEpsBV} with a non-negative cutoff $\eta \geq {\bf 1}_{\mathcal{U}}$. 
Recall $\mathcal{B}_1$ from \eqref{eqn:chiEps}. For $\delta_0$ sufficiently small we have $2\mathcal{B}_1(\delta_0) < \delta$ and $1-\delta < 2\mathcal{B}_1(1-\delta_0)$, and inserting this into \eqref{eqn:chi1} gives 
$$ \mathcal{L}^{d+1}(\{\delta< \chi_\eps < 1-\delta\} \leq C(\beta,\delta_0,\mathcal{U}) \eps.$$
Sending $\eps\to 0$ in the above estimate concludes the claim.
\end{proof}

\subsection{Parabolic inner variation formula: proof of \pref{eps-inner-limit}} \label{subsec:paraInnerVar}

\begin{proof}[Proof of \pref{eps-inner-limit}]
By \ref{part.lipschitz}, \ref{part.H1-continuity}, and \ref{part.attainment} of Proposition \ref{p.semilinear-wellposed}, Lemma \ref{lem:strongGrad}, and Lemma \ref{lem:chiConverge}, we may choose a subsequence of $\ep \to 0$ (not relabeled) so that $\mathfrak{u}_\ep$ converges to $\mathfrak{u}\in C(U_\infty)$ locally uniformly, $\grad \mathfrak{u}_\ep \to \grad \mathfrak{u}$ strongly in $L^2_{\rm loc}(U_\infty)$, $\partial_t \mathfrak{u}_\ep \rightharpoonup \partial_t \mathfrak{u}$ weakly in $L^2(U_\infty)$, and $\chi_\ep \to \chi \in L^1_{\rm loc}( U_\infty ;\{0,1\})$ pointwise a.e. and in $L^1_{\rm loc}(U_\infty)$. This proves \ref{part.eps-convergence}.

The convergence obtained above is sufficient to pass to the limit in \eqref{eqn:innerVarEps} 
and find that $(\mathfrak{u},\chi)$ solves
    \begin{equation}\nonumber
		\int_{U_\infty} (|\nabla \mathfrak{u}|^2 + Q(x)^2\chi) \grad \cdot \xi - 2 \nabla \mathfrak{u} \cdot D\xi \nabla \mathfrak{u} +\grad Q(x)^2 \cdot \xi \chi - 2\xi \cdot \grad \mathfrak{u} \partial_t\mathfrak{u}  \, dx  dt = 0
	\end{equation}
	for any $\xi \in C_c^\infty(U_\infty;\R^d)$. 
	Further, we show the inequality ${\bf 1}_{\{\mathfrak{u} > 0\}} \leq \chi$: For any $x_0$ in $\{\mathfrak{u}>0\}$, the locally uniform convergence gives that, for $\ep>0$ sufficiently small, $\mathfrak{u}_\ep \geq \ep$ on a neighborhood of $x_0$. Thus $\chi_\ep = 2\mathcal{B}_\ep(\mathfrak{u}_\ep) = 1$ on this neighborhood for $\ep>0$ sufficiently small and so $\chi$, the a.e. limit of $\chi_\ep$, is also equal to $1$ there.
	Thus, all requirements of \dref{parabolic-inner} hold (except for $\nabla \mathfrak{u} \in L^\infty_{\textup{loc}}(U_\infty)$ proven next), proving \ref{part.eps-inner}.

The estimates of \ref{part.eps-estimates} follow by passing to the limit in the corresponding estimates for $\mathfrak{u}_\ep$. The dissipation inequality follows from \eqref{e.dissipation-eps} and weak lower semi-continuity of the energy. The interior Lipschitz bound from Proposition \ref{p.semilinear-wellposed}\ref{part.lipschitz} passes to the limit by the locally uniform convergence. The weighted perimeter bound follows from \eqref{eqn:chiEpsBV} and the lower semicontinuity of the weighted total variation under $L^1_{\rm loc}$ convergence. 
\end{proof}

\subsection{Stationary limit as $T\to \infty$}\label{subsec:varTtoInfty}

We now pass $T\to \infty$ in the parabolic problem to obtain an inner variational solution of the stationary Bernoulli problem, proving \tref{longtime-inner}.

\begin{proof}[Proof of \tref{longtime-inner}]
\emph{Step 1: Existence of the locally uniform limit.} 
Consider any sequence $t\to \infty$. By \eqref{eqn:diss2}, $\mathfrak{u}(\cdot,t)$ is Cauchy in $L^2(U)$, and consequently the limit $u_{\infty} : = \lim_{t\to \infty}\mathfrak{u}(\cdot,t)$ is defined independent of the subsequence. With a unique limit identified, the Lipschitz estimate \eqref{eqn:LipEst} and the Arzel\`a-Ascoli theorem show convergence of the sequence is locally uniform and that the limit inherits \eqref{eqn:LipEst}. Furthermore, by the uniform bound on the Dirichlet energy from \eqref{eqn:diss2}, $\mathfrak{u}(\cdot,t)$ converges to $u_\infty$ weakly in $H^1(U).$

\emph{Step 2: Strong convergence of the gradient.}
The relation \eqref{eqn:heatEqnRelation} implies
\begin{equation}\nonumber
    -\int_{U_\infty} \partial_t\mathfrak{u}\,  \mathfrak{u} \eta \,  dx  dt -  \int_{U_\infty} \mathfrak{u} \nabla \mathfrak{u}\cdot \nabla \eta  \,  dx  dt = \int_{U_\infty} |\nabla \mathfrak{u}|^2 \eta   \, dx  dt
    \end{equation}
    for $\eta\in C^1_c(U_\infty)$
    after an integration by parts in time.
    Localizing this relation in time, for a.e. $t\in (0,\infty)$ we have
    \begin{equation}\label{eqn:heatEqn}
    -\int_{U} \partial_t\mathfrak{u}(\cdot, t) \mathfrak{u}(\cdot, t) \eta \,  dx  -  \int_{U} \mathfrak{u}(\cdot, t) \nabla \mathfrak{u}(\cdot, t)\cdot \nabla \eta  \,  dx  = \int_{U} |\nabla \mathfrak{u}(\cdot, t)|^2 \eta   \, dx 
    \end{equation}
    for any $\eta\in C^1_c(U)$.
    
    Suppose that $t\to \infty$ is chosen so that 
    \begin{equation}\label{eqn:vanishingtime}
    \int_{U} |\partial_t \mathfrak{u}(\cdot,t)|^2 \, dx \to 0
    \end{equation} and \eqref{eqn:heatEqn} holds for each $t$ in the sequence.
    It follows that 
    \begin{equation}\label{eqn:limsupMe}
     \limsup_{t\to \infty}\int_{U} |\nabla \mathfrak{u}(\cdot, t)|^2 \eta   \, dx \leq  -  \int_{U} u_\infty \nabla u_\infty\cdot \nabla \eta  \,  dx,
     \end{equation}
     where we have used that $\mathfrak{u}(\cdot, t)$ converges to $u_\infty$ weakly in $H^1(U).$
    In the same way that one shows $\mathfrak{u}$ satisfies the heat equation in its positivity set (see \eqref{eqn:weakHeat}), one can show that $u_\infty$ is harmonic in its positivity set with 
    $$\int_{U}\nabla u_\infty \cdot \nabla \phi \, dx = 0$$
    for all Lipschitz $\phi$ with ${\rm spt}\, \phi \subset \subset \{u_\infty>0\}.$
    Taking $\phi  = \eta \max\{u_\infty-\delta,0\}$ and then sending $\delta \downarrow 0$ gives that 
    $$-\int_{U} u_\infty \nabla u_\infty\cdot \nabla \eta  \,  dx = \int_{U} |\nabla u_\infty|^2 \eta   \, dx, $$
    which with \eqref{eqn:limsupMe} gives that $\nabla \mathfrak{u}(\cdot,t)\to \nabla u_\infty$ in $L^2_{\textup{loc}}(U).$

\emph{Step 3: selection of good times.} We choose times so that \eqref{eqn:heatEqn} and \eqref{eqn:vanishingtime} hold (thereby Step 2) and $\chi(\cdot, t)$ converges to $\chi_\infty$ satisfying desired perimeter bounds.  Define the sets $U_k := \{x \in U : {\rm dist}(x,\partial U) > 1/k\}$, $k \in \N$. Applying \eqref{eqn:chiEst2} for well-chosen test functions, we have that
$$\int_{U_k\times(i,i+1)} |\nabla \chi(x,t)|   dx \, dt \leq C_k \quad  \hbox{ for } i\in \N,$$
for a constant independent of $t.$ By Markov's inequality, for any $\theta\in (0,1)$ and $i\in \N$ there is a subset $I_{i,\theta}^k \subset (i,i+1)$ such that
\begin{equation}\label{eqn:chiBoundy}
 \int_{U_k} |\nabla \chi(x,t)|   dx \leq C_k/\theta \quad \text{ for all }t\in I^k_{i,\theta} \quad \text{ and }\quad  \mathcal{L}^1((i,i+1)\setminus I^k_{i,\theta})<\theta.
 \end{equation}
Similarly, we can find $I_i\subset (i,i+1)$ with $\mathcal{L}^1((i,i+1)\setminus I_{i})<1/2$ such that
 \begin{equation}\label{eqn:timeBound}
 \int_{U} |\partial_t \mathfrak{u}(x,t)|^2   dx \leq 2\int_{U\times(i,i+1)} |\partial_t \mathfrak{u}|^2   dx\, dt \quad \text{ for all }t\in I_{i}.
 \end{equation}
 Choose $$t_i\in I_i\cap \bigcap_{k\leq i} I^k_{i,2^{-(k+1)}}$$
 satisfying \eqref{eqn:heatEqn},
 which is possible since the set intersection has measure at least $2^{-(i+2)}.$
 From $\partial_t \mathfrak{u}\in L^2(U_\infty)$ and \eqref{eqn:timeBound}, it follows that \eqref{eqn:vanishingtime} holds for $t_i\to \infty$. 
 Similarly, from \eqref{eqn:chiBoundy}
 $$\limsup_{i\to \infty}\int_{U_k} |\nabla \chi(x,t_i)|   dx \leq C_k,$$
 for a new constant $C_k > 0$.
 Applying compactness in $BV$, we diagonalize in $k$ to find a subsequence of $t_i$, denoted by $t$, such that $\chi(\cdot, t) \to \chi_\infty$ in $L^1_{\textup{loc}}(U)$ as $t\to \infty$ and
$$\int_{U_k} |\nabla \chi_\infty|   dx \leq C_k$$
for all $k\in \N.$

\emph{Step 4: Stationary limit.}
A localization argument in time for \eqref{e.parabolic-inner-var} gives
\begin{equation}\label{eqn:innerVarLocalized}
\begin{aligned}
		\int_{U} \Big[(|\nabla \mathfrak{u}(x,t)|^2 + Q(x)^2\chi(x,t)) \grad \cdot \xi(x) - 2 \nabla \mathfrak{u}(x,t) \cdot D\xi(x) \nabla \mathfrak{u}(x,t) & \\
		+\grad Q(x)^2 \cdot \xi(x) \chi(x,t) - 2\xi(x) \cdot \grad \mathfrak{u}(x,t) \partial_t \mathfrak{u}(x,t) \Big] &   dx  = 0
\end{aligned}
	\end{equation}
	for all $\xi\in C_c^1(U;\R^d)$ and a.e. $t$. In particular, we may assume it holds for the sequence $t\to \infty$ found in Steps 1-3. For that sequence, the previously derived compactness properties show that we may pass to the limit in \eqref{eqn:innerVarLocalized} to recover \eqref{e.stationary-inner-var}.
	Finally, since $\mathfrak{u}(\cdot,t)$ converges to $u_\infty$ locally uniformly, the inequality $\one_{\{\mathfrak{u}(\cdot,t) > 0 \}}\leq \chi(\cdot, t)$ passes to the limit $\one_{\{u_\infty > 0 \}}\leq \chi_\infty.$
	Consequently, $u_\infty$ satisfies Definition \ref{d.stationary-inner} as desired.
\end{proof}

\section{Limits in the viscosity sense}\label{s.viscosity-limits}

In this section we prove the comparison-based limit theorems. This includes the $\ep\to 0$ limit of the semilinear problem, which together with \pref{eps-inner-limit} will yield \tref{parabolic-existence}, and the long-time limits. {In contrast to the variational setting, monotonicity plays an important role here, and we remind the reader that solutions of the semilinear equation found in Proposition \ref{p.semilinear-wellposed} are increasing or decreasing depending on whether the data $g$ is a subsolution or supersolution, respectively.}

\subsection{Some technical lemmas}
We prove two lemmas with the aim of understanding when strict ordering in the limit, as determined by $\prec_E$, can be lifted to an approximating sequence.

\begin{lemma}\label{l.limit-max}
    Suppose that $F$ is compact, $E_j$ are closed in $F$, and $w_j$ are continuous on $F$ with $w_j \to w$ uniformly on $F$.  Call $E^* = {\limsup}^* E_j$. Then
    \[\max_{E^* \cap F} w = \limsup_{j \to \infty}\max_{E_j \cap F} w_j .\]
\end{lemma}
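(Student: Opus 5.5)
The plan is to prove the two inequalities separately, using only compactness of $F$ and uniform convergence. Throughout, $\max_\emptyset := -\infty$. I read ${\limsup}^* E_j$ as the Kuratowski upper limit: the set of points $x$ for which there exist $j_k \to \infty$ and $x_{j_k} \in E_{j_k}$ with $x_{j_k} \to x$. Since each $E_j \subset F$ and $F$ is compact, $E^*$ is a closed subset of $F$, so $E^* \cap F = E^*$ is compact. Also $w$ is continuous as a uniform limit of continuous functions. Hence both maxima in the statement are attained whenever the sets involved are nonempty.

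\emph{Step 1: $\limsup_{j}\max_{E_j\cap F} w_j \le \max_{E^*\cap F} w$.} Choose a subsequence $j_k$ realizing the $\limsup$. If $E_{j_k} \cap F = \emptyset$ for all large $k$, the left side is $-\infty$ and there is nothing to prove. Otherwise, pass to a further subsequence along which the sets are nonempty, and pick maximizers $x_{j_k} \in E_{j_k}$ of $w_{j_k}$. By compactness of $F$, a further subsequence converges to some $x \in F$, and by definition $x \in E^*$. Then
\[
|w_{j_k}(x_{j_k}) - w(x)| \le \|w_{j_k}-w\|_{L^\infty(F)} + |w(x_{j_k}) - w(x)| \to 0 .
\]
This gives $\limsup_j \max_{E_j} w_j = w(x) \le \max_{E^*} w$. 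It also shows that if $E^* = \emptyset$, the left side must be $-\infty$.

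\emph{Step 2: the reverse inequality.} If $E^* = \emptyset$ the right side is $-\infty$ and this direction is trivial. Otherwise, let $x \in E^*$ be a maximizer of $w$ on $E^*$, and take $x_{j_k} \in E_{j_k}$ with $x_{j_k} \to x$. The same estimate as in Step 1 gives $w_{j_k}(x_{j_k}) \to w(x)$. Since $\max_{E_{j_k}} w_{j_k} \ge w_{j_k}(x_{j_k})$, we obtain $\limsup_j \max_{E_j\cap F} w_j \ge w(x) = \max_{E^*\cap F} w$.

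There is no real obstacle here. The only points requiring care are the bookkeeping for empty sets and the observation that uniform convergence together with continuity of the limit $w$ gives $w_j(x_j) \to w(x)$ along any convergent sequence $x_j \to x$; this is used in both directions.
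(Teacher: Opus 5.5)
Your proof is correct and follows the same route as the paper. One inequality comes from a maximizer of $w$ on $E^*$ approximated by points $x_{j_k}\in E_{j_k}$, and the other from maximizers of $w_j$ on $E_j\cap F$ together with a convergent subsequence extracted by compactness of $F$. Your version is somewhat more careful than the paper's: you handle the empty-set cases, and you state explicitly that uniform convergence plus continuity of $w$ gives $w_{j_k}(x_{j_k})\to w(x)$ (the paper writes $w(y_{j_k})$ where $w_{j_k}(y_{j_k})$ is meant).
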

\begin{proof}
    Let $x^* = \mathop{\textup{argmax}}_{E^* \cap F} w$. There are $E_{j_k} \ni x_{j_k} \to x^*$ so
    \[\max_{E^* \cap F} w = w(x^*) = \lim_{k \to \infty} w_{j_k}(x_{j_k}) \leq \limsup_{j \to \infty} \max_{E_j \cap F} w_j.\]
    On the other hand let $y_j = \mathop{\textup{argmax}}_{E_j \cap F} w_j$. Take a subsequence $y_{j_k} \to y \in E^* \cap F$ and $w(y_{j_k}) \to \limsup_{j \to \infty} \max_{E_j \cap F} w_j$. Then
    \[\max_{E^* \cap F} w \geq w(y) = \lim_{k \to \infty} w(y_{j_k}) =  \limsup_{j \to \infty} \max_{E_j \cap F} w_j.\]
\end{proof}
\begin{lemma}\label{l.prec-ordering-convergence}
    Suppose that $F$ is compact, $E$ is closed in $F$, and {$u$ and $v$ are continuous functions with} $u \prec_E v$ on $F$. If $u_j \to u$ and $v_j \to v$ uniformly on $F$ and ${\limsup}^*E_j \subset E$, then $u_j \prec_{E_j} v_j$ for $j$ sufficiently large.
\end{lemma}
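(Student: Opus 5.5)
We argue by contradiction, using \lref{limit-max} to pass the failure of strict ordering to the limit. The relation $u \prec_E v$ on $F$ means $u<v$ on $E\cap F$. Set $w := u - v$ and $w_j := u_j - v_j$. Then $w_j \to w$ uniformly on $F$, and each $w_j$ is continuous on the compact set $F$.

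First, if $E\cap F$ is empty, then ${\limsup}^*E_j \cap F$ is empty as well. We claim that $E_j\cap F$ is then empty for all large $j$. If not, there are points $x_{j_k}\in E_{j_k}\cap F$. By compactness of $F$ a subsequence converges to some $x\in F$, and $x \in {\limsup}^*E_j$ by definition of the upper Kuratowski limit. This is a contradiction, so for large $j$ the conclusion $u_j \prec_{E_j} v_j$ holds vacuously. We may therefore assume $E\cap F\neq\emptyset$. Since $E\cap F$ is compact and $w$ is continuous and negative there, $\max_{E\cap F} w =: -2\delta < 0$.

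Now suppose the conclusion fails along a subsequence $j_k$. Then there are $x_{j_k}\in E_{j_k}\cap F$ with $w_{j_k}(x_{j_k})\ge 0$. Passing to a further subsequence, $x_{j_k}\to x\in F$, and $x\in {\limsup}^*E_j\subset E$. Uniform convergence together with continuity of $w$ gives
\[
w(x) = \lim_{k} w_{j_k}(x_{j_k}) \ge 0 .
\]
This contradicts $w(x)\le -2\delta$, since $x\in E\cap F$.

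Alternatively, one can phrase this through \lref{limit-max} applied to $E^*:={\limsup}^*E_j$. It gives $\limsup_j \max_{E_j\cap F} w_j = \max_{E^*\cap F} w \le \max_{E\cap F} w <0$, with the convention that the maximum over an empty set is $-\infty$. Hence $\max_{E_j\cap F}w_j<0$ for large $j$, which is exactly $u_j\prec_{E_j}v_j$ on $F$.

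The only delicate point is the empty case and the convention it requires, which the direct subsequence argument above handles. No step here is a genuine obstacle: compactness of $F$ and the inclusion ${\limsup}^*E_j\subset E$ carry the whole argument.
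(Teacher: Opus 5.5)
Your proof is correct and is essentially the paper's. The paper's one-line proof is your ``alternative'' paragraph: it invokes \lref{limit-max} to get $\limsup_j \max_{E_j\cap F}(u_j-v_j) = \max_{{\limsup}^*E_j\cap F}(u-v)\le \max_{E\cap F}(u-v)<0$. (The paper writes this with $v$ in place of $v_j$, and with an equality where your inequality is the correct one.) Your direct subsequence argument simply inlines the half of \lref{limit-max} that is needed. That inlined version has a minor advantage: it uses neither closedness of the $E_j$ nor continuity of $u_j,v_j$, and needs no convention for empty maxima. This is convenient, since the lemma is later applied with the open sets $E_j=\{\mathfrak u_{\ep_j}>\ep_j\}$.
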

\begin{proof}
  Note that $u \prec_E v$ on $F$ is equivalent to $\max_{E \cap F} (u - v) < 0$. By \lref{limit-max} 
  \[0>\max_{E \cap F} (u - v) = \limsup_{j \to \infty} \max_{E_j \cap F} (u_j - v)\]
  So $\max_{E_j \cap F} (u_j - v) < 0$ for $j$ sufficiently large i.e. $u_j \prec_{E_j} v$ on $F$.
\end{proof}
\subsection{Semilinear limit}

Our next result shows that the $\ep \to 0$ limit of the semilinear approximations \eref{bernoulli-parabolic-semilinear} is a relaxed viscosity solution of the parabolic Bernoulli problem \eref{bernoulli-parabolic}.

\begin{proposition}\label{p.semilinear-limit-viscosity}
     Suppose that $\mathfrak{u}_{\ep_j}$ solve \eqref{e.bernoulli-parabolic-semilinear} in $U \times (0,T]$ and $\mathfrak{u}_{\ep_j} \to \mathfrak{u}$ locally uniformly in $U \times (0,T]$. Define $E:={\limsup}^*\{\mathfrak{u}_{\ep_j}>\ep_j\}$. Then $(\mathfrak{u},E)$ is a relaxed viscosity solution of \eqref{e.bernoulli-parabolic} in $U \times (0,T]$.
\end{proposition}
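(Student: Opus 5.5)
We need to prove two things: $\mathfrak u$ is a standard supersolution, and $(\mathfrak u,E)$ is a relaxed subsolution. First note $E$ is closed and contains $\{\mathfrak u>0\}$: if $\mathfrak u(x,t)>0$ then locally uniform convergence gives $\mathfrak u_{\ep_j}>\ep_j$ near $(x,t)$ for large $j$, so $(x,t)\in E$. In both halves the strategy is the same. A classical strict barrier $\varphi$ ordered with $\mathfrak u$ on the parabolic boundary of a cylinder $V\times(a,b]$ is lifted to a barrier for $\mathfrak u_{\ep_j}$, ordered in the same way, using \lref{prec-ordering-convergence}. We then run a first-crossing argument for the smooth semilinear solution and obtain a contradiction from the classical maximum principle combined with the strict inequalities satisfied by $\varphi$. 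Finally we pass the interior ordering back to the limit.

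\textbf{Relaxed subsolution.} Let $\varphi$ be a classical strict supersolution on $\overline V\times[a,b]$ with $\mathfrak u\prec_E\varphi$ on $\partial_P$. By compactness and strictness, the same holds for the slightly shifted barrier $\varphi-\delta$ for small $\delta>0$, which remains a strict supersolution after a harmless perturbation. I would regularize $\varphi$ into a semilinear barrier $\psi_\ep$. In $\{\varphi>\eta\}$ take $\psi_\ep=\varphi+\ep$; near $\partial\{\varphi>0\}$ use the standard one-dimensional traveling-wave profile $\psi_\ep=\ep\,\Phi(\varphi/\ep)$, where $\Phi''=Q^2\beta(\Phi)/|\nabla\varphi|^2$-type ODE solutions have asymptotic slope $|\nabla\varphi|/Q>1$. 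This gives a strict supersolution of \eref{bernoulli-parabolic-semilinear} with $\psi_\ep\to\varphi_+$ uniformly and $\psi_\ep\geq\ep$ exactly on a neighborhood of $\{\varphi\ge 0\}$ shrinking to it. \lref{prec-ordering-convergence} with $E_j=\overline{\{\mathfrak u_{\ep_j}>\ep_j\}}$, whose $\limsup^*$ is $E$, gives $\mathfrak u_{\ep_j}\prec_{E_j}\psi_{\ep_j}$ on $\partial_P$ for large $j$. Off $E_j$ we have $\mathfrak u_{\ep_j}\le\ep_j\le\psi_{\ep_j}$ wherever $\varphi\ge0$, and I would handle the region $\varphi<0$ by defining $\psi_\ep$ so that $\{\psi_\ep\le\ep\}$ there carries $\beta_\ep(\psi_\ep)$ large enough. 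This yields the full strict ordering $\mathfrak u_{\ep_j}<\psi_{\ep_j}+\text{small}$ on $\partial_P$. The maximum principle for the semilinear equation applies because $\beta$ is only Lipschitz on each scale, and comparing a strict supersolution against a solution gives no touching at a first contact time. So $\mathfrak u_{\ep_j}\prec_{E_j}\psi_{\ep_j}$ in the interior, and passing to the limit via \lref{limit-max} gives $\mathfrak u\prec_E\varphi$. The $\delta$-shift turns the non-strict limit into strict ordering.

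\textbf{Supersolution.} This half is symmetric but simpler, since $E$ plays no role: for $\varphi\prec\mathfrak u$ we need strict ordering only on $\overline{\{\varphi>0\}}$. I would build subsolution barriers $\psi_\ep\le\varphi_+$ whose profile has slope $|\nabla\varphi|/Q>1$ forcing the reaction term to be subcritical, and with $\psi_\ep=0$ where $\varphi\le -C\ep$. The ordering $\psi_\ep<\mathfrak u_{\ep_j}$ on $\partial_P$ then follows from uniform convergence on the compact set $\overline{\{\varphi\ge0\}}\cap\partial_P$ plus nonnegativity elsewhere. Comparison transfers it to the interior, and the limit gives $\varphi\prec\mathfrak u$, again after a $\delta$-shift to keep strictness.

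\textbf{Main obstacle.} The hard step is the construction of the semilinear barriers $\psi_\ep$ from a classical strict barrier $\varphi$ with $|\nabla\varphi|\gtrless Q$ at the free boundary. It needs careful traveling-wave profiles $\Phi$ for $\Phi''=\beta(\Phi)$, whose energy identity $\tfrac12\Phi'^2=\mathcal B_1(\Phi)$ gives terminal slope exactly $1$ from $\int\beta=\tfrac12$. Error terms from curvature ($\Delta\varphi$, $\partial_t\varphi$) are $O(\ep)$ in the $\ep$-layer and are absorbed by the strict inequalities. This is the classical construction of Caffarelli--V\'azquez and Berestycki--Caffarelli--Nirenberg, adapted to variable $Q$ via Lipschitz continuity. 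The bookkeeping between the $\prec_{E_j}$ ordering and the layer $\{0<\mathfrak u_{\ep_j}\le\ep_j\}$ must also be done carefully.
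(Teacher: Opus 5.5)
The supersolution half of your proposal is fine and matches the paper's Step 2: subsolution barriers built from $\Phi_{\ep,\theta}(\varphi+\delta)$, boundary ordering from uniform convergence near $\overline{\{\varphi>0\}}\cap\partial_P$ plus nonnegativity elsewhere, comparison, then the $\delta$-shift.

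The relaxed subsolution half breaks at exactly the step you flag but do not carry out. To invoke comparison you need $\mathfrak u_{\ep_j}\le\psi_{\ep_j}$ on all of $\partial_P(V\times(a,b])$. But $\mathfrak u\prec_E\varphi$ only controls $\mathfrak u_{\ep_j}$ near $E_j$; off $E_j$ you know only $0\le\mathfrak u_{\ep_j}\le\ep_j$.

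The barrier, meanwhile, must sit below $\ep$ there. A strict supersolution has $|\grad\varphi|<Q$, so your slope ratio is inverted. The semilinear profile therefore cannot be a travelling wave issuing from $0$: it bottoms out at a level $\kappa\ep$ with $\kappa<1$ (\lref{profile-super}). Some dip below $\ep$ near $\partial\{\varphi-\delta>0\}$ is unavoidable, since a barrier $\ge\ep$ would be a heat supersolution and $(\varphi-\delta)_+$ is not one across its free boundary.

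Wherever $\mathfrak u_{\ep_j}\in(\psi_{\ep_j},\ep_j]$ on $\partial_P$, comparison is unavailable. Making $\beta_\ep(\psi_\ep)$ large is an interior property of the barrier and says nothing about these boundary values. An ordering ``up to a small error'' does not feed into comparison either.

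Separately, your limit step via \lref{limit-max} only gives $\mathfrak u\le(\varphi-\delta)_+$ on $E$. Points of $E$ with $\varphi\le0$ need the paper's case (ii): the barrier is $\le(1-c(\delta))\ep_j$ near such a point, while $E$ is approximated by points where $\mathfrak u_{\ep_j}>\ep_j$.

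The paper's proof does not close the main gap either. It passes from $\mathfrak u_{\ep_j}\prec_{E_j}\varphi_{\ep_j}$ on $\partial_P$ to $\mathfrak u_{\ep_j}\le\varphi_{\ep_j}$ on $\partial_P$ with an unjustified ``in particular'', although off $E_j$ the barrier can be as small as $\kappa\ep_j$. In fact no barrier can fix this, because the relaxed subsolution claim fails in this generality. Here is a counterexample.

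Let $Q\equiv1$ and $u_\ep(x)=\ep v_\ep(x_1/\ep)$, where $v_\ep''=\beta(v_\ep)$, $v_\ep'(0)=0$ and $v_\ep(0)=1-\eta_\ep$. Then $v_\ep$ is even, reaches $1$ at some $Y_\ep$, and is affine beyond with slope $\alpha_\ep=(2\int_{1-\eta_\ep}^1\beta)^{1/2}$. Since $\beta$ vanishes to infinite order at $1$, $Y_\ep\ge\eta_\ep/\alpha_\ep\to\infty$ as $\eta_\ep\to0$. By the intermediate value theorem on $\eta\in(0,\tfrac12]$ one can therefore take $\eta_\ep\to0$ with $\ep Y_\ep=1$.

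These are stationary classical solutions, and $u_\ep\to\mathfrak u\equiv0$ locally uniformly since $\alpha_\ep\to0$. Yet $\{u_\ep>\ep\}=\{|x_1|>1\}$, so $E=\{|x_1|\ge1\}$.

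Now take $U$ a large ball, $0<a<b=a+2\mu<T$, $s=x_1-1$, $V=\{|s|<\tfrac1{16},\ |x_i|<1 \text{ for } i\ge2\}$, and $\varphi=\tfrac12 s-s^2+2\mu|x'|^2+\mu-(t-a)$ with $\mu>0$ small. Then $\partial_t\varphi-\Delta\varphi=1-4(d-1)\mu>0$ and $|\grad\varphi|<1$. Also $\varphi>0$ on $E\cap\partial_P(V\times(a,b])$. However, $\varphi(1,0',b)=-\mu<0=\mathfrak u$, and $(1,0',b)\in E$.

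The plateau $\{|x_1|<1\}$, where $u_\ep$ sits just below $\ep$, is precisely the region off $E_j$ where the boundary ordering fails. So an extra hypothesis excluding such plateaus, or a different choice of $E$, is needed; a more careful barrier will not do it.
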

This is essentially \cite{Kim03}*{Theorem 1.3} (see also \cite{CaffarelliVazquez1995}*{Sections 6--7} and \cite{Petrosyan}). Note that \cite{Kim03}*{Theorem 1.3} uses a weaker definition of viscosity solution and assumes that $E \subset \overline{\{\mathfrak{u}>0\}}$ without justification to conclude the subsolution property. Due to these gaps between the result we need and what is shown in the literature we include the proof.

\begin{proof}

We check the relaxed subsolution property and the supersolution property of the limit.

\emph{Step 1: Relaxed Subsolution Property.} Let $V \times (a,b]$ be a parabolic cylinder and ${\varphi}$ a smooth strict supersolution of \eqref{e.bernoulli-parabolic} on ${\overline{V} \times [a,b]}$ with $\mathfrak u\prec_{E} \varphi$ on $\partial_P(V \times (a,b])$. Suppose towards a contradiction that $\mathfrak u \prec_{E} \varphi$ does not hold true on $V \times (a,b]$. 
    
    {We first claim} that for $0 < \delta \leq \delta_0$ sufficiently small, ${(\varphi-\delta)}$ is still a smooth strict supersolution of \eqref{e.bernoulli-parabolic} on ${\overline V \times [a,b]}$ with $\mathfrak u\prec_{E} \varphi-\delta$ on $\partial_P(V \times (a,b])$:  Since $E \cap \partial_P(V \times (a,b])$ is compact $\varphi-\mathfrak u$ is a positive continuous function on that set $(\varphi-\delta)-\mathfrak u$ is also positive on $E \cap \partial_P(V \times (a,b])$ for sufficiently small $\delta>0$.  A similar argument using the continuity of $(\partial_t - \Delta) \varphi$ and $|\grad \varphi|$ shows the interior strict supersolution property of ${(\varphi - \delta)}$. 

    Now since $\mathfrak u \prec_{E} \varphi$ fails on $V \times (a,b]$, there exists $(x_0,t_0) \in E \cap (V \times (a,b])$ with 
    \begin{equation}\label{eqn:contraMe}
    \mathfrak u(x_0,t_0) \geq \varphi(x_0,t_0).
    \end{equation}  We split into two cases; either (i) $\varphi(x_0,t_0) >0$ or (ii) $\varphi(x_0,t_0) \leq 0$. In case (i) choose $\delta>0$ smaller if necessary so that $\varphi(x_0,t_0) - 2 \delta >0$.

    Define $E_j := \{\mathfrak u_{\ep_j} > \ep_j\}$.  Now consider $\varphi_{\ep} := \Psi_{\ep,\theta}(\varphi-\delta)$, where $\Psi_{\ep,\theta}$ is defined in \eqref{e.profile-ep-def}. By \lref{well-prepared} there is $\theta>1$ depending on the strict supersolution property of $\varphi-\delta$ so that $\varphi_\ep$ is a smooth strict supersolution {of the semilinear equation} \eref{bernoulli-parabolic-semilinear} with $|\varphi_\ep -(\varphi-\delta)_+| \leq \ep$. By the uniform convergence of $\varphi_{\ep_j}$ to $(\varphi-\delta)_+$, $\mathfrak u_{\ep_j}$ to $\mathfrak u$, and $E = {\limsup}^*E_j$ {--- which implies $${\limsup}^*(E_j\cap \partial_P(V \times (a,b])) \subset E\cap \partial_P(V \times (a,b])$$---} we {apply \lref{prec-ordering-convergence} to} find that 
    \begin{equation}\nonumber %
    {\mathfrak u_{\ep_j} \prec_{E_j} \varphi_{\ep_j}}\quad \text{ on } \partial_P(V \times (a,b])
    \end{equation} for $\ep_j$ sufficiently small. {In particular this gives $\mathfrak u_{\ep_j} \leq  \varphi_{\ep_j}$ on $\partial_P(V \times (a,b])$,} and  by the classical comparison principle for \eref{bernoulli-parabolic-semilinear} we conclude that 
   \begin{equation}\label{eqn:contraDo}
    {\mathfrak u_{\ep_j}\leq \varphi_{\ep_j}} \quad \text{ in } V \times (a,b].
    \end{equation}  {With this, we aim to contradict \eqref{eqn:contraMe} in cases (i) and (ii).}
    
      First consider case (i) where $\varphi(x_0,t_0)-\delta>0$. Then 
      \begin{equation}\label{eqn:contradicted}
      0\leq \mathfrak u_{\ep_j}(x_0,t_0) -\varphi_{\ep_j}(x_0,t_0) \to \mathfrak u(x_0,t_0)-(\varphi(x_0,t_0) - \delta)>0,
      \end{equation} where the first inequality follows from \eqref{eqn:contraDo} and the second from \eqref{eqn:contraMe}, and we conclude derive a contradiction for $\ep_j>0$ sufficiently small.  
      
      Next in the case (ii), where $\varphi(x_0,t_0) \leq 0$, $\varphi - \delta$ is smaller than $-\delta/2$ in a neighborhood ${\mathcal{N}\subset V\times (a,b]}$ of $(x_0,t_0)$. Then, by \lref{profile-super}\ref{part.profile-super-e} {and \eqref{e.profile-ep-def},} $$\varphi_{\ep_j} \leq (1-c(\delta))\ep_j\quad \text{ in }\mathcal{N}$$ for $\ep_j>0$ sufficiently small. On the other hand $(x_0,t_0) \in E$ so there is a sequence $\ep_{j_k} \to 0$ and $(x_k,t_k) \in E_{j_k}$ with $(x_k,t_k) \to (x_0,t_0)$. Thus, for $k$ sufficiently large $(x_k,t_k) \in \mathcal{N} \cap E_{j_k}$ and 
      \[\mathfrak u_{\ep_{j_k}}(x_k,t_k) > \ep_{j_k} >(1-c(\delta))\ep_{j_k} \geq {\varphi_{\ep_{j_k}}(x_k,t_k)},\] and we again derive a contradiction with \eqref{eqn:contraDo}.

\emph{Step 2: Supersolution Property.} We now check the supersolution condition, which is similar to the above argument.
Suppose $V \times (a,b] \subset\subset U \times (0,T]$ and $\varphi \in C^\infty(\overline{V} \times [a,b])$ is a smooth strict subsolution of \eref{bernoulli-parabolic} in $\overline{V} \times [a,b]$ with $\varphi \prec \mathfrak u$ on $\partial_P(V \times (a,b])$. We want to show $\varphi \prec \mathfrak u$ in $V \times (a,b]$. Suppose this fails, then there is a first touching point (from below) denoted by $(x_0,t_0)$ with $\varphi(x_0,t_0) = \mathfrak u(x_0,t_0)$.

As in Step 1, $\varphi+ \delta$ is also a smooth strict subsolution for all small $\delta$. Noting that $\varphi+ \delta \prec \mathfrak u$ is equivalent to $\varphi+ \delta \prec_{\overline{\{\varphi+\delta>  0\}}} \mathfrak u$ and that 
\begin{equation}\label{eqn:containerMe}
{\limsup_{\delta\to 0}}^* \overline{\{\varphi+\delta>  0\}} = \overline{\{\varphi>  0\}}
\end{equation}
 by the subsolution condition 
$$|\nabla \varphi| > Q \quad \text{ on }\partial \{\phi>0\}\cap\left(\overline{V} \times [a,b]\right),$$
 we apply by Lemma \ref{l.prec-ordering-convergence} to conclude for $\delta>0$ sufficiently small that 
 $$\varphi +\delta \prec \mathfrak u  \quad \text{ on }\partial_P(V \times (a,b]).$$

 We then consider $\varphi_{\ep} := (\Phi_{\ep,\theta}(\varphi+\delta))_+$, which is a {viscosity} subsolution of \eref{bernoulli-parabolic-semilinear} by \lref{well-prepared}. By uniform convergence of $\varphi_{\ep_j}$ to $\varphi$ and $\mathfrak u_{\ep_j}$ to $\mathfrak u$ along with \lref{well-prepared}\ref{part.gep-5} and \eqref{eqn:containerMe}, we again apply \lref{prec-ordering-convergence} to find that $\varphi_{\ep_j} \prec \mathfrak u_{\ep_j}$ on $\partial_P(V \times (a,b])$ for $\ep_j$ sufficiently small. Application of the standard comparison principle for \eqref{e.bernoulli-parabolic-semilinear} then yields $\varphi_{\ep_j} \leq \mathfrak u_{\ep_j}$ on $V \times (a,b]$, but for small $\eps_j$ this contradicts the fact that $\varphi(x_0,t_0)+\delta>\mathfrak u(x_0,t_0)$ (cf. \eqref{eqn:contradicted}).
\end{proof}

In the monotone increasing case, we recover a standard viscosity solution.

\begin{corollary}\label{cor:increasing}
    {Suppose that $\mathfrak{u}_{\ep_j}$ solve \eqref{e.bernoulli-parabolic-semilinear} as in Proposition \ref{p.semilinear-wellposed} with $g$ a strict subsolution and $(\mathfrak{u},E)$ is as in \pref{semilinear-limit-viscosity}.}
    Then $\mathfrak{u}$ is monotonically increasing in $t$, $E = \overline{\{u>0\}}$, and $\mathfrak u$ is a viscosity solution of \eqref{e.bernoulli-parabolic} in $U \times (0,T]$.
\end{corollary}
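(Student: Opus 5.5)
The plan is to obtain monotonicity of $\mathfrak u$ from the approximations, and then to apply \pref{monotone-nondegen-relaxed} to upgrade the relaxed subsolution $(\mathfrak u,E)$ of \pref{semilinear-limit-viscosity} to a standard subsolution. The supersolution property already holds by \pref{semilinear-limit-viscosity}.

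\emph{Monotonicity of $\mathfrak u$ and of $E_t$.} By \pref{semilinear-wellposed}\ref{part.monotone}, each $t\mapsto \mathfrak u_{\ep_j}(x,t)$ is nondecreasing. Pointwise limits preserve this, so $\mathfrak u(\cdot,t)$ is nondecreasing in $t$. For the sets, let $E_j=\{\mathfrak u_{\ep_j}>\ep_j\}$. Then $(E_j)_t\subset (E_j)_s$ for $t\le s$. Take $x\in E_t$, so there are $(x_k,t_k)\in E_{j_k}$ with $(x_k,t_k)\to(x,t)$. Fix $s>t$. For large $k$ we have $t_k<s$, hence $(x_k,s)\in E_{j_k}$, and $(x_k,s)\to(x,s)$, so $x\in E_s$. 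This gives monotonicity of $E$, so $(\mathfrak u,E)$ is a monotone increasing relaxed subsolution.

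\emph{Initial condition $E_0\subset\overline{\{\mathfrak u_0>0\}}$.} Here $\mathfrak u_0=g_+$. Let $x\notin\overline{\{g>0\}}$ and choose a compact neighborhood $K\subset\subset\overline U\setminus\overline{\{g>0\}}$ of $x$. By \pref{semilinear-wellposed}\ref{part.trace}, there is $\tau>0$ with $\mathfrak u_{\ep}<\ep$ on $K\times[0,\tau]$ for all small $\ep$. Hence $E_j\cap(K\times[0,\tau])=\emptyset$, and since $x$ is interior to $K$, $(x,0)\notin E$. Therefore $E_0\subset\overline{\{g>0\}}=\overline{\{\mathfrak u_0>0\}}$.

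This step is the main obstacle. The upper limit is taken at the boundary time $t=0$, where uniform convergence is only available through \pref{semilinear-wellposed}\ref{part.attainment}. To handle it I would regard $E$ as a subset of $\overline U\times[0,T]$ and use \ref{part.trace} exactly as above. Some care is also needed because $\overline{\{g>0\}}$ must coincide with $\overline{\{g_+>0\}}$, which is true.

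\emph{Conclusion.} \pref{monotone-nondegen-relaxed} now yields $E\subset\overline{\{\mathfrak u>0\}}$ and shows that $\mathfrak u$ is a standard subsolution. For the reverse inclusion, note that if $\mathfrak u(x,t)>0$ then locally uniform convergence gives $\mathfrak u_{\ep_j}>\ep_j$ near $(x,t)$, so $\{\mathfrak u>0\}\subset E$. Since $E$ is closed, $\overline{\{\mathfrak u>0\}}\subset E$, and hence $E=\overline{\{\mathfrak u>0\}}$. Together with the supersolution property, $\mathfrak u$ is a viscosity solution of \eqref{e.bernoulli-parabolic} in $U\times(0,T]$.
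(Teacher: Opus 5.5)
Your proposal is correct and follows the paper's argument: combine \pref{semilinear-limit-viscosity} with \pref{monotone-nondegen-relaxed}, using parts \ref{part.attainment} and \ref{part.trace} of \pref{semilinear-wellposed} to verify $E_0 \subset \overline{\{g_+>0\}}$. You also spell out two points the paper leaves implicit, namely the monotonicity of the slices $E_t$ and the inclusion $\overline{\{\mathfrak u>0\}}\subset E$.
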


The corollary is an immediate consequence of  \pref{semilinear-limit-viscosity} and Proposition~\ref{p.monotone-nondegen-relaxed}, where Proposition \ref{p.semilinear-wellposed}  \ref{part.attainment} and \ref{part.trace} ensure the non-degeneracy proposition's hypothesis $E_0 \subset \overline{\{\mathfrak{u}(\cdot,0)>0\}} = \overline{\{g_+>0\}}$ holds.

\subsection{Proof of \tref{parabolic-existence}}

The pieces are now in place to assemble the existence theorem for the parabolic flow \eref{bernoulli-parabolic}. 

\begin{proof}[Proof of \tref{parabolic-existence}]
Let $\ep_j \downarrow 0$ and $(\mathfrak{u},\chi)$ be the subsequence and limit pair of \pref{eps-inner-limit}. By Proposition \ref{p.semilinear-wellposed}\ref{part.attainment}, $\mathfrak{u}$ extends to $\mathfrak{u} \in C(\overline U \times [0,\infty))$ with $\mathfrak{u} = {g}_+$ on $\partial_P U_\infty$, and $\mathfrak{u}(x,\cdot)$ is nondecreasing (resp.\ nonincreasing) in the case of strict subsolution (resp.\ supersolution) Dirichlet data. The pair $(\mathfrak{u},\chi)$ is an inner variational solution of \eref{bernoulli-parabolic} and satisfies the desired estimates by \pref{eps-inner-limit} \ref{part.eps-inner} and \ref{part.eps-estimates}. When $g$ is a strict subsolution, $\mathfrak{u}$ is a viscosity solution of \eref{bernoulli-parabolic} by Corollary \ref{cor:increasing}; when $g$ is a strict supersolution, $\mathfrak{u}$ is a relaxed viscosity solution by \pref{semilinear-limit-viscosity}.
\end{proof}

\subsection{Long time limit}\label{s.long-time-limit}

Here we characterize the $t \to \infty$ limit of the parabolic flow \eqref{e.bernoulli-parabolic}. We verify that the appropriate sub/supersolution notions are preserved in the infinite time limit and show how to rule out degeneracy of the limit.

We begin by checking the relaxed subsolution property of the infinite-time limit.

\begin{lemma}\label{l.subsolution-limit}
    Let $(\mathfrak u,E)$ be a bounded relaxed subsolution of \eqref{e.bernoulli-parabolic} in $U \times (0,\infty)$ such that $\mathfrak u(\cdot,t) \to u_\infty$ locally uniformly in $U$ as $t \to \infty$ and {define $$E_\infty := {\limsup_{T\to \infty}}^*\bigcap_{t\geq T}E_t.$$} Then $(u_\infty,E_\infty)$ is a relaxed subsolution of \eqref{e.bernoulli-basic} in $U$.
\end{lemma}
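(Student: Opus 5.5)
The plan is to argue by contradiction at a touching point. We transfer the failure of the stationary relaxed subsolution property to a failure of the parabolic one on a late time window, using time-independent test functions. The two membership facts come first. Since $\mathfrak u(\cdot,t)\to u_\infty$ locally uniformly, any $x$ with $u_\infty(x)>0$ has a neighborhood $N$ with $\mathfrak u>0$ on $N\times[T,\infty)$ for some $T$. Hence $N\subset\bigcap_{t\ge T}E_t$, so $\{u_\infty>0\}\subset E_\infty$. Also, $E_\infty$ is closed because it is a ${\limsup}^*$. So $(u_\infty,E_\infty)$ is an admissible pair in \dref{relaxed-sub-stationary}.

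\textbf{Step 1: strict local touching.} Suppose $\varphi\in C^\infty(U)$ touches $u_\infty$ from above on $E_\infty\cap U$ at $x_0$, and assume $\Delta\varphi(x_0)<0$ and, if $\varphi(x_0)=0$, also $|\nabla\varphi(x_0)|<Q(x_0)$. Replace $\varphi$ by $\tilde\varphi=\varphi+\kappa|x-x_0|^2$ with $\kappa>0$ small. Then the touching is strict on $E_\infty\setminus\{x_0\}$, and $\tilde\varphi$ is still a classical strict supersolution of \eref{bernoulli-basic} on a small ball $\overline{B_r(x_0)}$. Since $\tilde\varphi$ is time independent with $-\Delta\tilde\varphi>0$, it is a classical strict supersolution of \eref{bernoulli-parabolic} on $\overline{B_r}\times[a,b]$ for every $a<b$. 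Lowering by a small $\delta>0$ keeps $\tilde\varphi-\delta$ a strict supersolution, as in Step 1 of \pref{semilinear-limit-viscosity}. We also get $u_\infty<\tilde\varphi-\delta$ on the compact set $E_\infty\cap\partial B_r$, while $u_\infty(x_0)>\tilde\varphi(x_0)-\delta$ with $x_0\in E_\infty$.

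\textbf{Step 2: transfer to late times.} By the definition of $E_\infty$ there are $T_k\to\infty$ and $x_k\to x_0$ with $x_k\in E_t$ for all $t\ge T_k$. By locally uniform convergence, $\mathfrak u(x_k,t)>\tilde\varphi(x_k)-\delta$ for all large $t$. So the conclusion $\mathfrak u\prec_E\tilde\varphi-\delta$ fails inside $B_r\times(T_k,T_k+1]$ for large $k$. To get a contradiction from \dref{parabolic-relaxed-subsolution}, it remains to check $\mathfrak u\prec_E\tilde\varphi-\delta$ on $\partial_P(B_r\times(T_k,T_k+1])$. The argument is a compactness one in the spirit of \lref{prec-ordering-convergence}. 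Consider the time-shifted functions $\mathfrak u(\cdot,\cdot+T_k)$, which converge uniformly on $\overline{B_r}\times[0,1]$ to the stationary $u_\infty$, and the shifted sets $E-T_k$. We want the ${\limsup}^*$ of the shifted sets, restricted to the parabolic boundary, to lie in $E_\infty\times[0,1]$. Where that holds, strict ordering on $E_\infty$ lifts to $\mathfrak u\prec_E\tilde\varphi-\delta$ on the parabolic boundary for $k$ large.

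\textbf{Main obstacle.} The difficulty is this containment of limit sets. $E_\infty$ only records points that eventually stay in $E_t$ for all later times. A priori, $E_t$ could repeatedly enter and exit near a boundary point $y\in\partial B_r$ where $\tilde\varphi(y)-\delta<0\le u_\infty(y)$, and such $y$ would break the ordering. I would handle this first with the freedom in the time window. For each of the finitely many relevant small patches of $\partial B_r\setminus E_\infty$ (finitely many by compactness), $y\notin E_\infty$ gives arbitrarily late times at which a neighborhood of $y$ avoids $E_t$. I would then try to pick the window $(a_k,b_k]$, with $x_k\in E_t$ throughout, so that its lateral part and base avoid $E$ near such points. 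This may require a window of small length and a covering argument uniform over the patches.

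If this fails, the fallback is to shrink $r$ and modify $\tilde\varphi$ so that $\tilde\varphi-\delta>\sup u_\infty$ on all of $\partial B_r$. This makes the boundary condition hold on the whole sphere, regardless of $E$. One would add a strict supersolution bump that is large on $\partial B_r$ but negligible at $x_0$, keeping strictness via $\kappa$. In that route, checking that the free-boundary gradient condition $|\nabla\tilde\varphi|<Q$ survives the modification is the delicate point.
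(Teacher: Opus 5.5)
\textbf{The decisive gap is the base of the parabolic cylinder, not the lateral boundary you flag.} Definition~\ref{d.parabolic-relaxed-subsolution} requires $\mathfrak u\prec_E\tilde\varphi-\delta$ on all of $\partial_P(B_r\times(T_k,T_k+1])$, and this set contains $\overline{B_r}\times\{T_k\}$.

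\begin{itemize}
\item You chose $x_k\in E_t$ for all $t\ge T_k$, so $(x_k,T_k)\in E$.
\item The same uniform convergence you use to make the conclusion fail gives $\mathfrak u(x_k,T_k)>\tilde\varphi(x_k)-\delta$ for large $k$.
\item In your limiting formulation, $u_\infty<\tilde\varphi-\delta$ holds only on $E_\infty\cap\partial B_r$. But $(E_\infty\times[0,1])\cap\partial_P(B_r\times(0,1])$ also contains $(x_0,0)$, where the inequality is reversed.
\item Moving the window does not help. Since $x_0\in E_\infty$ and $\{u_\infty>\tilde\varphi-\delta\}$ is a neighborhood of $x_0$, every sufficiently late slice $E_a$ contains a point where $\mathfrak u(\cdot,a)>\tilde\varphi-\delta$.
\end{itemize}

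So with a time-independent barrier the comparison hypothesis is never met, whatever happens on $\partial B_r$. Neither of your two fixes touches this.

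\textbf{The missing idea is a barrier that starts strictly above $\mathfrak u$ and descends slowly, so that there is a genuine first crossing.} The paper proceeds as follows.
\begin{itemize}
\item It takes $\psi_n(x,t)=\varphi(x)-\eta t+n$ and lets $t_n$ be the first time at which $\psi_n\le\mathfrak u$ at some $x_n\in E_{t_n}$.
\item Before $t_n$ the ordering holds on $E$. This supplies the base condition for small cylinders ending at $(x_n,t_n)$.
\item Since $(\partial_t-\Delta)\psi_n=-\eta-\Delta\varphi$, the relaxed subsolution property forces one of two alternatives: $\Delta\varphi(x_n)\ge-\eta$, or $\psi_n(x_n,t_n)=0$ and $|\nabla\varphi(x_n)|\ge Q(x_n)$.
\item Boundedness of $\mathfrak u$ gives $t_n\to\infty$. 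Strictness of the maximum and uniform convergence are used to get $x_n\to x_0$. One then lets $n\to\infty$ and finally $\eta\to0$.
\end{itemize}

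\textbf{Your lateral-boundary worry is legitimate, but Fix~1 rests on a false premise.} From $y\notin E_\infty$ you only get a ball $B_s(y)$ each of whose points \emph{individually} leaves $E_t$ at arbitrarily late times. You do not get a common late time, let alone a time interval, at which $B_s(y)$ avoids $E$.

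In the paper the same issue sits in the step $x_n\to x_0$. There \lref{limit-max} actually identifies $\limsup_n M_n(\rho)$ with the maximum of $u_\infty-\varphi$ over $(\overline{B_r}\setminus B_\rho)\cap{\limsup_{n}}^{*}E_{t_n}$. That set contains $E_\infty$ but can be strictly larger if $E_t$ oscillates. Identifying it with $M_\infty(\rho)$ is justified when $t\mapsto E_t$ is monotone, since then ${\limsup_{n}}^{*}E_{t_n}=E_\infty$. That is the situation of \cref{monotone-long-time}, the only place the lemma is applied.

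So once you replace the stationary barrier by the sliding one, you can complete the argument in the monotone setting. As it stands, your plan does not yield a proof.
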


\begin{proof}%
{The uniform convergence of $\mathfrak{u}(\cdot,t)\to u_\infty$ ensures that $\{u_\infty > 0 \}\subset E_\infty,$ as is necessary for a relaxed solution. We now verify the necessary PDE relations.}

    Let $\varphi$ be a smooth test function touching $u_\infty$ from above in $E_\infty \cap U$ strictly at some $x_0 \in E_\infty \cap U$ {(without loss of generality, since we may perturb $\varphi$ by $C|x-x_0|^4$)}.  Then define for any $\eta>0$ and $n \in \N$
    \[ \psi_n(x,t) = \varphi(x) -\eta t+n\]
    Let $(x_n,t_n)$ be a first touching point where $\psi_n$ touches $\mathfrak u$ from above in $E$, i.e. 
    \[t_n = \min \{t : \text{ there is } x\in E_t \cap {\overline{U}} \hbox{ with }  \psi_n(x,t) \leq \mathfrak u(x,t)\}.\]
    Then, as long as $x_n \in U$,
    \begin{equation}\label{eqn:alt1}
    0 \geq (\partial_t-\Delta)\psi_n(x_n,t_n) = -\eta - \Delta \varphi(x_n) \ \hbox{ so } \ \Delta \varphi(x_n) \geq -\eta
    \end{equation}
    or 
    \begin{equation}\label{eqn:alt2}
    \psi(x_n,t_n) = 0  \ \hbox{ and } \ |\grad \psi(x_n,t_n)| \geq Q(x_n),
    \end{equation}
    {since the relaxed subsolution Definition \ref{d.parabolic-relaxed-subsolution} implies the standard touching properties.}
    
     We claim now that $t_n \to \infty$ and $x_n \to x_0$ as $n \to \infty$. The first follows since $u$ is bounded and $\psi_n \to +\infty$ as $n \to \infty$ for $0 \leq t \leq T$. 
     {By hypothesis $u_\infty - \varphi$ has a strict maximum in $E_\infty \cap U $ at $x_0$ with value $0$. Perturbing $\varphi$ by $C|x-x_0|^4$, we may assume there is $r>0$ with $B_r(x_0) \subset\subset U$ such that 
     \[ x_n \in \mathop{\textup{argmax}}_{x \in {\overline{B_r(x_0)}} \cap E_{t_n}} (\mathfrak u(t_n,x)-\varphi(x))\]
     for sufficiently large $t_n.$}
      Since $x_0 \in {\limsup}^*E_{t_n}$ {by definition of $E_\infty$,} there is a sequence $E_{t_{n_j}}\ni y_{n_j} \to x_0$ and, using $\mathfrak u(t_n,\cdot) \to u_\infty$ uniformly {in $\overline{B_r(x_0)}$},
     \begin{align*}
         \limsup_{n \to \infty}(\mathfrak u(t_n,x_n)-\varphi(x_n))&= \limsup_{n \to \infty}\max_{x \in {\overline{B_r(x_0)}}\cap E_{t_n}} (\mathfrak u(t_n,x)-\varphi(x)) \\
         &\geq \limsup_{j \to \infty}(\mathfrak u(t_{n_j},y_{n_j})-\varphi(y_{n_j})) \\
         &=(u_\infty-\varphi)(x_0)= 0
     \end{align*}
     Define
     \[M_n(\rho) := \max_{(\overline{B_r} \setminus B_\rho) \cap E_{{t_n}}}(\mathfrak u(t_n,\cdot)-\varphi) \ \hbox{ and } \ M_\infty(\rho):= \max_{(\overline{B_r} \setminus B_\rho) \cap E_{\infty}}(u_\infty-\varphi).\]
     By \lref{limit-max}, $M_\infty(\rho) = \limsup_{n \to \infty} M_n(\rho)$. Since the maximum of $u_\infty-\varphi$ on $E$ at $x_0$ is strict, we have for all $0 < \rho \leq r$ 
     \[ 0> M_\infty(\rho) = \limsup_{n \to \infty} M_n(\rho).    \]
     Thus, by {considering a subsequence of $x_n$ (not relabeled),} we have $x_n \in \overline{B_\rho(x_0)}$ for $n$ sufficiently large. {A diagonalization argument with $\rho\to 0$ shows} $x_n \to x_0$.
    
    {Now we conclude, taking the limit in the previous inequalities \eqref{eqn:alt1} and \eqref{eqn:alt2}, that either (if the first alternative occurs infinitely often)
    \[ \Delta \varphi(x_0) \geq  -\eta\]
    or (if the second alternative occurs infinitely often) then $$0 = \psi_n(x_n,t_n) = \mathfrak u(x_n,t_n) \to u_\infty(x_0)= \varphi(x_0)$$ and $|\grad \varphi(x_n,t_n)|=|\grad \psi_n(x_n,t_n)| \geq Q(x_n)$, so
     \[\varphi(x_0) = 0  \ \hbox{ and } \ |\grad \varphi(x_0)| \geq Q(x_0).\]
     Since $\eta \to 0$ was arbitrary we can conclude the viscosity subsolution condition.}
\end{proof}

{Next}, we show that the $t\to \infty$ limit of a supersolution of the parabolic flow \eqref{e.bernoulli-parabolic} is a supersolution of the stationary problem \eqref{e.bernoulli-basic}. {The argument is similar to, but simpler than, the preceding lemma.}

\begin{lemma}\label{l.supersolution-limit}
    Let $\mathfrak u$ be a bounded viscosity {supersolution} of \eqref{e.bernoulli-parabolic} in $U \times (0,\infty)$ such that $\mathfrak{u}(\cdot,t) \to u_\infty$ locally uniformly in $U$ as $t \to \infty$. Then $u_\infty$ is a viscosity {supersolution} of \eqref{e.bernoulli-basic} in $U$.
\end{lemma}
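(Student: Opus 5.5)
I will mirror the proof of \lref{subsolution-limit}, with the roles of sub/supersolution reversed and without the set $E$, since the supersolution condition is tested against all of $U$ rather than a relaxed set. Let $\varphi \in C^\infty(U)$ touch $u_\infty$ from below at $x_0 \in U$. After subtracting $|x-x_0|^4$, the touching is strict, so $u_\infty - \varphi$ has a strict minimum $0$ at $x_0$ on a ball $\overline{B_r(x_0)} \subset\subset U$. I then introduce the time-dependent barriers
\[\psi_n(x,t) := \varphi(x) + \eta t - n,\qquad \eta>0,\ n\in\N,\]
which for large $n$ lie strictly below $\mathfrak u$ near $t=0$, since $\mathfrak u$ is bounded and nonnegative. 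I let $t_n$ be the first time at which $\psi_n(\cdot,t)$ touches $\mathfrak u(\cdot,t)$ from below on $\overline{B_r(x_0)}$, and $x_n$ a corresponding touching point.

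Before using the supersolution property at $(x_n,t_n)$, I must ensure that the touching is interior. The argument is the one in \lref{subsolution-limit}, run with minima in place of maxima and without the $\limsup^*$ sets. First, $t_n\to\infty$, because $\psi_n \to -\infty$ uniformly on bounded time intervals. Second, uniform convergence $\mathfrak u(\cdot,t_n)\to u_\infty$ on $\overline{B_r}$ together with the strict minimum gives
\[\min_{\overline{B_r}\setminus B_\rho}(\mathfrak u(\cdot,t_n)-\varphi) \to \min_{\overline{B_r}\setminus B_\rho}(u_\infty-\varphi) > 0,\]
while $\min_{\overline{B_r}}(\mathfrak u(\cdot,t_n)-\varphi)\to 0$. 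Hence $x_n \to x_0$, and in particular $x_n$ lies in the interior of the ball for large $n$.

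At $(x_n,t_n)$, $\psi_n$ touches $\mathfrak u$ from below in the parabolic sense: $\psi_n \le \mathfrak u$ on $B_r\times(0,t_n]$ with equality at $(x_n,t_n)$. I would extract the classical touching condition from the comparison-based \dref{parabolic-supersolution} by contradiction. If
\[(\partial_t-\Delta)\psi_n(x_n,t_n) < 0,\quad\text{and also }|\grad\psi_n(x_n,t_n)|>Q(x_n)\text{ whenever }\psi_n(x_n,t_n)=0,\]
then a small perturbation $\psi_n - \delta' - c(|x-x_n|^2+(t_n-t))$, restricted to a tiny cylinder $B_\sigma(x_n)\times(t_n-\sigma^2,t_n]$, is a classical strict subsolution. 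It satisfies $\prec\mathfrak u$ on the parabolic boundary of the cylinder but crosses $\mathfrak u$ at $(x_n,t_n)$ for suitable $\delta'$, which contradicts the definition. The same implication, "comparison definition implies touching property", was used for subsolutions in \lref{subsolution-limit}, so I take this step as essentially standard. It is also where the main care is needed: the strict-subsolution requirement on $\partial\{\psi>0\}$ and the set $\overline{\{\psi>0\}}$ on which $\prec$ is measured must be handled when $\psi_n(x_n,t_n)=0$.

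The touching condition therefore yields, for each $n$, one of two alternatives:
\[\eta - \Delta\varphi(x_n) \ge 0, \qquad\text{or}\qquad \psi_n(x_n,t_n)=0 \ \text{ and } \ |\grad\varphi(x_n)|\le Q(x_n).\]
One alternative occurs infinitely often, and I pass to the limit along that subsequence. In the first case, $\Delta\varphi(x_0)\le\eta$. In the second case,
\[0=\psi_n(x_n,t_n)=\mathfrak u(x_n,t_n)\to u_\infty(x_0)=\varphi(x_0),\]
so $\varphi(x_0)=0$ and $|\grad\varphi(x_0)|\le Q(x_0)$. If the second case occurs infinitely often for a sequence $\eta\to0$, the free boundary alternative holds at $x_0$. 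Otherwise $\Delta\varphi(x_0)\le 0$ after letting $\eta\to0$. Either way $u_\infty$ satisfies the viscosity supersolution condition at $x_0$.
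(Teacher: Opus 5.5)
Your proposal is correct and follows the paper's proof essentially step for step. It uses the same barriers $\psi_n=\varphi+\eta t-n$ and first touching time, gets $t_n\to\infty$ and $x_n\to x_0$ from the strict minimum and uniform convergence, then lets $n\to\infty$ and $\eta\to 0$; your extra paragraph deriving the pointwise touching alternative from the comparison-based \dref{parabolic-supersolution} (taking $\delta'\in(-c\sigma^2,0)$) fills in a step the paper leaves implicit, and it works as you describe.
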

\begin{proof}
            Let $\varphi$ a smooth test function touching $u_\infty$ from below strictly at some $x_0 \in U$.  Then define for any $\eta>0$ and $n \in \N$
    \[ \psi_n(x,t) := \varphi(x) +\eta t-n.\]
    Let $(x_n,t_n)$ be a first touching point where $\psi_n$ touches $\mathfrak u$ from below, i.e. 
    \[t_n = \min \{t : \text{ there is } x\in \overline{U} \hbox{ with }  \psi_n(x,t) \geq \mathfrak u(x,t)\}.\]
    Note that $\psi_n(x_n,t_n) = \mathfrak u(x_n,t_n)$. Then, as long as $x_n \in U$, either 
    \[0 \leq (\partial_t - \Delta)\psi_n(x_n,t_n) = \eta - \Delta \varphi(x_n) \ \hbox{ so } \ \Delta \varphi(x_n) \leq \eta\]
    or 
    \[\psi_n(x_n,t_n) = 0  \ \hbox{ and } \ |\grad \psi_n(x_n,t_n)| \leq Q(x_n).\]
    
     We claim now that $t_n \to \infty$ and $x_n \to x_0$ as $n \to \infty$. The first is obvious since {$\mathfrak u \geq 0$}. {As in the subsolution case, we may assume there is $r>0$ with $B_r(x_0) \subset\subset U$ such that 
     \[ x_n = \mathop{\textup{argmin}}_{x \in \overline{B_r(x_0)}} (\mathfrak u(t_n,x)-\varphi(x))\]
     for sufficiently large $t_n$.}
     Since $\mathfrak u(t_n,\cdot) \to u_\infty$ uniformly in $\overline{B_r(x_0)}$ as $n \to \infty$ and since $u_\infty- \varphi$ has a strict local minimum at $x_0$, we can argue similarly to \lref{subsolution-limit} and conclude that $x_n \to x_0$.

     {The conclusion is now similar to the subsolution case, letting $n \to \infty$ in the supersolution inequalities for $\psi_n$ and and finally letting $\eta \to 0$.}
\end{proof}

As observed previously, the subsolution condition may degenerate in the limit which is why we only recover a relaxed subsolution in the conclusion of \lref{subsolution-limit}. However, in the monotone increasing case, degeneracy can be ruled out in the long-time limit by pure viscosity theoretic arguments, allowing us to conclude that the long-time limit is a viscosity subsolution in the usual sense.

\begin{corollary}\label{c.monotone-long-time}
    Let $\mathfrak u$ be a bounded viscosity subsolution of \eqref{e.bernoulli-parabolic} in $U \times (0,\infty)$ which is monotonically increasing in time and such that $\mathfrak u(\cdot,t) \nearrow u_\infty$ locally uniformly in $U$ as $t \to \infty$. Then $u_\infty$ is a viscosity subsolution of \eqref{e.bernoulli-basic} in $U$.
\end{corollary}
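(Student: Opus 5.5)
The plan is to combine \lref{subsolution-limit} with the non-degeneracy argument of Proposition~\ref{p.monotone-nondegen-relaxed}, run in the stationary setting. Since $\mathfrak u$ is a viscosity subsolution, $(\mathfrak u, E)$ with $E := \overline{\{\mathfrak u>0\}}$ is a relaxed subsolution of \eqref{e.bernoulli-parabolic}. By \lref{subsolution-limit}, the pair $(u_\infty, E_\infty)$ is then a relaxed subsolution of \eqref{e.bernoulli-basic}, where
\[E_\infty = {\limsup_{T\to\infty}}^*\bigcap_{t\ge T}E_t .\]
Because $\mathfrak u$ is increasing in time, the sets $E_t = \overline{\{\mathfrak u(\cdot,t)>0\}}$ increase in $t$. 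Hence $\bigcap_{t\ge T}E_t = E_T$, and $E_\infty$ is the closure of $\bigcup_t E_t$. Since $\mathfrak u(\cdot,t)\le u_\infty$, every $E_t$ lies in $\overline{\{u_\infty>0\}}$, and therefore $E_\infty \cap U \subset \overline{\{u_\infty>0\}}\cap U$. The reverse inclusion is automatic from $\{u_\infty>0\}\subset E_\infty$. So $E_\infty\cap U = \overline{\{u_\infty>0\}}\cap U$, and by the remark after \dref{relaxed-sub-stationary} the relaxed subsolution $(u_\infty, E_\infty)$ is a genuine viscosity subsolution.

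I should double-check the inclusion $E_\infty\cap U\subset\overline{\{u_\infty>0\}}$ carefully, because it is the only point where degeneracy could enter. The risk is a point $x_0\in E_\infty$ at which $\mathfrak u(\cdot,t)$ is positive nearby for every $t$ but tends to zero, while $u_\infty\equiv 0$ near $x_0$. Monotonicity rules this out. If $x_0\in\overline{\{\mathfrak u(\cdot,t)>0\}}$ for some $t$, then every ball around $x_0$ contains points where $\mathfrak u(\cdot,t)>0$, and there $u_\infty\ge\mathfrak u(\cdot,t)>0$. If instead $x_0$ is only a limit of points $y_k\in E_{t_k}$, then each $y_k$ is approximated by points of positivity of $u_\infty$, so $x_0\in\overline{\{u_\infty>0\}}$ as well.

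So in the increasing case no quantitative non-degeneracy estimate (such as \eqref{eqn:nondegenStep}) is needed. The only real step is checking that $\lref{subsolution-limit}$ applies with $E=\overline{\{\mathfrak u>0\}}$, including that $E$ is closed in $\overline U\times[0,T]$ and that its time slices satisfy $E_t=\overline{\{\mathfrak u(\cdot,t)>0\}}$. I expect the main, though mild, obstacle to be this bookkeeping: the time slice of a space-time closure can be strictly larger than the spatial closure of the slice. Monotonicity together with continuity of $\mathfrak u$ controls this. If $(x,t)$ is a limit of points $(x_k,s_k)$ with $\mathfrak u(x_k,s_k)>0$, then $\mathfrak u(x_k,\max(s_k,t))>0$. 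Consequently $E_t$ is contained in the closure of $\bigcup_{s\le t+\delta}\{\mathfrak u(\cdot,s)>0\}$ for every $\delta>0$, which lies inside $\overline{\{u_\infty>0\}}$. That containment is all the argument requires.
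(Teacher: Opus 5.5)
Your proof is correct and follows the paper's argument: apply \lref{subsolution-limit} with $E=\overline{\{\mathfrak u>0\}}$, then use monotonicity to get $E_\infty\subset\overline{\bigcup_t\{\mathfrak u(\cdot,t)>0\}}\subset\overline{\{u_\infty>0\}}$, which upgrades the relaxed subsolution to a genuine viscosity subsolution. Your extra check, that time slices of the space-time closure are still controlled by monotonicity, covers a point the paper passes over silently; your opening appeal to the non-degeneracy of Proposition~\ref{p.monotone-nondegen-relaxed} is unnecessary, as you yourself conclude.
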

\begin{proof}
    Apply \lref{subsolution-limit} to get $(u_\infty,E_\infty)$ is a relaxed subsolution of \eqref{e.bernoulli-basic} in $U$. Since $t \mapsto \mathfrak  u(\cdot,t)$ is monotone increasing in $t$ we conclude that
    \[E_\infty ={\limsup_{T \to \infty}}^*\bigcap_{t \geq T}E_t = {\limsup_{T \to \infty}}^*\bigcap_{t \geq T}\overline{\{\mathfrak  u(\cdot,t)>0\}} \subset \overline{\bigcup_{t>0} \overline{\{\mathfrak  u(\cdot,t)>0\}}} = \overline{\{u_\infty>0\}}.\]
    So $u_\infty$ is a subsolution of \eqref{e.bernoulli-basic} in $U$.
\end{proof}

Note that if $g$ is a smooth strict subsolution and  $(\mathfrak{u},E)$ is as in \pref{semilinear-limit-viscosity}, then Corollary~\ref{cor:increasing} implies $\mathfrak u$ is increasing in time and a viscosity solution of \eref{bernoulli-parabolic}. By \lref{supersolution-limit}, the infinite-time limit $u_{\infty}$ is a supersolution of \eref{bernoulli-basic}. Combining this fact with \cref{monotone-long-time} shows $u_{\infty}$ is a viscosity solution of \eref{bernoulli-basic} when the Dirichlet data $g$ of \eref{bernoulli-parabolic} is a smooth strict subsolution.

For the case of largest subsolutions, we must study a monotone decreasing limit of solutions of \eref{bernoulli-parabolic}, and it is not clear to us whether degeneracy can be ruled out by pure viscosity theory arguments. We recover the subsolution property of the limit from the inner variational property, applying the theory of \cite{KriventsovWeiss} via \lref{inner-variational-implies-subsoln}.

\begin{lemma}\label{l.subsolution-inner-limit}
    Let $(\mathfrak{u},\mathfrak{\chi})$ be an inner variational solution and viscosity supersolution of \eqref{e.bernoulli-parabolic} satisfying \eqref{eqn:diss2}--\eqref{eqn:chiEst2}. {Then there is a sequence $t\to \infty$ such that} $\mathfrak{u}(\cdot,t) \to u_\infty$ locally uniformly, $\mathfrak{\chi}(t) \to \chi_\infty$ in $L^1_{\textup{loc}}$, and $(u_\infty,\chi_\infty)$ is both an inner variational solution and a viscosity solution of \eqref{e.bernoulli-basic}.
\end{lemma}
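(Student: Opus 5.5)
Three ingredients already in hand combine to give the lemma: the variational long-time limit (\tref{longtime-inner}), the supersolution long-time limit (\lref{supersolution-limit}), and the link from inner variational solutions to viscosity subsolutions (\lref{inner-variational-implies-subsoln}).

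\emph{Step 1: the limit and its inner variational property.} I first apply \tref{longtime-inner} to $(\mathfrak u,\chi)$. The hypotheses \eqref{eqn:diss2}--\eqref{eqn:chiEst2} are assumed. Boundedness of $\mathfrak u$, together with the boundary data $g_+ \in H^1(U)$ (take $g_\eps = g_+$ constant in $\eps$), supplies the remaining assumptions. This gives $u_\infty = \lim_{t\to\infty}\mathfrak u(\cdot,t)$ locally uniformly in $U$; note the whole family converges, since the $L^2$ limit is unique. It also gives a sequence $t_i\to\infty$ with $\chi(\cdot,t_i)\to\chi_\infty$ in $L^1_{\rm loc}(U)$, such that $(u_\infty,\chi_\infty)$ is an inner variational solution of \eqref{e.bernoulli-basic} in $U$. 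This sequence is the one claimed in the statement.

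\emph{Step 2: supersolution property.} Because $\mathfrak u(\cdot,t)\to u_\infty$ locally uniformly along all $t\to\infty$ and $\mathfrak u$ is a bounded viscosity supersolution of \eqref{e.bernoulli-parabolic}, \lref{supersolution-limit} applies directly. It shows that $u_\infty$ is a viscosity supersolution of \eqref{e.bernoulli-basic}. If one only had convergence along the subsequence $t_i$, I would check that the proof of \lref{supersolution-limit} uses convergence only along the first-touching times $t_n$. I do not expect to need this, since Step 1 gives convergence of the full family.

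\emph{Step 3: subsolution property.} Since $(u_\infty,\chi_\infty)$ is an inner variational solution, \lref{inner-variational-implies-subsoln} gives that $u_\infty$ is a viscosity subsolution. The key point is that this avoids the degeneracy issue. \lref{subsolution-limit} would only produce a relaxed subsolution $(u_\infty,E_\infty)$, and in the decreasing case $E_\infty$ could a priori be strictly larger than $\overline{\{u_\infty>0\}}$. The inner variational structure, through the Kriventsov--Weiss theory, rules out test functions touching from above at two-plane or high-density points unless they are subharmonic there.

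\emph{Main obstacle.} The substantive difficulty is already contained in \lref{inner-variational-implies-subsoln}, which relies on \cite{KriventsovWeiss} and the alternative ``$u_\infty\equiv0$ or $\chi_\infty={\bf 1}_{\{u_\infty>0\}}$.'' What remains for this lemma is bookkeeping: $u_\infty$ must satisfy the regularity demanded in \dref{stationary-inner} for that lemma to apply. Local Lipschitz continuity comes from \eqref{eqn:inftyLip}, and harmonicity in $\{u_\infty>0\}$ comes from Step 2 of the proof of \tref{longtime-inner}. Combining Steps 2 and 3, $u_\infty$ is a viscosity solution, and by Step 1 it is also an inner variational solution.
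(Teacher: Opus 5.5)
Your proposal is correct and follows the paper's proof essentially step for step. \tref{longtime-inner} gives the full-family locally uniform limit, a subsequence $t_i$ with $\chi(\cdot,t_i)\to\chi_\infty$, the inner variational property and the Lipschitz bound; then \lref{inner-variational-implies-subsoln} gives the subsolution property and \lref{supersolution-limit} the supersolution property, with the only cosmetic difference being that the paper cites \lref{inner-compactness-stability} for the inner variational property of the limit, which you read off directly from \tref{longtime-inner}.
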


\begin{proof}
    By \tref{longtime-inner} the limit $u(\cdot,t) \to u_\infty$ exists locally uniformly and in $H^1_{\rm loc}(U)$ {(without taking a subsequence)} in $U$, {$\chi(t_i)\to \chi_\infty$ in $L^1_{\textup{loc}}(U)$ for a subsequence $t_i\to \infty$,} and so $(u_\infty, {\chi_\infty})$ is an inner variational solution of \eqref{e.bernoulli-basic} by \lref{inner-compactness-stability}. The desired interior Lipschitz bound holds on each $V \subset\subset U$ {by passing \eqref{eqn:LipEst} to the limit.} Then \lref{inner-variational-implies-subsoln} implies that $u_\infty$ is a viscosity subsolution of \eqref{e.bernoulli-basic}, and \lref{supersolution-limit}  implies that $u_\infty$ is a viscosity supersolution of \eqref{e.bernoulli-basic}.
\end{proof}

Note that this approach of establishing non-degeneracy also works in the increasing case.  However, since the proof of \cref{monotone-long-time} above is self-contained, we opted to not use the results of \cite{KriventsovWeiss} as a black box in that setting.

\section{Proof of the main theorem}\label{s.conclusion}
In this section we assemble the results from the previous sections to complete the proof of \tref{main}. We begin with the proof of \tref{main} \ref{part.inner}, first tackling smallest supersolutions and then largest subsolutions. While, the proofs are analogous, there are notable technical differences when dealing with non-degeneracy in the case of largest subsolution. In both cases, we must also contend with the strict ordering requirement in the comparison principle \tref{comparison}. This is managed by introducing an additional $1$-parameter family of shifts in the data. 
We will also establish \tref{main} \ref{part.visc} within the proof of \tref{main} \ref{part.inner}. The proof of \tref{main} \ref{part.downward}, which is a short argument independent of the rest of the paper, is carried out in the final subsection.

Let us briefly sketch the argument behind the proof of \tref{main} \ref{part.inner} in the smallest supersolution case. {In the preceding sections, we have established that for the solution $\mathfrak{u}(t)$ of the parabolic Bernoulli problem with initial data given by a smooth strict subsolution $g$, the limit $u_\infty = \lim_{t \to \infty} \mathfrak{u}(t)$ is both an inner variational and a} viscosity solution of \eqref{e.bernoulli-basic} satisfying $u_{\infty} \geq g$ by monotonicity. Thus, if $u$ is the smallest supersolution of \eqref{e.bernoulli-basic} above $g$, it follows that $u \leq u_\infty$. To establish the reverse inequality, we would like to use comparison principle for the parabolic flow, Theorem~\ref{t.comparison}, but we lack strict ordering on the parabolic boundary.  This is a real issue, as we expect that {$u$ and $u_\infty$} may not agree when a slight perturbation of $g$ upwards or downwards results in a (discontinuous) jump in the corresponding minimal supersolution. However, this issue is non-generic, and we show it is possible to perturb it away by considering a monotone family of shifted data $g+a$, the corresponding parabolic solutions $\mathfrak{u}_a$, and its long-time limit.  
{We recover the desired inner variational property by sending $a\uparrow 0.$}

\subsection{The inner variational property: smallest supersolutions}

\begin{figure}
    \begin{center}
\includegraphics[width=.48\textwidth,trim=45 10 20 60,clip]{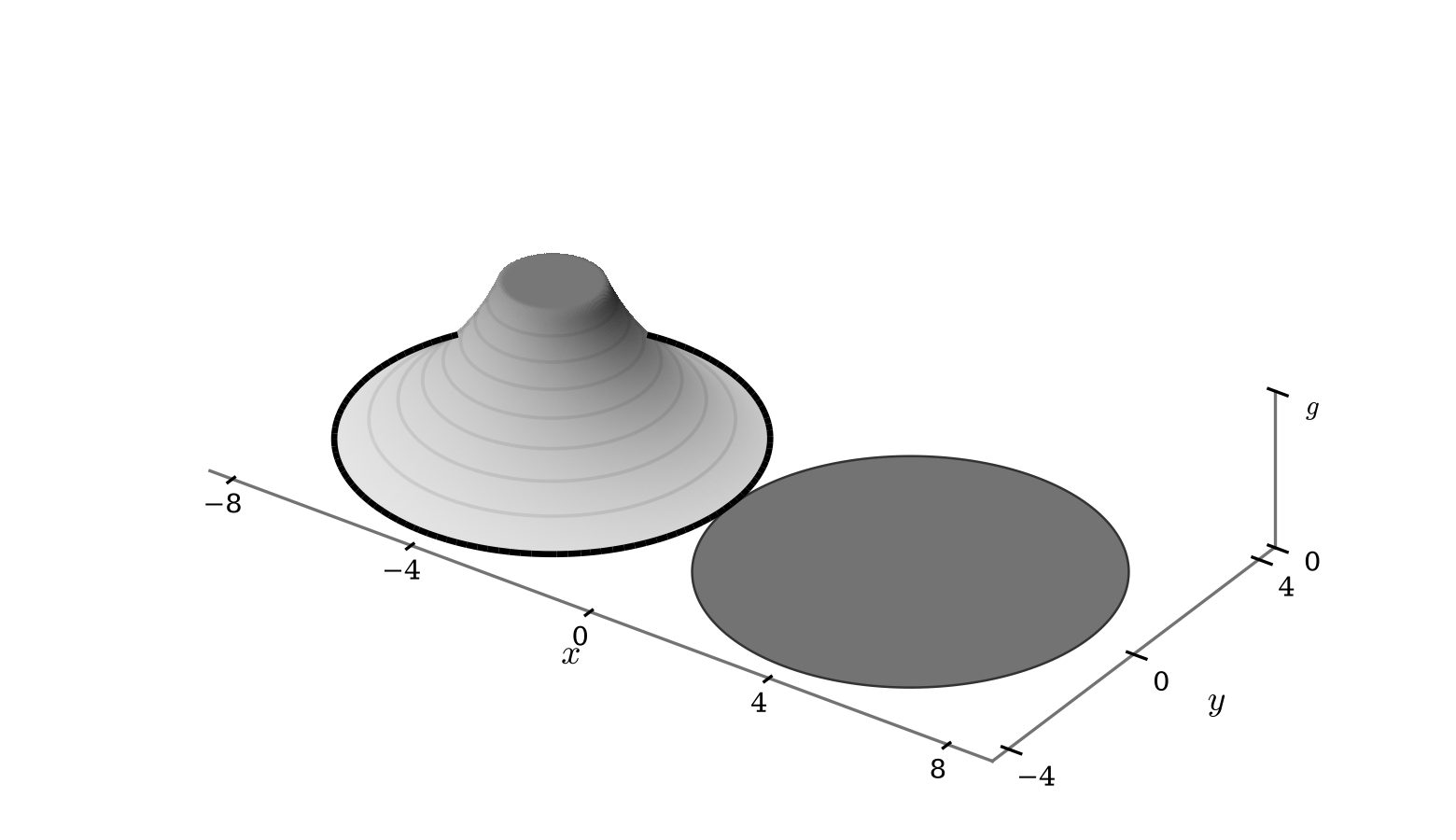}
\includegraphics[width=.48\textwidth,trim=45 10 20 60,clip]{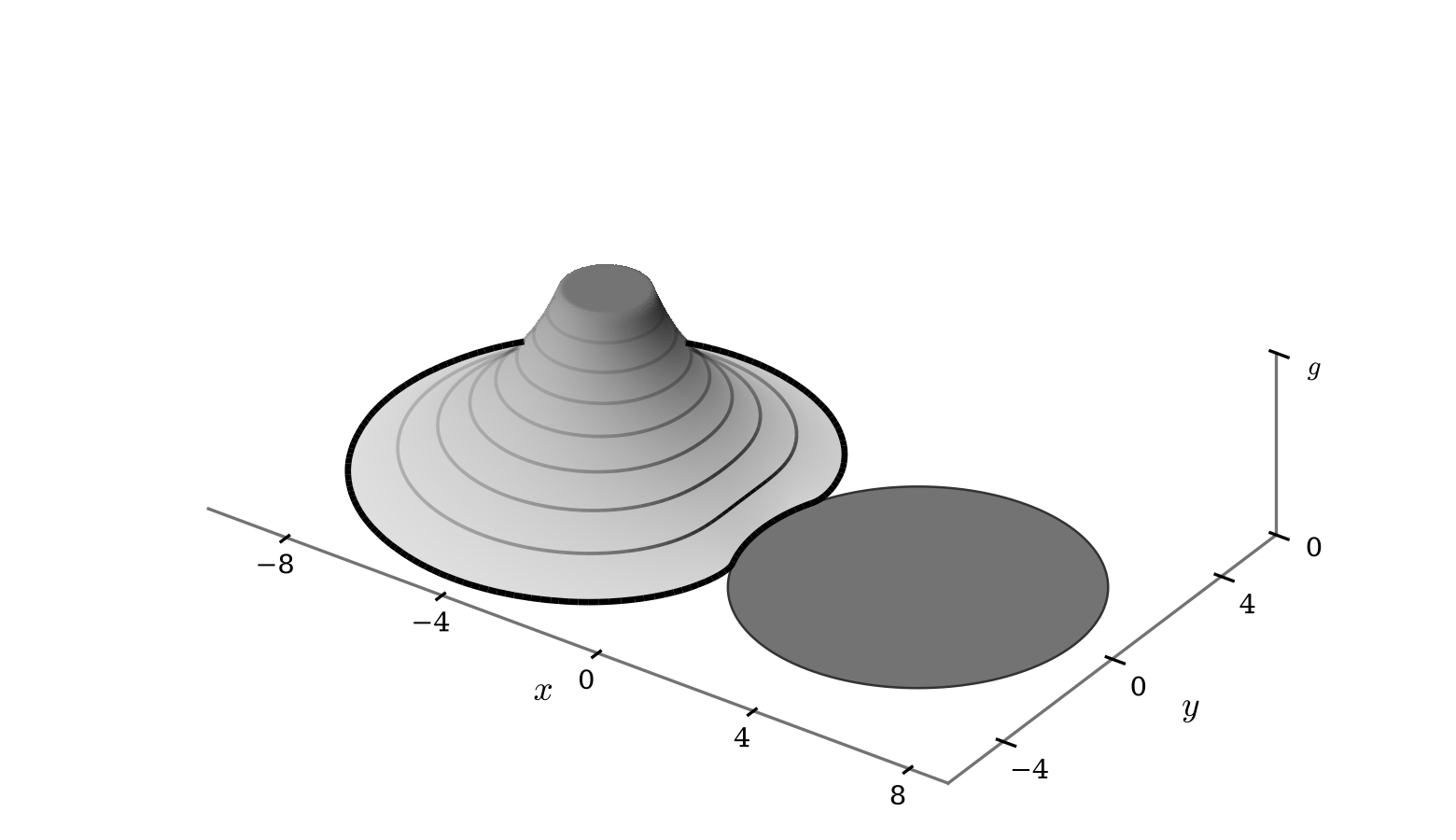}
\end{center}
    \caption{With boundary data $g = 0$ in $B_4((0,4))$ and $g = c_{1}$ (left diagram) or $g = c_{2}$ (right diagram) in $B_1((0,-4))$ with $0<c_{1} < c_{2}$, we have found the smallest supersolutions $u_1$ and $u_2$, respectively. Since the free boundary is able to meet the domain boundary (a possibility since $g = 0$ within $\partial U$), the ordering $u_1\prec u_2$ fails on $\partial U$.}
    \label{fig:precFail}
\end{figure}

\begin{proposition}\label{p.inner-smallest}
{Let ${g}$} be a smooth strict subsolution of \eref{bernoulli-basic} on $\overline{U}$ in the sense of \dref{strict-sub} with $g|_{\partial U}>0$, and let $u$ be the smallest supersolution of \eref{bernoulli-basic} above ${g}$ in the sense of \eref{perron-def-super}. Then $u$ is both an inner variational and viscosity solution of \eref{bernoulli-basic} in $U$.
\end{proposition}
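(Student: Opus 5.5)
For each small $a<0$ (say $|a|\le a_0/2$) I will use the shifted datum $g+a$, which by the remark after \dref{strict-sub} is still a smooth strict subsolution; for $|a|$ small it also keeps $g+a>0$ on $\partial U$. \tref{parabolic-existence} then gives a monotone increasing flow $\mathfrak u_a$ with data $(g+a)_+$. This flow is a viscosity solution and an inner variational solution, and it satisfies \eqref{eqn:diss2}--\eqref{eqn:chiEst2}. By monotonicity and boundedness, $\mathfrak u_a(\cdot,t)\nearrow u_{\infty,a}$. By \tref{longtime-inner}, $u_{\infty,a}$ is an inner variational solution with the local Lipschitz bound \eqref{eqn:inftyLip} and the perimeter bound \eqref{eqn:inftyPerim}; these bounds are uniform in $a$, since $g+a\to g$ in $H^1$. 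By \lref{supersolution-limit} and \cref{monotone-long-time}, $u_{\infty,a}$ is also a viscosity solution of \eref{bernoulli-basic}, and $u_{\infty,a}\ge (g+a)_+$.

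Next I compare with the Perron solution $u$. The aim is the two-sided bound $u_{\infty,a}\le u$ for every small $a<0$, together with $u\le \lim_{a\uparrow0}u_{\infty,a}$.

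For the upper bound, take any $v\in\mathcal S_g$. By Remark~\ref{eqn:statToPara}, $v$ is a stationary parabolic supersolution. The claim is $\mathfrak u_a\prec v$ near the parabolic boundary. At $t=0$ we have $\mathfrak u_a=(g+a)_+$. On $\overline{\{g+a>0\}}$, $g+a<g\le v$ holds with a strict gap $|a|$. Near $\partial U$ the boundary data is $g+a<g\le v$, again strict, because $g>0$ there so the positive parts coincide with the functions. The strict positivity $g|_{\partial U}>0$ is used exactly here, and \fref{precFail} shows why it is needed. By the continuity in \pref{semilinear-wellposed}\ref{part.attainment}, the strict ordering persists on a neighborhood of $\partial_P$. \tref{comparison} then gives $\mathfrak u_a\prec v$ for all times, hence $u_{\infty,a}\le v$, and taking the infimum over $v$ gives $u_{\infty,a}\le u$.

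For the lower bound, the functions $u_{\infty,a}$ are monotone in $a$ by the same comparison applied to $a<a'<0$. They are also uniformly locally Lipschitz, so $u_{\infty,a}\nearrow w\le u$ locally uniformly as $a\uparrow0$. The supersolution property passes to $w$ by \lref{super-stability}. The boundary values give $w\ge g_+$ up to $\overline U$; continuity up to the boundary comes from the uniform modulus $\varpi$. Hence $w\in\mathcal S_g$, so $w\ge u$, and therefore $w=u$.

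Finally I pass the inner variational property to the limit $a\uparrow0$. The $u_{\infty,a}$ have uniform local Lipschitz bounds and uniform local perimeter bounds on $\chi_{\infty,a}$, so \lref{inner-compactness-stability} gives that $u=w$ is an inner variational solution. The viscosity property of $u$ also needs the subsolution side. I will get it from \lref{inner-variational-implies-subsoln}, or, to avoid relying on \cite{KriventsovWeiss}, from an increasing-limit argument analogous to \cref{monotone-long-time}, using that the $u_{\infty,a}$ increase to $u$ so the positivity sets increase and no degeneracy can occur.

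I expect the main obstacle to be the strict ordering near the parabolic boundary required by \tref{comparison}. This is the reason for the shift $a<0$ and for the hypothesis $g>0$ on $\partial U$. One delicate point needs care: near $\partial\{g+a>0\}$ at $t=0$, the set $\overline{\{\mathfrak u_a>0\}}$ for small $t$ must not jump beyond where $v$ exceeds it. I will handle this with \pref{semilinear-wellposed}\ref{part.trace} and the uniform continuity modulus, using that $v\ge g_+ > (g+a)_+ $ strictly on a neighborhood of $\overline{\{g+a>0\}}$.
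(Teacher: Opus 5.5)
Your proposal is correct and follows essentially the same route as the paper. The steps match: shifted data $g+a$ with $a<0$, the monotone parabolic flow and its long-time limit, comparison with an arbitrary $v\in\mathcal S_g$ using $g|_{\partial U}>0$, and then $a\uparrow 0$ via \lref{inner-compactness-stability}, \lref{super-stability}, and the sandwich identifying the limit with $u$. Your extra care fills in points the paper treats only tersely: strict ordering on a neighborhood of $\partial_P U_\infty$ (including the no-outward-jump issue via \pref{semilinear-wellposed}\ref{part.trace}), continuity of the limit up to $\partial U$, and the viscosity subsolution property of the $a\uparrow 0$ limit.
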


\begin{proof}
Let {$v$ be any} supersolution of \eqref{e.bernoulli-basic} in $U$ above $g_+$, i.e., $v\in S_g$ from \eqref{eqn:S_g}. Let $a_0>0$ be sufficiently small so that $g+a$ satisfies {\dref{strict-sub}} with uniform constants when $|a| < a_0$.

For $a_0<a<0$, by \tref{parabolic-existence}, there exists $\mathfrak{u}_a$ which is an inner variational and viscosity solution of the parabolic Bernoulli problem \eqref{e.bernoulli-parabolic} with $\mathfrak{u}_a(x,t) = (g(x)+a)_+$ on $\partial_P U_\infty$. Furthermore $\mathfrak{u}_a$ is monotone increasing in $t$.
Note that by assumption of $g>0$ on $\partial U$ that $\mathfrak{u}_a  = g(x)+a \prec g  \leq v$ on $\partial_P U_\infty$ (this is where we use positivity of the boundary data; see Figure \ref{fig:precFail}). Consequently, monotonicity, the comparison Theorem \ref{t.comparison}, and {Remark \ref{eqn:statToPara}}
imply
$$(g+a)_+\leq \mathfrak{u}_a \prec v \quad \text{ in } U_\infty.$$
Taking $t\to \infty$ gives that
$$(g+a)_+\leq u_{a,\infty} : = \lim_{t\to \infty}\mathfrak{u}_a(\cdot,t) \leq v \quad \text{ in } U ,$$
where by Theorem \ref{t.longtime-inner} and Corollary \ref{cor:increasing}, $u_{a,\infty}$ is both an inner variational and viscosity solution of \eqref{e.bernoulli-basic}, and in fact satisfies the Lipschitz estimate \eqref{eqn:inftyLip} and perimeter bound for its associated `positivity set' $\chi_{a,\infty}$ \eqref{eqn:inftyPerim}. Passing $a\uparrow 0,$
we have
$$g_+\leq \hat u := \lim_{a\uparrow 0} u_{a,\infty} \leq v \quad \text{ in } U ,$$
and as inner variational solutions and viscosity supersolutions are stable under uniform limits (with control on the perimeter of $\chi_{a,\infty}$), see Lemma \ref{l.inner-compactness-stability} and Lemma \ref{l.super-stability}, {$\hat u$ is both an inner variational and viscosity solution. Taking the infimum over $v\in \mathcal{S}_g$, see \eqref{e.perron-def-super}, we find $\hat u \leq u$. However, since $\hat u\in \mathcal{S}_g$, we recover $\hat u = u$ and conclude the theorem.}
\end{proof}

\subsection{The inner variational property: largest subsolutions}

\begin{proposition}\label{p.inner-largest}
Let $U$ be a regular domain, ${g}$ be a smooth strict supersolution of \eref{bernoulli-basic} on $\overline{U}$ in the sense of \dref{strict-sub}, and let $u$ be the largest subsolution below ${g}$ in the sense of \eref{perron-def-super}. Then $u$ is both an inner variational and viscosity solution of \eref{bernoulli-basic} in $U$.
\end{proposition}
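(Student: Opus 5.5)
We mirror the proof of \pref{inner-smallest}, with the roles of sub- and supersolutions reversed and the shift taken upward. Let $v \in \mathcal{S}^g$ be arbitrary. Choose $a_0>0$ so that $g+a$ is a smooth strict supersolution with uniform constants for $|a|<a_0$. For $0<a<a_0$, \tref{parabolic-existence} gives a pair $(\mathfrak u_a,\chi_a)$ with $\mathfrak u_a = (g+a)_+$ on $\partial_P U_\infty$. It is monotone decreasing in time, an inner variational solution, and a relaxed viscosity solution; in particular $\mathfrak u_a$ is a standard viscosity supersolution. It also satisfies \eqref{eqn:diss2}--\eqref{eqn:chiEst2}.

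\emph{Long-time limit.} By \lref{subsolution-inner-limit}, along a sequence $t\to\infty$ we get $\mathfrak u_a(\cdot,t)\to u_{a,\infty}$ locally uniformly and $\chi_a(t)\to\chi_{a,\infty}$ in $L^1_{\rm loc}$. The limit $(u_{a,\infty},\chi_{a,\infty})$ is both an inner variational and a viscosity solution of \eref{bernoulli-basic}. It obeys the Lipschitz bound \eqref{eqn:inftyLip} and the perimeter bound \eqref{eqn:inftyPerim}, uniformly in $a$ because $g+a\to g$ in $H^1$. Monotonicity gives $u_{a,\infty}\le (g+a)_+$.

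\emph{Ordering against $v$.} We want $v \prec \mathfrak u_a$ in $U_\infty$ from \tref{comparison}. Here $v$ is a stationary viscosity subsolution by Remark~\ref{eqn:statToPara}, and $\mathfrak u_a$ is a supersolution. We need strict ordering $v\prec\mathfrak u_a$ in a neighborhood of the parabolic boundary, meaning $v<\mathfrak u_a$ on $\overline{\{v>0\}}$ there.
\begin{itemize}
\item At $t=0$ this reads $v\le g_+ < g+a = \mathfrak u_a(\cdot,0)$ on $\overline{\{v>0\}}$, since $g+a>g_+\ge v$ wherever $g>-a$. One should check that $\overline{\{v>0\}}\subset\{g\ge 0\}\subset\{g+a>0\}$, which holds because $v\le g_+$.
\item On $\partial U\times[0,\infty)$, $\mathfrak u_a = (g+a)_+$, and $g+a>v$ wherever $v>0$. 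Note that no boundary positivity of $g$ is needed, since the shift is upward. Continuity of $\mathfrak u_a$ up to $\partial_P U_\infty$ (\pref{semilinear-wellposed}\ref{part.attainment}, uniform in $\ep$) then gives strictness on a neighborhood of the boundary, using compactness of $\overline{\{v>0\}}$.
\end{itemize}
Comparison yields $v\le\mathfrak u_a$, hence $v\le u_{a,\infty}\le (g+a)_+$.

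\emph{Sending $a\downarrow 0$.} The functions $u_{a,\infty}$ are bounded, and for $a<a'$ comparison against the subsolution $u_{a,\infty}$ gives $u_{a,\infty}\le u_{a',\infty}$, so $\hat u:=\lim_{a\downarrow0}u_{a,\infty}$ exists. The uniform Lipschitz and perimeter bounds let us apply \lref{inner-compactness-stability}, so $\hat u$ is an inner variational solution. \lref{inner-variational-implies-subsoln} then makes $\hat u$ a viscosity subsolution; this is essential, since subsolutions are not stable under uniform limits. \lref{super-stability} makes $\hat u$ a supersolution. Moreover $v\le\hat u\le g_+$, so $\hat u\in\mathcal S^g$. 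Taking the supremum over $v$ gives $u\le\hat u\le u$, hence $\hat u=u$.

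\emph{Main obstacle.} The delicate point is the strict ordering near the parabolic boundary, in particular near the corner where $\partial\{v>0\}$ meets $\partial U$. There one needs $v< \mathfrak u_a$ on all of $\overline{\{v>0\}}$, including points where $v=0$ but $g+a\ge a>0$. This works, but it relies on the uniform continuity of $\mathfrak u_a$ at the boundary. A secondary subtlety is that only the inner variational route, via \lref{subsolution-inner-limit} and \lref{inner-variational-implies-subsoln}, recovers the subsolution property in the decreasing case. This is also why \cite{KriventsovWeiss} is genuinely used here.
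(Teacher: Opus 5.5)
Your proposal is correct and follows essentially the same route as the paper. The paper likewise takes upward shifts $g+a$, compares the decreasing parabolic flow against an arbitrary $v\in\mathcal S^g$, and passes to the long-time limit. It then sends $a\downarrow 0$ using \lref{inner-compactness-stability} and \lref{super-stability}, recovers the subsolution property only at the end via \lref{inner-variational-implies-subsoln}, and concludes by sandwiching. Your additional details are valid refinements of steps the paper passes over quickly: the explicit check of strict ordering on a neighborhood of $\partial_P U_\infty$ via the uniform modulus, the use of \lref{subsolution-inner-limit} at the $t\to\infty$ stage, and monotonicity in $a$ to obtain a full rather than subsequential limit (a subsequential limit already suffices for the sandwich).
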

Note we do not need to require $g|_{\partial U}>0$ in this case. The proof is very similar to the smallest supersolution case, with the key technical contrast being that subsolutions are not stable under uniform limits. Thus, the stability argument depends on Lemma \ref{l.inner-variational-implies-subsoln}.

\begin{proof}[Proof of Proposition \ref{p.inner-largest}]

Analogous to before, let {$v$ be any} subsolution of \eqref{e.bernoulli-basic} in $U$ below $g_+$, i.e., $v\in S^g$ from \eqref{eqn:S^g}. Let $a_0>0$ be sufficiently small so that $g+a$ satisfies {\dref{strict-sub}} with uniform constants when $|a| < a_0$.

For $0<a<a_0$, by \tref{parabolic-existence}, there exists $\mathfrak{u}^a$ which is an inner variational and viscosity solution of the parabolic Bernoulli problem \eqref{e.bernoulli-parabolic} with $\mathfrak{u}^a(x,t) = (g(x)+a)_+$ on $\partial_P( U \times (0,\infty))$. Furthermore $\mathfrak{u}^a$ is monotone decreasing in $t$.
As $v$ is \emph{below} $g_+$, we have that
$v \leq g_+ \prec (g+a)_+$ in $\overline U$, so that
$v \prec \mathfrak{u}^a$ on $\partial_P U_\infty$.
Applying monotonicity, the comparison Theorem \ref{t.comparison}, and {Remark \ref{eqn:statToPara}},
we obtain
$$v\prec \mathfrak{u}^a \leq (g+a)_+ \quad \text{ in } U_\infty,$$
and passing $t\to \infty$ gives
$$v\leq u^a_\infty : = \lim_{t\to \infty}\mathfrak{u}^a \leq (g+a)_+ \quad \text{ in } U .$$
By Theorem \ref{t.longtime-inner} and Lemma \ref{l.super-stability}, $u^a_{\infty}$ is both an inner variational and viscosity \emph{super}solution of \eqref{e.bernoulli-basic}, and satisfies the Lipschitz estimate \eqref{eqn:inftyLip} and perimeter bound for its associated `positivity set' $\chi^a_{\infty}$ \eqref{eqn:inftyPerim}. Passing $a\downarrow 0,$
we have
$$v\leq \check u := \lim_{a\downarrow 0} u^a_{\infty} \leq g_+ \quad \text{ in } U ,$$
and as inner variational solutions and viscosity supersolutions are stable under uniform limits (with control on the perimeter of $\chi_{a,\infty}$), see Lemma \ref{l.inner-compactness-stability} and Lemma \ref{l.super-stability}, we find that $\check u$ is both. Finally, we apply Theorem \ref{l.inner-variational-implies-subsoln} to conclude that $\check u$ is also a viscosity subsolution. 
As in the proof of Proposition \ref{p.inner-smallest}, a sandwiching argument gives $\check u = u,$ and concludes the theorem.
\end{proof}

\subsection{Extremal solutions are directionally minimal}\label{s.downward}

In this section we prove part \ref{part.downward} of \tref{main}. The argument is short and independent of the rest of the paper. The idea is to minimize $J_Q$ with $u$ itself as the obstacle, and invoke the extremality of $u$ to conclude that the minimizer is $u$.

\begin{lemma}\label{l.downward}
  If $u$ is a smallest supersolution (resp.\ largest subsolution) of \eqref{e.bernoulli-basic} in $U$ in the sense of \dref{largest-smallest} then $u$ is a downward (resp.\ upward) minimizer of $J(\cdot ;U)$.
\end{lemma}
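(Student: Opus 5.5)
We treat the smallest supersolution case in detail; the largest subsolution case is symmetric. Fix a ball $B \subset\subset U$ and suppose, for contradiction, that some $v \le u$ with $u - v \in H^1_0(B)$ has $J_Q(v;U) < J_Q(u;U)$. We then solve the obstacle problem
\[
\min\Big\{ J_Q(w;B) \;:\; w \in H^1(B),\ w - u \in H^1_0(B),\ 0 \le w \le u \Big\},
\]
with $u$ as the upper obstacle. The admissible class is nonempty (it contains $u$ and $v_+$, and $J_Q(v_+) \le J_Q(v)$). It is also weakly closed, and $J_Q$ is weakly lower semicontinuous because $\mathbf{1}_{\{w>0\}}$ is lsc under a.e.\ convergence. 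So a minimizer $w$ exists, and $J_Q(w;B) \le J_Q(v;B) < J_Q(u;B)$. In particular $w \not\equiv u$. We extend $w$ by $u$ outside $B$.

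The key step is to show that $w$ is a viscosity supersolution of \eqref{e.bernoulli-basic} in $U$ with $w = u$ on $U \setminus B$. The local smallest-supersolution property in \dref{largest-smallest} then forces $w \ge u$, hence $w = u$, which is a contradiction. We argue as follows.

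First, $w$ is continuous. It is a one-sided (upper-obstacle) Alt--Caffarelli minimizer with a Lipschitz obstacle, and the standard Alt--Caffarelli theory gives local Lipschitz regularity, since the obstacle only restricts competitors from above.

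Second, $w$ is superharmonic in $\{w>0\}\cap B$. Any downward perturbation $w - \epsilon\phi$ with $\phi \ge 0$ supported in $\{w>0\}$ is admissible, and it does not enlarge the positivity set. Hence $\int \nabla w\cdot\nabla\phi \ge 0$.

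Third, the free boundary condition holds in the supersolution sense. Suppose a test function $\varphi$ touches $w$ from below at a free boundary point $x_0 \in \partial\{w>0\}$. If $|\nabla \varphi(x_0)| > Q(x_0)$, then $w$ has a linear growth rate larger than $Q$ from the positive side. In that case we compare with the blow-up: the blow-up of $w$ at $x_0$ is a downward minimizer, subject to the obstacle, that lies above $(\alpha\, x\cdot e)_+$ with $\alpha > Q(x_0)$. Cutting the blow-up down by a slab, as in Step 1 of \pref{two-plane-minimizing-props}, then lowers the energy. This is the standard viscosity argument for minimizers (cf.\ \cite{VelichkovBook}*{Proposition 7.1}), and it only uses competitors below $w$, which remain below $u$.

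At points where $w = u > 0$ the obstacle is active. There $\varphi \le w \le u$ touches $u$ from below as well, so the supersolution inequality is inherited directly from $u$. The same holds at contact points with $u = w = 0$: there $\varphi$ touches $u$ from below, and $u$ is a supersolution.

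The main obstacle is the third step: checking the supersolution property of the one-sided obstacle minimizer at contact points where the free boundaries of $w$ and $u$ meet. The plan there is to use that every test function from below for $w$ is also a test function from below for $u$ whenever $w(x_0) = u(x_0)$. At non-contact points, where $w < u$ nearby, the obstacle constraint is inactive for small perturbations, and $w$ is a genuine local minimizer there.

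For the largest subsolution case we take $u$ as a lower obstacle, $w \ge u$. The minimizer is harmonic where positive, and it is a viscosity subsolution by the symmetric argument; at contact points, test functions from above for $w$ also touch $u$ from above. Extremality then gives $w \le u$, hence $w = u$.
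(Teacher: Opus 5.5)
\textbf{There is a genuine gap: the key step uses perturbations in the wrong direction.} Your overall plan is the paper's: solve the upper-obstacle problem, show the minimizer is a viscosity supersolution, and conclude $w=u$ by extremality. The contact/non-contact split is also the right one. But competitors lying below $w$ can only produce \emph{sub}solution information.

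In your second step, minimality against $w-\epsilon\phi$ leaves the positivity set unchanged for small $\epsilon$. It therefore gives $-2\epsilon\int\nabla w\cdot\nabla\phi+\epsilon^2\int|\nabla\phi|^2\ge 0$, that is $\int\nabla w\cdot\nabla\phi\le 0$. That is subharmonicity, not superharmonicity.

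In your third step, cutting down by a slab lowers the energy only when the slope is \emph{below} $Q$; Step~1 of \pref{two-plane-minimizing-props} is for $\alpha<q$. For $\alpha>Q(x_0)$ the same one-dimensional computation gives $J_{Q(x_0)}\big(\tfrac{\alpha}{1-\epsilon}(s-\epsilon)_+;[0,1]\big)-J_{Q(x_0)}\big(\alpha s_+;[0,1]\big)=\epsilon\big(\tfrac{\alpha^2}{1-\epsilon}-Q(x_0)^2\big)>0$. The argument of Step~3 of that proposition shows that $\alpha s_+$ with $\alpha\ge Q(x_0)$ is a genuine downward minimizer. So slope larger than $Q(x_0)$ at a free boundary point is fully compatible with minimality against competitors below $w$, and your third step yields no contradiction. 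The condition ``slope $\le Q$'' can only be detected by competitors that enlarge the positive phase. The sentence ``it only uses competitors below $w$, which remain below $u$'' is exactly where the proof breaks.

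\textbf{The fix, which is the paper's argument.} What is needed is an \emph{upward} perturbation, which is admissible only off the contact set. Suppose $w(x_0)<u(x_0)$. Continuity gives $w<u$ on some $B_r(x_0)\subset B$. If the supersolution test fails for a strictly touching $\varphi$, then $\Delta\varphi(x_0)>0$, and either $\varphi(x_0)>0$ or $|\nabla\varphi(x_0)|>Q(x_0)$. In that case $w\vee(\varphi+\delta)_+$ strictly decreases $J_Q$ (the computation of \cite{FeldmanKimPozar}). It is supported in $B_r(x_0)$ and stays below $u$ for small $\delta$, which contradicts minimality of $w$. If instead $w(x_0)=u(x_0)$, you inherit the property from $u$, as you say. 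This single perturbation handles interior and free boundary points together.

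\textbf{The largest-subsolution case.} Your ``symmetric'' treatment needs the same reversal: use downward perturbations $w\wedge(\varphi-\delta)_+$ off contact. There, inheritance at contact points is also not automatic. The test is over $\overline{\{w>0\}}$, and a touching point with $w(x_0)=u(x_0)=0$ may lie outside $\overline{\{u>0\}}$, where the subsolution property of $u$ gives nothing. You must instead use $u\equiv 0$ near $x_0$ to keep the downward perturbation admissible, including when $x_0\in\partial B$.

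\textbf{Continuity of $w$.} This is not ``standard Alt--Caffarelli''. That argument compares with the harmonic replacement, which is an upward competitor the obstacle may forbid. You need the obstacle-problem regularity theory the paper cites.
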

\begin{proof}
    \emph{Step 1: smallest supersolution case.} Condition (i) of \dref{directional} holds as {$u$ is a viscosity solution by Proposition \ref{p.inner-smallest},} so it remains to prove the energy inequality \eqref{eqn:directionalMinimality} for downward competitors. We minimize the energy \eqref{e.energy} with $u$ as an obstacle from above: Let $B \subset \subset U$ be an open set and let $w$ be a minimizer of
   \begin{equation}\label{e.obstacle-remin}
       \min \{ {J_Q}(w;B): \ w \in u+H^1_0(B), \ 0 \leq w \leq u\},
   \end{equation}
   which exists by the direct method (the class is nonempty as $u$ belongs to it {and $J_Q$ is lower semi-continuous}). Further $w$ is continuous in $B$ by the regularity theory for such obstacle problems \cite{FeldmanKimPozar2}*{Section 3}. {We claim that
   \begin{equation}\label{eqn:equiv}
   w\equiv u
   \end{equation}
    which by definition of $w$, implies the desired downward minimality for $u$. To obtain the claim, we show $w$ is a viscosity supersolution of \eqref{e.bernoulli-basic} in $U$, and since $u$ is the smallest supersolution and $w\leq u$, this gives \eqref{eqn:equiv}.}

   To verify $w$ {(extended by $u$ outside of $B$)} is a viscosity supersolution, suppose $\varphi \in C^\infty(U)$ touches $w$ {strictly} from below at $x_0 \in U$. If $w(x_0) = u(x_0)$ then, since $w \leq u$, $\varphi$ also touches $u$ from below at $x_0$, and the supersolution property of $u$ yields the desired outcome. Otherwise $w(x_0) < u(x_0)$; since $u \equiv w$ outside of $\overline{B}$, the touching point $x_0 \in B$ and, {by continuity of the functions,} $w < u$ in a neighborhood $B_r(x_0) \subset B$. Suppose for contradiction that the supersolution condition fails at $x_0$. Then for small $\delta > 0$ the upward perturbation $w \vee (\varphi + \delta)_+$ strictly decreases the energy, by the computation of \cite{FeldmanKimPozar}*{proof of Lemma 3.3}: failure of both alternatives means $\Delta\varphi(x_0) > 0$, or $\varphi(x_0) = 0$ and $|\grad\varphi(x_0)| > Q(x_0)$, and the energy difference formula \cite{FeldmanKimPozar}*{Lemma A.1}  shows $J(w \vee (\varphi + \delta)_+; B) < J(w;B)$ for $\delta$ small. Furthermore, for $\delta$ small the perturbation is supported in $B_r(x_0)$ and satisfies the constraint $0 \leq w \vee (\varphi + \delta)_+ \leq u$, so it is admissible in \eref{obstacle-remin}. This contradicts minimality of $w$ and proves the claim.

    \emph{Step 2: largest subsolution case.} Parallel to Step 1, condition (i) of \dref{directional} holds as {$u$ is a viscosity solution by Proposition \ref{p.inner-largest}.} To obtain directional minimality \eqref{eqn:directionalMinimality}, we minimize the energy with $u$ as an obstacle from below: let $B \subset \subset U$ and let $w$ be a minimizer of
   \begin{equation}\label{e.obstacle-remin-sub}
       \min \{ J(w;B): \ w \in u+H^1_0(B), \ w \geq u\},
   \end{equation}
   which exists and is continuous as before. As above it suffices to show $w \equiv u$ {and this follows from showing $w$ is a viscosity subsolution of \eqref{e.bernoulli-basic} in $U$.}

   To verify $w$ is a viscosity subsolution, suppose that $\varphi \in C^\infty(U)$ touches $w$ strictly from above in $\overline{\{w>0\}} \cap U$ at $x_0$. If $x_0 \in \overline{\{u>0\}}$ and $w(x_0) = u(x_0)$ then, since $w \geq u$, $\varphi_+$ also touches $u$ from above at $x_0 \in \overline{\{u>0\}}$, and the subsolution property of $u$ allows us to conclude. Otherwise either $$\text{(i)} \qquad w(x_0) > u(x_0) \geq 0,$$ or $$\text{(ii)} \qquad x_0 \not\in \overline{\{u>0\}}.$$ Since $u \equiv w$ outside of ${B}$, we have $$\text{(i)} \implies x_0 \in B$$ and $$\text{(ii)} \implies x_0 \in \overline{B}.$$ Suppose for contradiction the subsolution condition fails at $x_0$. {Applying \cite{FeldmanKimPozar}*{Lemma 3.3} once again,} the downward perturbation $w \wedge (\varphi - \delta)_+$ strictly decreases the energy for small $\delta>0$. {To obtain a contradiction with \eqref{e.obstacle-remin-sub}, the perturbation must be an admissible competitor. Precisely, it must} remain above the obstacle $u$ and only modify $w$ inside $B$, i.e., {$w \wedge (\varphi-\delta)_+ = u$ outside of $B$}. 
   
   First we show that the perturbation is above the obstacle. In case (i), $u < w \leq \varphi_+$ near $x_0$ and so by continuity $u\leq w \wedge (\varphi-\delta)_+$ near $x_0$ for small $\delta$. In case (ii), $u=0$ in a neighborhood of $x_0$ so $w \wedge (\varphi-\delta)_+ \geq 0 =u$ near $x_0$ as well.
   
   Next we show that the perturbation is supported in $B$. Since the touching is strict $\{(\varphi-\delta)_+ < w\} \subset B_{o(1)}(x_0)$ as $\delta \to 0$. So if $x_0 \in B$ {(whether in case (i) or (ii))} just choose $\delta>0$ sufficiently small so that $\{(\varphi-\delta)_+ < w\} \subset B$. If $x_0 \in \partial B$, then $w(x_0) = u(x_0)$ and we must also be in case (ii) $x_0 \not\in \overline{\{u>0\}}$. Consequently, $u \equiv 0 $ in a small ball $B_r(x_0)$ and $w(x_0) = u(x_0) = 0$. since $w = u$ outside $B$, we get $w \equiv 0$ in $B_r(x_0) \setminus B$. Choosing $\delta>0$ small enough that $\{(\varphi - \delta)_+ < w\} \subset B_r(x_0)$ (possible again by the strict touching), we conclude $w \wedge (\varphi - \delta)_+ = 0 = w = u$ in $B_r(x_0) \setminus B$, and therefore the perturbation is supported in $B$. Thus it is a valid competitor for \eref{obstacle-remin-sub}, providing a contradiction. 
\end{proof}

\appendix 

\section{Details on the semilinear approximation}\label{s.semilinear}

This appendix concerns the semilinear approximation problem \eref{bernoulli-parabolic-semilinear}. We extend existing results in the literature from \cite{CaffarelliVazquez1995} and \cite{Petrosyan} to the case of $x$-dependent coefficients in a bounded domain. Continuity at the boundary and initial time both for the solution and the positive phases is extremely important in the analysis carried out in the main body of the paper. Since these results are not available in the literature, we give complete proofs here using standard arguments.

\subsection{Basic properties} For fixed $\ep$, the nonlinearity $f(x,z) := -Q(x)^2\beta_\eps(z)$ is bounded and Lipschitz in $z$ uniformly in $x$, so for parabolic boundary data $g_\ep$ that is Lipschitz and time-independent, standard existence and uniqueness theory for semilinear parabolic equations (\cite{LadyzhenskayaSolonnikovUraltseva}*{Chapter V} and \cite{Lieberman_Book}*{Chapter IX}) yields a solution $\mathfrak{u}_\ep \in C\big(\overline{U}\times[0,\infty)\big) \cap C^{2,1}(U_\infty)$.  If $Q$ is smooth then the interior regularity can be upgraded to $\mathfrak{u}_\ep \in C^{\infty}(U_\infty)$.  This will be convenient for the Bernstein argument for the Lipschitz bound later. Furthermore, again using that $z \mapsto \beta_\ep(z)$ is Lipschitz, the parabolic strong comparison principle holds between continuous viscosity sub and supersolutions of \eqref{e.bernoulli-parabolic-semilinear}.

\begin{lemma}\label{l.monotone}
Assume that $g_\ep$ is a continuous subsolution (resp. supersolution) of \eqref{e.bernoulli-parabolic-semilinear}. Then the unique classical solution $\mathfrak{u}_\ep$ of \eqref{e.bernoulli-parabolic-semilinear} is monotone increasing (resp. monotone decreasing) in $t$.
\end{lemma}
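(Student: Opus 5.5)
The plan is the classical time-translation comparison argument. I treat the subsolution case; the supersolution case is symmetric, with all inequalities reversed.

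\emph{Step 1: $\mathfrak u_\ep$ lies above its data.} Write $\mathfrak u_\ep(\cdot,0)=g_\ep$. Since $g_\ep$ is time-independent, it is a (stationary) continuous viscosity subsolution of \eqref{e.bernoulli-parabolic-semilinear}: its time derivative vanishes, so the subsolution inequality reduces to $0\le \Delta g_\ep - Q^2\beta_\ep(g_\ep)$ in the viscosity sense. It also agrees with $\mathfrak u_\ep$ on the parabolic boundary $\partial_P U_\infty$. The comparison principle for \eqref{e.bernoulli-parabolic-semilinear} between continuous viscosity sub- and supersolutions holds because $z\mapsto Q(x)^2\beta_\ep(z)$ is Lipschitz uniformly in $x$. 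It therefore gives $g_\ep\le \mathfrak u_\ep$ in $\overline U\times[0,\infty)$.

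\emph{Step 2: compare with a time shift.} Fix $h>0$ and set $\mathfrak v(x,t):=\mathfrak u_\ep(x,t+h)$. Since the equation is autonomous in $t$ and $\mathfrak u_\ep$ is a classical solution, $\mathfrak v$ also solves \eqref{e.bernoulli-parabolic-semilinear} in $U_\infty$. On the lateral boundary $\partial U\times(0,\infty)$ we have $\mathfrak v=g_\ep=\mathfrak u_\ep$, because the Dirichlet data are time-independent. At $t=0$, Step 1 gives $\mathfrak v(x,0)=\mathfrak u_\ep(x,h)\ge g_\ep(x)=\mathfrak u_\ep(x,0)$. Applying the comparison principle again, now between the two classical solutions $\mathfrak u_\ep$ and $\mathfrak v$, which are ordered on $\partial_P U_\infty$, yields $\mathfrak u_\ep(x,t)\le\mathfrak u_\ep(x,t+h)$ for all $t\ge 0$ and all $h>0$. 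This is the claimed monotonicity.

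\emph{Main obstacle.} The delicate point is Step 1: making sure comparison applies with a merely continuous, stationary viscosity subsolution $g_\ep$, and that the solution is continuous up to the corner $\partial U\times\{0\}$, so that the boundary ordering is legitimate there. Continuity at the corner holds because the data are compatible: $g_\ep$ is simultaneously the initial and the lateral datum. For comparison itself, I would use the standard viscosity comparison for semilinear parabolic equations with Lipschitz zeroth-order term. I would first reduce to a strictly proper equation via the substitution $w=e^{-Lt}(\cdot)$, with $L$ the Lipschitz constant of $Q^2\beta_\ep$ in $z$, and then run the usual doubling-of-variables argument, or alternatively a sup-convolution argument. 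For Step 2, both functions are classical, so the maximum principle applies directly to the difference $\mathfrak u_\ep-\mathfrak v$. It satisfies a linear equation with bounded coefficient $c(x,t)=Q^2\,(\beta_\ep(\mathfrak u_\ep)-\beta_\ep(\mathfrak v))/(\mathfrak u_\ep-\mathfrak v)$, and the same exponential change of variables handles the sign of $c$.
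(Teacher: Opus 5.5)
Your proposal is correct and follows the paper's proof. Like the paper, you first compare the solution with the stationary subsolution $g_\ep$ to get $\mathfrak{u}_\ep(\cdot,h)\ge g_\ep$, and then compare $\mathfrak{u}_\ep$ with its time translate $\mathfrak{u}_\ep(\cdot,\cdot+h)$, which has the same lateral data and larger initial data. Your extra justifications (viscosity comparison using the Lipschitz reaction term, and the exponential substitution for the linearized difference) are the facts the paper invokes without detail.
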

\begin{proof}
By comparison $\mathfrak{u}_\ep(x,h) \geq g_\ep(x)$ for all $h >0$. Then consider $\mathfrak{u}_\ep(x,t+h)$, which also solves \eqref{e.bernoulli-parabolic-semilinear} with initial data $\mathfrak{u}_\ep(x,h) \geq g_\ep(x)$ and lateral boundary data $g_\ep(x)$. In other words it is a supersolution of \eqref{e.bernoulli-parabolic-semilinear} so by comparison $\mathfrak{u}_\ep(x,t+h) \geq \mathfrak{u}_\ep(x,t)$ for all $t>0$. Since $h>0$ was arbitrary we derive the monotonicity property. The case of supersolution initial data is symmetrical.
\end{proof}

\subsection{Energy estimates} Multiplying the equation \eqref{e.bernoulli-parabolic-semilinear} by $\partial_t \mathfrak{u}_\ep$ and integrating on $U \times [0,T]$ gives the energy dissipation equality
\begin{equation}\label{eqn:dissipation}
\frac{1}{2}J(\mathfrak{u}_\ep(T),\chi_\ep(T);U) + \int_0^T\int_U (\partial_t \mathfrak{u}_\ep)^2 \, dx\, dt  = \frac{1}{2}J(\mathfrak{u}_\ep(0),\chi_\ep(0);U).
\end{equation}
This shows that $\partial_t\mathfrak{u}_\eps \in L^2(U_\infty)$ and $\grad u_\ep \in L^\infty_tL^2_x(U_\infty)$ with bounds uniform in $\ep>0$ as long as the initial data has uniformly bounded energy.

\subsection{Well-prepared data}

In this section we construct good approximations $g_\ep$ of the data $g$. In particular these approximations will preserve the strict sub / supersolution property of $g$ with only a small perturbation. The idea is to compose $g$ with the one-dimensional transition layer for the semilinear ODE $Z'' = \beta(Z)$, although we need a bit extra room since $g$ is not exactly linear.

\subsubsection{One-dimensional sub and supersolution profiles} First we do the subsolution one-dimensional profile construction; a version of this was sketched in \cite{Petrosyan} and \cite{Kim03}. 

\begin{lemma}\label{l.profile-sub}
   For each $\theta \in (0,1)$ there is $\Phi = \Phi_{1,\theta} \in C^\infty(\R)$ with the following properties.
\begin{enumerate}[label = (\roman*)]
    \item\label{part.profile-sub-a} $(\Phi)'' = \theta \beta(\Phi)$ on $\R$.
    \item\label{part.profile-sub-b} $\sqrt{1-\theta} \leq (\Phi)' \leq 1$ on $\R$, and $\Phi(s) = s$ for $s \geq 1$.
    \item\label{part.profile-sub-c} There is $s_0 \in [-c_\theta, 0]$, with $c_\theta := \frac{1-\sqrt{1-\theta}}{\sqrt{1-\theta}}$, such that $\Phi(s_0) = 0$ and $\Phi > 0$ exactly on $(s_0,\infty)$. Moreover $s \leq \Phi(s) \leq 1$ for all $s \leq 1$, and $\sup_{\R} |(\Phi)_+ - s_+| \leq 1$.
\end{enumerate} 
\end{lemma}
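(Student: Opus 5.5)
We will construct $\Phi$ by solving the autonomous ODE $\Phi''=\theta\beta(\Phi)$ backwards from the linear regime, using the conserved quantity. Since $\operatorname{supp}\beta=[0,1]$, any solution with $\Phi(s)=s$ for $s\ge 1$ solves the ODE there, because $\beta(\Phi)=\beta(s)=0$ for $s\geq 1$ (with $\beta(1)=0$). We therefore define $\Phi$ on $[1,\infty)$ by $\Phi(s)=s$ and extend it to $s<1$ as the unique solution of the initial value problem $\Phi''=\theta\beta(\Phi)$ with $\Phi(1)=1$, $\Phi'(1)=1$. Since $\beta$ is smooth and globally Lipschitz, the solution exists on all of $\R$ and is $C^\infty$. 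The concatenation is smooth at $s=1$ by uniqueness: the line $s\mapsto s$ solves the same IVP on $[1,\infty)$. This gives \ref{part.profile-sub-a} and the last clause of \ref{part.profile-sub-b}.

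Next comes the energy identity. Multiplying by $\Phi'$ gives $\frac{d}{ds}\big[\tfrac12(\Phi')^2-\theta\mathcal B_1(\Phi)\big]=0$, where $\mathcal B_1(z)=\int_0^z\beta$. Evaluating at $s=1$, with $\mathcal B_1(1)=\tfrac12$, yields
\[
(\Phi')^2 = 1-\theta+2\theta\,\mathcal B_1(\Phi)\quad\text{on }\R .
\]
Because $0\le\mathcal B_1\le\tfrac12$, this forces $1-\theta\le(\Phi')^2\le 1$. The quantity $\Phi'$ is continuous, never zero, and equals $1$ at $s=1$, so $\Phi'>0$ everywhere and $\sqrt{1-\theta}\le\Phi'\le1$, which is \ref{part.profile-sub-b}. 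In particular $\Phi$ is strictly increasing with slope at least $\sqrt{1-\theta}$, so it has exactly one zero $s_0$, and $\{\Phi>0\}=(s_0,\infty)$.

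For \ref{part.profile-sub-c}, we argue as follows. From $\Phi'\le1$ and $\Phi(1)=1$, integrating backwards gives $\Phi(s)\ge s$ for $s\le1$; monotonicity gives $\Phi(s)\le1$ there. Hence $\Phi(0)\ge0$, so $s_0\le0$. For the lower bound on $s_0$, we use $\Phi'\ge\sqrt{1-\theta}$ on $[s_0,0]$ together with $\Phi(0)\le$ a bound obtained by integrating from $1$. The crude route is $1-\Phi(0)\ge\sqrt{1-\theta}$, so $\Phi(0)\le 1-\sqrt{1-\theta}$, and then $-s_0\sqrt{1-\theta}\le\Phi(0)-\Phi(s_0)\le1-\sqrt{1-\theta}$. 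This gives $s_0\ge -c_\theta$ exactly as stated. Finally, we bound $\sup|\Phi_+-s_+|$ by cases:
\begin{itemize}
\item for $s\ge1$ the difference is $0$;
\item for $0\le s\le1$, $0\le\Phi-s\le 1-s\le1$;
\item for $s_0\le s<0$, $0\le\Phi(s)\le\Phi(0)\le1$;
\item for $s<s_0$ both positive parts vanish.
\end{itemize}

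No step is a real obstacle. The only points requiring care are the smooth matching at $s=1$, which we handle via ODE uniqueness and $\beta(1)=0$, and obtaining precisely the constant $c_\theta$. For the latter, the integration-from-$1$ estimate described above should yield it directly.
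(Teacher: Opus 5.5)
Your proof is correct and is essentially the paper's argument. The paper obtains $\Phi$ directly from the first integral $(\Phi')^2=(1-\theta)+2\theta\mathcal{B}(\Phi)$ by quadrature, setting $\Phi(s):=G^{-1}(s-1)$ with $G(z)=\int_1^z\big((1-\theta)+2\theta\mathcal{B}(w)\big)^{-1/2}\,dw$; you instead solve the second-order initial value problem at $s=1$ and recover the same first integral by energy conservation, after which your verification of (ii) and (iii), including $|s_0|\le \Phi(0)/\sqrt{1-\theta}\le c_\theta$ and the case analysis for $\sup|\Phi_+-s_+|$, matches the paper's step for step.
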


\begin{proof}
     Since $0 \leq 2\mathcal{B} \leq 1$, the function
\[G(z) := \int_1^z \frac{dw}{\sqrt{(1-\theta) + 2\theta  \mathcal{B}(w)}}\]
is well defined for $z \in \R$, is $C^1$ with $G' \in [1, (1-\theta)^{-1/2}]$, and is therefore a bijection of $\R$ onto $\R$. Define $\Phi(s) := G^{-1}(s - 1)$. Then $\Phi(1) = 1$ and
\begin{equation}\label{e.profile-foc}
(\Phi)'(s) = \sqrt{(1-\theta) + 2\theta  \mathcal{B}(\Phi(s))} \in [\sqrt{1-\theta},  1].
\end{equation}
Differentiating \eref{profile-foc} and using $\mathcal{B}' = \beta$ gives $(\Phi)'' = \theta \beta(\Phi)$, and since $\beta \in C^\infty$ a bootstrap of this identity gives $\Phi \in C^\infty(\R)$. For $s \geq 1$ we have $\Phi(s) \geq 1$ (as $(\Phi)' > 0$), hence $\mathcal{B}(\Phi(s)) = \tfrac12$, hence $(\Phi)' = 1$, and with $\Phi(1)=1$ this gives $\Phi(s) = s$, proving  \ref{part.profile-sub-b}.

For  \ref{part.profile-sub-c}: the function $s \mapsto s - \Phi(s)$ is nondecreasing by \eref{profile-foc} and vanishes at $s = 1$, so $\Phi(s) \geq s$ for $s \leq 1$; also $\Phi(s) \leq \Phi(1) = 1$ for $s \leq 1$ since $(\Phi)' > 0$. In particular $\Phi(0) \geq 0$, so the unique zero $s_0$ of the strictly increasing function $\Phi$ satisfies $s_0 \leq 0$. On $[0,1]$, $(\Phi)' \geq \sqrt{1-\theta}$ gives $\Phi(0) \leq \Phi(1) - \sqrt{1-\theta} = 1 - \sqrt{1-\theta}$, and then $(\Phi)' \geq \sqrt{1-\theta}$ on $[s_0, 0]$ gives $|s_0| \leq \Phi(0)/\sqrt{1-\theta} \leq c_\theta$. Finally, for $s \geq 1$ we have $(\Phi(s))_+ - s_+ = 0$; for $s_0 \leq s \leq 1$ both $(\Phi(s))_+$ and $s_+$ lie in $[0,1]$; and for $s \leq s_0$ both terms vanish since $s \leq s_0 \leq 0$. This proves  \ref{part.profile-sub-c}.
\end{proof}

Next we construct the supersolution profiles. Similar constructions were done in \cites{CaffarelliVazquez1995, Petrosyan, Kim03}.

\begin{lemma}\label{l.profile-super}
    For each $\hat\theta \in (1,\infty)$ there are a constant level $\kappa = \kappa_{\hat\theta} \in (0,1)$ and constants $C_{\hat\theta} \geq 1 \geq c_{\hat\theta} > 0$, all depending only on $\hat\theta$ and $\beta$, and $\Psi = \Psi_{1,\hat\theta} \in C^\infty(\R)$ with the following properties.
\begin{enumerate}[label = (\roman*)]
    \item\label{part.profile-super-a} $0 \leq (\Psi)'' \leq \hat\theta  \beta(\Psi)$ on $\R$.
    \item\label{part.profile-super-b} $0 < (\Psi)' \leq 1$ on $\R$, and $\Psi(s) = s$ for $s \geq 1$.
    \item\label{part.profile-super-c} $\max(s,\kappa) < \Psi(s) < 1$ for all $s < 1$, and $\Psi(s) \downarrow \kappa$ as $s \to -\infty$. In particular $\Psi > \kappa > 0$ on $\R$ and $\sup_{\R}|\Psi - s_+| \leq 1$.
    \item\label{part.profile-super-e} $\max\{\Psi(s) - \kappa ,(\Psi)'(s) , (\Psi)''(s)\} \leq C_{\hat\theta}  e^{c_{\hat\theta}  s}$ for all $s \leq 0$.
\end{enumerate}
\end{lemma}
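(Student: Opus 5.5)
We will build $\Psi$ the same way as $\Phi$ in \lref{profile-sub}, by prescribing its first integral. The only change is that we start from a positive constant level at $-\infty$ instead of slope $\sqrt{1-\theta}$.

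\emph{Construction.} Pick $\kappa\in(0,1)$ with $1-2\hat\theta\mathcal{B}(\kappa)>0$. This is possible because $\mathcal{B}(\kappa)\to0$ as $\kappa\downarrow0$. Then define
\[
F(z):=\big(1-2\hat\theta\,(\mathcal{B}(1)-\mathcal{B}(z))\big)_+ ,
\]
where $\mathcal{B}(1)=\tfrac12$. A computation shows $F(z)=1-\hat\theta+2\hat\theta\mathcal{B}(z)$ wherever it is positive. Since $\hat\theta>1$, $F$ vanishes at the unique level $\kappa^*\in(0,1)$ where $\mathcal{B}(\kappa^*)=\tfrac{\hat\theta-1}{2\hat\theta}$, and $F>0$ on $(\kappa^*,\infty)$. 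Set $\kappa:=\kappa^*$ and define $\Psi$ on $\{\Psi>\kappa\}$ through
\[
\Psi'=\sqrt{F(\Psi)},\qquad \Psi(1)=1 .
\]
Equivalently, $\Psi(s)=G^{-1}(s-1)$ with $G(z)=\int_1^z F(w)^{-1/2}\,dw$ for $z>\kappa$.

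\emph{Checking the properties.} Differentiating $(\Psi')^2=F(\Psi)$ gives $\Psi''=\hat\theta\beta(\Psi)\ge0$, which proves \ref{part.profile-super-a}. The bound $F\le1$ gives $0<\Psi'\le1$. For $\Psi\ge1$ we have $F=1$, so $\Psi(s)=s$ for $s\ge1$, which proves \ref{part.profile-super-b}. For $s<1$, the function $s-\Psi(s)$ is nondecreasing and vanishes at $s=1$, so $\Psi(s)>s$; also $\Psi<1$ and $\Psi>\kappa$. This yields \ref{part.profile-super-c}, provided $G(z)\to-\infty$ as $z\downarrow\kappa$, so that $\Psi$ is defined on all of $\R$ and decreases to $\kappa$ at $-\infty$.

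\emph{Main obstacle: divergence of $G$ and the exponential rate \ref{part.profile-super-e}.} Near $\kappa$ we have $F(z)=2\hat\theta(\mathcal{B}(z)-\mathcal{B}(\kappa))\approx 2\hat\theta\beta(\kappa)(z-\kappa)$, and $\beta(\kappa)>0$ because $\kappa\in(0,1)$. So $F^{-1/2}\sim c(z-\kappa)^{-1/2}$ is integrable, and $\Psi$ reaches $\kappa$ in finite time. That is the wrong behaviour for \ref{part.profile-super-c}.

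To fix this, use the inequality allowed in \ref{part.profile-super-a}. Choose $\Psi''=\hat\theta\,\beta(\Psi)\,\omega(\Psi)$, where $\omega\in C^\infty$, $0\le\omega\le1$, $\omega=1$ above some level, and $\omega(z)\approx c(z-\kappa)$ near $\kappa$. The first integral then becomes $F(z)=1-2\hat\theta\int_z^1\beta\omega$. Tune $\omega$ (continuity in a parameter) so that $F(\kappa)=0$. With this choice $F(z)\approx c(z-\kappa)^2$ near $\kappa$, so $F'(\kappa)=0$. Then $\Psi'=\sqrt{F(\Psi)}\approx c(\Psi-\kappa)$, which gives $G\to-\infty$ as $z\downarrow\kappa$ and
\[
\Psi-\kappa\sim Ce^{cs}\quad\text{as } s\to-\infty.
\]
The resulting bounds on $\Psi'$ and $\Psi''$ follow from $\Psi'\le C(\Psi-\kappa)$ and $\Psi''\le C\Psi'$. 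On the compact range $s\in[s_1,0]$, all three quantities are bounded, so enlarging $C_{\hat\theta}$ gives \ref{part.profile-super-e}. Smoothness of $\Psi$ follows by bootstrapping the ODE, because $F$ is smooth with a simple quadratic zero at $\kappa$ and $\sqrt{F}$ is smooth near $\kappa$.
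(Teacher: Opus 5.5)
Your strategy is the paper's. The unmodified first integral gives a vector field $\sqrt{F}$ vanishing like a square root at the bottom level, so the trajectory attaches in finite time. Replacing it by one vanishing quadratically makes $\Psi'$ vanish linearly in $\Psi-\kappa$ and forces exponential convergence. The paper does this with $g=\sqrt{\eta(P)}$, $\eta(p)=p^2$ near $0$, and your $\omega$ is exactly $\eta'(P)/(1+\sigma)$.

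However, the tuning step as you wrote it cannot succeed. You set $\kappa:=\kappa^*$, with $\mathcal{B}(\kappa^*)=\frac{\hat\theta-1}{2\hat\theta}$, and keep this same $\kappa$ as the zero of $\omega$. Take any continuous $\omega$ with $0\le\omega\le1$ and $\omega(\kappa^*)=0$. Since $\beta>0$ on $(\kappa^*,1)$, we get $\int_{\kappa^*}^1\beta\omega<\int_{\kappa^*}^1\beta=\frac{1}{2\hat\theta}$, hence $F(\kappa^*)>0$. So no admissible $\omega$ achieves $F(\kappa^*)=0$. Reducing $\Psi''$ near the bottom, which is what produces the exponential tail, necessarily lowers the limiting level.

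So $\kappa$ must be chosen strictly below $\kappa^*$, and the existence of $\omega$ with $\int_\kappa^1\beta\omega=\frac{1}{2\hat\theta}$ needs an actual argument. For instance, fix $\kappa\in(0,\kappa^*)$ and take $\omega_\lambda(z)=\zeta((z-\kappa)/\lambda)$, where $\zeta$ is nondecreasing, $\zeta(0)=0<\zeta'(0)$, $\zeta>0$ on $(0,\infty)$ and $\zeta=1$ on $[1,\infty)$. The integral $\int_\kappa^1\beta\omega_\lambda$ is continuous in $\lambda$. It tends to $\frac12-\mathcal{B}(\kappa)>\frac{1}{2\hat\theta}$ as $\lambda\to0$ and to $0$ as $\lambda\to\infty$, so the intermediate value theorem gives the required $\lambda$.

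The paper avoids tuning altogether. It fixes $\kappa$ through $\tilde\theta=\frac{1+\hat\theta}{2}$, i.e. $2\mathcal{B}(\kappa)=\frac{\tilde\theta-1}{\tilde\theta}$, so that $P$ runs from $0$ at $\kappa$ to $1$ at $z=1$. It then composes with $\eta$ satisfying $\eta(0)=0$, $\eta(1)=1$ and $\eta'\le1+\sigma$. The normalizations $F(\kappa)=0$ and $F(1)=1$ are then automatic, and the factor $1+\sigma=\hat\theta/\tilde\theta$ is exactly the slack in $\Psi''\le\hat\theta\beta(\Psi)$ being spent.

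With $\kappa$ lowered in either way, the rest of your verification of (i)--(iv) goes through. This includes smoothness of $\sqrt{F}$ near $\kappa$: write $F=(z-\kappa)^2h$ with $h$ smooth and $h(\kappa)=\hat\theta\beta(\kappa)\omega'(\kappa)>0$. It also includes the bounds $\Psi'\le C(\Psi-\kappa)$ and $\Psi''\le C(\Psi-\kappa)$ that give the exponential estimates.
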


\begin{proof}
 Set $\tilde\theta := \tfrac{1+\hat\theta}{2} \in (1,\hat\theta)$ and $\sigma := \hat\theta/\tilde\theta - 1 > 0$. Since $\beta > 0$ on $(0,1)$, $2\mathcal{B}$ is a strictly increasing bijection from $[0,1]$ onto $[0,1]$, so there is a unique $\kappa = \kappa_{\hat\theta} \in (0,1)$ with
\begin{equation}\label{e.kappa-def}
2\mathcal{B}(\kappa) = \frac{\tilde\theta - 1}{\tilde\theta} .
\end{equation}
Note that $\kappa \to 1$ as $\hat \theta \to \infty$. Define $P : [\kappa,\infty) \to [0,1]$ by $P(z) := 2\tilde\theta\big(\mathcal{B}(z) - \mathcal{B}(\kappa)\big)$. Then $P$ is smooth and nondecreasing with $P' = 2\tilde\theta\beta \geq 0$, strictly increasing on $[\kappa,1]$, $P(\kappa) = 0$, $P'(\kappa) = 2\tilde\theta\beta(\kappa) > 0$, and
\[P(z) = 2\tilde\theta\Big(\tfrac12 - \tfrac{\tilde\theta-1}{2\tilde\theta}\Big) = 1 \  \hbox{for } z \geq 1 .\]

It would be natural to solve $\Psi' = \sqrt{P(\Psi)}$ with $\Psi(1) = 1$ since then $\Psi'' = \tilde\theta \beta(\Psi)$, but this solution attaches to the level $\kappa$ at some finite value and is only $C^{1,1}$ at the attachment point. To get a smooth solution we instead we flatten the vector field quadratically at the bottom of its range, spending the slack $\sigma$. 

Fix a smooth reparametrization $\eta \in C^\infty([0,1];[0,1])$, depending only on $\sigma$, with
\begin{equation}\label{e.eta-properties}
\eta(p) = p^2 \ \hbox{ on } [0,p_0], \  \eta(p) = p \ \hbox{ on } [p_1,1], \  0 < \eta(p) \leq p \ \hbox{ on } (0,1], \  0 \leq \eta' \leq 1+\sigma,
\end{equation}
for suitable $0 < p_0 < p_1 < 1$.

Define $g : [\kappa,\infty) \to [0,1]$ by $g(z) := \sqrt{\eta(P(z))}$. The function $g$ is smooth: near $z = \kappa$ we have $P(z) \leq p_0$, so $g = P$ there, which is smooth; and where $P > 0$ the composition $\eta(P)$ is smooth and positive (by \eref{eta-properties}), so its square root is smooth. Moreover:
\begin{enumerate}[label = (\alph*)]
\item\label{it.g-ineq} $0 \leq (g^2)'(z) = \eta'(P(z)) P'(z) \leq (1+\sigma)\cdot 2\tilde\theta\beta(z) = 2\hat\theta \beta(z)$ for all $z \geq \kappa$;
\item\label{it.g-firstint} $g(z)^2 = \eta(P(z)) \leq P(z)$, so in particular $g \leq 1$, with $g(z) = 1$ for $z \geq 1$ and $g(z) < 1$ for $z < 1$ (as $P < 1$ there);
\item\label{it.g-linear} $g(\kappa) = 0$, $g > 0$ on $(\kappa,\infty)$, and, with $z_0 \in (\kappa,1)$ defined by $P(z_0) = p_0$,
\[c_1 (z-\kappa) \leq g(z) = P(z) \leq c_2 (z-\kappa) \  \hbox{for } z \in [\kappa, z_0],\]
where $c_1 := 2\tilde\theta\min_{[\kappa,z_0]}\beta > 0$ and $c_2 := 2\tilde\theta \max\beta$.
\end{enumerate}
Let $\Psi$ be the maximal solution of the autonomous ODE
\[(\Psi)' = g(\Psi), \  \Psi(1) = 1 .\]
Since $g$ is locally Lipschitz and $z \equiv \kappa$ is a stationary solution, $\Psi$ is defined on all of $\R$, takes values in $(\kappa,\infty)$, and is strictly increasing. Bootstrapping the ODE with $g$ smooth gives $\Psi \in C^\infty(\R)$. For $s \geq 1$, $g(\Psi) = 1$ once $\Psi \geq 1$, so $\Psi(s) = s$. This and $0 < (\Psi)' = g(\Psi) \leq 1$ prove  \ref{part.profile-super-b}. Differentiating,
\[(\Psi)'' = g'(\Psi)  g(\Psi) = \tfrac12 (g^2)'(\Psi) \in \big[0,\ \hat\theta \beta(\Psi)\big]\]
by \ref{it.g-ineq}, which proves \ref{part.profile-super-a}.

For  \ref{part.profile-super-c}: $\Psi > \kappa$ and $\Psi(s) < \Psi(1) = 1$ for $s < 1$ by strict monotonicity. The function $s - \Psi(s)$ has derivative $1 - g(\Psi(s)) > 0$ for $s < 1$ by \ref{it.g-firstint} and vanishes at $s = 1$, so $\Psi(s) > s$ for $s < 1$. As $s \to -\infty$, $\Psi$ decreases to a limit $\ell \geq \kappa$ which must satisfy $g(\ell) = 0$, so $\ell = \kappa$. The bound $\sup_\R|\Psi - s_+| \leq 1$ follows since for $s \leq 1$ both $\Psi(s)$ and $s_+$ lie in $[0,1]$, while for $s \geq 1$ they are equal.

For \ref{part.profile-super-c}: let $s_\# \in \R$ be the time with $\Psi(s_\#) = z_0$ (with $z_0$ from \ref{it.g-linear}). For $s \leq s_\#$ we have $\Psi(s) \in (\kappa, z_0]$, where \ref{it.g-linear} gives $\frac{d}{ds}\log(\Psi - \kappa) = \frac{g(\Psi)}{\Psi - \kappa} \geq c_1$. Integrating from $s$ to $s_\#$,
\[\Psi(s) - \kappa \leq (z_0 - \kappa)  e^{c_1 (s - s_\#)} \leq e^{c_1|s_\#|}  e^{c_1 s} \  \hbox{for } s \leq s_\#,\]
while for $s_\# \leq s \leq 0$ (if this interval is nonempty) the trivial bound $\Psi(s) - \kappa \leq 1 \leq e^{c_1|s_\#|}e^{c_1 s}$ holds. On the same regions, $(\Psi)' = g(\Psi) \leq c_2(\Psi - \kappa)$ for $s \leq s_\#$ and $(\Psi)' \leq 1$ otherwise, and $(\Psi)'' = \tfrac12\eta'(P)P'(\Psi) \leq P(\Psi)\cdot 2\tilde\theta\max\beta \leq c_2^2(\Psi-\kappa)$ for $s \leq s_\#$ (using $\eta' \leq 2P$ on $[0,p_0]$) and $(\Psi)'' \leq \hat\theta\max\beta$ otherwise. Since $s_\#$, $c_1$, $c_2$, $z_0$ depend only on $\hat\theta$ and $\beta$, all three bounds combine into \ref{part.profile-super-c} with $c_{\hat\theta} := c_1$ and a suitable $C_{\hat\theta} = C_{\hat\theta}(\hat\theta,\beta)$.
\end{proof}

\subsubsection{Transforming sub and supersolutions}

For $\ep > 0$ define the rescaled profiles
\begin{equation}\label{e.profile-ep-def}
\Phi_{\ep,\theta}(s) := \ep  \Phi_{1,\theta}(s/\ep), \  \Psi_{\ep,\hat\theta}(s) := \ep  \Psi_{1,\hat\theta}(s/\ep).
\end{equation}
 Then $\Phi_{\ep,\theta}$ and $\Psi_{\ep,\hat\theta}$ belong to $C^\infty(\R)$ and satisfy
 \[\Phi_{\ep,\theta}'' = \theta \beta_\eps(\Phi_{\ep,\theta}) \quad \hbox{and} \quad 0 \leq \Psi_{\ep,\hat\theta}'' \leq \hat\theta  \beta_\eps(\Psi_{\ep,\hat\theta}) \  \hbox{on } \R,\]
 together with the rescaled versions of the remaining properties of \lref{profile-sub} and \lref{profile-super}.
\begin{lemma}\label{l.well-prepared}
Assume that $g$ is a smooth strict subsolution (resp. smooth strict supersolution) of \eqref{e.bernoulli-basic} in the sense of \ref{d.strict-sub} with parameters $\delta_0,a_0>0$. Define $\theta = \frac{1}{1+\delta_0}$, $\hat \theta = \frac{1}{1-\delta_0}$, and
\[g_\ep := \big(\Phi_{\ep,\theta}({g})\big)_+ \  \Big(\hbox{resp. } g_\ep := \Psi_{\ep,\hat\theta}({g})\Big).\]
 There is $\ep_0 = \ep_0 > 0$ depending on $(a_0,\delta_0,Q_{\min},Q_{\max},\beta,\|{g}\|_{C^2(\overline U)})$ such that for every $\ep \in (0,\ep_0]$ the following hold.
\begin{enumerate}[label = (\roman*)]
    \item\label{part.gep-1} $g_\ep \in C^{0,1}(\overline{U})$ with $\|\grad g_\ep\|_{L^\infty(U)} \leq \|\grad {g}\|_{L^\infty(U)}$, and $g_\ep = {g}$ on $\{{g} \geq \ep\}$.
    \item\label{part.gep-2} ${g}_+ \leq g_\ep \leq {g}_+ + \ep$ on $\overline{U}$.
    \item\label{part.gep-3} $g_\ep$ is a viscosity subsolution (resp. classical supersolution) of the stationary equation
    \[\Delta v = Q(x)^2 \beta_\eps(v) \  \hbox{in } U.\]
    \item\label{part.gep-4} With $\chi^0_\eps := 2\mathcal{B}_\ep(g_\ep)$,
    \[J(g_\ep, \chi^0_\eps; U) \leq \int_U |\grad {g}|^2   dx + Q_{\max}^2  |U| =: E_0.\]
    \item\label{part.gep-5}  In the subsolution case, $\{g_\ep > 0\} = \{{g} > \ep s_0\}$ with $\ep s_0 \in [-c_\theta \ep,  0]$ the zero of $\Phi_{\ep,\theta}$. In particular
    \[\{{g} > 0\} \subset \{g_\ep > 0\} \subset \{{g} > -c_\theta  \ep\}.\]
\end{enumerate}
\end{lemma}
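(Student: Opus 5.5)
We verify the five properties of $g_\ep$ in the order (i), (ii), (v), (iii), (iv), treating the subsolution and supersolution cases in parallel. Everything follows from the profile properties in \lref{profile-sub} and \lref{profile-super}, rescaled via \eref{profile-ep-def}, together with a chain-rule computation. The key smallness condition is $\ep_0 \leq a_0/2$ (up to the constant $c_\theta$). It guarantees that every point where $\Phi_{\ep,\theta}(g)$ or $\Psi_{\ep,\hat\theta}(g)$ differs from the identity or from zero lies in the layer $\{-C\ep \leq g \leq \ep\}\subset\{|g|\leq a_0\}$, where the strict sub/supersolution hypotheses of \dref{strict-sub} hold.

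\emph{Parts (i), (ii), (v).} For (i), the profiles satisfy $0<\Phi',\Psi'\leq 1$, so $|\grad g_\ep|\leq|\grad g|$. The identity $\Phi_{\ep,\theta}(s)=s$ for $s\geq\ep$ (and likewise for $\Psi$) gives $g_\ep=g$ on $\{g\geq\ep\}$. For (ii), we have $\Phi(s)\geq s$ for $s\leq1$, $\Phi(s)\leq 1$ for $s \leq 1$, and $\Phi\geq0$ where positive. Hence $g_+\leq(\Phi_{\ep,\theta}(g))_+\leq g_++\ep$. For $\Psi$ we use $\max(s,\kappa)<\Psi(s)<1$ for $s<1$, after scaling. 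For (v), we use that $\Phi$ is strictly increasing with unique zero $s_0\in[-c_\theta,0]$.

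\emph{Part (iii), the main computation.} At points where $g_\ep>0$, the chain rule gives
\[
\Delta \Phi_{\ep,\theta}(g)=\Phi_{\ep,\theta}'(g)\Delta g+\Phi_{\ep,\theta}''(g)|\grad g|^2 .
\]
The first term is $\geq 0$ on $\{g>-a_0\}$, which contains $\{g_\ep>0\}$ for small $\ep$. For the second term, note that $\Phi_{\ep,\theta}''=\theta\beta_\ep(\Phi_{\ep,\theta})$ is supported where $0\leq \Phi_{\ep,\theta}(g) \leq \ep$, hence where $|g|\leq C\ep\leq a_0$. There $|\grad g|^2\geq(1+\delta_0)Q^2$, so the second term is at least $\theta(1+\delta_0)Q^2\beta_\ep = Q^2\beta_\ep(g_\ep)$. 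Thus $\Phi_{\ep,\theta}(g)$ is a classical subsolution on $\{g_\ep>0\}$.

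Taking the positive part preserves the viscosity subsolution property, because $\beta_\ep(0)=0$: the maximum of two subsolutions is a subsolution, and the constant $0$ is a subsolution. The subtlety is only at the zero level, and the max argument handles it.

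In the supersolution case, where $\Delta g\leq 0$ on $\{g>-a_0\}$, the term $\Psi'\Delta g\leq0$ on that set. On $\{g\leq -a_0\}$ we have $g/\ep\leq -a_0/\ep$, so by \lref{profile-super}\ref{part.profile-super-e} both $\Psi'$ and $\Psi''$ are $O(e^{-c a_0/\ep})$ there. The term $\Psi''|\grad g|^2\leq \hat\theta\beta_\ep(\Psi)|\grad g|^2$ is controlled as follows. Where $|g|\leq a_0$, the bound $|\grad g|^2\leq(1-\delta_0)Q^2$ gives exactly $Q^2\beta_\ep$. Where $g\leq-a_0$, the function $\Psi_{\ep,\hat\theta}(g)$ is within $O(\ep e^{-ca_0/\ep})$ of the level $\kappa\ep$. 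At that level $\beta_\ep(\kappa\ep)=\ep^{-1}\beta(\kappa)>0$ dominates the exponentially small terms $\|g\|_{C^2}O(e^{-ca_0/\ep})$ once $\ep$ is small.

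I expect this region $\{g\leq-a_0\}$ in the supersolution case to be the main obstacle. It is where the dependence of $\ep_0$ on $\|g\|_{C^2}$, $Q_{\min}$ and $\beta$ enters. One must check that $\beta(\Psi)$ stays bounded below near $\kappa$, which holds since $\beta(\kappa)>0$ and $\kappa\in(0,1)$.

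\emph{Part (iv).} We have $|\grad g_\ep|\leq|\grad g|$ from (i), and $\chi^0_\eps=2\mathcal B_\ep(g_\ep)\leq1$. Together these give $J(g_\ep,\chi^0_\eps;U)\leq\int_U|\grad g|^2+Q_{\max}^2|U|$.
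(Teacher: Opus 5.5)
Your proposal is correct and follows essentially the same route as the paper's proof. The profile properties give (i), (ii), (iv) and (v); the chain-rule computation uses $\theta(1+\delta_0)=1$ (resp.\ $\hat\theta(1-\delta_0)=1$) in the transition layer; your ``max with the constant $0$'' at zeros of $g_\ep$ is exactly the paper's observation that a test function touching from above there has a local minimum; and on $\{g<-a_0\}$ the exponential decay of the profile is dominated by $\beta_\ep(\Psi_{\ep,\hat\theta}(g))\geq \ep^{-1}\min_{[\kappa,(1+\kappa)/2]}\beta$. One bookkeeping slip is harmless: the rescaled term is $\Psi_{\ep,\hat\theta}''(g)=\ep^{-1}\Psi_{1,\hat\theta}''(g/\ep)=O(\ep^{-1}e^{-ca_0/\ep})$, not $O(e^{-ca_0/\ep})$, and $\beta_\ep$ carries the same factor $\ep^{-1}$, so the comparison is unaffected.
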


\begin{proof}
Set $L := \|\grad{g}\|_{L^\infty(U)}$ and $D := \|\Delta{g}\|_{L^\infty(U)}$. We first require $\ep_0 \leq \tfrac{a_0}{1 + c_\theta}$, so that in the increasing case the transition region of the profile is contained in $\{|{g}| \leq a_0\}$, as quantified below.  

\emph{Subsolution case.} Write $\Phi := \Phi_{\ep,\theta}$ and recall from \lref{profile-sub}\ref{part.profile-sub-c} the zero $\ep s_0 \in [-c_\theta\ep, 0]$ of $\Phi$. Item \ref{part.gep-1} holds since $0 \leq \Phi' \leq 1$ and ${g} \in C^{0,1}$, with $g_\ep = (\Phi({g}))_+ = \Phi({g}) = {g}$ on $\{{g} \geq \ep\}$. Item \ref{part.gep-2}: $\Phi(s) \geq s$ for all $s$ gives $(\Phi({g}))_+ \geq {g}_+$, and $\sup_\R|\Phi_+ - s_+| \leq \ep$ gives the upper bound. Item \ref{part.gep-5} is immediate from the strict monotonicity of $\Phi$ and $\{\Phi > 0\} = (\ep s_0,\infty)$. Item \ref{part.gep-4} follows from \ref{part.gep-1} and $0 \leq \chi^0_\eps \leq 1$:
\[J(g_\ep,\chi^0_\eps;U) \leq \int_U |\grad {g}|^2 {\bf 1}_{\{g_\ep>0\}}   dx + \int_U Q(x)^2   dx \leq \int_U |\grad{g}|^2   dx + Q_{\max}^2 |U|.\]

We prove \ref{part.gep-3}. Let $\psi \in C^2(U)$ touch $g_\ep$ from above at $x_* \in U$; we must show $\Delta \psi(x_*) \geq Q(x_*)^2 \beta_\eps(g_\ep(x_*))$.

First suppose $g_\ep(x_*) = 0$. Then $\psi \geq g_\ep \geq 0$ near $x_*$ and $\psi(x_*) = 0$, so $x_*$ is a local minimum of $\psi$ and $\Delta \psi(x_*) \geq 0 = Q(x_*)^2\beta_\eps(0)$, where $\beta_\eps(0) = 0$ since $\operatorname{supp}\beta = [0,1]$ and $\beta$ is continuous.

Now suppose $g_\ep(x_*) > 0$, i.e.\ ${g}(x_*) > \ep s_0$. Near $x_*$ we have $g_\ep = \Phi({g}) \in C^2$, so it suffices to verify the classical inequality at $x_*$:
\begin{equation}\label{e.chain-sub}
\Delta\big(\Phi({g})\big)(x_*) = \Phi''({g}) |\grad{g}|^2 + \Phi'({g}) \Delta{g} \geq Q(x_*)^2\beta_\eps\big(\Phi({g}(x_*))\big).
\end{equation}
Since ${g}(x_*) > \ep s_0 \geq -c_\theta\ep \geq -a_0$, the strict subsolution property gives $\Delta {g}(x_*) \geq 0$, and $\Phi' > 0$, so the second term in \eref{chain-sub} is non-negative. If $\beta_\eps(\Phi({g}(x_*))) = 0$ then \eref{chain-sub} follows since $\Phi'' = \theta\beta_\eps(\Phi) = 0$ there. If $\beta_\eps(\Phi({g}(x_*))) > 0$ then $0 < \Phi({g}(x_*)) < \ep$, which by \lref{profile-sub}\ref{part.profile-sub-c} forces $\ep s_0 < {g}(x_*) < \ep$, hence $|{g}(x_*)| \leq a_0$ by the choice of $\ep_0$. Then the strict subsolution property gives $|\grad {g}(x_*)|^2 \geq (1+\delta_0) Q(x_*)^2 $, and, using $\Phi'' = \theta\beta_\eps(\Phi)$ and the monotonicity of $t \mapsto \frac{t}{t + \delta_0}$,
\begin{align*}
    \Phi''({g})|\grad{g}|^2 &\geq \theta (1+\delta_0) Q(x_*)^2 \beta_\eps(\Phi({g})) \geq Q(x_*)^2 \beta_\eps(\Phi({g}))
\end{align*}
at $x_*$, using the definition of $\theta$ for the second inequality. This proves \eref{chain-sub}, hence \ref{part.gep-3}.

\emph{Supersolution case.} Write $\Psi := \Psi_{\ep,\hat\theta}$ and $\kappa := \kappa_{\hat\theta} \in (0,1)$ from \lref{profile-super}. Item \ref{part.gep-1} holds since $0 < \Psi' \leq 1$, with $g_\ep = \Psi({g}) = {g}$ on $\{{g} \geq \ep\}$. Item \ref{part.gep-2}: $\Psi(s) \geq s$ and $\Psi > 0$ give $\Psi(s) \geq s_+$, hence $\Psi({g}) \geq {g}_+$, and $\sup_\R|\Psi - s_+| \leq \ep$ gives the upper bound. Item \ref{part.gep-5} is $\Psi > \ep\kappa$. Item \ref{part.gep-4} follows as before.

For \ref{part.gep-3}, since $\Psi \in C^\infty(\R)$ by \lref{profile-sub} and ${g} \in C^2(\overline U)$, the composition $g_\ep = \Psi({g})$ belongs to $C^2(\overline U)$ and the supersolution property is the classical pointwise inequality
\begin{equation}\label{e.chain-super-goal}
\Delta\big(\Psi({g})\big) = \Psi''({g}) |\grad{g}|^2 + \Psi'({g}) \Delta{g} \leq Q(x)^2\beta_\eps\big(\Psi({g})\big) \  \hbox{in } U,
\end{equation}
which we verify in three regions.

Case ${g}(x) \geq \ep$. Then $\Psi({g}) = {g} \geq \ep$ at $x$, so $\beta_\eps(\Psi({g})) = 0$ and $\Psi''({g}) = 0$ (as $\Psi$ is affine on $[\ep,\infty)$), while $\Delta{g}(x) \leq 0$ since $g$ is a strict supersolution  as in \dref{strict-sub} (as ${g}(x) \geq \ep > -a_0$). Hence the left side of \eref{chain-super-goal} is non-positive.

Case $-a_0 \leq {g}(x) < \ep$. Then $|{g}(x)| \leq a_0$ (using $\ep \leq a_0$), so the strict supersolution property of $g$ \dref{strict-sub} gives $\Delta {g}(x) \leq 0$ and $|\grad{g}(x)|^2 \leq Q(x)^2 - \delta_0$. Then, using $0 \leq \Psi'' \leq \hat\theta\beta_\eps(\Psi)$ from \lref{profile-super}\ref{part.profile-super-a} and the definition of $\hat\theta$,
\begin{align*}
    \Psi''({g})|\grad{g}|^2 &\leq \hat\theta(1-\delta_0)Q(x)^2\beta_\eps(\Psi({g})) = Q(x)^2\beta_\eps(\Psi({g})).
\end{align*}
 Also $\Psi'\Delta{g} \leq 0$ since $\Delta g \leq 0$ on $g(x) \geq - a_0$. Thus, again, we have derived \eref{chain-super-goal}.

Case ${g}(x) < -a_0$. Here the strict supersolution property of $g$ gives no information, and we use the exponential decay of the profile instead. By \lref{profile-super}\ref{part.profile-super-e},
\[\Psi''({g})|\grad{g}|^2 + \Psi'({g})\Delta{g} \leq \frac{C_{\hat\theta}}{\ep} e^{-c_{\hat\theta}a_0/\ep} \big(L^2 + \ep D\big) \leq \frac{C_{\hat\theta}(L^2 + D)}{\ep} e^{-c_{\hat\theta} a_0/\ep} .\]
On the other hand $\Psi({g}(x))/\ep = \Psi({g}(x)/\ep) \in \big(\kappa,\ \Psi(-a_0/\ep)\big]$, and by \lref{profile-super}\ref{part.profile-super-c}, $\Psi(-a_0/\ep) \leq \kappa + C_{\hat\theta}e^{-c_{\hat\theta}a_0/\ep} \leq \tfrac{1+\kappa}{2}$ once $\ep$ is small enough that $C_{\hat\theta}e^{-c_{\hat\theta}a_0/\ep} \leq \tfrac{1-\kappa}{2}$. Hence, with $b_{\hat\theta} := \min_{[\kappa, \frac{1+\kappa}{2}]}\beta > 0$,
\[Q(x)^2\beta_\eps\big(\Psi({g}(x))\big) = \frac{Q(x)^2}{\ep}\beta\Big(\frac{\Psi({g}(x))}{\ep}\Big) \geq \frac{Q_{\min}^2 b_{\hat\theta}}{\ep},\]
and \eref{chain-super-goal} holds in this region as soon as $C_{\hat\theta}(L^2 + D) e^{-c_{\hat\theta}a_0/\ep} \leq Q_{\min}^2 b_{\hat\theta}$. Together with the condition of the previous display this fixes the threshold $\ep_0$, with the stated dependencies.
\end{proof}

\subsection{Lipschitz estimate in space}
 
We prove that solutions of the semilinear problem \eqref{e.bernoulli-parabolic-semilinear} are locally Lipschitz in space, uniformly in $\ep$. 

We prove two versions of the Lipschitz estimate. Both proofs are based on the Bernstein method, and are adaptations of the proofs of \cite{CaffarelliVazquez1995}*{Theorems 4.1 and 5.1}. The difference in our case is the spatial localization and the $x$-dependent coefficient field $Q(x)$. The inhomogeneous coefficients are the slightly trickier part, and we cannot avoid taking a derivative of the coefficient field here.

The first Lipschitz estimate is localized in both time and space.  

\begin{proposition}%
 Let $0 < r \leq 1$ and let $\mathfrak{u}_\ep \geq 0$ be a classical solution of 
\[(\partial_t - \Delta)\mathfrak{u}_\ep = - Q(x)\beta_\ep(\mathfrak{u}_{\ep}) \ \hbox{ in } \ Q_{2r}\] and continuous on $\overline{Q_{2r}}$. Then
	\begin{equation}\nonumber
	\|\nabla \mathfrak{u}_\ep\|_{L^\infty(Q_r)} \leq C \left(\frac{\|\mathfrak{u}_\ep\|_{L^\infty(Q_{2r})}}{r} + 1\right)
	\end{equation}
	with $C = C(d, Q_{\min}, Q_{\max}, \|\grad(Q^2)\|_{L^\infty}, \|\beta\|_{C^1})$.
\end{proposition}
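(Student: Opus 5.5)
We rescale to $r=1$. Set $v(x,t):=r^{-1}\mathfrak u_\ep(rx,r^2t)$, which solves $(\partial_t-\Delta)v=-Q_r(x)\beta_{\ep/r}(v)$ with $Q_r(x)=Q(rx)$. Because $r\le1$, the Lipschitz constant of $Q_r$ (and of $Q_r^2$) does not grow. The claimed bound then becomes $\|\grad v\|_{L^\infty(Q_1)}\le C(\|v\|_{L^\infty(Q_2)}+1)$, uniformly in the new parameter $\ep/r$. So it suffices to treat $r=1$ with an arbitrary $\ep>0$ and $M:=\|\mathfrak u_\ep\|_{L^\infty(Q_2)}$.

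We use the Bernstein method, following \cite{CaffarelliVazquez1995}*{Theorem 4.1}, and split $Q_2$ into three regions according to the size of $\mathfrak u_\ep$ relative to $\ep$.

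\emph{Region $\{\mathfrak u_\ep\ge\ep\}$.} Here the reaction term vanishes, so $\mathfrak u_\ep$ is caloric. Interior gradient estimates on parabolic cylinders contained in this set give the bound, but only on cylinders of size $\sim\dist$ to $\{\mathfrak u_\ep<\ep\}$. This is why a global argument is needed.

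\emph{Layer $\{\mathfrak u_\ep<\ep\}$.} We bound the energy-type quantity
\[
w:=|\grad\mathfrak u_\ep|^2-2Q^2\mathcal B_\ep(\mathfrak u_\ep),
\]
or, since the equation is written with $Q\beta_\ep$, the same expression with $Q$ in place of $Q^2$. In 1D stationary profiles this quantity is constant, which is what makes it the natural Bernstein function. We compute $(\partial_t-\Delta)w$. The terms coming from $\beta_\ep'$ cancel exactly: $-2\grad\mathfrak u\cdot\grad(Q\beta_\ep)$ from the gradient part matches the contribution of $\partial_t\mathcal B-\Delta\mathcal B$. The remaining terms are $-2|D^2\mathfrak u|^2$, terms like $2\beta_\ep|\grad \mathfrak u|^2Q$, and error terms carrying $\grad Q$. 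These errors have the form $\beta_\ep(\mathfrak u)\,\mathfrak u$-independent factors times $|\grad Q||\grad\mathfrak u|$, plus $\mathcal B_\ep\,\Delta Q$. The last one is problematic, since $Q$ is only Lipschitz, and this is why the constant depends only on $\|\grad(Q^2)\|_\infty$. To handle it, we first mollify $Q$: the bound must not depend on $\|D^2Q\|$, and the appendix remarks that smooth $Q$ gives $C^\infty$ solutions. After mollification we avoid $\Delta Q$ by grouping $\Delta(Q\mathcal B)$ with the equation, or by replacing $w$ with $|\grad \mathfrak u|^2/Q-2\mathcal B_\ep$, which removes the $Q$ weight from the reaction part. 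The Hessian of $Q$ still appears, but it can be integrated out only in a weak form. I expect this cancellation to be the main obstacle. If pointwise control fails, the fallback is a comparison argument: control $w$ via $\beta_\ep$-weighted terms bounded by $\beta_\ep|\grad \mathfrak u|\,|\grad Q|\le \beta_\ep(\delta|\grad \mathfrak u|^2+C_\delta)$, absorbing the $\delta$-part with the good term $2Q\beta_\ep|\grad\mathfrak u|^2$.

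\emph{Localization and conclusion.} We localize with a cutoff $\zeta$ adapted to $Q_2$, equal to $1$ on $Q_1$, and apply the maximum principle to $\zeta^2 w+\lambda\mathfrak u_\ep^2$. The choice of $\lambda$ lets $(\partial_t-\Delta)\mathfrak u_\ep^2=-2|\grad \mathfrak u_\ep|^2-2\mathfrak u_\ep Q\beta_\ep\le -2|\grad\mathfrak u_\ep|^2$ absorb the cutoff errors $|\grad\zeta|^2|\grad\mathfrak u_\ep|^2$ and $\zeta|\grad\zeta||D^2\mathfrak u_\ep||\grad\mathfrak u_\ep|$, the latter against $-\zeta^2|D^2\mathfrak u|^2$. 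This yields $\sup_{Q_1}w\le C(M^2+1)$. Since $\mathcal B_\ep\le\frac12$, we conclude $|\grad\mathfrak u_\ep|^2\le C(M^2+1)$ on $Q_1$, which is the claimed bound.
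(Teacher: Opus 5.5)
Your argument has a genuine gap at its core step: the sign structure of $(\partial_t-\Delta)w$. Write $u=\mathfrak u_\ep$ and the equation as $\partial_t u-\Delta u=-Q^2\beta_\ep(u)$; the $Q$ versus $Q^2$ convention is immaterial. For $w=|\grad u|^2-2Q^2\mathcal B_\ep(u)$ the $\beta_\ep'$ terms do cancel, but what remains is
\[
(\partial_t-\Delta)w=-2|D^2u|^2+2Q^4\beta_\ep(u)^2+2\beta_\ep(u)\grad(Q^2)\cdot\grad u+2\mathcal B_\ep(u)\Delta(Q^2).
\]
The term $2Q^4\beta_\ep(u)^2$ is positive and of order $\ep^{-2}$ on the layer. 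There is no term of the form $2Q\beta_\ep|\grad u|^2$, and in particular no \emph{good} one, so your fallback absorption has nothing to absorb into.

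The only mechanism that controls $2Q^4\beta_\ep^2$ is the Modica-type critical-point identity. At a point where $\grad w=0$ (with $Q$ constant) one has $D^2u\,\grad u=Q^2\beta_\ep(u)\grad u$, hence $|D^2u|^2\ge Q^4\beta_\ep^2$. This consumes the \emph{entire} Hessian term, and it does not survive your localization:
\begin{itemize}
\item At a maximum of $\zeta^2w+\lambda u^2$ the identity holds only up to an error.
\item You propose to spend $-\zeta^2|D^2u|^2$ on the cutoff cross term, so the Hessian is no longer available for the identity.
\item In any case $-2\grad\zeta^2\cdot\grad w$ contains $8Q^2\beta_\ep(u)\zeta\grad\zeta\cdot\grad u$, which is of size $|\grad u|/\ep$ on the layer.
\end{itemize}
Your only good terms are $-2\lambda|\grad u|^2$ and $-2\lambda Q^2u\beta_\ep(u)$, and the latter is $O(1)$ since $u\beta_\ep(u)\le\|\beta\|_\infty$. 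They cannot absorb errors of order $\ep^{-2}$ or $|\grad u|/\ep$ unless $\lambda\gtrsim\ep^{-1}$. That destroys uniformity through the $\lambda M^2$ term, so the scheme as written gives at best an $\ep$-dependent bound.

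The second gap, which you flag yourself, is the $x$-dependence of $Q$. Any $x$-dependent weight in the Bernstein quantity produces $\Delta(Q^2)$, or $|\grad u|^2\Delta(Q^{-2})$ for your variant $|\grad u|^2/Q^2-2\mathcal B_\ep$. The constant would then depend on $D^2Q$. Freezing $Q=Q(x_0)$ instead breaks the $\beta_\ep'$ cancellation and leaves $2(Q(x_0)^2-Q^2)\beta_\ep'(u)|\grad u|^2$, which is of order $|x-x_0|\ep^{-2}$.

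The paper avoids both problems with an $x$-independent change of variables $u=\phi(v)$. Set $v_{\max}=\ep+4M$ and choose $\ell=\phi''/\phi'$ so that $\ell'\le-\ep^{-2}$ on $[0,\ep]$, $\ell'\le-(8v_{\max}^2)^{-1}$ on $[\ep,\infty)$, and $\ell^2\le2|\ell'|$. Then $w=|\grad v|^2$ acquires the absorbing term $-2|\ell'(v)|w^2$. The reaction enters only through $k=Q^2\beta_\ep(\phi(v))/\phi'(v)$, with $|\partial_vk|\lesssim\ep^{-2}$ and $|\grad_xk|\lesssim\|\grad(Q^2)\|_\infty\ep^{-1}$, so only first derivatives of $Q$ appear. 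Since no $\beta_\ep^2$ term has to be balanced, the Hessian is free to absorb the cutoff cross term, and $\ell^2\le2|\ell'|$ handles the drift term. This bounds the maximum of $\eta^2w$ by $C(1+M)^2$. The case $\ep\ge1$, which your rescaling to $\ep/r$ also produces, is handled by the classical Bernstein argument.
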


The second is localized only in space and assumes the initial data satisfies a Lipschitz bound.

\begin{proposition}\label{p.localLip-initial-data}
 Let $0 < r \leq 1$ and let $\mathfrak{u}_\ep \geq 0$ be a classical solution of 
\[(\partial_t - \Delta)\mathfrak{u}_\ep = - Q(x)\beta_\ep(\mathfrak{u}_{\ep}) \ \hbox{ in } \ B_{2r} \times (0,T]\]   for some $0 < T \leq 4r^2$ and $u$ and $\grad u$ are continuous on $\overline{B_{2r}} \times [0,T]$. Then
	\begin{equation}\nonumber
	\|\nabla \mathfrak{u}_\ep\|_{L^\infty(B_r \times [0,T])} \leq C\left(\|\grad \mathfrak{u}_\ep(\cdot,0)\|_{L^\infty(B_{2r})}+\frac{\|\mathfrak{u}_\ep\|_{L^\infty(B_{2r}\times[0,T])}}{r}+1\right)
	\end{equation}
	with $C = C(d, Q_{\min}, Q_{\max}, \|\grad(Q^2)\|_{L^\infty}, \|\beta\|_{C^1})$.
\end{proposition}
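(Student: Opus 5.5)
We prove \pref{localLip-initial-data} by a localized Bernstein argument. The model is \cite{CaffarelliVazquez1995}*{Theorem 5.1}; the changes are a spatial cutoff only (no time cutoff) and handling the $x$-dependence of $Q$. By the parabolic scaling $\mathfrak u_\ep(x,t)\mapsto r^{-1}\mathfrak u_\ep(rx,r^2t)$ with $\ep\mapsto \ep/r$, the equation keeps its form, with coefficient $Q(rx)$ whose Lipschitz constant only improves since $r\le1$. So we may take $r=1$ and $T\le 4$. We may also assume $Q$ is smooth, so that $\mathfrak u_\ep\in C^\infty$, and approximate at the end. Write $u=\mathfrak u_\ep$, $M=\|u\|_{L^\infty(B_2\times[0,T])}$ and $L_0=\|\grad u(\cdot,0)\|_{L^\infty(B_2)}$.

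\textbf{Step 1: a first integral.} Consider
\[
w:=|\grad u|^2+2Q(x)^2\mathcal B_\ep(u)\ \ (\text{or } |\grad u|^2/Q^2+2\mathcal B_\ep(u)).
\]
This is the quantity whose evolution cancels the singular reaction term in the constant-$Q$ case. Compute $(\partial_t-\Delta)w$ from the equation $u_t-\Delta u=-Q^2\beta_\ep(u)$. The $\beta_\ep'$ terms cancel exactly, and the term $|D^2u|^2$ has a good sign. What remains are error terms with $\grad(Q^2)$:
\[
\grad(Q^2)\cdot\grad u\,\beta_\ep(u)\quad\text{and}\quad \mathcal B_\ep(u)\Delta(Q^2).
\]
To avoid second derivatives of $Q$, we write $2Q^2\mathcal B_\ep(u)$ using the distributional/viscosity form, or we mollify $Q$ and use only $\|\grad Q^2\|_\infty$ after integrating by parts. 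A cleaner option is to use $w=|\grad u|^2/Q(x)^2+2\mathcal B_\ep(u)$, so that the reaction term cancels with no $\Delta Q^2$. The cross terms then involve $\grad Q\cdot D^2u\grad u/Q^3$, and we absorb them into $|D^2u|^2$ by Cauchy--Schwarz. The outcome should be
\[
(\partial_t-\Delta)w\le C|\grad u|^2+C|\grad u|\,\beta_\ep(u)
\]
with $C$ depending on $N_Q,Q_{\min},Q_{\max}$.

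\textbf{Step 2: the main obstacle.} The term $|\grad u|\beta_\ep(u)$ is of size $\ep^{-1}$, so it cannot be treated as a bounded error. I would handle it in two ways. First, at an interior maximum point of the auxiliary function (below) where $\beta_\ep(u)>0$, we have $0<u<\ep$. Second, in the transition layer the cancellation structure gives $|\grad u|^2\approx Q^2(1-2\mathcal B_\ep)+$lower order. If that fails, I would add a multiple of the reaction itself to the auxiliary function, say $w+\lambda u$. Then $(\partial_t-\Delta)(\lambda u)=-\lambda Q^2\beta_\ep(u)$ produces a negative term $-\lambda Q_{\min}^2\beta_\ep$, which dominates $C|\grad u|\beta_\ep$ whenever $|\grad u|\le \lambda Q_{\min}^2/C$. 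When $|\grad u|$ is larger than this, the gradient itself is the bounded quantity we want, so either alternative is acceptable.

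\textbf{Step 3: localization.} Let $\zeta\in C_c^\infty(B_2)$ with $\zeta=1$ on $B_1$. Set
\[
z=\zeta^2 w+\lambda u+Ku^2 \quad\text{or}\quad z=\zeta^2 w\, e^{-Ct}+\dots,
\]
with a $Ku^2$ term supplying $-2K|\grad u|^2$ to control the cutoff errors $|\grad\zeta|\,|\grad w|$ and $|D\zeta|^2w$. We use the standard trick $\grad(\zeta^2 w)=0$ at a maximum to rewrite $\zeta\grad\zeta\cdot\grad w$. Take $K\sim 1+M^{-1}$, scaled appropriately. The maximum principle on $B_2\times[0,T]$ then places the maximum of $z$ either at $t=0$, bounded by $L_0^2+CM^2+C$; or on $\partial B_2$, where $\zeta=0$; or at an interior point, where the inequality from Steps 1--2 bounds $\zeta^2|\grad u|^2\le C(M^2+1)$. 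Since $T\le4$, the factor $e^{-Ct}$ is harmless. Restricting to $B_1$ gives
\[
|\grad u|\le C(L_0+M+1),
\]
and undoing the scaling gives the stated estimate.
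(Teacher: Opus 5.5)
There is a genuine gap, and it sits in Step 1: the computation you describe is not the right one, and it drops exactly the term that carries the difficulty.

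\textbf{The Step 1 computation.} The Modica-type quantity for $\Delta u=Q^2\beta_\ep(u)$ is $|\nabla u|^2-2Q^2\mathcal B_\ep(u)$; it vanishes on the one-dimensional profile. With your sign $+$, the $\beta_\ep'$ terms do not cancel. They add up to $-4Q^2\beta_\ep'(u)|\nabla u|^2$, which is of order $\ep^{-2}|\nabla u|^2$ and has no sign. With the correct sign one gets
\[
(\partial_t-\Delta)\big(|\nabla u|^2-2Q^2\mathcal B_\ep(u)\big)=-2|D^2u|^2+2Q^4\beta_\ep(u)^2+2\beta_\ep(u)\nabla(Q^2)\cdot\nabla u+2\mathcal B_\ep(u)\Delta(Q^2).
\]
So a term $+2Q^4\beta_\ep^2\sim\ep^{-2}$ survives, and your claimed bound $(\partial_t-\Delta)w\le C|\nabla u|^2+C|\nabla u|\beta_\ep(u)$ does not hold.

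\textbf{Why this term cannot be absorbed after localization.} Since $|D^2u|^2\ge(\Delta u)^2/d$ is too weak for $d\ge2$, the only way to offset the term is the observation that where $\nabla w=0$, $\nabla u$ is an eigenvector of $D^2u$ with eigenvalue about $Q^2\beta_\ep$. This uses up all of $|D^2u|^2$, so nothing is left for your Cauchy--Schwarz absorption of cross terms. Once you localize, the critical-point condition becomes $\nabla(\zeta^2w+Ku^2)=0$. This shifts the eigenvalue by roughly $|\nabla u||\nabla\zeta|/\zeta$ and leaves a residual of order $\ep^{-1}\zeta|\nabla\zeta||\nabla u|$ on $\{0<u<\ep\}$. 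The term $-2K|\nabla u|^2$ cannot absorb this uniformly in $\ep$; balancing only gives $|\nabla u|\lesssim(K\ep)^{-1}$.

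\textbf{The Step 2 remedy fails.} Already for $Q\equiv1$ and $z=w+\lambda u$ (with the corrected $w$), $\nabla z=0$ gives $D^2u\nabla u=(\beta_\ep-\lambda/2)\nabla u$. Hence one only gets $(\partial_t-\Delta)z\le\lambda\beta_\ep-\lambda^2/2$, which is positive in the layer. Also, the case split there is a non sequitur: $|\nabla u|>\lambda Q_{\min}^2/C$ at the maximum point is a lower bound, not an upper bound.

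\textbf{Dividing by $Q^2$ does not help.} $(\partial_t-\Delta)(Q^{-2}|\nabla u|^2)$ contains $-|\nabla u|^2\Delta(Q^{-2})$. A constant depending on $D^2Q$ is incompatible with the stated dependence on $\|\nabla(Q^2)\|_{L^\infty}$ and with approximating $Q$ by smooth coefficients.

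\textbf{What is missing.} You need an absorption term that is superlinear in the gradient quantity and of strength $\ep^{-2}$ inside the layer, while keeping $Q$ out of the differentiated quantity. The paper writes $u=\phi(v)$ with $\ell=\phi''/\phi'$ satisfying:
\begin{enumerate}
\item $\ell'\le-\ep^{-2}$ on $[0,\ep]$ and $\ell'\le-a$ on $[\ep,\infty)$;
\item $\ell^2\le2|\ell'|$;
\item $\tfrac12\le\phi'\le e$.
\end{enumerate}
Then $w=|\nabla v|^2$ carries the good term $-2|\ell'(v)|w^2$. The reaction errors $-2k_vw-2\nabla_xk\cdot\nabla v$ are $O(\ep^{-2}w+\ep^{-1}w^{1/2})$, supported in $\{v<\ep\}$, and involve only $\nabla(Q^2)$. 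For $z=\eta_1(x)^2w$, an interior or final-time maximum gives
\[
|\ell'(v)|z\le C+\eta^2\big(2C_1\ep^{-2}+2N\|\beta\|_\infty\ep^{-1}w^{-1/2}\big)\mathbf 1_{\{v<\ep\}},
\]
which closes in both regions, while a maximum at the initial time gives $z\le4L^2$.

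Your Step 3 bookkeeping matches this: spatial cutoff only, with the maximum at $t=0$, on $\partial B_2$, or in the interior. A plain Bernstein function $\zeta^2|\nabla u|^2+Ku^2-Kt$ does suffice when the rescaled $\ep$ is at least $1$. For small $\ep$, however, the argument needs an auxiliary function of the paper's type.
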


As a consequence of these two we obtain the global-in-time interior Lipschitz estimate for \eqref{e.bernoulli-parabolic-semilinear}.

\begin{lemma}%
Let $g_\ep$ non-negative and Lipschitz on $\overline{U}$. Let $\mathfrak{u}_\ep$ be the solution of \eqref{e.bernoulli-parabolic-semilinear}. For every $V \subset\subset U$ there is $C_V$ so that
\[\sup_{0 < \ep \leq 1}\ \sup_{t > 0}\ \|\grad \mathfrak{u}_\ep(\cdot,t)\|_{L^\infty(V)} \leq C_V(1+\|g_\ep\|_{L^\infty(U)}+\|\grad g_\ep\|_{L^\infty(U)}) .\]
Here $C_V $ depends only on $d(V,\R^d \setminus U)$, $N$, $C_\beta$, $d$, $Q_{\min}$ and $Q_{\max}$.
\end{lemma}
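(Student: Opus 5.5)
The estimate follows from the two localized Lipschitz bounds just stated, by a case split on the time variable. Fix $V \subset\subset U$ and set $\rho := \min\{1, \tfrac14 d(V, \R^d\setminus U)\}$. By the maximum principle for \eqref{e.bernoulli-parabolic-semilinear} we have $0 \leq \mathfrak{u}_\ep \leq \|g_\ep\|_{L^\infty(U)}$; see \eqref{eqn:inftyBound}. The constant $0$ is a subsolution because $\beta_\ep(0)=0$, and the constant $\|g_\ep\|_{L^\infty}$ is a supersolution because the reaction term is nonpositive. Fix $x_0 \in V$ and $t>0$.

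\emph{Large times, $t \geq 4\rho^2$.} The parabolic cylinder $Q_{2\rho}(x_0,t) = B_{2\rho}(x_0)\times(t-4\rho^2,t]$ is contained in $U_\infty$. The fully localized proposition then gives
\[
|\grad\mathfrak{u}_\ep(x_0,t)| \leq C\big(\rho^{-1}\|g_\ep\|_{L^\infty(U)} + 1\big).
\]

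\emph{Small times, $0 < t < 4\rho^2$.} Apply Proposition~\ref{p.localLip-initial-data} on $B_{2\rho}(x_0)\times(0,T]$ with $T = t \leq 4\rho^2$. Since $\mathfrak{u}_\ep(\cdot,0) = g_\ep$, this gives
\[
|\grad\mathfrak{u}_\ep(x_0,t)| \leq C\big(\|\grad g_\ep\|_{L^\infty} + \rho^{-1}\|g_\ep\|_{L^\infty} + 1\big).
\]
Combining the two cases and absorbing $\rho^{-1}$ into $C_V$ yields the claim. The dependence is on $d(V,\R^d\setminus U)$ through $\rho$, and on $d$, $Q_{\min}$, $Q_{\max}$, $N$, $\|\beta\|_{C^1}$ through the Bernstein constants.

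The main obstacle is regularity. Proposition~\ref{p.localLip-initial-data} requires $\mathfrak{u}_\ep$ and $\grad\mathfrak{u}_\ep$ to be continuous up to $t=0$, but $g_\ep$ is only Lipschitz, and the Bernstein argument may also need $Q$ smooth. I would handle both with an approximation.
\begin{itemize}
\item Mollify $g_\ep$ and $Q$, keeping the boundary data equal to the mollified $g_\ep$ on $\partial U$. This preserves the bounds on $\|\grad g_\ep\|_{L^\infty}$, $\|g_\ep\|_{L^\infty}$, $Q_{\min}$, $Q_{\max}$ and $N$.
\item For the regularized data, classical parabolic theory gives $\grad\mathfrak{u}$ continuous on $\overline{B_{2\rho}(x_0)}\times[0,T]$, away from $\partial U$, because the data are smooth and compatible there.
\item Pass to the limit using stability of the semilinear problem under uniform perturbations of the data and of $Q$: the nonlinearity is Lipschitz in $z$ for fixed $\ep$, so comparison gives uniform convergence. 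A uniform gradient bound passes to locally uniform limits by lower semicontinuity of Lipschitz constants.
\end{itemize}

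Finally, the supremum over $0<\ep\leq 1$ is harmless, since none of the constants involved depend on $\ep$.
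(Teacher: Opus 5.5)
Your proof is correct and follows the paper's argument: reduce to smooth data by mollification and stability of the semilinear problem, use $0\le \mathfrak u_\ep\le \|g_\ep\|_{L^\infty}$, and split into $t\ge 4\rho^2$ (the fully localized Bernstein estimate) and $0<t<4\rho^2$ (Proposition~\ref{p.localLip-initial-data}). Your choice $\rho=\min\{1,\tfrac14 d(V,\R^d\setminus U)\}$ is slightly more careful than the paper's $r=d(V,\R^d\setminus U)$, because it guarantees both $\overline{B_{2\rho}(x_0)}\subset U$ and the restriction $\rho\le 1$ required by the two propositions.
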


\begin{proof}
By a mollification argument using the uniqueness of continuous viscosity solutions of \eqref{e.bernoulli-parabolic-semilinear} it suffices to consider the case $g_\ep$ is smooth, so that $\grad \mathfrak{u}_\ep$ is continuous at $t=0$ on $V$.

Call $r = d(V,\R^d \setminus U)$. Let $(x,t) \in V \times (0,\infty)$. If $t \geq 4r^2$ then $Q_{2r}(x,t) \subset U_\infty$ and \pref{localLip} gives $|\grad \mathfrak{u}_\ep(x,t)| \leq C(\|g_\ep\|_\infty/r + 1)$. If $0 < t < 4r^2$ we apply \pref{localLip-initial-data} and get $|\grad \mathfrak{u}_\ep(x,t)| \leq C(\|\grad g_\ep\|_\infty +\|g_\ep\|_\infty/r+ 1)$.

\end{proof}

Now we return to prove \pref{localLip} and \pref{localLip-initial-data}. The two proofs are very similar so we do both at the same time.  

\begin{proof}[Proof of \pref{localLip} and \pref{localLip-initial-data}]

We can assume that $Q$ is smooth so that $\mathfrak{u}_\ep$ is smooth on the interior to justify all the derivative computations below, the end estimate will only depend on the Lipschitz norm of $Q$ so we can derive the general Lipschitz case by approximation. By parabolic rescaling it suffices to prove the estimate at unit scale. For $r \leq 1$ then $\hat u(y,s) := r^{-1}\mathfrak{u}_\ep(x_0 + ry, t_0 + r^2 s)$ solves the same equation on $\Gamma := B_2 \times (-4,0]$ with $\hat\ep := \ep/r$ (note $r\beta_\eps(r z) = \beta_{\ep/r}(z)$) and $\hat Q(y) := Q(x_0 + ry)$ in place of $\ep$ and $Q$. Then $\|\grad(\hat Q^2)\|_\infty = r\|\grad(Q^2)\|_\infty \leq N_Q$ since $r \leq 1$. Likewise the scaling $u \mapsto u/Q_{\max}$, $\ep \mapsto \ep/Q_{\max}$, $Q \mapsto Q/Q_{\max}$ maps solutions to solutions, so we may normalize $Q_{\max} = 1$.  Note that due to the former scaling argument we must handle all $ \ep >0$, for now we assume that $\ep \leq 1$, the case $\ep \geq 1$ is easier and done at the end.

Note that in the case of \pref{localLip-initial-data} the rescaled domain is $\Gamma := B_2 \times (-\tau_0,0]$ for  $\tau_0 = \frac{T}{r^2} \in (0,4]$.

Call $M:= \|u\|_{L^\infty(Q_2)}$ and $L= \|\grad u(\cdot,-\tau_0)\|_{L^\infty(B_2)}$ (for \pref{localLip-initial-data}).

\emph{Step 1.} We introduce the change of variables
\[u = \phi(v)\]
for some increasing and smooth function $\phi : [0,\infty) \to [0,\infty)$ with $\phi(0) = 0$. Call $\ell(v) := \phi''(v)/\phi'(v)$. Since $\grad u = \phi'(v)\grad v$ and $\Delta u = \phi'(v)\Delta v + \phi''(v)|\grad v|^2$, after the change of variables the equation becomes
\[\partial_t v - \Delta v - \ell(v) |\grad v|^2 =- k(x,v) \ \hbox{ with } \  k(x,v) := \frac{Q(x)^2 \beta_\ep(\phi(v))}{\phi'(v)} \geq 0 .\]
Differentiating with respect to $x_i$, multiplying by $2\partial_{x_i}v$ and summing over $i$, we find that the quantity $w := |\grad v|^2$ satisfies
\begin{equation}\label{e.w-eq}
\partial_t w - \Delta w - 2\ell(v)\grad v\cdot\grad w = 2\ell'(v) w^2 - 2\partial_v k w - 2\grad_xk\cdot\grad v - 2|D^2v|^2 .
\end{equation}
The term $2\ell'(v)w^2$ has been created by the change of variables, and we will choose $\phi$ so that $\ell'(v)$ is negative everywhere and very negative across the transition layer. In particular we will have $2\ell'(v) w^2 = -2|\ell'(v)|w^2$. 

\emph{Step 2.} Put $v_{\max} := \ep + 4M$ and $a := \frac{1}{8v_{\max}^2}$. The following properties of $\phi$ will be used; the construction is postponed to Step 4.
\begin{enumerate}[label = (\alph*)]
    \item\label{phi.a} $\tfrac12 \leq \phi' \leq e$ on $[0,v_{\max}]$, and $\phi(v_{\max}) \geq 2M$.
    \item\label{phi.b} $\phi(\ep) \geq \ep$ so that $\{v < \ep\} = \{\phi^{-1}(u)<\ep\} = \{u < \phi(\ep)\} \supset \{u < \ep\}$.
    \item\label{phi.c} On $[0,\ep]$: $\ell' \leq -\ep^{-2}$, $0 \leq \ell \leq 2/\ep$, and $\phi' \geq 1$.
    \item\label{phi.d} On $[\ep,\infty)$: $\ell' \leq -a$.
    \item\label{phi.e} $\ell^2 \leq 2|\ell'|$ on $[0,v_{\max}]$.
\end{enumerate}
By \ref{phi.a}, $\phi$ maps $[0,v_{\max}]$ onto an interval containing $[0,2M]$, so $v = \phi^{-1}(u)$ is well defined, smooth in $\Gamma$, continuous on $\overline\Gamma$, and takes values in $[0,v_{\max}]$.

Let us record what these properties give for the coefficients in \eref{w-eq}. In $\{v < \ep\}$ we have
\[k_v = Q^2\Big(\beta_\ep'(u) - \frac{\beta_\ep(u) \ell(v)}{\phi'(v)}\Big), \  \grad_x k = \frac{\beta_\ep(u) \grad(Q^2)}{\phi'(v)},\]
and since $|\beta_\ep| \leq \|\beta\|_\infty\ep^{-1}$, $|\beta_\ep'| \leq \|\beta'\|_\infty\ep^{-2}$, $0 \leq \ell \leq 2/\ep$ and $\phi' \geq 1$ there,
\[|k_v| \leq C_1 \ep^{-2}, \  |\grad_xk| \leq N\|\beta\|_\infty \ep^{-1}, \  C_1 := \|\beta'\|_\infty + 2\|\beta\|_\infty .\]
On $\{v\geq \ep\}$ then $\phi(v) \geq \ep$ by \ref{phi.b} so $\beta_\ep(\phi(v)) \equiv 0$ and $k \equiv 0$.

\emph{Step 3.} We localize with a cutoff $\eta$, $0 \leq \eta \leq 1$, with $|\grad\eta| + |\partial_t\eta| + |D^2\eta| \leq C_d$. For \pref{localLip} we take $\eta(x,t) = \eta_1(x)\eta_2(t)$ with $\eta_1 \in C^\infty_c(B_2)$, $\eta_1 = 1$ on $B_1$, $\eta_2 = 1$ on $[-1,0]$ and $\eta_2 = 0$ on $(-\infty,-3]$. For \pref{localLip-initial-data} we just cut-off in space $\eta(x,t) = \eta_1(x)$. We consider the quantity
\[z := \eta^2 w .\]
Note that $z$ vanishes where $\eta = 0$. For \pref{localLip} this includes a neighborhood of $\partial_P\Gamma$ and so $z$ is continuous on $\overline \Gamma$. For \pref{localLip-initial-data}, $\eta =0$ on a neighborhood of the spatial boundary $\partial B_2 \times [-\tau_0,0]$, and $\grad v = \grad u/\phi'(v)$ extends continuously to $\overline\Gamma$, so still $z$ is continuous on $\overline \Gamma$.

  If $z \equiv 0$ there is nothing to prove. Otherwise $z$ attains a positive maximum at a point $(\hat x,\hat t)$ where $\eta > 0$ and $w > 0$. We will show that
\begin{equation}\label{e.zmax}
z(\hat x,\hat t) \leq \max\big(1, C + 2N\|\beta\|_\infty,\ C(1 + 4M)^2,4L^2\big) .
\end{equation}
If $\hat t = -\tau_0$, which can only happen for \pref{localLip-initial-data}, then by \ref{phi.a} we have $z \leq w = |\grad u|^2/\phi'(v)^2 \leq 4L^2$ at $(\hat{x},\hat{t})$. Otherwise $(\hat x, \hat t)$ is an interior or final time maximum and the derivative tests hold
\[\partial_t z \geq 0, \grad z = 0, \hbox{ and } \Delta z \leq 0 .\]
Let us compute $\partial_t z - \Delta z - 2\ell\grad v\cdot\grad z$, which is non-negative at $(\hat x,\hat t)$ by the derivative tests. We have
\begin{align*}
\partial_t z - \Delta z - 2\ell\grad v\cdot\grad z &= \eta^2\big(\partial_t w - \Delta w - 2\ell\grad v\cdot\grad w\big) \\ & \  + w\big(\partial_t\eta^2 - \Delta\eta^2 - 2\ell\grad v\cdot\grad\eta^2\big) - 2\grad\eta^2\cdot\grad w .
\end{align*}
The first parenthesis is given by \eref{w-eq}. In the second parenthesis, $|\partial_t\eta^2 - \Delta\eta^2| \leq C$, while by Young's inequality and \ref{phi.e},
\[4|\ell| \eta|\grad\eta| w^{3/2} \leq |\ell'| \eta^2w^2 + \frac{4\ell^2}{|\ell'|}|\grad\eta|^2w \leq |\ell'| \eta^2w^2 + C w .\]
Finally $|\grad w| \leq 2|D^2v| w^{1/2}$, so that
\[-2\grad\eta^2\cdot\grad w \leq 8\eta|\grad\eta| |D^2v| w^{1/2} \leq \eta^2|D^2v|^2 + C w.\]
The positive Hessian term is absorbed by the term $-2\eta^2|D^2v|^2$ coming from \eref{w-eq}. Collecting terms, and using the bounds on $k_v$ and $\grad_xk$ from Step 2, we arrive at
\[0 \leq -|\ell'(v)| \eta^2w^2 + C w + \eta^2\big(2C_1\ep^{-2} w + 2N\|\beta\|_\infty\ep^{-1} w^{1/2}\big) {\bf 1}_{\{v < \ep\}} \  \hbox{at } (\hat x,\hat t) .\]
Dividing by $w > 0$ this reads
\begin{equation}\label{e.zmax-ineq}
|\ell'(v)| z \leq C + \eta^2\big(2C_1\ep^{-2} + 2N\|\beta\|_\infty \ep^{-1}w^{-1/2}\big) {\bf 1}_{\{v < \ep\}} \  \hbox{at } (\hat x,\hat t) .
\end{equation}
We now analyze \eref{zmax-ineq} separately in the two regions.

  \emph{In $v \geq \ep$.} Here the indicator vanishes and $|\ell'| \geq a$ by \ref{phi.d}, so
\[z \leq \frac{C}{a} = 8C v_{\max}^2 \leq C(1 + 4M)^2 .\]

\emph{In $v < \ep$.} Here $|\ell'| \geq \ep^{-2}$ by \ref{phi.c}, and multiplying \eref{zmax-ineq} by $\ep^2 \leq 1$ we get
\[z \leq C+C_1 + 2N\|\beta\|_\infty \ep w^{-1/2} .\]
If $w < 1$ then $z \leq \eta^2 w < 1$, and if $w \geq 1$ then $z \leq C+C_1 + 2N\|\beta\|_\infty$. 

This proves \eref{zmax}. Since $w = z$ where $\eta = 1$, which contains $B_1 \times (-1,0]$ for \pref{localLip} and $B_1 \times [-\tau_0,0]$ for \pref{localLip-initial-data}, and since $|\grad u|^2 = \phi'(v)^2 w \leq e^2 w$ by \ref{phi.a}, the lemma follows from \eref{zmax} and $M \geq 1$.

\emph{Step 4. Construction of $\phi$.} It is convenient to prescribe $\ell = \phi''/\phi'$ and then recover $\phi$ from $\phi'(v) = \exp\big(\int_0^v \ell\big)$, $\phi(v) = \int_0^v\phi'$. Let $\rho := \ep^2/(64 v_{\max})$, fix a nonincreasing $\psi \in C^\infty(\R)$ with $\psi = 1$ on $(-\infty,0]$ and $\psi = 0$ on $[1,\infty)$, and set
\[\ell(v) := \int_v^\ep \Big(\ep^{-2}\psi\Big(\frac{s-\ep}{\rho}\Big) + a\Big) ds, \  \hbox{so that} \  \ell'(v) = -\ep^{-2}\psi\Big(\frac{v-\ep}{\rho}\Big) - a .\]
Thus $\ell' = -\ep^{-2} - a$ on $[0,\ep]$, $\ell' = -a$ on $[\ep + \rho, \infty)$, and $\ell'$ is between these values in the short interval $[\ep, \ep+\rho]$. In particular \ref{phi.d} holds. On $[0,\ep]$ we have $\ell(v) = (\ep^{-2} + a)(\ep - v)$, so $0 \leq \ell \leq (\ep^{-2}+a)\ep \leq 2/\ep$ because $a = \frac{1}{8v_{\max}^2}\le \frac{1}{8\ep^2}$, as $v_{\max} \ge \ep$. Consequently $\phi'$ increases on $[0,\ep]$ from $\phi'(0) = 1$ to
\[\phi'(\ep) = \exp\big(\tfrac12(1 + a\ep^2)\big) \in [e^{1/2}, e],\]
which gives \ref{phi.c}, \ref{phi.b}, and the bounds of \ref{phi.a} on $[0,\ep]$.

For $v \geq \ep$, $\ell$ is negative, and since $0 \leq \psi \leq 1$ vanishes on $[1,\infty)$,
\begin{equation}\label{e.ell-bds-above-ep}
a(v-\ep) \leq -\ell(v) \leq \frac{\rho}{\ep^2} + a(v - \ep) .
\end{equation}
Hence, for $\ep \leq v \leq v_{\max}$, using $(\rho\ep^{-2})^2 = a/512$ and $a v_{\max}^2 = \tfrac18$,
\[\int_\ep^v|\ell| \leq \frac{\rho}{\ep^2} v_{\max} + \frac{a}{2}v_{\max}^2 \leq \frac1{16} + \frac1{16}, \ \hbox{so that} \ \phi'(v) = \phi'(\ep) e^{-\int_\ep^v|\ell|} \geq \frac {\phi'(\ep)}{2} \geq \frac12,\]
while $\phi'(v) \leq \phi'(\ep) \leq e$ by monotonicity. Then $\phi(v_{\max}) \geq \phi(\ep) + \frac{\phi'(\ep)}{2}\cdot 4M \geq 2M$, which completes \ref{phi.a}. Finally, for \ref{phi.e}: on $[0,\ep]$, $\ell^2/|\ell'| = (\ep^{-2} + a)(\ep-v)^2 \leq 1 + a\ep^2 \leq 2$, and on $[\ep,v_{\max}]$, using $|\ell'| \geq a$ and \eqref{e.ell-bds-above-ep},
\[\frac{\ell^2}{|\ell'|} \leq \frac{2\rho^2\ep^{-4}}{a} + 2a(v-\ep)^2 \leq \frac1{256} + 2a v_{\max}^2 \leq \frac12 .\]
This completes the construction and the proof in the case $\ep \leq 1$.

\emph{Step 5.}  Finally we mention how to handle the case $\ep \geq 1$. In this case no change of variables is needed. The nonlinearity satisfies $\|Q^2\beta_\ep\|_{C^1} \leq \|\beta\|_{C^1}$, and the classical Bernstein argument applies. Take $w := |\grad u|^2$ then $(\partial_t - \Delta)w \leq 2\|\beta'\|_\infty w + 2N\|\beta\|_\infty w^{1/2}$. Note also that $(\partial_t - \Delta)u^2 = -2Q^2\beta_\ep(u)u - 2w \leq -2w$. So that, with $\eta$ as in Step 3 and $\lambda$ large depending on $d$, $N$ and $\|\beta\|_{C^1}$, the function $z= \eta^2w + \lambda u^2 - \lambda t$ is a subsolution of the heat equation in $\Gamma$. Applying the maximum principle to $z$ gives the result. 
\end{proof}

\subsection{Attainment of the parabolic boundary data}\label{ss.attainment}

 In this subsection we prove that the semilinear approximations $\mathfrak{u}_\ep$ attain the boundary and initial data ${g}_+$ with a modulus which is uniform in $\ep$. This is key to applying the comparison principle, Theorem \ref{t.comparison}, in the limit.

\begin{proposition}[uniform attainment of the data]%
 Let $g_\ep$ be non-negative, $L$-Lipschitz on $\overline{U}$, and bounded above by $M$. Assume that $\mathfrak{u}_\ep \in C(\overline{U_\infty})$ is a classical solution of \eqref{e.bernoulli-parabolic-semilinear}. There is a modulus of continuity $\varpi$, independent of $\ep \in (0,1]$ and of $t$, such that for all $(x,t)$ and $(y,s)$ in $\overline{U_\infty}$
 \[ |\mathfrak{u}_\ep(x,t) - \mathfrak{u}_\ep(y,s)| \leq \varpi(|x-y|+|t-s|).\]
\end{proposition}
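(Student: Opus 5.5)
The plan is to build the modulus $\varpi$ from explicit barriers for \eqref{e.bernoulli-parabolic-semilinear} whose constants depend only on $d$, $L$, $M$, $Q_{\max}$, $\beta$ and the $C^2$ character of $\partial U$, and never on $\ep$. The two sides are handled by different mechanisms.

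\textbf{Upper bounds.} Since $-Q^2\beta_\ep(\mathfrak{u}_\ep)\le 0$, every solution is a subsolution of the heat equation. It therefore lies below the caloric function in $U_\infty$ with the same parabolic data $g_\ep$. Near the lateral boundary and at $t=0$, I would compare with the standard barriers
\[
g_\ep(x_0)+\eta+K|x-x_0|^2+C_\eta t
\quad\text{or}\quad
g_\ep(x_0)+\eta+K\,b_{x_0}(x)+C_\eta t ,
\]
where $b_{x_0}$ is an exterior-ball harmonic barrier (this uses that $\partial U$ is $C^2$). This gives $\mathfrak{u}_\ep(x,t)\le g_\ep(x_0)+\omega_1(|x-x_0|+t)$ for $x_0\in\partial U$ or $t$ small, with $\omega_1$ depending only on $L$, $M$, $d$, $U$.

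\textbf{Lower bounds.} These are the delicate part, because the reaction term is of size $\ep^{-1}$ and is not controlled by heat-equation comparison. I would use the Bernoulli-type barrier
\[
\psi(x,t)=c-K|x-x_0|^2-(2d+1)Kt, \qquad c=g_\ep(x_0)-\eta .
\]
For the heat operator it satisfies $(\partial_t-\Delta)\psi=-K<0$. On its zero set, for $t\le c/(2(2d+1)K)$, it has $|\grad\psi|=2K|x-x_0|\gtrsim\sqrt{cK}$. Hence it is a classical strict subsolution of \eqref{e.bernoulli-parabolic} in the sense of \dref{strict-sub} (with room $\delta_0$), provided $K\ge C(Q_{\max})/\eta$. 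I would then transform it to $(\Phi_{\ep,\theta}(\psi))_+$ with the profile of \lref{profile-sub}. The chain-rule computation of \lref{well-prepared}\ref{part.gep-3}, with the extra $\partial_t$ term having the good sign, shows that this is a viscosity subsolution of the semilinear equation. It also lies within $\ep$ of $\psi_+$.

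Choosing $K\ge L/\eta$ and $K\ge C/\eta$ makes the barrier lie below $g_\ep$ on the parabolic boundary. Comparison for \eqref{e.bernoulli-parabolic-semilinear} then yields
\[
\mathfrak{u}_\ep(x,t)\ge g_\ep(x_0)-2\eta-C\eta^{-1}\big(|x-x_0|^2+t\big)
\]
whenever $g_\ep(x_0)\ge 2\eta$. When $g_\ep(x_0)<2\eta$, the trivial bound $\mathfrak{u}_\ep\ge 0$ suffices. Optimizing in $\eta$ produces a modulus $\omega_2$ at the parabolic boundary.

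\textbf{Interior continuity.} In the interior, spatial continuity is the Lipschitz estimate (Proposition \ref{p.localLip} / \pref{localLip-initial-data}). Combined with the boundary moduli, this gives uniform continuity in $x$ up to $\partial U$ and uniformly in $t$. For time continuity at an interior point $(x,t)$ with $h>0$, I would restart the equation at time $t$ on a ball $B_r(x)$ and apply the same two barriers there. The data at time $t$ is $u(\cdot,t)$, which is controlled by $u(x,t)\pm\omega(|y-x|)$ using the spatial modulus.
\begin{itemize}
\item The caloric bound gives $u(x,t+h)\le u(x,t)+\omega(r)+Ch/r^2$.
\item The quadratic barrier gives $u(x,t+h)\ge u(x,t)-\omega(r)-2\eta-C\eta^{-1}(h+r^2)$ if $u(x,t)\ge 2\eta$, and trivially otherwise.
\end{itemize}
Choosing $r$ and $\eta$ as suitable powers of $h$ gives a modulus in $h$ independent of $t$ and $\ep$.

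\textbf{Main obstacle.} I expect the hardest step to be the lower barrier: checking that $(\Phi_{\ep,\theta}(\psi))_+$ is a genuine subsolution of the semilinear equation when $\psi$ depends on time and is only strict in the Bernoulli sense near its zero set. In particular, the strictness must hold on the whole transition layer $\{-c_\theta\ep<\psi<\ep\}$, which requires $\ep$ small relative to $\eta$. For $\ep$ not small relative to $\eta$, I would instead absorb the error into the modulus, using $0\le\mathfrak{u}_\ep$ and $|\Phi_+-s_+|\le\ep$.
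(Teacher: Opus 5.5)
\paragraph{Main gap: no lower bound along the lateral boundary for later times.} Your only lower barrier, $\psi=c-K|x-x_0|^2-(2d+1)Kt$, is superharmonic in $x$. It is a subsolution only because of the drift $-(2d+1)Kt$, and this causes three problems.
\begin{itemize}
\item It decays in time.
\item It is a strict Bernoulli subsolution only for $t\lesssim c/K$. After that, its zero set shrinks toward $x_0$ and $|\grad\psi|$ there falls below $Q$.
\item The bound it gives, $\mathfrak u_\ep(x,t)\ge g_\ep(x_0)-2\eta-C\eta^{-1}(|x-x_0|^2+t)$, controls only the initial-time part of $\partial_P U_\infty$.
\end{itemize}
At a lateral point $(x_0,t_0)$ with $t_0$ not small you get no lower bound at all. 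Restarting $\psi$ at time $t_0-\tau$ is circular, because it requires already knowing $\mathfrak u_\ep(\cdot,t_0-\tau)\ge g_\ep(x_0)-\eta-K|\cdot-x_0|^2$ near $x_0$. So the step ``combined with the boundary moduli, this gives uniform continuity in $x$ up to $\partial U$ and uniformly in $t$'' does not follow. Your interior time argument uses that spatial modulus, so it fails as well.

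What is missing is a \emph{stationary} lower barrier, which is the core of the paper's \lref{attainment-lateral}. Take $w=g_\ep(x_0)-\sigma-A_\sigma\eta$ with the exterior-ball function $\eta(x)=1-\rho^{d-1}|x-y_0|^{1-d}$. For $A_\sigma$ large, depending on $\sigma,L,M,\rho,Q_{\max}$, this function:
\begin{itemize}
\item is subharmonic;
\item satisfies $|\grad w|\ge cA_\sigma/\rho\ge 2Q_{\max}$ on $U\cap B_{3\rho}(x_0)$;
\item is well below zero off $B_{2\rho}(x_0)$;
\item lies below $g_\ep$.
\end{itemize}
Then $(\Phi_{\ep,\theta}(w))_+$ is a time-independent subsolution of \eqref{e.bernoulli-parabolic-semilinear} lying below the data on all of $\partial_P U_\infty$, once the transition layer is checked. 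Comparison gives $\mathfrak u_\ep(x,t)\ge g_\ep(x_0)-\sigma-CA_\sigma|x-x_0|$ for every $t\ge 0$. You already use this exterior-ball function for the upper bound; you need its reflection for the lower bound.

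\paragraph{Secondary gap: the regime where $\ep$ is not small relative to $\eta$.} Your fallback does not work as stated. An $O(\ep)$ error cannot be absorbed into a modulus that is independent of $\ep$ and vanishes at $0$. The bound $\mathfrak u_\ep\ge 0$ only helps when $g_\ep(x_0)$, or $u(x,t)$, is itself $O(\eta)$. Below the $\ep$-scale you need a separate mechanism.
\begin{itemize}
\item In time: since $0\le Q^2\beta_\ep\le C/\ep$, the caloric restart minus $Ch/\ep$ is a subsolution. Over a step $h\le\ep^2$ the reaction therefore costs at most $Ch/\ep\le C\sqrt h$.
\item At the lateral boundary with $g_\ep(x_0)\lesssim\ep$: rescale by $\ep$. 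The reaction term becomes bounded, and a plain exterior-ball barrier with large $\Delta w$ suffices. The paper's lateral step tacitly needs this case too, since it uses $2\ep\le g_\ep(x_0)/2$.
\end{itemize}

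\paragraph{How the paper handles time.} The paper never uses barriers in time. Once the spatial modulus is uniform in $t$ and $\ep$, it applies \lref{initial-L2}, which uses only the dissipation identity and $J(g_\ep,\chi^0_\ep)\le(L^2+Q_{\max}^2)|U|$. This gives $\|\mathfrak u_\ep(\cdot,t)-\mathfrak u_\ep(\cdot,s)\|_{L^2(U)}\le C|t-s|^{1/2}$. The spatial modulus then upgrades this to a uniform $L^\infty$ modulus in time, with no $\ep$-versus-$\eta$ restriction.
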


First we prove the spatial boundary regularity.  Of course this is where we need the domain regularity. Since $\partial U$ is $C^2$, the domain $U$ satisfies a uniform exterior ball condition: there is $\rho_0 = \rho_0(U) \in (0,1]$ such that for every $x_0 \in \partial U$ and every $\rho \in (0,\rho_0]$ there is $y_0 = y_0(x_0,\rho) \in \R^d$ with
\begin{equation}\label{e.exterior-ball}
|y_0 - x_0| = \rho, \  B_\rho(y_0) \cap U = \emptyset, \  \hbox{and} \  |x-y_0|^2 \geq \rho^2 + \tfrac12 |x-x_0|^2 \ \hbox{ for all } x \in \overline{U}.
\end{equation}
The last inequality follows, after possibly decreasing $\rho_0$, from the $C^2$ regularity of $\partial U$: writing $n$ for the exterior unit normal at $x_0$, every $x \in \overline{U}$ satisfies $(x-x_0)\cdot n \leq C_{\partial U} |x-x_0|^2$, so that
\[|x - y_0|^2 = \rho^2 + |x-x_0|^2 - 2\rho  (x-x_0)\cdot n \geq \rho^2 + (1 - 2\rho C_{\partial U})|x-x_0|^2,\]
and it suffices to take $\rho_0 \leq \tfrac{1}{4} C_{\partial U}^{-1}$.

\begin{lemma}\label{l.attainment-lateral}
Under the same hypotheses as \pref{attainment} there is a modulus of continuity $\varpi$, independent of $\ep \in (0,1]$ and of $t$, so that for all $x,y \in \overline{U}$ and any $t \geq 0$
\[ |\mathfrak{u}_\ep(x,t) - \mathfrak{u}_\ep(y,t)| \leq \varpi(|x-y|).\]
\end{lemma}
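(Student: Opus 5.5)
The plan is to prove the lateral boundary modulus by barrier comparison at boundary points, and then propagate it to a global spatial modulus using the interior Lipschitz estimate. The key observation is that $\mathfrak{u}_\ep = g_\ep$ on $\partial U \times [0,\infty)$ and at $t=0$. So it suffices to bound $|\mathfrak{u}_\ep(x,t) - g_\ep(x_0)|$ for $x_0 \in \partial U$, uniformly in $\ep$ and $t$. Boundary points will be handled with explicit upper and lower barriers built from the exterior ball condition \eref{exterior-ball}.

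\emph{Upper barrier.} Since the reaction term is nonpositive, $\mathfrak{u}_\ep$ is a subsolution of the heat equation. For $x_0 \in \partial U$ and $\rho \le \rho_0$, I take
\[
W(x) := g_\ep(x_0) + L\rho + K\bigl(\rho^{2-d} - |x-y_0|^{2-d}\bigr)
\]
(use $\log$ when $d=2$). This is a harmonic function vanishing to first order near $x_0$ and positive away from it. The constant $K$ is chosen so that $K(\rho^{2-d} - (\rho^2 + \tfrac12\delta^2)^{(2-d)/2}) \ge M$ whenever $|x-x_0| \ge \delta$, using the last inequality in \eref{exterior-ball}. A cleaner alternative is $W = g_\ep(x_0) + L\rho + C_\rho(|x-y_0|^2 - \rho^2)$ plus the harmonic correction.

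With this choice, $W \ge g_\ep$ on $\partial U$ by the $L$-Lipschitz bound near $x_0$ and by $M$ far away, and $W \ge g_\ep$ at $t=0$ in the same way. Time-independent barriers are fine because the data are time-independent. Comparison for the heat equation then gives $\mathfrak{u}_\ep \le W$. Evaluating near $x_0$ yields
\[
\mathfrak{u}_\ep(x,t) - g_\ep(x_0) \le L\rho + C(\rho)\,|x-x_0|,
\]
and optimizing in $\rho$ gives a modulus.

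\emph{Lower barrier.} This is the harder direction because of the absorption term $-Q^2\beta_\ep(\mathfrak{u}_\ep)$, which is of size $\ep^{-1}$. The fix is to use the lower bound $\mathfrak{u}_\ep \ge 0$ together with a barrier of the form
\[
V := \Bigl(g_\ep(x_0) - L\rho - K\bigl(\rho^{2-d} - |x-y_0|^{2-d}\bigr)\Bigr)_+ .
\]
Where $V>0$ I need $V$ to be a subsolution of the semilinear equation, i.e. $\Delta V \ge Q^2\beta_\ep(V)$. A harmonic $V$ fails exactly where $0<V<\ep$.

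To repair this I would compose with the subsolution profile: set $V_\ep := \bigl(\Phi_{\ep,\theta}(h)\bigr)_+$ for the harmonic function $h$ above, in the spirit of \lref{well-prepared}. This requires $|\grad h|^2 \ge (1+\delta)Q_{\max}^2$ on the transition layer. That is arranged by choosing $K$ large, i.e. by making the barrier steep. A steep barrier still gives a modulus, because its zero set stays within distance $o(1)$ of $x_0$ as $\rho \to 0$, and $\Phi_{\ep,\theta}$ costs at most $\ep$ by \lref{profile-sub}. The $\ep$ error is harmless for $\ep$ small; for $\ep$ not small (comparable to $1$), the nonlinearity is bounded and the standard heat equation barrier plus a bounded forcing handles it.

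After comparison, $\mathfrak{u}_\ep(x,t) \ge g_\ep(x_0) - L\rho - C(\rho)|x-x_0| - \ep$. The residual $\ep$ is absorbed by also using $\mathfrak{u}_\ep \ge 0$ and $|g_\ep(x_0)| \lesssim$ the distance scale when $g_\ep(x_0)$ is small. I expect this lower barrier to be the main obstacle, specifically checking the subsolution inequality for the composed barrier on the transition layer.

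\emph{Global modulus.} For interior pairs $x,y$, I combine the boundary modulus with the interior Lipschitz bound \eref{localEst}. At distance $r$ from $\partial U$ the gradient is at most $C(M/r + 1)$, so $|x-y| \le r^2$ gives a contribution $\lesssim r$. Otherwise I pass through nearby boundary points. For small times, where parabolic cylinders are not available, I use \pref{localLip-initial-data} with $\|\grad g_\ep\|_\infty \le L$. Together these yield $\varpi$ independent of $\ep$ and $t$.
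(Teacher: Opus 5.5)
Your route is the paper's route: exterior-ball barriers at $x_0\in\partial U$, a harmonic upper barrier, a steep barrier composed with $\Phi_{\ep,\theta}$ for the lower bound, and the interior Lipschitz estimates to pass from the boundary modulus to a global one. Your explicit upper barrier, which dominates $g_\ep$ on all of $\overline U$, is slightly more robust than the paper's harmonic extension $h_\ep$, since comparison at $t=0$ needs $h_\ep\ge g_\ep$ in $U$.

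The genuine gap is how you dispose of the $O(\ep)$ residual in the lower bound. Since $\Phi_{\ep,\theta}(s)\ge s$, composing never weakens the final inequality. The $\ep$ enters only through the ordering $V_\ep\le g_\ep$ on $\partial_P U_\infty$: on the layer $\{-c_\theta\ep<h<\ep\}$ you only know $V_\ep\le\ep$, while $g_\ep$ may be smaller than $\ep$ there. Whatever device restores the ordering (e.g.\ shifting $h$ down by $2\ep$), you end with $\mathfrak u_\ep(x,t)\ge g_\ep(x_0)-\varpi_1(|x-x_0|)-C\ep$. The bound $\mathfrak u_\ep\ge0$ absorbs the residual only when $g_\ep(x_0)$ or $\ep$ is $\lesssim\varpi(|x-x_0|)$. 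In the regime $\varpi(|x-x_0|)\ll\min\{\ep,g_\ep(x_0)\}$ neither holds, so what you prove is $\varpi(r)+C\ep$, not a modulus independent of $\ep$. Your remark about $\ep\sim1$ does not cover this, because what matters is $\ep$ relative to $|x-x_0|$, not relative to $1$.

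To close the gap, split on the size of $g_\ep(x_0)$.

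\emph{Case $g_\ep(x_0)\ge4\ep$.} Do not shift. A steep enough barrier confines the points of $\overline U$ in the layer to $\{L|x-x_0|\le g_\ep(x_0)/2\}$, where $g_\ep\ge g_\ep(x_0)/2\ge2\ep\ge V_\ep$, so no residual appears. This is how the paper checks the layer. Its inequalities $g_\ep(x_0)+c_\theta\ep\le2g_\ep(x_0)$ and $2\ep\le g_\ep(x_0)/2$ tacitly use $g_\ep(x_0)\ge4\ep$, so the written proof also omits the other case.

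\emph{Case $g_\ep(x_0)<4\ep$.} Drop the profile and use only $0\le Q^2\beta_\ep(\mathfrak u_\ep)\le F/\ep$, where $F:=Q_{\max}^2\|\beta\|_\infty$. Set $\rho:=\min\{\ep,\rho_0\}$, $\eta(x):=1-\rho^{d-1}|x-y_0|^{1-d}$ and $V:=U\cap B_{3\rho}(x_0)$. Consider
\[
w(x):=g_\ep(x_0)-\sigma-A\eta(x)+\frac{F}{2d\ep}\bigl(|x-y_0|^2-\rho^2\bigr),\qquad A:=C\Bigl(\ep+\frac{(L+F)^2\rho^2}{\sigma}\Bigr).
\]
This function has three properties:
\begin{enumerate}
\item $\partial_t w-\Delta w\le -F/\ep\le\partial_t\mathfrak u_\ep-\Delta\mathfrak u_\ep$ in $V$.
\item $w\le g_\ep$ on $\overline V$. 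There the forcing term is at most $CF|x-x_0|$ (using $\rho\le\ep$), while $\eta(x)\ge|x-x_0|^2/(64\rho^2)$ by \eref{exterior-ball}.
\item $w\le0\le\mathfrak u_\ep$ on $U\cap\partial B_{3\rho}(x_0)$. This is exactly where $g_\ep(x_0)\lesssim\ep$ is used.
\end{enumerate}
Linear comparison gives $\mathfrak u_\ep(x,t)\ge g_\ep(x_0)-\sigma-C(1+\ep/\sigma)|x-x_0|$ in $V$. Choosing $\sigma=\sqrt{\ep|x-x_0|}$ yields the $\ep$-independent bound $g_\ep(x_0)-C\bigl(|x-x_0|+|x-x_0|^{1/2}\bigr)$. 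For $|x-x_0|\ge3\rho$ one has trivially $\mathfrak u_\ep\ge0\ge g_\ep(x_0)-C|x-x_0|$. This is your bounded-forcing idea, applied at scale $\ep$ around $x_0$ rather than only for $\ep\sim1$.
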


\begin{proof}[Proof of \lref{attainment-lateral}]
We aim to show that for every $x_0 \in \partial U$, $x \in \overline U$, and $t > 0$,
\[ |\mathfrak{u}_\ep(x,t) - g_\ep(x_0)| \leq \varpi(|x - x_0|).\]
Assuming we have this it is easy to derive the result by arguing as in \lref{lipschitz-initial} and combining the interior Lipschitz estimate with the boundary modulus.

Let $h_\ep$ be the harmonic function in $U$ with $h_\ep = g_\ep$ on $\partial U$. Then $h_\ep$ is a stationary supersolution of \eqref{e.bernoulli-parabolic-semilinear} with $h_\ep \geq \mathfrak{u}_\ep$ on $\partial _PU$ so $h_\ep \geq \mathfrak{u}_\ep$ in $U \times (0,\infty)$. Since $U$ is a $C^2$ regular domain there is a modulus of continuity $\varpi_0$ depending only on $L$ and $M$ so that $|h_\ep(x) - g_\ep(x_0)| \leq \varpi_0(|x-x_0|)$ for $x_0 \in \partial U$ and $x \in U$. This gives the upper bound inequality 
\[\mathfrak{u}_\ep(x,t) \leq h_\ep(x) \leq g_\ep(x_0) + \varpi_0(|x-x_0|). \]

For the lower bound we need to redo the standard barrier argument for harmonic functions, we will create a barrier which is a smooth strict subsolution of \eqref{e.bernoulli-parabolic} and then compose with $\Phi_{\ep,\theta}$ to create a subsolution of \eqref{e.bernoulli-parabolic-semilinear}. Let $x_0 \in \partial U$ and assume that $g(x_0)>0$, otherwise the lower bound is trivial. Let $\eta$ be a standard barrier for the Dirichlet problem at $x_0$. Specifically, say $x_0$ has an exterior ball of radius $\rho>0$ centered at $y_0$ then define $\eta(x) = 1-\rho^{d-1}|x - y_0|^{-(d-1)}$. Define $V := U \cap B_{3\rho}(x_0)$ and $\rho \leq |x - y_0| \leq 4\rho$ on $\overline V$. Then $\eta$ satisfies
\begin{enumerate}[label = (\roman*)]
\item\label{part.eta-superharmonic} $\Delta \eta  <0 $ in $\R^d \setminus \{y_0\}$,
\item\label{part.eta-gradient} $c \leq \rho|\grad \eta| \leq C$ on $\overline V$ and so $\eta(x)\leq C \rho^{-1}|x - x_0|$ on $\overline V$,
\item\label{part.quadratic-behavior} $\eta(x) \geq \tfrac12\big(1 - \tfrac{\rho^2}{|x-y_0|^2}\big) = \tfrac{|x-y_0|^2 - \rho^2}{2|x-y_0|^2} \geq \tfrac{|x-x_0|^2}{64 \rho^2}$ for $x \in \overline V$, using \eref{exterior-ball} and the elementary inequality $1 - r^{q} \geq \tfrac12(1-r)$ for $r \in [0,1]$, $q \geq \tfrac12$,
\item $\eta \geq 2^{-(d-1)} \geq \tfrac12$ on $U \cap \partial B_{3\rho}(x_0)$, since $|x - y_0| \geq |x-x_0| - \rho = 2\rho$ there.
\end{enumerate}
Let $0<\sigma < 1$ and define
\[ w(x) := g_\ep(x_0) - \sigma -A_\sigma \eta(x).\]
Our eventual goal is to show that $\mathfrak{u}_\ep(x,t) \geq w(x)$ via a comparison argument. If $\sigma \geq g_\ep(x_0)$ this is trivial, so we can assume that $\sigma < g_\ep(x_0)$. 

We will first show that $w$ is a smooth strict subsolution of \eqref{e.bernoulli-basic} in $\overline{U}$. Then we will compose with $w_\ep = \Phi_{\ep,\theta}(w)$ to get a strict subsolution of \eqref{e.bernoulli-parabolic-semilinear}. Then we will argue that $w_\ep \leq g_\ep$ on $\partial_P U_\infty$ in order to do a comparison argument and conclude.

From \ref{part.eta-superharmonic} $w$ is a smooth subharmonic function on $\overline{U}$. On $\overline{U}  \setminus B_{2\rho}(x_0)$
\begin{equation}\label{e.b2rho-outside}
w(x) \leq g_\ep(x_0) - \sigma - cA_\sigma \leq -c_{1/2} \leq 0 \ \hbox{ if } \ A_\sigma \geq C(1+M).
\end{equation}
Here $c_{1/2}$ is the constant from \lref{profile-sub}. In particular $w(x) \leq 0 \leq g_\ep(x)$ on $\overline{U}  \setminus B_{2\rho}(x_0)$. Next we claim that $w \leq g_\ep$ on $\overline{U}\cap B_{2\rho}(x_0)$.  Indeed by the Lipschitz bound ${g}_\ep(x) \geq {g}_\ep(x_0) - L|x-x_0|$, so it suffices that $\sigma + A_\sigma\eta(x) \geq L|x-x_0|$.  By \ref{part.quadratic-behavior}
\[\sigma + A_\sigma\eta(x) \geq \sigma + cA_\sigma \rho^{-2}|x-x_0|^2 \geq c\rho^{-1}\sqrt{\sigma A_\sigma} |x-x_0| \geq L|x-x_0|,\]
if we choose $A_\sigma \geq C L^2\rho^2/\sigma$.  Finally from \ref{part.eta-gradient} on $B_{3\rho}(x_0) \cap \overline{U}$
\[|\grad w| = A_\sigma|\grad \eta| \geq cA_\sigma\rho^{-1} \geq 2Q_{\max}\] 
 as long as we choose $A_\sigma \geq C\rho Q_{\max}$.

 Thus $w$ is a smooth strict subsolution of \eqref{e.bernoulli-basic} in $\overline{U}$ with $w \leq g_\ep$ on $\overline{U}$.  By \lref{well-prepared} there is $\theta \in (0,1)$, as per \lref{well-prepared} $\theta = 1/2$ works, so that $w_\ep(x) := (\Phi_{\ep,\theta}(w(x)))_+$ is a stationary viscosity subsolution of \eqref{e.bernoulli-parabolic-semilinear}. By \lref{profile-sub}\ref{part.profile-sub-a} where $w(x) \geq \ep$ we get
 \[ w_\ep(x) = (\Phi_{\ep,\theta}(w(x)))_+ = w(x) \leq g_\ep(x).\]
 Where $w(x) \leq -c_\theta \ep$ then \lref{profile-sub}\ref{part.profile-sub-c} gives $w_\ep(x) = 0 \leq g_\ep(x)$.  Finally, suppose $-c_\theta\ep \leq w(x) \leq \ep$.
By \eqref{e.b2rho-outside} such a point lies in
$B_{2\rho}(x_0)$, and \ref{part.quadratic-behavior} gives
\[
\frac{A_\sigma}{64\rho^2}|x-x_0|^2
\leq A_\sigma\eta(x)
= g_\ep(x_0)-\sigma-w(x)
\leq g_\ep(x_0)+c_\theta\ep
\leq 2g_\ep(x_0).
\]
Since $\sigma<g_\ep(x_0)$, increasing the constant in
$A_\sigma\geq CL^2\rho^2/\sigma$ ensures that
$L|x-x_0|\leq g_\ep(x_0)/2$. Consequently, by the Lipschitz
continuity of $g_\ep$ and
\lref{profile-sub}\ref{part.profile-sub-c},
\[
w_\ep(x)\leq w(x)_+ + \ep
\leq 2\ep
\leq \frac{g_\ep(x_0)}{2}
\leq g_\ep(x_0)-L|x-x_0|
\leq g_\ep(x).
\]

 Thus we have shown that $w_\ep \leq g_\ep$ everywhere in $\overline{U}$, by comparison principle for \eqref{e.bernoulli-parabolic-semilinear} we conclude that $ \mathfrak{u}_\ep(x,t) \geq w_\ep(x,t) \geq w(x,t)$ on $U_\infty$.

 Then applying \ref{part.eta-gradient} we conclude, absorbing the extra $\rho^{-1}$ factor into the universal constant $C$,
 \[ \mathfrak{u}_\ep(x,t) \geq g_\ep(x_0) - \sigma - CA_\sigma |x-x_0| .\]
Taking $\varpi(r) := \inf_{\sigma \in (0,1)}\big(\sigma + CA_\sigma  r\big)$, which is a modulus of continuity, finishes the proof.
\end{proof}

The temporal regularity starts with $L^2$-in-time estimate that follows from the dissipation inequality.

\begin{lemma}\label{l.initial-L2}
Let $g_\ep$ be Lipschitz continuous and bounded on $\overline{U}$. For every $\ep \in (0,\ep_0]$ and $t ,s\geq 0$,
\[\|\mathfrak{u}_\ep(\cdot,t) - \mathfrak{u}_\ep(\cdot,s)\|_{L^2(U)} \leq \Big(\tfrac{1}{2}E_0  |t-s|\Big)^{1/2}.\]
\end{lemma}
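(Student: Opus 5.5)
This estimate follows directly from the energy dissipation identity \eqref{eqn:dissipation} combined with the energy bound on the well-prepared data, Lemma~\ref{l.well-prepared}\ref{part.gep-4}. Without loss of generality take $s < t$.

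The first step is the fundamental theorem of calculus in time, pointwise in $x$. Since $\mathfrak{u}_\ep$ is smooth in $U_\infty$ and continuous up to $t=0$, we have
\[\mathfrak{u}_\ep(x,t)-\mathfrak{u}_\ep(x,s) = \int_s^t \partial_t \mathfrak{u}_\ep(x,\tau)\,d\tau .\]
Cauchy--Schwarz in $\tau$ then gives
\[|\mathfrak{u}_\ep(x,t)-\mathfrak{u}_\ep(x,s)|^2 \leq |t-s|\int_s^t |\partial_t \mathfrak{u}_\ep(x,\tau)|^2\,d\tau .\]
Integrating over $U$ and using Fubini yields
\[\|\mathfrak{u}_\ep(\cdot,t)-\mathfrak{u}_\ep(\cdot,s)\|_{L^2(U)}^2 \leq |t-s| \int_0^\infty\!\!\int_U (\partial_t \mathfrak{u}_\ep)^2\,dx\,d\tau .\]
When $s=0$, the integrand near $\tau=0$ must be handled by integrating over $[\delta,t]$ and letting $\delta\downarrow 0$, using continuity of $\mathfrak{u}_\ep$ on $\overline U\times[0,\infty)$ and dominated convergence, since the functions are bounded on the bounded set $U$.

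The second step bounds the space-time integral. In \eqref{eqn:dissipation}, the term $\frac12 J(\mathfrak{u}_\ep(T),\chi_\ep(T);U)$ is nonnegative, so for every $T$,
\[\int_0^T\!\!\int_U(\partial_t\mathfrak{u}_\ep)^2 \leq \tfrac12 J(g_\ep,\chi^0_\ep;U) \leq \tfrac12 E_0 ,\]
where the last inequality is Lemma~\ref{l.well-prepared}\ref{part.gep-4}. Letting $T\to\infty$ and combining with the first step gives the claimed bound.

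For data $g_\ep$ that are merely Lipschitz rather than of the specific form in Lemma~\ref{l.well-prepared}, the same argument works with $E_0$ replaced by $J(g_\ep,\chi_\ep^0;U)\le \|\grad g_\ep\|_{L^2}^2+Q_{\max}^2|U|$.

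The only genuinely delicate point is justifying \eqref{eqn:dissipation} down to $t=0$. This requires $\partial_t\mathfrak{u}_\ep \in L^2$ near the initial time, which is where the Lipschitz regularity of $g_\ep$ and the compatibility of the data on the corner $\partial U\times\{0\}$ enter. I would handle this by first deriving the identity on $[\delta,T]$, where everything is smooth and the boundary terms vanish because $\partial_t \mathfrak{u}_\ep = 0$ on the lateral boundary, since the data there are time-independent. I would then pass $\delta\to0$, using lower semicontinuity of $J$ at time $\delta$ together with the $H^1$ continuity of $\mathfrak{u}_\ep(\cdot,\delta)\to g_\ep$ from standard parabolic theory for Lipschitz data. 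This yields the inequality, which is all that is needed.
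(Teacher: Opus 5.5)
Your argument is correct and matches the paper's proof: you bound $\int_0^\infty\int_U(\partial_t\mathfrak{u}_\ep)^2$ by $\tfrac12 E_0$ using the dissipation identity and Lemma~\ref{l.well-prepared}\ref{part.gep-4}, then apply the fundamental theorem of calculus in time and Cauchy--Schwarz, with $s=0$ included.

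One small correction concerns your side remark on justifying the identity down to $t=0$. Passing $\delta\downarrow 0$ needs $\limsup_{\delta\to 0}J(\mathfrak{u}_\ep(\delta),\chi_\ep(\delta);U)\le J(g_\ep,\chi^0_\ep;U)$. That is the opposite direction to lower semicontinuity. It follows instead from the strong $H^1$ convergence you invoke, together with continuity of $\mathcal{B}_\ep$ and of $\mathfrak{u}_\ep$ at $t=0$.
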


\begin{proof}
Take $t>s$. By \eqref{eqn:dissipation} and \lref{well-prepared}(iv), $$\int_0^\infty\int_U (\partial_t \mathfrak{u}_\ep)^2  dx  dt \leq \tfrac12 J(g_\ep,\chi^0_\eps;U) \leq \tfrac12 E_0.$$ Since $\mathfrak{u}_\ep \in C([0,\infty);L^2(U))$ with $\partial_t \mathfrak{u}_\ep \in L^2(U\times(0,t))$, the fundamental theorem of calculus in $L^2$ and the Cauchy--Schwarz inequality give
\begin{multline}
\int_U |\mathfrak{u}_\ep(x,t) - \mathfrak{u}_\ep(\cdot,s)|^2   dx = \int_U \Big|\int_s^t \partial_\tau \mathfrak{u}_\ep(x,\tau)  d\tau\Big|^2 dx \\
\leq (t-s) \int_s^t \int_U (\partial_s \mathfrak{u}_\ep)^2   dx  d\tau \leq \tfrac12 E_0  (t-s). 
\end{multline}
This includes the case $s=0$ where $\mathfrak{u}_\ep(\cdot,s)=g_\ep(\cdot)$.
\end{proof}
 Now we can easily prove the $L^\infty$ continuity as well.
 \begin{proof}[Proof of \pref{attainment}]
     Combining $L^2$ continuity in time \lref{initial-L2} with the uniform modulus of continuity in space from \lref{attainment-lateral} gives the continuity in $L^\infty$ norm with a uniform modulus independent of $t$.  The proof is elementary, if $\|\mathfrak{u}_\ep(\cdot,t) - \mathfrak{u}_\ep(\cdot,s)\|_{L^\infty(U)} =: \delta>0$ then $|\mathfrak{u}_\ep(\cdot,t) - \mathfrak{u}_\ep(\cdot,s)| \geq \frac{\delta}{2}$ in a $\varpi^{-1}(\delta/2)$ radius ball centered somewhere in $\overline{U}$.  Plugging into \lref{initial-L2} gives the bound $C|t-s|^{1/2} \geq c\varpi^{-1}(\delta/2)^{d/2}\delta$ and inverting this relation gives the modulus of continuity, up to updating $\varpi$ again.
 \end{proof}

\subsection{No outward jump of the positive phase at $t=0$} \pref{attainment} controls the values of $\mathfrak{u}_\ep$ at the parabolic boundary. We also need continuity of the positivity set at the parabolic boundary for the comparison principle.  This is only an issue for the monotone increasing case since continuity in time of $\mathfrak{u}_\ep$ is sufficient to control the inwards propagation of the free boundary.

We need to show that the positivity set of the limit does not immediately jump outwards at the initial time.  If a ball $B_r(x_0)$ is contained in $\{g \leq 0\}$ then the continuity of $\mathfrak{u}_\ep$ at time $t=0$ gives that $\mathfrak{u}_\ep(x,t) \leq \ep+\varpi(t)$ on $B_r(x_0)$. Then a stationary supersolution barrier, built by composing a radial harmonic function of small slope with the profile $\Psi_{\ep,\hat\theta}$ of \lref{profile-super} bounds the propagation of the positive phase.

\begin{lemma}\label{l.positivity-trace}
Let $g_\ep$ be Lipschitz continuous and bounded on $\overline{U}$ and let $\mathfrak{u}_\ep$ be the solution of \eqref{e.bernoulli-parabolic-semilinear}. Suppose that $x_0 \in \overline{U}$ and $B_{3r}(x_0) \cap \overline{U} \subset \{g_\ep \leq (1-\mu)\ep\}$ for some $ \mu \in (0,1)$.  Then there exists $\tau>0$ and $\ep_0>0$ depending only on $r$, $\mu$, and universal parameters so that $\overline{B_{r}(x_0)} \subset \{\mathfrak{u}_\ep(x,t) \leq \ep\}$ for all $ 0 \leq t \leq \tau$ and $0 < \ep \leq \ep_0$.
\end{lemma}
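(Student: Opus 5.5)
The plan is to compare $\mathfrak{u}_\ep$ with a stationary supersolution barrier of \eqref{e.bernoulli-parabolic-semilinear} that sits above the data on the parabolic boundary of a small cylinder $(B_{3r}(x_0)\cap U)\times(0,\tau]$. The barrier will be built from a radial harmonic function of small slope composed with the profile $\Psi_{\ep,\hat\theta}$ of \lref{profile-super}.

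\emph{Step 1: smallness at short times.} By \pref{attainment}, the semilinear solutions have a uniform modulus $\varpi$, independent of $\ep$. Using $g_\ep\le(1-\mu)\ep$ on $B_{3r}(x_0)\cap\overline U$, this gives $\mathfrak{u}_\ep(x,t)\le (1-\mu)\ep+\varpi(t)$ there. The $\varpi(t)$ term is not small relative to $\ep$, so continuity alone does not finish the argument; the barrier must absorb an $O(\varpi(\tau))$ excess.

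\emph{Step 2: the barrier.} Let $h$ be radial and harmonic in the annulus $B_{3r}(x_0)\setminus \overline{B_{2r}(x_0)}$, with $h=-\lambda$ on $\partial B_{2r}$ and $h=\lambda$ on $\partial B_{3r}$. We extend $h$ by the constant $-\lambda$ inside $B_{2r}$; the extension is a minimum of superharmonic pieces, so it is a viscosity supersolution, and it can be smoothed. Its slope is of order $\lambda/r$. We choose $\lambda$ small, depending on $r$, so that $|\grad h|^2\le(1-\delta_0)Q_{\min}^2$. Then $h$ is a strict supersolution in the sense of \dref{strict-sub} on its transition region. Set $H_\ep:=\Psi_{\ep,\hat\theta}(h)$. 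As in the supersolution case of \lref{well-prepared}\ref{part.gep-3}, $H_\ep$ is a stationary supersolution of the semilinear equation for $\ep\le\ep_0(\lambda)$. In the region $h<-a_0$ (here $a_0$ is replaced by $\lambda$), the exponential decay in \lref{profile-super}\ref{part.profile-super-e} handles the estimate, exactly as in that proof. On $\overline{B_{2r}}$ we have $H_\ep=\ep\Psi(-\lambda/\ep)\le\ep(\kappa+C e^{-c\lambda/\ep})<\ep$, since $\kappa<1$. Hence any comparison $\mathfrak{u}_\ep\le H_\ep$ gives $\mathfrak{u}_\ep\le\ep$ on $\overline{B_r}\subset \overline{B_{2r}}$.

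\emph{Step 3: boundary ordering; this is the main obstacle.} At $t=0$ we need $g_\ep\le H_\ep$, where $g_\ep\le(1-\mu)\ep$. Because $H_\ep\ge\kappa\ep$, this requires $\kappa\ge1-\mu$. We enforce it by choosing $\hat\theta$ large, which gives $\kappa_{\hat\theta}\to1$ (noted in the proof of \lref{profile-super}). Taking $\hat\theta$ large forces $\hat\theta(1-\delta)\le1$ with $\delta=1-1/\hat\theta$ close to $1$, so the slope constraint becomes $|\grad h|^2\le Q_{\min}^2/\hat\theta$, still achievable by decreasing $\lambda$. On the lateral boundary $\partial B_{3r}\times(0,\tau]$ we have $H_\ep=\lambda$ once $\ep<\lambda$. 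Step 1 gives $\mathfrak{u}_\ep\le\ep+\varpi(\tau)\le\lambda$ there if $\tau$ and $\ep_0$ are small, depending on $\lambda$ and hence on $r,\mu$. If $B_{3r}(x_0)$ meets $\partial U$, the portion of the parabolic boundary lying on $\partial U$ carries data $g_\ep\le(1-\mu)\ep\le H_\ep$, which is fine. The comparison domain is therefore $(B_{3r}(x_0)\cap U)\times(0,\tau]$.

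\emph{Step 4: conclusion.} The classical comparison principle for \eqref{e.bernoulli-parabolic-semilinear} gives $\mathfrak{u}_\ep\le H_\ep$ on $(B_{3r}\cap U)\times[0,\tau]$. By Step 2, $\mathfrak{u}_\ep\le\ep$ on $\overline{B_r(x_0)}$ for $0\le t\le\tau$. The delicate points are the interplay $\kappa\ge1-\mu$ versus the slope, and checking the supersolution property of $\Psi(h)$ across the kink of $h$ at $\partial B_{2r}$ (or smoothing $h$ there while keeping it superharmonic).
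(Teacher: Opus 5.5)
Your strategy is the paper's: a radial barrier of small slope composed with $\Psi_{\ep,\hat\theta}$, $\hat\theta$ large so that $\kappa_{\hat\theta}\ge 1-\mu$, the $\ep$-uniform modulus to order the lateral boundary for $t\le\tau$, and classical comparison. Several steps are correct: the trade-off $|\grad h|^2\le Q_{\min}^2/\hat\theta$, the lateral check on $\partial B_{3r}$ where $H_\ep=\lambda$, and the part of the parabolic boundary lying on $\partial U$. The gap is that the barrier, as you define it, is not a supersolution, and neither of your proposed resolutions works.

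The radial harmonic $h$ with $h(2r)=-\lambda$, $h(3r)=\lambda$ increases in $|x-x_0|$, so continued inward it is $<-\lambda$. Your extension is therefore $\max(h,-\lambda)$, a \emph{maximum} of harmonic functions, not a minimum of superharmonic ones. It has a convex kink on $\partial B_{2r}(x_0)$, where the radial derivative jumps from $0$ to $h'(2r)>0$. Smooth functions touching $H_\ep$ from below there can have arbitrarily large Laplacian, while $Q^2\beta_\ep(\Psi_{\ep,\hat\theta}(-\lambda))$ is finite, so the supersolution test fails. No smoothing can keep $h$ superharmonic either. By the minimum principle, a superharmonic function equal to $\lambda$ on $\partial B_{3r}(x_0)$ is $\ge\lambda$ in the whole ball, so any barrier that is negative near $x_0$ must fail superharmonicity somewhere. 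Your appeal to the exponential-decay case with ``$a_0$ replaced by $\lambda$'' is vacuous for your $h$, since $h\ge-\lambda$ everywhere.

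The missing idea is to place this unavoidable non-superharmonic region where $h$ is bounded away from $0$ from below, independently of $\ep$. Continue the harmonic profile inside $\partial B_{2r}$, where it is already $<-\lambda$, and only there flatten it smoothly and monotonically to a constant near $x_0$. This is exactly the paper's $\eta$: strictly superharmonic on $B_3\setminus B_{3/2}$, zero on $\partial B_2$, smoothly monotone to $-1$ on $B_1$. Then $\Delta h>0$ only on $\{h<-\lambda\}$, and $h$ is $C^2$ with bounds depending on $r,\lambda$. On that set the third case of the proof of \lref{well-prepared} applies. By property (iv) of \lref{profile-super}, $\Psi''_{\ep,\hat\theta}(h)|\grad h|^2+\Psi'_{\ep,\hat\theta}(h)\Delta h\le C\ep^{-1}e^{-c\lambda/\ep}$. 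This is dominated by $Q^2\beta_\ep(\Psi_{\ep,\hat\theta}(h))\ge Q_{\min}^2\min_{[\kappa,(1+\kappa)/2]}\beta/\ep$ once $\ep\le\ep_0(r,\mu)$. On $\{h\ge-\lambda\}$ your slope computation applies. Since still $h\le-\lambda$ on $\overline{B_{2r}(x_0)}$, the rest of your argument goes through unchanged.
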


First we apply this to the $\ep \to 0$ limit in the monotone increasing case.

\begin{corollary}\label{c.positivity-no-jump}
Let $g$ be a smooth strict subsolution of \eqref{e.bernoulli-basic}, $g_\ep = (\Phi_{\ep,\theta}(g))_+$ as in \lref{well-prepared}, and $\mathfrak{u}_\ep$ be the corresponding solution of \eqref{e.bernoulli-parabolic-semilinear}. Suppose that $x_0 \in \overline{U}$ and $B_{4r}(x_0) \cap \overline{U} \subset \{g \leq 0\}$.  Then there exists $\tau>0$ depending only on $r$ and universal parameters so that 
\[B_r(x_0) \times [0,\tau] \subset \overline{U} \setminus ({\limsup_{\ep \to 0}}^*\{\mathfrak{u}_\ep>\ep\}).\]
\end{corollary}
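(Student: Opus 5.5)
The plan is to reduce the statement directly to \lref{positivity-trace}. That lemma requires a quantitative smallness $g_\ep \leq (1-\mu)\ep$ on a ball, and it delivers $\mathfrak u_\ep \leq \ep$ on a smaller closed ball for a short time, uniformly in $\ep$. We then pass this uniform statement through the definition of ${\limsup}^*$.

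\emph{Step 1: the smallness hypothesis.} On $\{g \leq 0\}$ we use the monotonicity of $\Phi_{\ep,\theta}$ together with $\Phi_{\ep,\theta}(s) = \ep\,\Phi_{1,\theta}(s/\ep)$. This gives
\[
g_\ep = (\Phi_{\ep,\theta}(g))_+ \leq \ep\,\Phi_{1,\theta}(0).
\]
The proof of \lref{profile-sub}\ref{part.profile-sub-c} shows that $\Phi_{1,\theta}(0) \leq 1 - \sqrt{1-\theta}$. Hence on $B_{4r}(x_0) \cap \overline U$ we have $g_\ep \leq (1-\mu)\ep$ with $\mu := \sqrt{1-\theta} \in (0,1)$. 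Here $\theta = 1/(1+\delta_0)$ is fixed by \lref{well-prepared}, so $\mu$ is universal.

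\emph{Step 2: apply the lemma.} We apply \lref{positivity-trace} with radius $r' := 4r/3$, so that $B_{3r'}(x_0) = B_{4r}(x_0)$. This yields $\tau' > 0$ and $\ep_0 > 0$, depending only on $r$ and universal parameters, such that
\[
\overline{B_{4r/3}(x_0)} \subset \{\mathfrak u_\ep(\cdot,t) \leq \ep\} \quad \hbox{for all } 0 \leq t \leq \tau' \hbox{ and } 0 < \ep \leq \ep_0 .
\]
We then set $\tau := \tau'/2$.

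\emph{Step 3: pass to the upper limit.} Suppose $(x,t) \in B_r(x_0) \times [0,\tau]$ belonged to ${\limsup}^*_{\ep \to 0}\{\mathfrak u_\ep > \ep\}$. Then there would be $\ep_k \to 0$ and points $(x_k,t_k) \to (x,t)$ with $\mathfrak u_{\ep_k}(x_k,t_k) > \ep_k$. For $k$ large we would have $\ep_k \leq \ep_0$, $x_k \in B_{4r/3}(x_0)$, and $t_k < \tau'$, since $t \leq \tau'/2$. This contradicts Step 2, so $B_r(x_0) \times [0,\tau]$ is disjoint from the upper limit, as claimed.

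The substantive work is entirely contained in \lref{positivity-trace}. Here the only points needing care are the uniform bound $\Phi_{1,\theta}(0) < 1$ in Step 1, which produces a definite gap $\mu$, and the margins in Step 2. We shrink the spatial ball from $4r/3$ to $r$ and halve the time interval from $\tau'$ to $\tau$ precisely so that approximating sequences converging to closed-set endpoints are still captured by the lemma.
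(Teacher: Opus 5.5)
Your proof is correct, but it obtains the hypothesis of \lref{positivity-trace} differently from the paper.

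\textbf{The paper's route.} The paper argues geometrically. From \lref{well-prepared}\ref{part.gep-5} one has $\{g_\ep>0\}\subset\{g>-c_\theta\ep\}$. The gradient lower bound of the strict subsolution near its zero set then gives $\{g>-c_\theta\ep\}\subset\{g>0\}+B_{C\ep}$. Hence $g_\ep\equiv 0$ on $B_{3r}(x_0)$ once $C\ep<r$, and the lemma applies with any $\mu$.

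\textbf{Your route.} You use only two facts: the monotonicity of $\Phi_{1,\theta}$, and the one-dimensional bound $\Phi_{1,\theta}(0)\leq 1-\sqrt{1-\theta}$. Together they give $g_\ep\leq(1-\mu)\ep$ on all of $\{g\leq 0\}$, for every $\ep$, with the universal gap $\mu=\sqrt{1-\theta}$. You then spend the $4r$ versus $3r$ margin by rescaling the radius in the lemma, rather than on absorbing a $C\ep$-neighborhood.

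\textbf{Comparison.} Your argument is more elementary. It uses no information about the level sets of $g$, beyond $g\leq 0$ on the ball. In particular, you never need the containment $\{g>-c_\theta\ep\}\subset\{g>0\}+B_{C\ep}$. That containment is proved by following $\grad g$, and it needs extra care near $\partial U$, where such a path could leave $\overline U$. Your argument also uses exactly the flexibility that the $(1-\mu)\ep$ form of the lemma's hypothesis provides. The paper's route yields a stronger fact that is not needed here: the initial positivity set of $g_\ep$ is $C\ep$-close to $\{g>0\}$.

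Your halving of the time interval is also slightly more careful than the paper. The paper passes directly from $\overline{B_r(x_0)}\times[0,\tau]$ being disjoint from $\{\mathfrak u_\ep>\ep\}$ to the ${\limsup}^*$ statement. As written, that step does not account for sequences approaching $t=\tau$ from above, and shrinking $\tau$ closes this.
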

\begin{proof}
By \lref{well-prepared}\ref{part.gep-5} $\{g_\ep>0\} \subset \{g > -c \ep\}$ and since $g$ is a subsolution with $|\grad g| \geq Q_{\min}$ on $\partial \{g>0\}$ we conclude that $\{g>-c\ep\} \subset \{g>0\}+B_{C\ep}$. In particular for $\ep>0$ sufficiently small $B_{3r}(x_0) \subset \{g_\ep = 0\}$. Then \pref{positivity-trace} applies and there exists $\tau>0$ depending on $r>0$ and universal parameters so that $\overline{B_r(x_0)} \times [0,\tau] \subset \overline{U} \setminus \{\mathfrak{u}_\ep > \ep\}$ and the containment with the ${\limsup}^*$ follows.
\end{proof}

Now we return to the proof of \lref{positivity-trace}.

\begin{proof}
By the definition of $\kappa_{\hat \theta}$ in \eqref{e.kappa-def}, and the comment below that displayed equation, we can choose $\hat \theta$ sufficiently large so that $\kappa_{\hat \theta} = 1-\frac{1}{2}\mu$.  

Let $\eta$ be a radial strictly superharmonic function in $B_3 \setminus B_{3/2}$ which is zero on $\partial B_{2}$ and $1$ on $\partial B_3$. Extend $\eta$ in a smooth monotone radial way to be equal to $-1$ on $B_{1}$. Call $c_0(d)$ to be supremum of $|\grad \eta|$ on $B_{2} \setminus B_{3/2}$. Let $\gamma>0$ sufficiently small so that $\gamma c_0 \leq \frac{1}{2}Q_{\min}$. Then call $\hat\eta(x) = \gamma r\eta(\frac{x-x_0}{r})$, this is a smooth strict supersolution of \eqref{e.bernoulli-basic} in $B_3$ with parameters $\delta_0 = \frac{1}{2}$ and $a_0 = \frac{1}{2}$.  Choosing $\hat{\theta}$ larger if necessary we can also guarantee $\hat\theta \geq 2 = \frac{1}{1-\delta_0}$, as needed for applying \lref{well-prepared}.

Then define $\hat \eta_\ep = \Psi_{\ep,\hat\theta}(\hat \eta)$. By \lref{well-prepared} we conclude that $\hat \eta_\ep$ is a supersolution of \eqref{e.bernoulli-parabolic-semilinear} in $B_{2r}(x_0)$. By choice of $\hat\theta$ and \lref{profile-super} we have $\hat \eta_\ep \geq \kappa_{\hat \theta} = 1-\frac{1}{2}\mu \geq g_\ep$ on $B_{2r}(x_0)$.  By \pref{attainment} $\mathfrak{u}_\ep(x,t) \leq \delta r \leq \hat \eta_\ep(x)$ on $\partial B_{2r}(x_0)$ for $t \leq \tau$ and $\tau$ sufficiently small so that $\varpi(\tau) \leq r \delta$.  Since $\delta>0$ depends only on $\mu$, $\tau$ depends only on $r$ and $\mu$.  Thus comparison principle for \eqref{e.bernoulli-parabolic-semilinear} on $B_{2r}(x_0) \times (0,\tau]$ yields $\mathfrak{u}_\ep(x,t) \leq \hat \eta_\ep(x)$ in $B_{2r}(x_0) \times (0,\tau]$. Since $\hat \eta \leq - \delta r$ in $B_{r/2}$ and using \lref{profile-super} we get $\hat \eta_\ep  \leq \kappa \ep+Ce^{-c\delta r/\ep} \leq \ep$ for $\ep>0$ sufficiently small depending on $\delta$ and $r$.

\end{proof}

\section{Non-degeneracy of largest subsolutions}\label{s.largest-sub-nondegen}

In this section we present the largest subsolution non-degeneracy in $d=2$. This result was proved by Orcan-Ekmekci \cite{OrcanEkmekci}. Since that proof is brief, elegant, and perhaps not so well known, we recall it here.  

\begin{theorem}[Largest subsolution non-degeneracy \cite{OrcanEkmekci}]\label{t.largest-sub-non-degen}
    If $d=2$ and $u$ is a largest subsolution in $B_1(0)$ and $0 \in \partial \{u>0\}$ then for all $0 < r \leq 1$
    \[ \sup_{\partial B_r(0)} u \geq c r.\]
\end{theorem}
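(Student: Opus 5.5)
The plan is to argue by contradiction and exploit the \emph{largest} subsolution property through a local replacement: if $u$ is too small on some circle, we will build a subsolution $v\geq u$ that agrees with $u$ outside a ball, equals $u$ on $\partial B_r$, and is strictly positive at the origin. This contradicts the local largest subsolution property of \dref{largest-smallest}, which requires $v\le u$ and hence $v(0)\le u(0)=0$. Throughout, $u$ is Lipschitz and a viscosity solution, by \lref{extremal-viscosity} and the interior Lipschitz estimate. By scaling $u_r(x)=r^{-1}u(rx)$, which is again a largest subsolution in $B_1$ with $0\in\partial\{u_r>0\}$, it suffices to treat $r=1$. So we assume $\sup_{\partial B_1}u\le \delta$ with $\delta$ small, and derive a contradiction.

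\textbf{Step 1 (planar geometry).} Since $0\in\partial\{u>0\}$, there is a connected component $\Omega$ of $\{u>0\}\cap B_1$ with $0\in\overline\Omega$. We first check that $\Omega$ reaches $\partial B_1$. If $\overline\Omega\subset B_1$, then $u$ is harmonic in $\Omega$ and vanishes on $\partial\Omega$, so $u\equiv0$ in $\Omega$, which is absurd. In $d=2$ this yields a curve $\Gamma\subset\Omega$ joining a point near $0$ to $\partial B_1$ along which $u>0$. This is the one place dimension two enters: a connected set of positivity touching both $0$ and $\partial B_1$ meets every circle $\partial B_s$, $0<s<1$.

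\textbf{Step 2 (barrier).} Next we construct a radially symmetric subsolution $w$ on an annular or ball region, in the spirit of the barrier $h$ in \pref{monotone-nondegen-relaxed}. Concretely, take $w(x)=A\,(|x|-\rho)_+$-type profiles, or a fundamental-solution profile $w=A\log(|x|/\rho)_+$ on $\rho<|x|<1$, with slope $>Q_{\max}$ at $|x|=\rho$. This makes $w$ a strict classical subsolution with free boundary $\partial B_\rho$.

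The straightforward approach is to set $v=\max(u,w)$ in $B_1$, after translating the center so that $B_\rho$ is centered at the origin. The obstacle is that $v=u$ on $\partial B_1$ requires $w\le u$ there, but $u$ may vanish on parts of $\partial B_1$. The planar trick in \cite{OrcanEkmekci} instead uses the positive curve $\Gamma$. I would take $w$ supported in a thin neighborhood of the segment from $0$ to a point where $\Gamma$ meets a circle, glued so that $w\le u$ wherever $w$ meets $\partial(\text{support})$ outside $\{u>0\}$. In two dimensions, a harmonic function can have positive gradient along a thin strip: for example, $w=$ the positive part of a harmonic function in a slit domain whose zero set lies inside $\{u=0\}\cup\Gamma$, with boundary values controlled by the smallness $\delta$.

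\textbf{Step 3 (conclude).} The function $v=\max(u,w)$ is a subsolution, since a maximum of subsolutions is again a subsolution. It coincides with $u$ near $\partial B_1$, and $v(0)>0$, contradicting maximality of $u$. This forces $\sup_{\partial B_1}u\ge c$ with $c$ depending only on $Q_{\min}$ and $Q_{\max}$.

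\textbf{Main obstacle.} The main difficulty is Step 2: ensuring the barrier stays below $u$ on the boundary of its support, using only the information that $u$ is small on $\partial B_1$ together with the planar connectivity. I expect the key estimate to be a lower bound $u\ge c\,\mathrm{dist}$-type along $\Gamma$. This would come from comparing $u$ in $\Omega$ with the harmonic measure of the arc $\partial B_1\cap\overline\Omega$, using the supersolution property to rule out small slopes near the free boundary.
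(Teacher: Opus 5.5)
\textbf{There is a genuine gap in Step~2.} You never construct the competitor $w$, and the construction you sketch cannot work in the regime you are in.

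Any admissible $w$ must be a subsolution with $\max(u,w)=u$ near the boundary of the region it modifies. Extended by zero, $w$ is subharmonic, so the maximum principle gives $w\le u\le\delta$. So the smallness of $u$ makes a larger competitor harder to build, not easier. Nothing in your outline turns $\sup_{\partial B_1}u\le\delta$ into a subsolution with $w(0)>0$.

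The thin-strip idea fails concretely. By Phragm\'en--Lindel\"of, a nonnegative function that is subharmonic in a strip of width $h$ and vanishes near its sides grows like $e^{\pi x/h}$ along the strip. Meanwhile, the slope condition at an outer-regular point of $\partial\{w>0\}$ (for instance one touched by the side of the strip) forces $w\gtrsim Q_{\min}h$ near its tip. Hence $w$ would be exponentially large at distance $\sim 1$, which is incompatible with $w\le u\le\delta$ where it is glued.

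Step~1 also does not use $d=2$. A connected set whose closure contains $0$ and which meets $\partial B_1$ meets every sphere $\partial B_s$ in any dimension. If your outline worked it would prove the statement in all dimensions, which signals that the real mechanism is missing.

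\textbf{What is needed is the opposite of your Step~1: zero phase, not positive phase, on every circle.} The largest property is used only to show that each $\partial B_s$, $0<s<1$, contains a point $x_s\notin\overline{\{u>0\}}$. Otherwise the harmonic lift of $u$ in $B_s$ is a subsolution lying strictly above $u$ (strong maximum principle, since $u(0)=0$), contradicting maximality.

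Set $\rho_s=\dist(x_s,\overline{\{u>0\}})$. The ball $B_{\rho_s}(x_s)$ touches the positive phase from outside, so the plain subsolution property gives non-degeneracy there, via a small-slope supersolution barrier from above. This is where smallness legitimately produces a contradiction.

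Next, linear growth of $\sup_{\partial B_\rho}u$, linear growth of spherical averages, and $\int_{B_\rho}\Delta u\gtrsim\rho^{d-1}$ are equivalent. Using this, you get $\int_{B_{2\rho_s}(x_s)}\Delta u\ge c\rho_s$. A Vitali covering of $(0,1]$ by the radial intervals $(s-2\rho_s,s+2\rho_s)$ yields disjoint balls with $\sum_j\rho_j\gtrsim 1$. Hence $\int_{B_1}\Delta u\ge c$, and therefore $\sup_{\partial B_1}u\ge c$.

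Dimension two enters exactly here. Each ball contributes mass $\rho^{d-1}=\rho$, and only when $d=2$ do these contributions add up to something bounded below by the length of the radial interval.
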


Note that this is somewhat weaker than the non-degeneracy property of minimal supersolutions since it allows for degeneration of the following type $\frac{1}{2}(x_d)_++\frac{1}{n}(-x_d)_+ \to \frac{1}{2}(x_d)_+$ which is a sequence of subsolutions of \eqref{e.bernoulli-basic} which are non-degenerate in the sense of \tref{largest-sub-non-degen} but converge uniformly to a non-subsolution. Of course these are not largest subsolutions.

\begin{lemma}[Subsolution non-degeneracy at outer-regular points]\label{l.outer-reg-non-degen}
    If $u$ is a subsolution of \eqref{e.bernoulli-basic} in $B_1(0)$ and $\{u>0\}$ has an exterior touching ball of radius $1$ at $0 \in \partial \{u>0\}$ then
    \[ \max_{\partial B_1(0)} u \geq c.\]
\end{lemma}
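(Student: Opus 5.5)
The plan is to build an explicit smooth strict supersolution barrier that lives inside the exterior ball, and to use the subsolution property of $u$ to force a contradiction if $u$ were uniformly small on $\partial B_1(0)$. Let $B_1(z)$ be the exterior touching ball, with $|z|=1$, $B_1(z)\cap\{u>0\}=\emptyset$ and $0\in\partial B_1(z)$. Consider the annular harmonic profile
\[
h(x) = A\, G(|x-z|), \qquad G(s) = \log s \ (d=2), \quad G(s)=1-s^{2-d}\ (d\geq 3),
\]
normalized so that $h=0$ on $\partial B_1(z)$ and $h>0$ outside $B_1(z)$. On $\partial B_1(z)$ the gradient is $A G'(1)$. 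Choose $A$ so that $|\grad h| \leq \tfrac12 Q_{\min}$ in $B_2(z)\setminus B_1(z)$. Then $h$ is harmonic, and its zero level set has slope strictly below $Q$.

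For $\sigma\in(0,1)$, perturb by setting $h_\sigma(x) := A\,G(|x-z|/(1-\sigma))$. This shrinks the zero sphere to $\partial B_{1-\sigma}(z)$, so $h_\sigma>0$ on $\overline{\{u>0\}}\cap \overline{B_1(0)}$ near $0$, and in particular $h_\sigma(0)>0=u(0)$. Suppose for contradiction that $\max_{\partial B_1(0)} u < c$, where $c := \tfrac12 \min_{\partial B_1(0)\setminus B_1(z)} h$. This minimum is only a priori positive away from $\partial B_1(z)$, so I would instead compare in $D := B_1(0)\setminus \overline{B_{1-\sigma}(z)}$, or use a slightly smaller ball $B_{1/2}(0)$ whose boundary meets $\partial B_1(z)$ only where $u=0$ anyway. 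On $\partial D\cap\overline{\{u>0\}}$ the points lie outside $B_1(z)$, and there $h_\sigma \geq c' >0$ by the choice of $A$ and the geometry, uniformly in $\sigma$. Hence $u<h_\sigma$ on $\partial D\cap \overline{\{u>0\}}$ once $c\leq c'$.

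Now decrease $\sigma$ continuously from a value where $h_\sigma$ strictly dominates $u$ on $\overline{\{u>0\}}\cap \overline D$ (for instance by first translating the barrier away, or by taking $\sigma$ near $1$, where $h_\sigma$ is large) down to $\sigma=0$. If $u\leq h_\sigma$ fails for some $\sigma$, there is a first $\sigma$ at which $(h_\sigma)_+$ touches $u$ from above in $\overline{\{u>0\}}$ at an interior point $x_1$. Since $\Delta h_\sigma=0<$ is not strict, I would add a small strictly superharmonic correction $-\delta|x-z|^2$ to make $h_\sigma$ a strict supersolution. At the touching point, the subsolution definition (Definition \ref{d.stationary-viscosity}(ii)) gives either $\Delta\varphi\geq0$, which contradicts strict superharmonicity, or $\varphi=0$ with $|\grad\varphi|\geq Q$, which contradicts $|\grad h_\sigma|\leq \tfrac12 Q_{\min}+O(\delta)$. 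Therefore $u\le h_0$ near $0$. This is consistent, so the contradiction must come from elsewhere: since $0\in\partial\{u>0\}$, there are points of $\{u>0\}$ near $0$.

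Because the decisive contradiction has to come from non-degeneracy, the barrier needs to be placed the other way. The genuine mechanism, following Step 1 of Proposition \ref{p.monotone-nondegen-relaxed}, is to take the barrier with zero set enclosing a ball slightly larger than the exterior ball, $h_\sigma(x)=A\,G(|x-z|/(1+\sigma))$, whose zero sphere passes beyond $0$. This barrier is strictly positive on $\partial B_1(0)\cap\overline{\{u>0\}}$ (with a lower bound $c$ independent of $\sigma$, using that these points lie outside $B_1(z)$ at distance bounded below, after restricting to $B_{1/2}$ and rescaling). It starts from $\sigma=0$, where the strict ordering holds on $\overline{\{u>0\}}$ because $\{u>0\}$ avoids the closed ball $\overline{B_1(z)}$ except at its boundary points, which lie in the zero set. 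I would slide $\sigma$ upward. The touching argument above shows that no interior first contact occurs, while boundary contact is excluded when $\max_{\partial B_1}u<c$. Hence $u\prec h_\sigma$ for small $\sigma>0$. But $h_\sigma<0$ at $0$, while $0\in\overline{\{u>0\}}$, and this contradicts $u\prec h_\sigma$, which requires $u<h_\sigma$ on $\overline{\{u>0\}}$.

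The main obstacle is the start of the sliding at $\sigma=0$, where strict ordering fails exactly at $\partial B_1(z)\cap\partial\{u>0\}$, including at $0$. I would handle this by replacing continuous sliding with the strict-supersolution touching at a first contact time, arguing that any contact with $\sigma>0$ small but positive is a genuine viscosity touching. The other delicate point is choosing $c$ uniformly away from the tangency region; restricting to $B_{1/2}(0)$ and using the geometry of $\partial B_1(z)$ should suffice.
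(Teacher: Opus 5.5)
The paper states this lemma without proof; the intended argument is the elliptic analogue of the barrier in Step 1 of \pref{monotone-nondegen-relaxed}. Your mechanism matches that argument: a radial strict supersolution with slope below $Q_{\min}$ on its zero sphere, moved until its negative region contains $0\in\overline{\{u>0\}}$, with any first interior contact excluded by \dref{stationary-viscosity}(ii). There is a genuine gap, however: the family you settle on, $h_\sigma=A\,G(|x-z|/(1+\sigma))$ with $\sigma$ increasing from $0$, has no valid starting member.

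At $\sigma=0$, strict ordering already fails at $0$, where $u(0)=h_0(0)=0$. Worse, even the weak inequality $u\le h_0$ on $\overline{\{u>0\}}$ near $0$ is unavailable. The condition $\{u>0\}\cap B_1(z)=\emptyset$ says nothing about how fast $u$ grows off $\partial B_1(z)$, whereas $h_0$ grows only at rate $A\le\frac12Q_{\min}$. In fact this inequality is essentially what needs proof. Were it available for a strictly superharmonic profile of the same type (e.g.\ $A(1-|x-z|^{-d})$), then $(h_0)_+$ would already touch $u$ from above at $0$ in $\overline{\{u>0\}}$, and \dref{stationary-viscosity}(ii) would give the contradiction with no sliding at all. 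So passing to ``a first contact time at small $\sigma>0$'' does not address the obstacle, because the family may be crossing $u$ from the outset. Notice also that your argument never uses smallness of $u$ in the interior, which is exactly where the hypothesis has to enter.

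The lateral boundary bound also fails. For $|z|=1$, the sphere $\partial B_{1+\sigma}(z)$ meets $\partial B_\rho(0)$ for every $\rho\in[\sigma,2+\sigma]$. It therefore crosses $\partial B_1(0)$ and $\partial B_{1/2}(0)$ alike, and rescaling does not change the picture. Moreover, $h_\sigma<0$ on the part of these spheres lying in the shell $\{1\le|x-z|<1+\sigma\}$. The hypotheses give no separation between $\overline{\{u>0\}}$ and $\partial B_1(z)$ away from $0$. Hence $u<h_\sigma$ on $\partial D\cap\overline{\{u>0\}}$ can fail however small $\max_{\partial B_1}u$ is.

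To repair the argument, start the zero sphere strictly inside the exterior ball, at a fixed positive distance from its boundary, and keep it small relative to $B_1(0)$. Since a viscosity subsolution is subharmonic in $B_1(0)$, $\max_{\partial B_1}u<c$ gives $u<c$ in $\overline{B_1(0)}$. Take $h_s$ radial about $y_s=\frac s2 z$, $s\in[\frac18,1]$, with these properties: negative in $B_{1/8}(y_s)$; vanishing on $\partial B_{1/8}(y_s)$ with slope at most $\frac12Q_{\min}$ there; strictly superharmonic outside $B_{1/8}(y_s)$. For example, a multiple of $1-(8|x-y_s|)^{-d}$, smoothed near $y_s$, works. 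Then $h_s\ge c_0>0$ wherever $|x-y_s|\ge\frac12$. This covers $\partial B_1(0)$ for all $s$, and covers $\R^d\setminus B_1(z)\supset\overline{\{u>0\}}$ for $s=1$. With $c\le c_0$ the argument closes:
\begin{itemize}
\item $u\prec h_1$ holds initially, and there is no boundary contact for any $s$;
\item any first failure of strict ordering as $s$ decreases is an interior touching of $(h_s)_+$ from above in $\overline{\{u>0\}}$, which contradicts \dref{stationary-viscosity}(ii);
\item if no failure occurs, then $h_{1/8}(0)<0=u(0)$ with $0\in\overline{\{u>0\}}$, which is again a contradiction.
\end{itemize}
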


\begin{lemma}\label{l.nondegen-equivalences}
    Suppose that $u$ a non-negative Lipschitz continuous function on $B_1(0)$ which is harmonic in $\{u>0\}$ with $0  \in \partial \{u>0\}$ and satisfying for some $c_i>0$ one of the following conditions:
    \begin{enumerate}[label = (\roman*)]
        \item\label{part.avg} There exists $c_1>0$ so that 
        \[\dashint_{\partial B_r(0)} u(x)~dS(x) \geq c_1 r \ \hbox{ for } \ 0 < r < 1\]
        \item\label{part.sup} There exists $c_2>0$ so that 
        \[ \sup_{\partial B_r(0)} u \geq c_2 r \ \hbox{ for } \ 0 < r < 1.\]
        \item\label{part.laplace} There exists $c_3,\kappa>0$ so that 
        \[\int_{B_r(0)} \Delta u ~dx \geq c_3 r^{d-1} \ \hbox{ for } \ 0 < r < 1.\]
    \end{enumerate}
    Then the other two conditions $ j \in \{\ref{part.avg},\ref{part.sup},\ref{part.laplace}\}$ for $j \neq i$ also hold with constants $c_j$ depending only on $c_i$, $d$ and $L$.
\end{lemma}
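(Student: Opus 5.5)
The plan is to prove the cycle \ref{part.avg} $\Rightarrow$ \ref{part.sup} $\Rightarrow$ \ref{part.laplace} $\Rightarrow$ \ref{part.avg}. Throughout, $L$ denotes the Lipschitz constant of $u$ and $u(0)=0$. Since $u$ is harmonic in $\{u>0\}$ and non-negative, $u$ is subharmonic in $B_1$, so $\Delta u$ is a non-negative Radon measure supported on $\partial\{u>0\}$.

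The two basic tools are Green's identity for spherical means and harmonic majorants. For the spherical mean $m(r):=\dashint_{\partial B_r} u\,dS$ we have
\[
m'(r) = \frac{1}{|\partial B_r|}\int_{B_r}\Delta u = \frac{\mu(B_r)}{|\partial B_1| r^{d-1}}, \qquad \mu := \Delta u .
\]
Since $m(0)=u(0)=0$, integrating gives $m(r) = \int_0^r \mu(B_s)\,\frac{ds}{|\partial B_1| s^{d-1}}$.

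\textbf{\ref{part.laplace} $\Rightarrow$ \ref{part.avg}.} Integrate the formula above: $m(r)\ge \int_0^r c_3 s^{d-1}/(|\partial B_1|s^{d-1})\,ds = c r$.

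\textbf{\ref{part.avg} $\Rightarrow$ \ref{part.sup}.} This is trivial, since $\sup_{\partial B_r} u \ge m(r)$.

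\textbf{\ref{part.sup} $\Rightarrow$ \ref{part.laplace}.} This is the main step. The first sub-step is to upgrade the sup bound to a mean bound. Pick $x_r\in\partial B_r$ with $u(x_r)\ge c_2 r$. By the Lipschitz bound, $u\ge c_2r/2$ on $B_{c_2 r/(2L)}(x_r)$. This ball lies inside the annulus $B_{2r}\setminus B_{r/2}$ once $c_2/(2L)<1/2$, which holds since $c_2\le L$. By subharmonicity, $m$ is nondecreasing, and the solid average over $B_{2r}$ is at most $m(2r)$. Hence
\[
m(2r)\ \ge\ \dashint_{B_{2r}} u\ \ge\ c(c_2/L)^d\,c_2 r .
\]
So \ref{part.avg} holds with a constant depending on $c_2,d,L$, for radii up to $1/2$; radii in $(1/2,1)$ are handled by monotonicity.

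The second sub-step turns the mean bound into a mass bound. Write $\mu(B_s)\le \mu(B_r)$ for $s\le r$ in the integral formula. That is the wrong direction, so instead split the integral at a small fraction $\delta r$:
\[
m(r) = m(\delta r) + \int_{\delta r}^{r}\frac{\mu(B_s)}{|\partial B_1|s^{d-1}}ds \le L\delta r + \frac{\mu(B_r)}{|\partial B_1|}\int_{\delta r}^r s^{1-d}\,ds ,
\]
using $m(\delta r)\le L\delta r$. Choose $\delta = c_1/(2L)$. Then
\[
\mu(B_r)\int_{\delta r}^r s^{1-d}ds \ \ge\ c\,c_1 r .
\]
The integral is at most $C(\delta) r^{2-d}$ for $d\ge 3$, and at most $\log(1/\delta)$ for $d=2$. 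In either case $\mu(B_r)\ge c_3 r^{d-1}$ with $c_3$ depending only on $c_1,d,L$.

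The other implications follow by composing the cycle. The main obstacle is this last step: the spherical-mean formula naturally bounds $\mu(B_r)$ from below only after isolating the contribution of small radii. The Lipschitz bound on $m(\delta r)$ is what makes the splitting work, and the $\log$ in dimension two is the only place where $d$ enters nontrivially.
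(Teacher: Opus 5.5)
Your proof is correct and follows essentially the same route as the paper: (i)$\Rightarrow$(ii) is trivial, (ii)$\Rightarrow$(i) comes from the Lipschitz bound (you supply the details via the solid average and monotonicity of the spherical mean, which the paper leaves implicit), (iii)$\Rightarrow$(i) integrates the spherical-mean identity, and your splitting at $\delta r$ with $\delta = c_1/(2L)$, combined with $\mu(B_s)\le\mu(B_r)$, is exactly the paper's argument for (i)$\Rightarrow$(iii) with $\kappa=\delta$. One cosmetic point: your bound $m(2r)\ge c\,r$ for all $r<1/2$ already covers every radius in $(0,1)$, so the extra monotonicity step for radii in $(1/2,1)$ is unnecessary.
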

\begin{proof}
Note that \ref{part.avg} implies \ref{part.sup} immediately. The other direction \ref{part.sup} implies \ref{part.avg} follows from the Lipschitz estimate. So it suffices to show \ref{part.avg} is equivalent to \ref{part.laplace}.

Note that
\[ \frac{d}{dr}\dashint_{\partial B_r(0)} u(x)~dS(x) = \frac{1}{|\partial B_1| r^{d-1}} \int_{B_r(0)} \Delta u ~dx\]
so
\[0 = u(0) = \dashint_{\partial B_r(0)} u(x)~dS(x) -\frac{1}{|\partial B_1|}\int_0^r\frac{1}{s^{d-1}} \int_{B_s(0)} \Delta u ~dx~ds.\]
From this it is direct to show that \ref{part.laplace} implies \ref{part.avg}. 

Finally to show that \ref{part.avg} implies \ref{part.laplace} use that
\begin{align*}
    \frac{1}{|\partial B_1|}\int_{\kappa r}^r\frac{1}{s^{d-1}} \int_{B_s(0)} \Delta u ~dx~ds &= \dashint_{\partial B_r(0)} u(x)~dS(x) - \dashint_{\partial B_{\kappa r}(0)} u(x)~dS(x) \\
    &\geq (c-L\kappa) r \geq \frac{1}{2}cr
\end{align*}
 choosing $\kappa = \frac{1}{2L}c$.  On the other hand, since $\Delta u$ is a non-negative measure,
 \[\frac{1}{|\partial B_1|}\int_{\kappa r}^r\frac{1}{s^{d-1}} \int_{B_s(0)} \Delta u ~dx~ds \leq \frac{1}{|\partial B_1|\kappa^{d-1}}\frac{1}{r^{d-2}}\int_{B_r(0)} \Delta u ~dx\]
\end{proof}
\begin{lemma}
    If $u$ is a largest subsolution in $B_1$ and $0 \in \partial \{u>0\}$ then for any $0 < r < 1$, $ \partial B_r(0) \setminus \overline{\{u>0\}} \neq \emptyset$.
\end{lemma}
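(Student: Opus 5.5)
The plan is a proof by contradiction using a harmonic replacement (Perron lift) inside $B_r(0)$. Suppose that for some $0<r<1$ we have $\partial B_r(0) \subset \overline{\{u>0\}}$. Since $u$ is continuous and nonnegative, $u$ is then positive at some point of $\partial B_r(0)$, so $u|_{\partial B_r(0)} \not\equiv 0$.

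\emph{Step 1: the competitor.} Let $h$ be the harmonic function in $B_r(0)$ with $h=u$ on $\partial B_r(0)$; it is continuous up to the boundary. Define $v := h$ in $B_r(0)$ and $v := u$ in $B_1(0)\setminus B_r(0)$, so $v$ is continuous. Because the boundary data are nonnegative and not identically zero, the strong maximum principle gives $h>0$ in $B_r(0)$.

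\emph{Step 2: $v\geq u$.} On the open set $\{u>0\}\cap B_r(0)$ the function $u$ is harmonic. On the boundary of this set, either $u=0\le h$ or $u=h$ (on $\partial B_r(0)$). The maximum principle therefore gives $u\le h$ there, and trivially $u=0<h$ elsewhere in $B_r(0)$. Hence $v\geq u$ in $B_1(0)$, with equality outside $B_r(0)$.

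\emph{Step 3: $v$ is a viscosity subsolution in $B_1(0)$.} We split by the location of the touching point $x$.
\begin{enumerate}[label=(\alph*)]
\item If $x\in B_r(0)$, then $v=h>0$ is harmonic near $x$, so the subsolution condition holds.
\item If $x\notin\overline{B_r(0)}$, then $v=u$ near $x$ and $\overline{\{v>0\}}\supset\overline{\{u>0\}}$ locally coincide, so the property is inherited from $u$.
\item If $x\in\partial B_r(0)$, suppose $\varphi_+$ touches $v$ from above at $x$ in $\overline{\{v>0\}}$. Since $v\ge u$, $v(x)=u(x)$, and $\overline{\{u>0\}}\subset\overline{\{v>0\}}$, the function $\varphi_+$ also touches $u$ from above at $x$ in $\overline{\{u>0\}}$; here we use the hypothesis $x\in\partial B_r(0)\subset\overline{\{u>0\}}$. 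The subsolution property of $u$ then gives the required alternative for $\varphi$ at $x$.
\end{enumerate}
Case (c) is the one delicate point, and it is exactly where the contradiction hypothesis is used. Without it, a free boundary point of $v$ on $\partial B_r(0)$ lying outside $\overline{\{u>0\}}$ could not be handled, and I would otherwise need a Hopf-type gradient bound for $h$.

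\emph{Step 4: contradiction with maximality.} Choose $r<r'<1$ and $B=B_{r'}(0)\subset\subset B_1(0)$. Then $v$ is a viscosity subsolution with $v=u$ on $B_1(0)\setminus B$. Since $u$ is a (local) largest subsolution in the sense of \dref{largest-smallest}, we must have $v\le u$. But $v(0)=h(0)>0=u(0)$, because $0\in\partial\{u>0\}$, which is a contradiction.

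Hence $\partial B_r(0)\setminus\overline{\{u>0\}}\neq\emptyset$ for every $0<r<1$. The argument does not use $d=2$.
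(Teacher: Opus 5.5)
Your argument is correct and follows the same route as the paper's sketch: harmonic lift of $u$ in $B_r(0)$, checking the subsolution property on $\partial B_r(0)$ by transferring the test function to $u$ via the contradiction hypothesis $\partial B_r(0)\subset\overline{\{u>0\}}$, and contradicting maximality at $0$. Your write-up also supplies the case analysis that the paper's sketch leaves implicit.

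The only slip is your opening claim that $u>0$ somewhere on $\partial B_r(0)$ follows from continuity and nonnegativity. It does not: $(|x|-r)_+$ is a counterexample. The claim does follow from your Step 2. If $u\equiv 0$ on $\partial B_r(0)$, then $u\le h\equiv 0$ in $B_r(0)$, which contradicts $0\in\partial\{u>0\}$. This is exactly what the paper's appeal to the strong maximum principle accomplishes.
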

\begin{proof}[Sketch]
    Suppose, towards a contradiction, that $\partial B_r(0) \subset \overline{\{u>0\}}$. Let $v$ be the harmonic lift of $u$ in $B_r(0)$ which is equal to $u$ outside of $B_r(0)$.  Since $u$ is harmonic in $\{u>0\} \cap U$ and non-negative $u$ is subharmonic in $U$. Thus $u \leq v $ in $B$.  Since $0 \in \partial \{u>0\} \subset \{u=0\}$ we must have $v > u$ in $B$ by strong maximum principle. We claim that $v$ is a subsolution of \eref{bernoulli-basic} in $\{v>u\} \cup U \setminus \overline{\{u>0\}}$. Note that by the hypotheses $\partial B$ does not intersect this set so nothing needs to be checked there.  Thus the largest subsolution property of $u$ implies that $v \leq u$ which is a contradiction.
\end{proof}

\begin{proof}[Sketch of proof of \tref{largest-sub-non-degen}]
    By Lemma~\ref{l.nondegen-equivalences} it suffices to show that $\int_{B_r(0)} \Delta u \geq cr$ for all $0 < r < 1$. By hyperbolic scaling invariance of the largest subsolution property it suffices to show $\int_{B_1(0)} \Delta u \geq c$.
    
    For all $0 < r < 1$ there is $x_r \in \partial B_r(0) \setminus E$ and let $\rho_r = d(x,E)$. Then $B_{\rho_r}(x_r)$ touches $\{u>0\}$ from the outside at some point $y_r$ so \lref{outer-reg-non-degen} and Lemma~\ref{l.nondegen-equivalences} imply that 
    \[\int_{B_{2\rho_r}(x_r)}  \Delta u \geq c\rho_r. \]
    Since $\{(r-2\rho_r,r+2\rho_r)\}_{r \in (0,1]}$ form an open cover of $(0,1]$ there is a disjoint subcollection $\{(r_j-\rho_j,r_j+\rho_j)\}_{j=1}^\infty$ such that $(r_j - 10\rho_j,r_j+10\rho_j)$ covers $(0,1]$. Note that the disjointness of the intervals of radii implies disjointness of the balls $B_{2\rho_j}(x_{r_j})$. Thus
    \[ \sum 20\rho_j \geq 1 \ \hbox{ and so } \ \int_{B_1(0)} \Delta u \geq \sum_j \int_{B_{2\rho_j}(x_{r_j})} \Delta u \geq c \sum \rho_j \geq c.\]
    This concludes the proof
\end{proof}

\bibliography{articles.bib}
\end{document}